\renewcommand{\bar}[1]{\overline{#1}}
\newtheorem{theorem}{Theorem}[section]
\newtheorem{lemma}[theorem]{Lemma}
\newtheorem{proposition}[theorem]{Proposition}
\newtheorem{fact}[theorem]{Fact}
\theoremstyle{definition}
\newtheorem{remark}[theorem]{Remark}
\newtheorem{example}[theorem]{Example}
\newtheorem{problem}{Problem}
\newtheorem{notation}[theorem]{Notation}
\def\Aut{{\sf Aut}}
\def\Sym{{\sf Sym}}
\newcommand{\cal}{\mathcal}
\def\Sup{\mathop{\rm Sup}\nolimits}
\def\diam{\mathop{\rm diam}\nolimits}
\newcommand{\dedge}[1]{\ar@{--}[#1]}
\newcommand{\edge}[1]{\ar@{-}[#1]}
\newcommand{\arcc}[1]{\ar@{->}[#1]}
\newcommand{\darcc}[1]{\ar@{=>}[#1]}
\newcommand{\barcc}[1]{\ar@{<-}[#1]}
\newcommand{\bdarcc}[1]{\ar@{<=}[#1]}
\newcommand{\lulab}[1]{\ar@{}[l]_<<{#1}}
\newcommand{\rulab}[1]{\ar@{}[r]^<<{#1}}
\newcommand{\ldlab}[1]{\ar@{}[l]^<<{#1}}
\newcommand{\rdlab}[1]{\ar@{}[r]_<<{#1}}
\newcommand{\node}{*+[o][F-]{ }}
\newcommand{\arrowdraw}[2]{\draw [postaction={decorate}] (#1)--(#2);}
\newcommand{\edgedraw}[2]{\draw (#1)--(#2);}
\begin{document}
\title[Set-homogeneous digraphs]{Set-homogeneous directed graphs \\ \today}

\keywords{Digraphs, Homogeneous structures, Set-homogeneous structures}
\subjclass[2000]{05C20; 05C75, 05C25, 20B05}

\maketitle

\begin{center}
		
    ROBERT GRAY\footnote{Supported by an EPSRC Postdoctoral Fellowship EP/E043194/1 held at the University of St Andrews, Scotland. Partially supported by FCT and FEDER, project POCTI-ISFL-1-143 of Centro de \'{A}lgebra da Universidade de Lisboa, and by the project PTDC/MAT/69514/2006.}

    \medskip

    Centro de \'{A}lgebra da Universidade de Lisboa, \\ 
    Av. Prof. Gama Pinto 2,  1649-003 Lisboa,  Portugal.

    \medskip

    \texttt{rdgray@fc.ul.pt}

    \bigskip

    \bigskip

    DUGALD MACPHERSON (corresponding author)\footnote{Supported by EPSRC grants EP/D048249/1 and EP/H00677X/1}

    \medskip

    School of Mathematics, \ University of Leeds, \\
    Leeds LS2 9JT, \ England, tel +441133435166, fax +441133435090.

    \medskip

    \texttt{pmthdm@maths.leeds.ac.uk} \\

    \bigskip

    \bigskip

    CHERYL E. PRAEGER\footnote{Supported by Australian Research Council Federation Fellowship FF0776186.}, \quad GORDON F. ROYLE\footnote{GFR acknowledges support from the Australian Research Council Discovery Grants Scheme (DP0984540).}

    \medskip

Centre for the Mathematics of Symmetry and Computation, \\   
School of Mathematics and Statistics, \ University of Western Australia, \\
    Nedlands, \ WA 6009, \ Australia.

    \medskip

    \texttt{praeger@maths.uwa.edu.au}, \quad \texttt{gordon@maths.uwa.edu.au} \\
\end{center}

\begin{abstract}
A directed graph  is {\em set-homogeneous} if, whenever $U$ and $V$ are isomorphic finite subdigraphs, there is an
 automorphism $g$ of the digraph with $U^g=V$.
Here,  extending work of Lachlan on finite homogeneous digraphs, we classify finite set-homogeneous digraphs, where we allow
 some pairs of vertices to have arcs in both directions. 
Under the assumption that such pairs of vertices are {\em not} allowed, we obtain initial results on countably infinite
 set-homogeneous digraphs, 
classifying those which are not 2-homogeneous.
\end{abstract}

\section{Introduction}

The notion of \emph{homogeneous} structure (sometimes called \emph{ultrahomogeneous}) dates back to the pioneering work of 
Fra{\"{\i}}ss{\'e}; see \cite{ Fraisse1}, or originally \cite{Fraisse2}. 
A relational structure $M$ is called \emph{homogeneous} if any isomorphism between 
finite induced substructures of $M$ extends to an automorphism of $M$. 
Homogeneity is a strong symmetry condition and, as a result of this, for several natural classes of relational structures those that are homogeneous have been classified. 
The finite homogeneous graphs were determined by Gardiner in \cite{gardiner}, and the countable homogeneous graphs were 
classified by Lachlan and Woodrow in \cite{Lachlan1}. The countable homogeneous posets were described by Schmerl in 
\cite{Schmerl1}, and the corresponding classification for  tournaments was given by  
Lachlan \cite{Lachlan2}.  Generalising the results for posets and tournaments, Cherlin \cite{Cherlin1}
classified the homogeneous countable digraphs in a major piece of work. 
Homogeneous structures continue to receive a lot of attention in the literature; see for example \cite{Hubicka1, Hubicka2008, Kechris1, SauerNew}.  

There are various natural ways in which the condition of homogeneity can be weakened. 
In this article we consider one such weakening called \emph{set-homogeneity}: a relational structure $M$ is {\em set-homogeneous}
if, whenever $U$ and $V$ are isomorphic finite induced substructures of $M$, there is $g\in \Aut(M)$ with $U^g=V$.  
The notion of a set-homogeneous structure was introduced by Fra{\"{\i}}ss{\'e} in \cite{Fraisse1}, although unpublished observations had been made on it earlier by  Fra{\"{\i}}ss{\'e} and Pouzet. 

In this paper we investigate the extent to which homogeneity is stronger than set-homogeneity. For instance,    
it was shown by Ronse \cite{Ronse1978} that any finite set-homogeneous graph is in fact homogeneous. 
Enomoto gave a short and very elegant direct proof of this fact in \cite{enomoto}. 
(We shall rehearse Enomoto's proof below 
when showing that every finite set-homogeneous tournament is homogeneous.) 
On the other hand, for infinite graphs homogeneity is known to be strictly stronger than set-homogeneity; see \cite{dgms}. 
Here, we consider how set-homogeneity compares with homogeneity for other natural families of relational structures, in particular  
 for directed graphs.
Lachlan classified the finite homogeneous digraphs in \cite{lachlan}. 
In fact,
 he considers two notions of digraph, differing according to whether or not one allows a pair of vertices with arcs
 in each direction. 
As we shall see (Theorem~\ref{maintheorem}(ii)), for finite digraphs (where one allows at most one arc between any pair of vertices), set-homogeneity is close to being equivalent to homogeneity in the sense that there is just one digraph -- the directed pentagon -- that is set-homogeneous but not homogeneous. On the other hand, for the more general notion of digraph where one does allow arcs in both directions between pairs of vertices, there are (up to complementation) three infinite families -- the digraphs $J_n, K_n[D_5], D_5[K_n]$ -- and several small sporadic examples of finite digraphs which are set-homogeneous but not homogeneous
(Theorem~\ref{maintheorem}(iii)).
We also obtain partial results on countably infinite set-homogeneous digraphs (Theorem~\ref{infnot2hom}).

\bigskip\noindent
\textbf{Digraph definitions:}\quad Before stating the main theorems, we list the relevant classes of graphs and digraphs. We view graphs as structures with a
 single symmetric irreflexive binary relation, denoted $\sim$.
An {\em antisymmetric digraph}, or {\em a-digraph}, has a single binary relation $\rightarrow$ which is irreflexive and 
antisymmetric.
A {\em symmetric digraph}, or {\em s-digraph}, is endowed with a symmetric relation $\sim$ and an 
antisymmetric relation $\rightarrow$, both irreflexive. We refer to pairs $\{u,v\}$ with $u\sim v$ as {\em edges}, and to pairs $(u,v)$ with $u\to v$ as {\em arcs}. An edge $\{u,v\}$ may be viewed as a pair of vertices with an arc in each direction. 
Both a graph and an a-digraph can be viewed as an s-digraph (with $\to$ or $\sim$ empty, respectively), so our main theorem is a classification 
of set-homogeneous s-digraphs. Some definitions are phrased just for s-digraphs. 

\begin{itemize}
 \item[(a)]  If $U$ and $V$ are s-digraphs, then, as in \cite{lachlan}, $U\times V$ denotes their \emph{product}, and $U[V]$ their \emph{compositional 
product}. Both have domain the cartesian product
$U\times V$. In the product $U\times V$, we have $(u_1,v_1)\rightarrow (u_2,v_2)$ if and only if $u_1\rightarrow u_2$ and 
$v_1\rightarrow v_2$, and likewise for $\sim$. On the other hand,
$U[V]$ consists of $|U|$ copies of $V$: the $\rightarrow$ relation consists of the pairs  $(u,v_1)\rightarrow (u,v_2)$ where
$v_1\rightarrow v_2$ in $V$, and $(u_1,v_1)\rightarrow (u_2,v_2)$ where $u_1\rightarrow u_2$ in $U$. Likewise, the $\sim$ relation in $U[V]$ consists of the pairs $(u,v_1)\sim (u,v_2)$ where  $v_1\sim v_2$ in $V$, and $(u_1,v_1)\sim (u_2,v_2)$ where  $u_1\sim u_2$ in $U$.

\item[(b)] For each $n$, $K_n$ denotes the complete graph on $n$ vertices, $K_{m,n}$ the complete bipartite graph with parts of size $m$ and $n$, $C_n$ the undirected cycle on $n$ vertices, and 
$D_n$ the directed cycle on $n$ vertices.
So we may view $C_n$ as having vertex set $\{0,\ldots,n-1\}$ with edge set $\{\{i,j\}: j\equiv i+1 ~({\rm mod}~ n)\}$, and $D_n$, 
on the same vertex set,
just has arcs $i\rightarrow i+1 ~({\rm mod}~ n)$. 

\item[(c)] We define three additional s-digraphs $H_0,H_1,H_2$ constructed in \cite{lachlan} and 
depicted in Figures~\ref{FIG-h0}, \ref{FIG-h1} and \ref{FIG-H2-FULL} below, and described there.
A further $s$-digraph, denoted $H_3$, is constructed in Section~\ref{sec_27vertex}; it has 27 vertices.

\item[(d)] We  use $P_3$ to denote a tournament on 3 vertices which is totally ordered by $\rightarrow$.
Let $E_6$ be a copy of $D_6$ with vertex set $\{0,1,2,3,4,5\}$, and, in addition, edges 
$i \sim i+3 ~~({\rm mod}~ 6)$. Let
 $E_7$ be the corresponding expansion of $D_7$ on $\{0,1,2,3,4,5,6\}$ with edges $i \sim i+3 ~({\rm mod}~ 7)$. Also, 
let $F_6$ be the s-digraph with vertex set $\{x_1,y_1,z_1,x_2,y_2,z_2\}$ such that each $x_iy_iz_i$ is a copy of  $D_3$ with arcs $x_i\rightarrow y_i$, $y_i\rightarrow z_i$ and $z_i\rightarrow x_i$ and with edges $x_1\sim x_2$, $y_1 \sim y_2$ and $z_1 \sim z_2$. 
The s-digraph $J_n$ (for $n>1$) is defined as follows. Let $B_0$, $B_1$, $B_2$ be disjoint independent sets each of size $n$. 
Let $f_{01}: B_0\rightarrow B_1$, $f_{02}: B_0 \rightarrow B_2$, and $f_{12}: B_1\rightarrow B_2$ be
 bijections, with $f_{02}=f_{12}\circ f_{01}$. If $x\in B_i$ and $y \in B_{i+1~(\mod 3)}$, there is an 
arc $x\rightarrow y$ except in the case when $x,y$ are matched by the $f_{ij}$, in which case they are $\sim$-related.

\item[(e)] Finally, if $M$ is an s-digraph, then $\bar{M}$, the {\em complement}, is the s-digraph with the same vertex set, such that
$u\sim v$ in $\bar{M}$ if and only if they are unrelated in $M$, and $u\rightarrow v$ in $\bar{M}$ if and only if $v\rightarrow u$ in $M$.
Thus, for example, $J_2 \cong \overline{E_6}$.

\item[(f)] We let ${\cal L}$, ${\cal A}$, and ${\cal S}$ denote respectively the collections of finite set-homogeneous graphs, finite set-homogeneous
a-digraphs, and finite set-homogeneous s-digraphs. 
\end{itemize}

Our main theorem is part (iii) of the following, with part (ii) as a special case. Part (i) is due to Gardiner \cite{gardiner} (for homogeneous graphs), Ronse \cite{Ronse1978} and  Enomoto \cite{enomoto}.  

\begin{theorem}\label{maintheorem}
Let $M$ be a finite  symmetric digraph. Then, with the above notation
\begin{enumerate}[(i)]

\item $M\in {\cal L}$ if and only if $M$ or $\bar{M}$ is one of: $C_5$, $K_3\times K_3$, $K_m[\bar{K}_n]$ (for $1\leq m,n\in {\mathbb N}$);

\item $M\in {\cal A}$ if and only if $M$ is one of: $D_1$, $D_3$, $D_4$, $D_5$, $H_0$, $\bar{K}_n$, $\bar{K}_n[D_3]$, or
$D_3[\bar{K}_n]$, for some $n\in {\mathbb N}$ with $1\leq n$;

\item $M \in {\cal S}$ if and only if either $M$ or $\bar{M}$ is isomorphic to an s-digraph of one of the
 following forms:
$K_n[A], A[K_n]$, $L$, $D_3[L]$, $L[D_3]$, $H_1$, $H_2$, $H_3$, $E_6$, $E_7$, $F_6$, $J_n$, where 
$n\in {\mathbb N}$ with $1\leq n$, $A\in {\cal A}$ 
and $L\in {\cal L}$.
\end{enumerate}
\end{theorem}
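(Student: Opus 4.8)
Throughout one works up to complementation, since $M\in\mathcal{S}$ if and only if $\bar{M}\in\mathcal{S}$, and the three parts are established in the order (i), (ii), (iii), each feeding into the next; part (i) is quoted from Gardiner, Ronse and Enomoto. For any $M\in\mathcal{S}$ the starting point is that the three isomorphism types of two-element substructure (an edge, an arc, an unrelated pair) are pairwise non-isomorphic, so $\Aut(M)$ is transitive on vertices, on edges, on arcs and on non-edges. Fixing a vertex $x$ and splitting the remaining vertices into out-neighbours $N^+(x)$, in-neighbours $N^-(x)$, $\sim$-neighbours $N^\sim(x)$ and non-neighbours $N^0(x)$, the rigidity of an arc gives immediately that the stabiliser $\Aut(M)_x$ is transitive on $N^+(x)$ and on $N^-(x)$; the analogous transitivity on the symmetric sets $N^\sim(x)$ and $N^0(x)$ is genuinely more delicate, because an edge or a non-edge admits a swap and so the global automorphism supplied by set-homogeneity need not fix $x$. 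Recovering control of $N^\sim(x)$ and $N^0(x)$, which amounts to ruling out half-arc-transitive behaviour of the reducts, is one of the recurring technical points.

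To prove (ii) I first treat the edge-free case, in which $M$ is a finite set-homogeneous a-digraph. The strategy is an Enomoto-style argument (cf.\ \cite{enomoto}): one shows that the transitivity data above already force the one-point extension property characterising homogeneity, so that $M$ is a finite homogeneous a-digraph and hence appears on Lachlan's list \cite{lachlan} as one of $D_1,D_3,D_4,H_0,\bar{K}_n,\bar{K}_n[D_3],D_3[\bar{K}_n]$, the sole additional set-homogeneous but non-homogeneous example being the directed pentagon $D_5$, whose cyclic automorphism group supplies set-homogeneity while the amalgamation property fails.

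For (iii) I may assume $M$ carries both an edge and an arc, and the organising principle is the substitution (modular) decomposition. If $M$ has a nontrivial module --- a subset $W$ with $1<|W|<|M|$ such that every vertex outside $W$ is related in the same way to all of $W$ --- then set-homogeneity and vertex-transitivity force the maximal proper modules to partition $M$ into isomorphic blocks, so that $M\cong U[V]$ with $U,V\in\mathcal{S}$ smaller, and an induction on $|M|$ becomes available. A composition lemma then decides exactly which products are set-homogeneous: the requirement that the set-transitive group $\Aut(M)$ contain $\Aut(U)\Wr\Aut(V)$, together with the compatibility forced between $\sim$ and $\rightarrow$, confines one factor to the highly symmetric pieces $K_n$ or $D_3$ and the other to an already-classified family, yielding precisely $K_n[A]$, $A[K_n]$, $D_3[L]$, $L[D_3]$ with $A\in\mathcal{A}$ and $L\in\mathcal{L}$.

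The main obstacle is the \emph{prime} case: s-digraphs carrying both relations but admitting no nontrivial module, where no reduction is available. Here one must exploit the interaction of the two relations directly --- at every vertex $x$ the induced structure on $N^\sim(x)$ is a set-homogeneous graph and that on $N^+(x)$ a set-homogeneous a-digraph, and these local pictures must be mutually consistent around every vertex and every edge --- to cut the possibilities down to the sporadic examples $H_1,H_2,H_3,E_6,E_7,F_6$ and the infinite family $J_n$. Isolating $J_n$ as the unique prime infinite family, and certifying that the sporadic list is complete, is where the argument is hardest: because set-homogeneity (unlike homogeneity) refuses to lift an isomorphism of neighbourhood configurations to an automorphism fixing $x$, each local-to-global step must be closed by exhibiting a global automorphism explicitly rather than by invoking an extension property, and I expect completeness of the finite part of the list to be settled by a computer-assisted enumeration of the small s-digraphs.
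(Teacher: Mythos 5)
Your top-level architecture (quote (i); treat a-digraphs first; split s-digraphs into a composite case handled by a product decomposition and a remaining ``prime'' case) runs roughly parallel to the paper, and your modular-decomposition framing of the composite case is a reasonable repackaging of what the paper's Lemmas~\ref{disconnect} and~\ref{samenbours} actually detect (the $\Gamma$-connected components, and the classes of the congruence $\Gamma(x)=\Gamma(y)$, are the relevant block systems). But two genuine gaps remain. First, for (ii) you propose to run Enomoto's one-point-extension argument on an arbitrary finite set-homogeneous a-digraph and conclude homogeneity, with $D_5$ appearing as ``the sole additional'' exception. That argument cannot work as stated: Enomoto's choice of $a$ maximising $|\Gamma(a)\cap A|$ forces the relation of the matching vertex $b$ to $B\setminus B'$ only when non-domination leaves a unique alternative (as for graphs and tournaments); for a-digraphs a vertex may relate to $B\setminus B'$ by in-arcs or by non-adjacency, the extension step fails, and $D_5$ is precisely the resulting counterexample. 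You give no mechanism for locating where the argument breaks and certifying that $D_5$ is the only casualty. The paper applies Enomoto's argument only to tournaments (Lemma~\ref{sethomtourn}) and obtains (ii) by induction on $|M|$: $\Gamma(\alpha)$ and $\Gamma^*(\alpha)$ are themselves set-homogeneous (Lemma~\ref{nhood}), hence on the list by induction, have equal cardinality, and satisfy $\Gamma(\alpha)\cap\Gamma^*(\beta)\cong\Gamma(\gamma)\cap\Gamma^*(\alpha)$ (Lemma~\ref{2verts}); together with the separate analysis of the case $\Gamma(\alpha)\cong\Gamma^*(\alpha)$ (Lemma~\ref{isom}), these constraints pin down the list. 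That inductive mechanism is entirely absent from your proposal.

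Second, and more seriously, your treatment of the prime case of (iii) is a plan rather than a proof, and it is exactly this case that occupies Sections~\ref{sec_CaseInNEqualsOutN} and~\ref{sec_5} of the paper. The missing idea is again the neighbourhood induction: once $\Gamma(\alpha)$ and $\Gamma^*(\alpha)$ are known to lie on the list of (iii), the analysis splits on whether $\Gamma(\alpha)\cong\Gamma^*(\alpha)$, and each branch becomes a finite (if lengthy) case analysis over the possible neighbourhood types; without it there is no a priori bound on anything. Your fallback of ``computer-assisted enumeration of the small s-digraphs'' cannot close the argument, because the prime examples are not of bounded size: the family $J_n$ admits no nontrivial module (its maximal independent sets $B_i$ are not modules, since a vertex of $B_1$ is $\sim$-related to its match in $B_0$ but receives arcs from the rest of $B_0$), so it sits squarely in your prime case for every $n$, and no finite search certifies that the $J_n$ are the only unbounded prime family. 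Note also that your dichotomy places $F_6$ and the $\equiv$-class structures underlying $H_1$, $H_2$, $H_3$ in the prime case (their natural block systems are not modules, as the between-block relations are mixed), so the prime case is strictly larger than the sporadic-plus-$J_n$ residue you describe, and the composite case alone does not account for all the block-system phenomena the paper has to control.
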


%
%
%
%

\begin{figure}
\begin{center}
\begin{tikzpicture}
[decoration={ 
markings,
mark=
at position 0.5 
with 
{ 
\arrow[scale=1.5]{stealth} 
} 
} 
] 
\tikzstyle{vertex}=[circle,draw=black, fill=white, inner sep = 0.75mm]

\matrix[row sep=5mm,column sep = 2cm]{

%
%

\node (v01) [vertex,label={90:{\small $(0,1)$}}] at (90:3cm) {};
\node (v11) [vertex,label={135:{\small $(1,1)$}}] at (135:3cm) {};
\node (v12) [vertex,label={180:{\small $(1,-1)$}}] at (180:3cm) {};
\node (v20) [vertex,label={225:{\small $(-1,0)$}}] at (225:3cm) {};
\node (v02) [vertex,label={270:{\small $(0,-1)$}}] at (270:3cm) {};
\node (v22) [vertex,label={315:{\small $(-1,-1)$}}] at (315:3cm) {};
\node (v21) [vertex,label={0:{\small $(-1,1)$}}] at (0:3cm) {};
\node (v10) [vertex,label={45:{\small $(1,0)$}}] at (45:3cm) {};

\draw [postaction={decorate}] (v11)--(v01);
\arrowdraw{v01}{v22}
\arrowdraw{v22}{v02}
\arrowdraw{v02}{v11}

\arrowdraw{v10}{v01}
\arrowdraw{v01}{v20}
\arrowdraw{v20}{v02}
\arrowdraw{v02}{v10}

\arrowdraw{v12}{v01}
\arrowdraw{v01}{v21}
\arrowdraw{v21}{v02}
\arrowdraw{v02}{v12}

\arrowdraw{v10}{v11}
\arrowdraw{v11}{v20}
\arrowdraw{v20}{v22}
\arrowdraw{v22}{v10}

\arrowdraw{v11}{v12}
\arrowdraw{v12}{v22}
\arrowdraw{v22}{v21}
\arrowdraw{v21}{v11}

\arrowdraw{v20}{v12}
\arrowdraw{v12}{v10}
\arrowdraw{v10}{v21}
\arrowdraw{v21}{v20}
\\
$H_0$ \\
};

\end{tikzpicture}
\end{center}
\caption{The digraph $H_0$. This digraph has the following description. 
The vertices are the $8$ non-zero vectors in $GF(3)^2$ and there is an arc from $u$ to $v$ if $\left| u^T \ v^T \right| = 1$ 
(where $\left| u^T \ v^T \right|$ is the determinant of the $2 \times 2$ matrix with columns $u^T$ and $v^T$).}
\label{FIG-h0}
\end{figure}
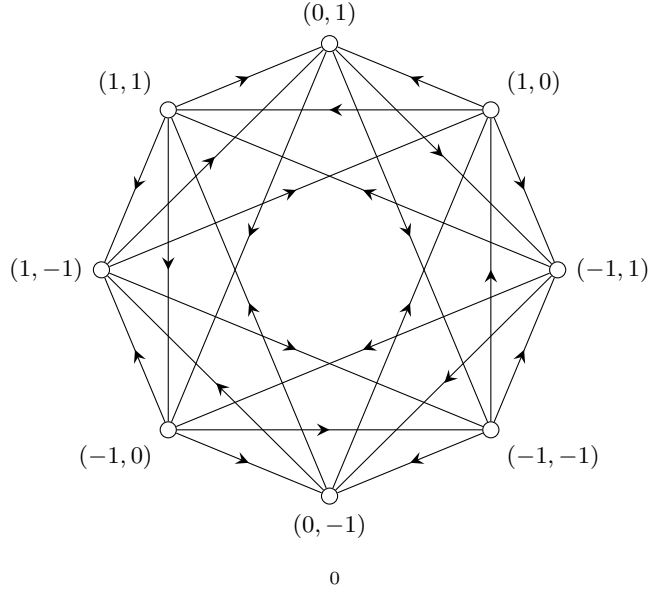

%
%
%
%

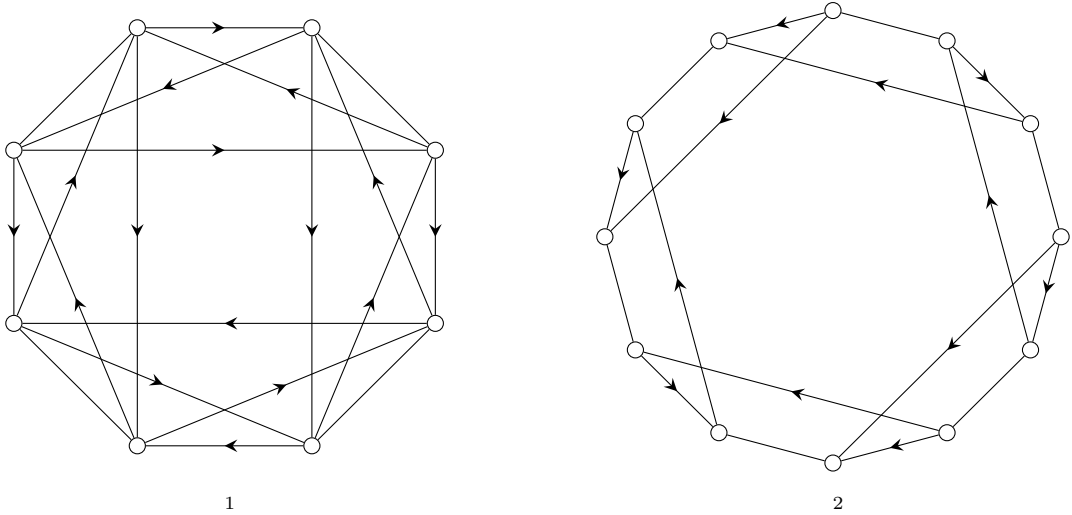
\begin{figure}

\begin{center}

\begin{tikzpicture}
[decoration={ 
markings,
mark=
at position 0.5 
with 
{ 
\arrow[scale=1.5]{stealth} 
} 
} 
] 
\tikzstyle{vertex}=[circle,draw=black, fill=white, inner sep = 0.75mm]

%
%

\matrix[row sep=5mm,column sep = 2cm]{

%
%
%
%

%
%

\node (v01) [vertex] at (90+22.5:3cm)  {};
\node (v11) [vertex] at (135+22.5:3cm) {};
\node (v12) [vertex] at (180+22.5:3cm) {};
\node (v20) [vertex] at (225+22.5:3cm) {};
\node (v02) [vertex] at (270+22.5:3cm) {};
\node (v22) [vertex] at (315+22.5:3cm) {};
\node (v21) [vertex] at (0+22.5:3cm)   {};
\node (v10) [vertex] at (45+22.5:3cm)  {};

\edgedraw{v11}{v01}
\edgedraw{v10}{v21}
\edgedraw{v12}{v20}
\edgedraw{v02}{v22}

\arrowdraw{v01}{v10}
\arrowdraw{v01}{v20}

\arrowdraw{v10}{v11}
\arrowdraw{v10}{v02}

\arrowdraw{v21}{v22}
\arrowdraw{v21}{v01}

\arrowdraw{v22}{v10}
\arrowdraw{v22}{v12}

\arrowdraw{v02}{v21}
\arrowdraw{v02}{v20}

\arrowdraw{v20}{v22}
\arrowdraw{v20}{v11}

\arrowdraw{v12}{v01}
\arrowdraw{v12}{v02}

\arrowdraw{v11}{v21}
\arrowdraw{v11}{v12}

&

\node (v1) [vertex] at (0:3cm)  {};
\node (v2) [vertex] at (30:3cm)  {};
\node (v3) [vertex] at (60:3cm)  {};
\node (v4) [vertex] at (90:3cm)  {};
\node (v5) [vertex] at (120:3cm)  {};
\node (v6) [vertex] at (150:3cm)  {};
\node (v7) [vertex] at (180:3cm)  {};
\node (v8) [vertex] at (210:3cm)  {};
\node (v9) [vertex] at (240:3cm)  {};
\node (v10) [vertex] at (270:3cm)  {};
\node (v11) [vertex] at (300:3cm)  {};
\node (v12) [vertex] at (330:3cm)  {};

\edgedraw{v1}{v2}
\edgedraw{v3}{v4}
\edgedraw{v5}{v6}
\edgedraw{v7}{v8}
\edgedraw{v9}{v10}
\edgedraw{v11}{v12}

\arrowdraw{v1}{v12}
\arrowdraw{v1}{v10}

\arrowdraw{v2}{v5}

\arrowdraw{v3}{v2}

\arrowdraw{v4}{v5}
\arrowdraw{v4}{v7}

\arrowdraw{v6}{v7}

\arrowdraw{v8}{v9}

\arrowdraw{v9}{v6}

\arrowdraw{v11}{v8}
\arrowdraw{v11}{v10}

\arrowdraw{v12}{v3}
\\
$H_1$ & $H_2$ \\
};

\end{tikzpicture}

\end{center}
\caption{The digraphs $H_1$ and $H_2$. As in \cite{lachlan}, in the diagram for $H_2$ we have omitted 
most of the arcs. The remaining arcs are obtained by carrying out the following process. In $H_2$ each 
vertex $v$ has a unique mate $v'$ to which it is joined by an undirected edge (as shown in the diagram). Now if $v \rightarrow w$
 then $w \rightarrow v'$ where $v'$ is the mate of $v$. Similarly, if $w \rightarrow v$ then 
$v' \rightarrow w$. This leads to the insertion of another $36$ arcs. The resulting digraph, with all the arcs included, is depicted in Figure~\ref{FIG-H2-FULL}.} 
\label{FIG-h1}
\end{figure}

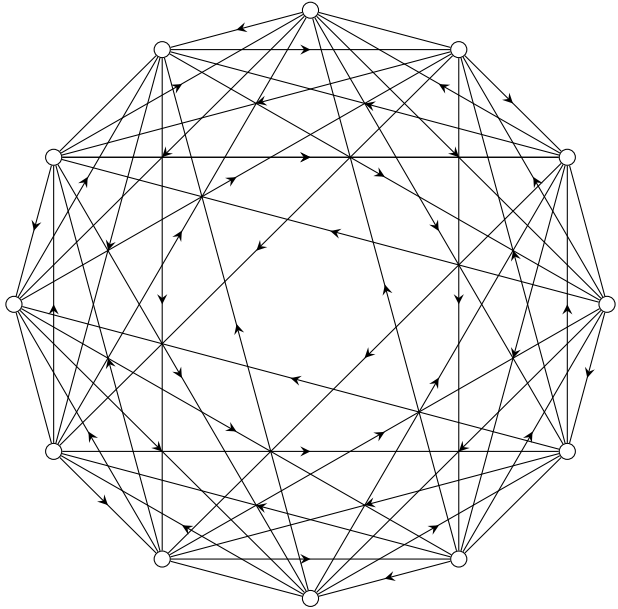
\begin{figure}
\begin{center}
\begin{tikzpicture}
[scale=.6,
decoration={ 
markings,
mark=
at position 0.5 
with 
{ 
\arrow[scale=1.2]{stealth} 
} 
} 
] 
\tikzstyle{vertex}=[circle,draw=black, fill=white, inner sep = 0.75mm]

%
%

\node (v1) [vertex,label={0:{}}] at (0:6.5cm)  {};
\node (v2) [vertex,label={30:{}}] at (30:6.5cm)  {};
\node (v3) [vertex,label={60:{}}] at (60:6.5cm)  {};
\node (v4) [vertex,label={90:{}}] at (90:6.5cm)  {};
\node (v5) [vertex,label={120:{}}] at (120:6.5cm)  {};
\node (v6) [vertex,label={150:{}}] at (150:6.5cm)  {};
\node (v7) [vertex,label={180:{}}] at (180:6.5cm)  {};
\node (v8) [vertex,label={210:{}}] at (210:6.5cm)  {};
\node (v9) [vertex,label={240:{}}] at (240:6.5cm)  {};
\node (v10) [vertex,label={270:{}}] at (270:6.5cm)  {};
\node (v11) [vertex,label={300:{}}] at (300:6.5cm)  {};
\node (v12) [vertex,label={330:{}}] at (330:6.5cm)  {};

\edgedraw{v1}{v2}
\edgedraw{v3}{v4}
\edgedraw{v5}{v6}
\edgedraw{v7}{v8}
\edgedraw{v9}{v10}
\edgedraw{v11}{v12}

\arrowdraw{v1}{v12}
\arrowdraw{v1}{v10}

\arrowdraw{v2}{v5}

\arrowdraw{v3}{v2}

\arrowdraw{v4}{v5}
\arrowdraw{v4}{v7}

\arrowdraw{v6}{v7}

\arrowdraw{v8}{v9}

\arrowdraw{v9}{v6}

\arrowdraw{v11}{v8}
\arrowdraw{v11}{v10}

\arrowdraw{v12}{v3}


\arrowdraw{v11}{v1}		\arrowdraw{v2}{v4}		\arrowdraw{v3}{v11}
\arrowdraw{v9}{v1}		\arrowdraw{v6}{v2}		\arrowdraw{v7}{v3}
\arrowdraw{v5}{v1}    \arrowdraw{v10}{v2}   \arrowdraw{v5}{v3}
\arrowdraw{v1}{v3}    \arrowdraw{v12}{v2}																

\arrowdraw{v4}{v12}		\arrowdraw{v5}{v9}		\arrowdraw{v6}{v2}
\arrowdraw{v6}{v4}		\arrowdraw{v7}{v5}		\arrowdraw{v6}{v10}
\arrowdraw{v8}{v4}													\arrowdraw{v8}{v6}

\arrowdraw{v9}{v7}		\arrowdraw{v8}{v12}		\arrowdraw{v9}{v11}
\arrowdraw{v7}{v11}		\arrowdraw{v10}{v8}		

\arrowdraw{v10}{v12}

\arrowdraw{v4}{v1}			\arrowdraw{v3}{v6}			\arrowdraw{v10}{v5}
\arrowdraw{v1}{v6}			\arrowdraw{v3}{v8}			\arrowdraw{v7}{v10}
\arrowdraw{v2}{v9}			\arrowdraw{v11}{v4}			\arrowdraw{v12}{v7}
\arrowdraw{v2}{v11}			\arrowdraw{v5}{v8}

\arrowdraw{v12}{v9}

\end{tikzpicture}
\end{center}
\caption{The digraph $H_2$ with all of the arcs included.}
\label{FIG-H2-FULL}
\end{figure}

We note that, for the classes ${\cal A}$ and ${\cal S}$, set-homogeneity is strictly weaker than
 homogeneity -- see Example~\ref{ex:c5}. 
Thus, the short argument of Enomoto for graphs, in combination with the Lachlan classification of 
finite homogeneous a-digraphs and s-digraphs,
 is not directly applicable for us. Our proof yields a proof of the Lachlan theorem \cite{lachlan}. It is expressed differently, though 
some of the ideas are similar, and some arguments are taken from \cite{lachlan}.

\begin{example}\label{ex:c5} 

\

\begin{enumerate}[(i)]
\item 
The directed cycle $D_5$ on $\{a_0,a_1,a_2,a_3,a_4\}$, with $a_i\rightarrow a_j$ if and only if
 $j=i+1 ~~({\rm mod}~ 5)$, is an a-digraph which is  set-homogeneous but 
not homogeneous. To see this, observe that the two sets $W=\{a_0,a_2\}$ and $
W'=\{a_0,a_3\}$ induce isomorphic digraphs but there is no automorphism of $D_5$
extending the isomorphism $h:W\rightarrow W'$ given by $h:a_0\rightarrow a_0,\ 
a_2\rightarrow a_3$.
\item
The s-digraphs $E_6,E_7,F_6, H_3, K_n[D_5], D_5[K_n]$ and $J_n$ (for $n\geq 2$) are  set-homogeneous but not homogeneous.
\end{enumerate}
\end{example}

\begin{remark} \label{weakcomp}
\
\begin{enumerate}
 \item[(i)]  By inspection of the lists in Theorem~\ref{maintheorem}, we see that the only finite set-homogeneous s-digraphs with vertex-primitive automorphism group
are, up to complementation, $K_n, C_5, D_3$, $D_5$, $E_7, K_3 \times K_3$.

\item[(ii)] The a-digraph $H_0$ is isomorphic to the one obtained from it by reversing arcs. One such isomorphism is the following map $g$ (in the notation of Figure 1). $(1,-1)^g=(1,-1)$; $(-1,1)^g=(-1,1)$; $((0,-1), (-1,-1))^g=((-1,-1), (0,-1))$; $((1,1), (0,1))^g=((0,1),(1,1))$; $((1,0), (-1,0))^g=((-1,0), (1,0))$.

\item[(iii)] Under our definition above, one obtains the complement $\bar{M}$ of an $s$-digraph $M$ by reversing arcs, and interchanging the relation $\sim$ with the relation `unrelated'. We could have defined an alternative notion of {\em weak complement} $M^c$ of $M$, whereby $u\sim v$ in $M^c$ if and only if $u\sim v$ in $M$, but $u \to v$ in $M^c$ if and only if $v \to u$ in $M$. However, it can be checked that each $s$-digraph in Theorem~\ref{maintheorem} is isomorphic to its weak complement. (An isomorphism in the case of $H_0$ is given in part (b).)

\end{enumerate}
\end{remark}
A relational structure $M$ will be said to be {\em $k$-homogeneous} if any isomorphism between induced substructures of $M$ of size $k$ extends to an automorphism. We say that $M$ is {\em $k$-set-homogeneous} if, whenever $U,V$ are isomorphic induced substructures of $M$ of size $k$, there is $g\in \Aut(M)$ with $U^g=V$.

In Sections 6 and 7, we investigate countably infinite set-homogeneous a-digraphs, and classify those which 
are not 2-homogeneous. As a particular case, Droste \cite[Proposition 8.13]{droste}, using different terminology, showed that every countably infinite set-homogeneous partial order is homogeneous.
A complete classification of the countably infinite set-homogeneous a-digraphs seems difficult.
 Our methods are rather similar to those of \cite{dgms}, 
where countably infinite set-homogeneous graphs which are not 3-homogeneous were classified.
We define  a-digraphs $T(4)$ and $R_n$ (for $n\geq2$), proving that they are all set-homogeneous but not 2-homogeneous, and that the automorphism group of $T(4)$ is vertex-primitive  while those for the $R_n$ are imprimitive on vertices (see Lemma 6.1). 
Our main result, below, proved in Section 6, is that $T(4)$ and the $R_n$ are the only examples which are not 2-homogeneous.

\begin{theorem} \label{infnot2hom}
Let $M$ be a countably infinite set-homogeneous digraph which is not $2$-homogeneous. Then $M$ is isomorphic to 
$T(4)$ or to $R_n$ for some $n\geq 2$.
\end{theorem}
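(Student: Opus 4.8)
The plan is to first convert the hypotheses into a rigid structural dichotomy, and then to classify the very symmetric objects that result.

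\textbf{Reduction.} Since $M$ is set-homogeneous it is in particular $2$-set-homogeneous, so $\Aut(M)$ is transitive on vertices, on unordered arcs, and on unordered non-edges. The first thing I would record is that an arc is \emph{rigid}: if $u\to v$, $u'\to v'$, then any $g\in\Aut(M)$ with $\{u,v\}^g=\{u',v'\}$ must send $u\mapsto u'$ and $v\mapsto v'$, since the alternative would reverse an arc, impossible by antisymmetry. Hence $\Aut(M)$ is already transitive on \emph{ordered} arcs, so $2$-homogeneity can fail only on non-edges. As every $2$-subset is an arc or a non-edge, I conclude that $M$ has a non-edge (otherwise $M$ is a tournament and the previous remark yields $2$-homogeneity), and that for some, hence by transitivity on unordered non-edges for every, non-edge $\{u,v\}$ there is no $g\in\Aut(M)$ with $u^g=v$, $v^g=u$; indeed swappability of a non-edge is preserved under conjugation, so it is an all-or-nothing property. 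Thus $\Aut(M)$ has exactly two orbits on ordered non-edges, interchanged by coordinate reversal.

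\textbf{An invariant two-coloured tournament.} Using these two orbits I define an $\Aut(M)$-invariant antisymmetric relation $\Rightarrow$ on the non-edges, declaring $u\Rightarrow v$ when $(u,v)$ lies in a fixed one of the two orbits. Then $\hat M=(V(M),\to,\Rightarrow)$ is a tournament whose arcs are coloured \emph{black} (the arcs $\to$ of $M$) or \emph{white} (the oriented non-edges $\Rightarrow$); by construction $\Aut(\hat M)=\Aut(M)$, it is transitive on ordered arcs of each colour, and both colours occur. I would then verify that $\hat M$ is set-homogeneous as a two-coloured tournament: two isomorphic induced sub-tournaments are in particular isomorphic as sub-a-digraphs of $M$, so set-homogeneity of $M$ supplies $g\in\Aut(M)=\Aut(\hat M)$ matching them setwise, and such a $g$ automatically respects the colouring. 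This replaces the problem by the study of countable set-homogeneous two-coloured tournaments whose underlying (uncoloured) tournament therefore has exactly two arc-orbits. A useful sanity check at this stage is that no countable homogeneous tournament ($\mathbb{Q}$, the dense local order, or the random tournament) admits an invariant $2$-arc-colouring, since each is arc-transitive; so $\hat M$ is genuinely of a more structured, non-arc-transitive kind.

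\textbf{Classification of $\hat M$.} Here I would run the standard three-point analysis: vertex- and arc-transitivity restrict the two-coloured tournaments on $3$ vertices that can occur, and set-homogeneity forces each admissible $3$-type to be realised $\Aut$-symmetrically; in the spirit of the arguments for tournaments rehearsed earlier and of \cite{dgms}, this should pin $\hat M$ down through its (finitely described) age and amalgamation behaviour. I would then split on whether $\Aut(\hat M)$ is primitive or imprimitive on vertices. In the imprimitive case a nontrivial congruence appears; I would show it is read off from the colours (the two colours being constant within blocks and across blocks, respectively), exhibiting $\hat M$ as a lexicographic-type product over a smaller set-homogeneous tournament, with the white orientation inside/between blocks exactly preventing non-edges from being swapped, and recovering $M$ as the imprimitive family $R_n$. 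The primitive case should leave a single surviving object, $\hat M$ for $T(4)$. Forgetting the white orientation returns the a-digraphs of the theorem.

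\textbf{The main obstacle.} The hard part is not the reduction but controlling the reverse passage from $\hat M$ back to $M$. The orientation $\Rightarrow$ is only a \emph{global} $\Aut(M)$-invariant and need not be visible on small sub-a-digraphs, so information between $M$ and $\hat M$ must be transported by automorphisms rather than by local isomorphism types. Crucially, a set-homogeneous two-coloured tournament need not arise from our construction: one must impose the rigidity condition $\Aut(M)=\Aut(\hat M)$, equivalently that $M$ itself is not $2$-homogeneous. This is precisely what discards the generic possibility, where forgetting white yields the generic homogeneous a-digraph, whose strictly larger automorphism group swaps non-edges and so collapses the two ordered-non-edge orbits into one. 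Carrying out the two-colour three-point analysis while simultaneously enforcing this rigidity, ruling out all generic cases, and constructing and verifying the imprimitive family, is where the real work lies; once done, $T(4)$ and $R_n$ remain as the only examples, as defined in Section 6.
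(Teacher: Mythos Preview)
Your reduction is sound and matches the paper's setup: the paper also introduces $\Rightarrow$ as the non-self-paired orbital $\Lambda$ and works throughout Section~6 with what is effectively your two-coloured tournament $\hat M$. The primitive/imprimitive split is likewise exactly the paper's strategy (Propositions~\ref{recover1} and~\ref{recover2}).

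However, your proposal is an outline, not a proof, and you concede as much: the phrase ``this should pin $\hat M$ down through its age and amalgamation behaviour'' followed by ``carrying out the two-colour three-point analysis \ldots\ is where the real work lies'' is precisely where the paper spends Lemmas~\ref{(1)and(2)}--\ref{(10)} in the primitive case and Lemma~\ref{extra} plus Proposition~\ref{recover2} in the imprimitive case. In the primitive case one must determine which of fourteen explicit $3$-types $L_1,\ldots,L_{14}$ embed, show that $\Gamma(\alpha),\Gamma^*(\alpha),\Lambda(\alpha),\Lambda^*(\alpha)$ are each densely linearly ordered by $\Rightarrow$, assemble from these a $G$-invariant circular order on $M$, and then run a bespoke back-and-forth. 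None of this is ``standard''.

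Two places where your sketch is actually misleading. First, in the imprimitive case $R_n$ is \emph{not} a lexicographic-type product over a smaller tournament: the blocks $Q_i$ are dense codense pieces of a single copy of $\mathbb{Q}$, and the arc structure between blocks is governed by that global linear order rather than by any quotient tournament. The paper must prove that $M$ embeds no $D_3$, that each block carries an invariant dense linear order coming from inclusion of out-neighbourhoods, and that these orders cohere into one linear order on all of $M$ with the required interleaving properties. Second, your appeal to ``amalgamation behaviour'' does not work as stated: $\hat M$ is only set-homogeneous, not homogeneous, so Fra\"{\i}ss\'e theory is unavailable; the paper's back-and-forth in Proposition~\ref{recover1} is hand-built, using the circular order and Lemma~\ref{(9)} to control one-point extensions directly rather than via an amalgamation class.
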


We recall one standard piece of permutation group terminology. If $G$ is a transitive permutation group on a set $\Omega$,
 then an 
{\em orbital} of $G$ is  a $G$-orbit $\Lambda$ on $\Omega^2$. The orbital {\em paired} to $\Lambda$ is
$\Lambda^*:=\{(y,x):(x,y)\in \Lambda\}$. We may view $\Lambda$ as the arc set of a directed graph with vertex set $\Omega$
 on which
$G$ acts arc-transitively as a group of automorphisms. We write $\Lambda(\alpha):=\{x:(\alpha,x)\in \Lambda\}$. By easy 
counting arguments,
if $\Omega$ is finite then $|\Lambda(\alpha)|=|\Lambda^*(\alpha)|$. 
The $G_\alpha$-orbits  $\Lambda(\alpha)$ and $\Lambda^*(\alpha)$ are referred to as 
{\em paired suborbits} of $G$. 
We write $\Lambda \circ \Lambda$ 
for
$\{(\alpha,\beta): \exists \gamma((\alpha,\gamma),(\gamma,\beta) \in \Lambda)\}$. Also,
$\Lambda\circ \Lambda(\alpha):=\{\beta:(\alpha,\beta)\in \Lambda\circ \Lambda\}$.

We say that a permutation group $G$ on $\Omega$ is {\em $k$-homogeneous} if it is transitive on the collection of unordered
 $k$-subsets of $\Omega$, and that
 it is {\em $k$-transitive} if it is transitive on the ordered $k$-tuples of distinct points of $\Omega$. 
It is {\em highly homogeneous} (respectively, {\em highly transitive}) if it is $k$-homogeneous (respectively, $k$-transitive)
 for all $k$. 
We freely use without explicit reference the following result, which follows from Theorem 2 of \cite{livwag}.

\begin{fact} \label{livwag}
Any highly homogeneous finite permutation group, other than the cyclic group of order $3$ acting regularly,
 is $2$-transitive.
\end{fact}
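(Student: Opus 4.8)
The plan is to prove the contrapositive in a self-contained way: I will show that a finite highly homogeneous group $G$ on a set $\Omega$ of size $n$ that is \emph{not} $2$-transitive must have $n=3$ with $G$ the regular cyclic group of order $3$. Since $G$ is highly homogeneous it is transitive and $2$-homogeneous, so it has a single orbit on $2$-subsets. The first step is to note that $2$-homogeneity together with the failure of $2$-transitivity forces $G$ to preserve a tournament. Indeed, if the two orderings of one $2$-subset lay in the same $G$-orbit on ordered pairs, then by transitivity on $2$-subsets this would hold for every $2$-subset, making $G$ transitive on ordered pairs, i.e.\ $2$-transitive. Hence the nondiagonal orbitals split as a paired pair $\Lambda\neq\Lambda^{*}$ with $\Lambda\cup\Lambda^{*}$ the set of all ordered pairs of distinct points; thus $\Lambda$ is the arc set of a tournament $T$ on $\Omega$, and $G$ acts on $T$ arc-transitively, in particular vertex-transitively.

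The crux is then to examine the induced subdigraphs on $3$ vertices. Up to isomorphism there are exactly two $3$-vertex tournaments, namely the totally ordered triple $P_{3}$ and the directed $3$-cycle $D_{3}$. I will show that when $n\geq 4$ the tournament $T$ contains both. A copy of $P_{3}$ exists because the out-degrees sum to $\binom{n}{2}$, giving average out-degree $(n-1)/2\geq \tfrac{3}{2}$, so some vertex $v$ has two out-neighbours $x,y$; together with the arc between $x$ and $y$ these form a transitive triple. A copy of $D_{3}$ exists because a tournament with no directed $3$-cycle is a total order, and a total order has no nontrivial automorphism (its vertices have pairwise distinct out-degrees), which is incompatible with $G$ acting vertex-transitively on $T$ when $n\geq 2$. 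Since $P_{3}\not\cong D_{3}$, the $3$-subsets of $\Omega$ form at least two $G$-orbits, contradicting the $3$-homogeneity of $G$. Therefore $n\leq 3$.

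It remains to inspect $n\leq 3$. For $n=1$ the group is $2$-transitive vacuously, and for $n=2$ transitivity gives $G=\Sym(\Omega)$, which is $2$-transitive; both are excluded by our hypothesis. Hence $n=3$, where $T=D_{3}$ has automorphism group the cyclic group of order $3$, so transitivity of $G$ forces $G$ to be exactly that group acting regularly, as claimed.

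I expect the only genuinely delicate point to be the first step, namely correctly extracting a single $G$-invariant tournament from $2$-homogeneity and the failure of $2$-transitivity; after that the triangle dichotomy is routine counting. An alternative, less self-contained route is to invoke Theorem 2 of \cite{livwag}: a $k$-homogeneous group of degree $n$ with $2\leq k\leq n/2$ is $(k-1)$-transitive, so once $n\geq 6$ the $3$-homogeneity of $G$ already yields $2$-transitivity, and the remaining small degrees (where the invariance of the tournament forces $n$ odd, ruling out $n=4$, and no arc-transitive tournament exists on $5$ vertices) are checked by hand.
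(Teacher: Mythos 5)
Your proof is correct. Note, however, that the paper does not actually prove this Fact: it simply records that it ``follows from Theorem 2 of \cite{livwag}'' and uses it freely, so there is no argument in the paper to compare against line by line. Your self-contained route is genuinely different and arguably more informative. The key observation --- that $2$-homogeneity without $2$-transitivity forces the two non-diagonal orbitals to be a non-self-paired pair $\Lambda\neq\Lambda^*$ partitioning the off-diagonal, hence a $G$-invariant arc-transitive tournament --- is exactly right, and the $P_3$ versus $D_3$ dichotomy then kills every degree $n\geq 4$ using only $3$-homogeneity (vertex-transitivity rules out the transitive tournament, so $D_3$ embeds; an out-degree count gives $P_3$; two non-isomorphic induced $3$-vertex subtournaments contradict a single orbit on $3$-subsets). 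The small cases $n\leq 3$ are handled correctly. What the citation buys the authors is brevity; what your argument buys is that it avoids the Livingstone--Wagner machinery entirely and, as you note in your closing paragraph, even the citation route is not quite immediate, since Theorem 2 of \cite{livwag} (that $k$-homogeneity with $n\geq 2k$ gives $(k-1)$-transitivity) only settles $n\geq 6$ directly and the degrees $n\leq 5$ still need the parity/arc-transitivity check you sketch. The only cosmetic quibble is that your invariant tournament already forces $n$ odd (each out-degree equals $(n-1)/2$ by vertex-transitivity), so the case $n=4$ of your main argument is vacuous; this does not affect correctness.
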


\begin{notation} \label{notat}
In Section 1--5, 
$M$  always denotes a finite set-homogeneous s-digraph (possibly an a-digraph), and $G=\Aut(M)$. We describe below three orbitals $\Gamma,\Delta, \Lambda$, but slightly abusing convention, we allow the possibility that some of these are empty. We denote by $\Gamma$ the
 $G$-orbital such that
$\alpha \rightarrow \beta$ if and only if $(\alpha,\beta) \in \Gamma$. (The symbol $\to$ may occasionally be used also for functions $f:A\to B$, but no confusion should arise.) If $M$ has undirected edges, then there 
is another specified orbital $\Delta$ such that
 if $(\alpha,\beta)\in \Delta$ then $\alpha \sim \beta$. The orbital $\Delta$ may or may not be self-paired (often a key case division in proofs -- note that set-homogeneity implies that the pairs $\{\alpha,\beta\}$ with $\alpha \sim \beta$ form a single $G$-orbit). We say $\alpha,\beta$ are {\em $\sim$-related} if $(\alpha,\beta)\in \Delta \cup \Delta^*$. We reserve the words  {\em edge} for $\sim$-related pairs, and  {\em arc} for pairs $(x,y)$ with $x\to y$. We use $\Lambda$
to denote an orbital on independent pairs (also called `unrelated' pairs), which again, may or may not be self-paired, and write $\alpha \parallel \beta$ if and only if $(\alpha, \beta) \in \Lambda \cup \Lambda^*$. So $\alpha\parallel \beta$ means that $\alpha,\beta$ are distinct and are unrelated, that is, not related by either $\Gamma$ or $\Delta$ (again, set-homogeneity implies that such unordered pairs form a single $G$-orbit).
A set $S$ of vertices in an s-digraph or a-digraph is {\em independent} if it carries an induced digraph structure with no $\Delta$-edges and no $\Gamma$-arcs. If $u,v$ are vertices of an s-digraph or a-digraph, then $d(u,v)$ denotes the number of arcs in a shortest directed path from  $u$ to $v$, and takes value $\infty$ if there is no such path.
A {\em $2$-arc} in a digraph with arc set $\Gamma$ is a sequence $(x,y,z)$ such that $(x,y)\in \Gamma$ and $(y,z)\in \Gamma$.
If $A$ is a subset of the vertex set of the s-digraph $M$, and $x$ is a vertex, we say $x$ {\em dominates} $A$ if $x\to a$ for all $a\in A$. Likewise, we say $x$ {\em dominates} $a$ if $x\to a$.
We use the symbols $\supset$ and $\subset$ for {\em strict} set containment.

Where a permutation group $G$ on $X$ is imprimitive, and there is a specific system of imprimitivity (or proper non-trivial {\em $G$-congruence}) clear from the context, we occasionally
 refer
to the classes of this congruence as {\em blocks}.

We denote by ${\mathbb Z}_n$ the cyclic group of order $n$.
\end{notation}

The proof of Theorem~\ref{maintheorem} is intricate, but the basic ideas are very simple. 
It is quite straightforward to verify that each of the digraphs listed in the theorem is set-homogeneous (in the case of $H_3$, for convenience, this verification may be made computationally; see Section~\ref{sec_27vertex}). Thus, Sections 3-5 are concerned with proving the converse result, that every finite set-homogeneous digraph appears in our list. 
We run in parallel
the proofs
of (ii) and (iii), at least the initial steps, but the proof of (ii) is much simpler, since there 
are fewer examples. First, we reduce to the case when $M$ is $\Gamma$-connected (Lemma~\ref{disconnect}). Under
 this assumption, we classify examples in the special case where there are distinct $\alpha,\beta\in M$ with
$\Gamma(\alpha)=\Gamma(\beta)$ (Lemma~\ref{samenbours}). Next, it is easily seen (Lemma~\ref{nhood}) that the subdigraphs induced on 
$\Gamma(\alpha)$ and $\Gamma^*(\alpha)$ are themselves set-homogeneous. In Lemma~\ref{isom} (for a-digraphs) and Section 4 (for s-digraphs) we handle the case 
where $\Gamma(\alpha)\cong \Gamma^*(\alpha)$; there is
$g\in G$ with $(\Gamma(\alpha))^g=\Gamma^*(\alpha)$, and $\alpha, \alpha^g$ and
 $\alpha^{g^{-1}}$ are equivalent under a natural $G$-conguence (Lemma~\ref{isom2}). Thus, we reduce to the case when 
$\Gamma(\alpha)$ and 
$\Gamma^*(\alpha)$ are non-isomorphic set-homogeneous a-digraphs (or s-digraphs, in (iii)). Arguing 
inductively, we may assume they belong to the list
in Theorem~\ref{maintheorem} (ii) (or (iii)). As noted above, $|\Gamma(\alpha)|=|\Gamma^*(\alpha)|$. Thus,
 there are very limited possibilities, and these are handled by ad hoc arguments, which ultimately prove that every finite set-homogeneous digraph $M$ satisfies $\Gamma(\alpha) \cong \Gamma^*(\alpha)$. This case is handled easily for a-digraphs at the end of Section 3, but 
 it requires a lot of work for s-digraphs -- see Section 5.
 
 Only elementary group 
theory is used -- in particular, there is no use of the classification of finite simple groups.

\section{A $27$ vertex set-homogeneous digraph}
\label{sec_27vertex}

Before embarking on the proof of the main theorem, in this section we shall first complete our description of the set-homogeneous digraphs listed in Theorem~\ref{maintheorem}, by giving a construction for the largest sporadic example $H_3$. We begin with a description of a distance transitive undirected graph, and then go on to use it to define the digraph $H_3$.  

\subsection*{\boldmath The 3-fold cover of $K_9$}

Consider the graph $X$ with 27 vertices
\begin{equation*}
V(X) = \{(u, \alpha) \mid u \in GF(3)^2, \alpha \in GF(3)\}
\end{equation*}
where $GF(3)^2$ is the space of $2$-dimensional row vectors.  The $\sim$-relation is given by
\begin{equation*}
(u, \alpha) \sim (v, \beta)  \Longleftrightarrow \left| u^T \ v^T \right| = \alpha-\beta
\end{equation*}
where  $\left| u^T \ v^T \right|$ is the determinant of the $2\times 2$ matrix with columns $u^T$ and $v^T$.

Then \verb+nauty+ \cite{nauty} tells us that $G = {\rm Aut}(X)$ has order $1296 = 48 \times 27$, and with a little thought we can identify its elements.

\begin{itemize}
\item For each $x \in GF(3)^2$, there is an automorphism
\begin{equation*}
(u, \alpha) \mapsto \left( u + x, \left| u^T \ x^T \right| + \alpha \right)
\end{equation*}
Also, for each $\gamma\in GF(3)$ there is an automorphism $(u,\alpha)\mapsto (u,\alpha+\gamma)$.
The automorphisms of these two types generate a group $K$ of order 27 isomorphic to $(C_3 \times C_3) \times C_3$, which acts regularly on $V(X)$.

\item each $M \in GL(2,3)$ acts as follows
\begin{equation*}
 (u, \alpha) \mapsto (uM, \left|M\right| \alpha)
\end{equation*}
where $|M|$ is the determinant of $M$. Notice that $GL(2,3)$ stabilizes $((0,0), 0)$ and as it has order 48, this is the full vertex stabilizer.

\end{itemize}
Every element of the group $G$ is then a product of elements from these two subgroups and so $G = GL(2,3) K$.
It is now possible to completely determine the structure of $G$, but for our purposes this will not be necessary.

The graph $X$ is a distance-transitive graph of diameter three with 
parameters $\{8,6,1;1,3,8\}$ and hence (see for example \cite[Section 4.5]{GodsilAndRoyle}) 
it is a 3-fold cover of $K_9$. Each of the fibres of the cover consists of a 
triple of vertices of the form 
\begin{equation*}
\left\{ (u,0), (u,1), (u,-1) \right\}
\end{equation*}
and these vertices are mutually at distance 3 from each other. The 9 triples form a system of imprimitivity.

The group $G$ has four self-paired orbitals, namely the diagonal and three others:
\begin{description}
\item[${\cal O}_1$] The orbit containing
$\left( ((0,0), 0) , ((0,1), 0) \right)$, which consists of the edges or ``distance-1-graph'' of $X$.
\item[${\cal O}_2$] The orbit containing
$\left( ((0,0), 0) , ((0,1), 1) \right)$, which forms the ``distance-2-graph'' of $X$.
\item[${\cal O}_3$] The orbit containing 
$\left( ((0,0), 0) , ((0,0), 1) \right)$
which forms the the ``distance-3-graph'' of $X$ and is  a collection of 9 triangles.
\end{description}

The 3-fold cover of $K_9$ was first constructed as a coset graph in Example 3.5 of \cite{glp}, and proved there to be distance transitive. Moreover, it was proved to be the unique distance transitive antipodal 3-fold cover of $K_9$ in the classification result ``Main Theorem" of that paper. It occurs in case (5)(c) with $q=2$. (Note that such a graph is not bipartite so does not occur in case (1), and note also that in Example 3.4 of \cite{glp} the value of $r$ is at least 5 in the unitary case so that the graph does not occur in case (4)(b).)

\subsection*{\boldmath The s-digraph $H_3$}

If two vertices $(u, \alpha)$ and $(v, \beta)$ have distance 2 in $X$ then $u \not= v$ and either 
\begin{equation}\label{plus}
\left| u^T \ v^T \right| = (\alpha-\beta) + 1
\end{equation}
or
\begin{equation}\label{minus}
\left| u^T \ v^T \right| = (\alpha-\beta) - 1
\end{equation}
and so we can define two relationships ${\cal O}_2^+$, ${\cal O}_2^-$ according to whether \eqref{plus} or \eqref{minus} holds respectively, where clearly ${\cal O}_2 = {\cal O}_2^+ \cup {\cal O}_2^-$.  Similarly we can partition ${\cal O}_3 = {\cal O}_3^+ \cup {\cal O}_3^-$. The relations ${\cal O}_1$, ${\cal O}_2^+$, ${\cal O}_2^-$, ${\cal O}_3^+$, ${\cal O}_3^-$ are then the orbitals of the group $H = SL(2,3) K$ which is a subgroup of index two in $G$.

The s-graph $H_3$ is obtained by taking  the relation $\sim$ to consist of the pairs in $O_3^+ \cup O_3^-$, and the relation $\to$ to consist of the pairs in $O_2^+$.
\begin{equation*}
E(H_3) = {\cal O}_2^+ \cup {\cal O}_3
\end{equation*}
In other words, we take 9 triangles on the fibres and ``one half'' of each edge of the distance-2 graph, while the unrelated pairs form a copy of the 3-fold cover of $K_9$.
In the terminology of Notation~\ref{notat}, 
$$\Delta=O_3^+, \Gamma=O_2^+, \mbox{~and~} \Lambda=O_1.$$

 For $\omega= ((0,0),0)$ the ``out-neighbourhood'' $\Gamma(\omega)$ of $\omega$ consists of 8 vertices
\begin{center}
\begin{tabular}{cccc}
$((0,1), 1)$ & $((0,-1), 1)$ & $((1,0), 1)$ & $((-1,0), 1)$\\
$((1,1), 1)$ & $((-1,-1), 1)$ & $((1,-1), 1)$ & $((-1,1), 1)$
\end{tabular}
\end{center}
and Figure~\ref{FIG-h0} above shows the digraph that they induce is in fact $H_0$ (where each vertex  $(u,1)$ is simply labelled with $u$). The ``in-neighbourhood'' $\Gamma^*(\omega)$ is simply the 8 vertices
\begin{center}
\begin{tabular}{cccc}
$((0,1), -1)$ & $((0,-1), -1)$ & $((1,0), -1)$ & $((-1,0), -1)$\\
$((1,1), -1)$ & $((-1,-1), -1)$ & $((1,-1), -1)$ & $((-1,1), -1)$
\end{tabular}
\end{center}
and obviously the mapping $(u,1) \mapsto (u,-1)$ is an isomorphism from $\Gamma(\omega)$ to $\Gamma^*(\omega)$.

Testing set-homogeneity appears to be a task best suited to a computer; using an orderly algorithm we compute one representative of each orbit of $H$ on subsets of $H_3$, and then confirm that the induced subdigraphs are pairwise non-isomorphic.

\section{Proof of Theorem~\ref{maintheorem} parts (i) and (ii).}
We first prove some lemmas which yield a rapid proof of Theorem~\ref{maintheorem}(ii). Very similar ideas, 
together with knowledge of (ii), then yield a (much longer) proof of (iii).  

Throughout, $M$ will always denote a finite set-homogeneous s-digraph (possibly an a-digraph), and $G = \Aut(M)$. We freely use without specific mention that, by set-homogeneity, $G$ is transitive on the  vertex set of $M$.
We remark that any finite vertex-transitive connected digraph is strongly connected; that is, if there is an unoriented path
 between any two vertices,
then between any two vertices, there is an oriented path in each direction. See for example Lemma 5.9 of 
\cite{bmmn}.

Recall that a {\em tournament} is an a-digraph $M$ such that for any distinct vertices $a,b$
of $M$, exactly one of $a\to b$ or $b\to a$ holds. 

\begin{lemma} \label{sethomtourn}
Let $M$ be a finite set-homogeneous tournament. Then $M$ is homogeneous, so of size $1$ or isomorphic to $D_3$.
\end{lemma}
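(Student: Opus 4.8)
The plan is to prove the statement by first establishing a known symmetry property of finite set-homogeneous tournaments, and then using a short counting/extremal argument to pin down the structure. I would begin by recalling that in any tournament, for a vertex $\alpha$ the out-neighbourhood $\Gamma(\alpha)$ and in-neighbourhood $\Gamma^*(\alpha)$ partition the remaining vertices, and by the counting fact stated in the excerpt (valid in the finite case), $|\Gamma(\alpha)| = |\Gamma^*(\alpha)|$. Hence $M$ has odd order, say $2k+1$, with every vertex having out-degree and in-degree equal to $k$. This regularity is the first structural constraint.

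The key idea, following Enomoto's elegant argument which the paper promises to rehearse here, is to show that a finite set-homogeneous tournament is in fact $2$-homogeneous, and then to leverage this. The crux is a minimal counterexample / extremal argument: suppose $M$ is set-homogeneous but not homogeneous, and consider an isomorphism $f: U \to V$ between induced subtournaments that does \emph{not} extend to an automorphism, chosen with $|U|$ maximal subject to this failure. Set-homogeneity guarantees \emph{some} automorphism carries $U$ to $V$ setwise, so the obstruction is that no automorphism realises this particular bijection $f$; one derives a contradiction by extending $U$ by a single vertex and using maximality together with the regularity $|\Gamma(\alpha)| = |\Gamma^*(\alpha)|$ to count the candidate vertices that could extend the isomorphism, showing the counts on the two sides must match and force an extension. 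Once $M$ is shown to be homogeneous, I would invoke Lachlan's classification of finite homogeneous tournaments (or prove the small residual case directly): the only finite homogeneous tournaments are the single vertex and the directed triangle $D_3$. Indeed, a homogeneous tournament of order $\geq 3$ with a transitive automorphism group acting as required cannot contain a transitive triple $P_3$ unless it is $D_3$ itself, since homogeneity would force contradictory local structure.

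I would expect the main obstacle to be the counting step that upgrades set-homogeneity to homogeneity: one must carefully track, for the two ``sides'' of the extension, the sets of vertices related in a prescribed way to all of $U$ (equivalently to all of $V$), and argue that set-homogeneity together with the out-degree/in-degree balance forces these candidate sets to have equal size and compatible structure, so that the failing bijection $f$ can after all be extended one vertex at a time. The delicacy lies in handling the case division according to how a new vertex $x$ is related to $U$ (dominating some prescribed subset, dominated by the complementary subset), and ensuring the extremal hypothesis is genuinely applicable to the enlarged sets. After homogeneity is in hand, ruling out all tournaments except $D_3$ and the trivial one-vertex tournament is routine: any larger homogeneous tournament would contain two non-isomorphic types of triple or would fail to have the required transitivity on arcs and $2$-arcs, contradicting homogeneity.
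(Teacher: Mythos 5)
Your overall route is the same as the paper's: Enomoto's one-point extension argument to upgrade set-homogeneity to homogeneity, followed by an appeal to Lachlan's classification of finite homogeneous tournaments. (Your minimal-counterexample framing is logically equivalent to the paper's direct iteration of one-point extensions, and the detour through $2$-homogeneity is a non-issue, since for tournaments $2$-set-homogeneity gives $2$-homogeneity trivially: an isomorphism between two arcs is unique.)

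But the step you yourself flag as the main obstacle is not resolved, and the counting you propose to use is not the one that makes the argument work. The regularity $|\Gamma(\alpha)|=|\Gamma^*(\alpha)|$ plays no role in the extension step. The actual mechanism is an extremal choice that your sketch omits: given $\phi:A\to B$, choose $a\in M\setminus A$ so that $A':=\Gamma(a)\cap A$ is as large as possible, and set $B':=A'^{\phi}$. Set-homogeneity applied to the \emph{sets} $A'\cong B'$ gives an automorphism carrying $A'$ to $B'$, whence the number of vertices of $M$ dominating $A'$ equals the number dominating $B'$; since $\phi$ matches the dominators of $A'$ lying in $A$ with the dominators of $B'$ lying in $B$, and $a\notin A$ dominates $A'$, there must exist $b\in M\setminus B$ dominating $B'$. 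The maximality of $|A'|$ is then used a \emph{second} time: if $b$ also dominated some vertex of $B\setminus B'$, transporting back would yield a vertex outside $A$ dominating more than $|A'|$ vertices of $A$, a contradiction; so $a\mapsto b$ really does extend $\phi$. Without the extremal choice and its double use, the case division you anticipate over how a new vertex relates to $U$ does not close up. (One must also dispose of the degenerate case $A'=\varnothing$, by duality or directly.) With that supplied, the remainder of your plan --- quoting Lachlan for the finite homogeneous tournaments --- is exactly what the paper does.
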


\begin{proof} To show that $M$ is homogeneous, we use the idea of Enomoto \cite{enomoto}
 for finite set-homogeneous graphs. For the second assertion, by \cite{lachlan} any finite homogeneous tournament has size 1 or is isomorphic to the directed triangle $D_3$.

Let $M$ be a finite set-homogeneous tournament and let $\phi:A \rightarrow B$ be an isomorphism between induced 
subdigraphs of $M$. We show how to extend $\phi$ to an isomorphism $A\cup\{a\}\rightarrow B\cup\{b\}$ for some $a\in M\setminus A, b\in M\setminus B$. Repeating this process several times extends $\phi$ to an
automorphism of $M$. Among all vertices of $M \setminus A$
 choose $a \in M \setminus A$ so that the set $A' := \Gamma(a) \cap A$ is as large as possible. (We may assume $A'\neq \varnothing$; for if $A'=\varnothing$, then either apply the argument below to the tournament obtained from $M$ by reversing all arcs, or argue directly as follows. If $A'=\varnothing$ then $x\rightarrow a$ for all $x\in A$ and $a\in M\setminus A$; by set-homogeneity there is an automorphism $g$ of $M$ taking $A$ to $B$, and hence $y\rightarrow b$ for all $y\in B, b\in M\setminus B$; thus, for any $a\in M\setminus A$ and $b\in M\setminus B$ the map $\phi$ has a one-point extension $\hat{\phi}$ with $\hat{\phi}(a)=b$.) Define $B' := A'^\phi$.
 By set-homogeneity, since $A' \cong B'$ there is an automorphism of $M$ sending $A'$ to $B'$ (setwise). It follows that
 the number of vertices of $M$ dominating $A'$ equals the number dominating $B'$. Also the isomorphism $\phi: A \rightarrow B$ 
sends the set of vertices of $A$ that dominate $A'$ bijectively to the set of vertices of $B$ dominating $B'$. It follows that
 there exists $b \in M \setminus B$ dominating $B'$. Moreover, $b$ does not dominate any vertex of $B \setminus B'$, since if 
it did then applying the above argument in reverse would yield a vertex $a' \in M \setminus A$ dominating more than $|A'|$ 
vertices of $A$, contradicting the maximality of $|\Gamma(a) \cap A|$ for the original choice of $a$. It follows that the 
extension of $\phi$ to $\hat{\phi}:A \cup \{ a \} \rightarrow B \cup \{ b \}$ given by defining $a^{\hat{\phi}} = b$ is an 
isomorphism.  
\end{proof}

We say that $M \in \mathcal{S}$ is \emph{$\Gamma$-connected} if between every pair of vertices of $M$ there is a path where successive terms in the path are $(\Gamma\cup\Gamma^*)$-related (that is, the a-digraph obtained by removing every $\Delta$-edge from $M$ is connected). Recall Notation~\ref{notat}, which is used in the next lemma and indeed throughout the proof of Theorem~\ref{maintheorem}. 

\begin{lemma} \label{disconnect}

\

\begin{enumerate}[(i)]
\item  Suppose $M\in {\cal A}$, and that $M$ is not $\Gamma$-connected. Then $M$ is isomorphic to $\bar{K}_n$ or 
$\bar{K}_n[D_3]$ for some $n\geq 1$.
\item Suppose $M \in {\cal S}$, and that $M$ is not $\Gamma$-connected. Then $M$ or $\bar{M}$ is isomorphic to one of:
$K_n[A]$ for some $A\in {\cal A}$ and $n>1$; $L$ or $L[D_3]$, for some $L \in {\cal L}$; or $F_6$.
\end{enumerate}
\end{lemma}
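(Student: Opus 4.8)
The plan is to study the partition of $V(M)$ into $\Gamma$-connected components. Since $\Gamma$-connectivity is preserved by $G=\Aut(M)$, these components are permuted by $G$, and as $G$ is vertex-transitive they form a system of imprimitivity: there are $k\ge 2$ components (because $M$ is not $\Gamma$-connected), all isomorphic and of some common size $m$. Two observations drive everything. First, between distinct components only $\sim$ and ``unrelated'' can occur, since an arc would $\Gamma$-connect its endpoints. Second, and this is the engine of the whole argument, if $U,V$ are nonempty subsets of a single component $C$ and $g\in G$ satisfies $U^g=V$, then $g$ fixes $C$ setwise: indeed $C^g$ is again a component with $C^g\supseteq V\ne\varnothing$ and $V\subseteq C$, so $C^g\cap C\ne\varnothing$ and hence $C^g=C$. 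In particular each component, with its induced structure, is itself a finite set-homogeneous s-digraph (or a-digraph), and no automorphism of $M$ can carry a pair of vertices lying inside one component to a pair split between two components.

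From the last remark I extract two global constraints. If some cross-component pair is unrelated, then no component contains an internal unrelated pair (such a pair would be isomorphic to the cross one, yet could not be mapped onto it); dually, if some cross-component pair is an edge, then no component contains an internal edge. This already settles (i): an a-digraph has no edges, so every cross pair is unrelated, whence each component is a tournament (no internal unrelated and no $\sim$-pairs), and being $\Gamma$-connected and set-homogeneous it is a single point or $D_3$ by Lemma~\ref{sethomtourn}. As all cross pairs are unrelated, $M$ is an ``unrelated union'' of $k$ isomorphic copies, giving $M\cong\bar K_n$ or $M\cong\bar K_n[D_3]$.

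For (ii) I first dispose of $m=1$: then $M$ has no arcs at all, so it is a finite set-homogeneous graph, i.e. an $L\in{\cal L}$. Assume $m\ge2$. If every cross pair is an edge then, by the second constraint, no component has an internal edge, so each component is an $A\in{\cal A}$ and $M\cong K_n[A]$ with $n=k>1$; the dual case (every cross pair unrelated) yields $\bar M\cong K_n[A]$ by the same reasoning, since then each complemented component is an a-digraph. The interesting case is when cross pairs of \emph{both} kinds occur: then both constraints apply simultaneously, every internal pair of a component is an arc, so each component is a $\Gamma$-connected set-homogeneous tournament, and hence, by Lemma~\ref{sethomtourn} together with $m\ge 2$, a copy of $D_3$.

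It remains to analyse an $M$ assembled from $k$ copies of $D_3$ whose pairwise $\sim$-patterns are mixed, and this is the main obstacle. The plan is to determine, for two components joined by at least one edge, the bipartite $\sim$-graph between them, exploiting that the block stabiliser induces the full rotation group $\mathbb Z_3$ on each $D_3$ (this follows from the engine above and arc-transitivity) together with set-homogeneity applied to small configurations, such as an arc with an incident cross-edge, to force the join to be either complete ($K_{3,3}$) or a rotation-aligned perfect matching. When all joins are complete-or-empty, the join pattern descends to a set-homogeneous graph $L$ on the $k$ components and $M\cong L[D_3]$ with $L\in{\cal L}$; when some join is a matching, a further set-homogeneity argument must show that no third component can be present and that the matching is aligned with the triangle orientations, forcing $k=2$ and $M\cong F_6$. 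The delicate points, namely excluding intermediate join patterns (for instance each vertex having exactly two cross-neighbours) and ruling out a matching join coexisting with additional components, are precisely where set-homogeneity must be pushed hardest, by exhibiting pairs of isomorphic induced subdigraphs that no automorphism could identify.
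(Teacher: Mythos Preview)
Your architecture matches the paper's: the $\Gamma$-components form a block system, each block is itself set-homogeneous, and 2-set-homogeneity prevents an in-block pair and a cross-block pair from sharing an isomorphism type. Your trichotomy on cross-pair types (all $\sim$, all unrelated, mixed) is a clean reorganisation of the paper's Case~1/Case~2 split and recovers $K_n[A]$, $L$, and $L[D_3]$ in the same way; part~(i) and the non-mixed subcases of~(ii) are correct as you have them.

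The genuine gap is in the $F_6$ endgame. Two points. First, the case you flag for exclusion---``each vertex having exactly two cross-neighbours''---is \emph{not} to be excluded: it actually occurs and produces $\overline{F_6}$, which is why the lemma is stated for ``$M$ or $\bar M$''. So between two blocks with a mixed join the bipartite $\sim$-graph can be a rotation-aligned perfect matching \emph{or} its complement; the paper treats both (handling the second by the parenthetical ``there is a similar argument, recovering $\overline{F_6}$''). The $\mathbb Z_3$-invariance you invoke permits both possibilities, so it cannot by itself force the matching. Second, your sketch for ruling out a third block underestimates the work. The paper first pins down the alignment on $B_1\cup B_2$ (so it is exactly $F_6$), and in the course of doing so establishes the key structural fact that the orbital $\Lambda$ on unrelated pairs is \emph{not} self-paired. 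This is what blocks the 3-set-homogeneity moves that a third block would otherwise demand. With that in hand, the paper shows that any three blocks must induce a specific rigid $9$-vertex digraph $Y_9$, deduces from the absence of $K_3$ that $|\Delta(x)|=3$, and then obtains a contradiction because the required element of $G_x$ cycling $\Delta(x)$ would have to act on $Y_9$ in a way its automorphism group does not allow. The non-self-pairedness of $\Lambda$ is the missing ingredient in your plan; the rotation action of $\mathbb Z_3$ on each block does not carry this step.
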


\begin{proof} (i) The collection of $\Gamma$-connected components forms a system of imprimitivity. Thus, 
by 2-set-homogeneity,
if $\alpha,\beta$ lie in the same $\Gamma$-connected component then $\alpha$ and $\beta$ cannot be unrelated, so the connected
components are
all tournaments. These components  are all set-homogeneous, and are all isomorphic. Thus, by Lemma~\ref{sethomtourn}, they 
are all isomorphic to $D_3$ or all of size 1.

(ii) Write $x\equiv y$ if $x,y$ lie in the same $\Gamma$-connected component of $M$. 
So $\equiv$ is a $G$-congruence.
By 2-set-homogeneity, either there are no independent pairs in the same $\equiv$-class, or there are no $\sim$-related
 pairs in the same
 $\equiv$-class; for at least one of these binary relations must hold of some pair from distinct $\equiv$-classes. There are two cases to consider. 

\

\noindent \textbf{Case 1:} \textit{There are no $\sim$-related pairs in an $\equiv$-class.}

\

\noindent Then each $\equiv$-class is a 
set-homogeneous a-digraph, so lies 
in ${\cal A}$, and all such are isomorphic, to $A$, say. If $A$ is not a tournament, then by 2-set-homogeneity applied to 
unrelated pairs,
any two vertices from distinct $\equiv$-classes are $\sim$-related, so $M \cong K_n[A]$ for some $n>1$. On 
the other hand, if $A$ is a tournament, then it is isomorphic to $D_1$ or $D_3$. If $A \cong D_1$, then $M$ has no $\Gamma$-arcs,
so is a homogeneous graph so lies in ${\cal L}$. 

Thus, we may  suppose $A \cong D_3$. 

Suppose first that there do {\em not} exist distinct $\equiv$-classes $B_1$ and $B_2$
such that $B_1 \times B_2$ contains both $\sim$-related and unrelated pairs. Then, for each pair of $\equiv$-classes, either
 all possible edges exist between the $\equiv$-classes, or none exist. In this case, there is an induced graph on $M/\equiv$, and it
 is easy to see that this is set-homogeneous. Thus, $M \cong  L[D_3]$ for some $ L \in {\cal L}$.

In the other case there are distinct $\equiv$-classes $B_1$ and $B_2$
such that $B_1 \times B_2$ contains both $\sim$-related and unrelated pairs. 
Then by 2-set-homogeneity, all such pairs of distinct $\equiv$-classes have this form.
We claim that in this case, either $M$ or $\bar{M}$ is isomorphic to $F_6$.

Put $B_i=\{x_i,y_i,z_i\}$ with $x_i\rightarrow y_i\rightarrow z_i\rightarrow x_i$ for $i=1,2$.

\

\noindent {\em Claim.} Each vertex of $B_1$ is $\sim$-related to a vertex of $B_2$, and is unrelated to a vertex in $B_2$.

\

\begin{proof}[Proof of Claim.]
Suppose  that some vertex of $B_1$, say $x_1$, is $\sim$-related to all members of $B_2$; the other case, when some vertex of $B_1$ is  unrelated to every vertex of $B_2$, is handled similarly. Now if say
$y_1$ is $\sim$-related to an element of $B_2$, without loss of generality $x_2$, then there is $g\in G$ with  
$(x_1,y_1,x_2)^g=(x_2,y_2,x_1)$. In particular $(x_1,x_2)^g=(x_2,x_1)$, 
so $\Delta$ is self-paired. Hence there is $h\in G$ with $(x_1,x_2)^h=(y_1,x_2)$, so 
$\Delta(y_1)=\Delta(x_1)^h$ contains $B_2^h=B_2$. Also, $y_1^h=z_1$, so $\Delta(z_1)$ contains $B_2$.
Thus, all pairs from $B_1 \times B_2$ are $\sim$-related, 
contradicting our assumption. Thus, $y_1$, and likewise $z_1$, are unrelated to all vertices of $B_2$.

By 3-set-homogeneity, there is $g\in G$ with $(y_1,z_1,x_2)^g=(x_2,y_2,y_1)$. In particular $(y_1,x_2)^g=(x_2,y_1)$, so the $G$-orbit $\Lambda$ is self-paired. Thus $\Lambda(y_2)=\Lambda(z_1)^g$ contains $B_2^g=B_1$, contradicting our assumption that $x_1$ and $y_2$ are $\sim$-related.
\end{proof}

Thus, by the claim,  we may suppose $x_1$ is $\sim$-related to some but not all members of $B_2$, and we suppose that
 it is $\sim$-related just to $x_2$ (there is a similar argument, recovering $\bar{F_6}$,  when $x_1$ is $\sim$-related to
 two vertices of $B_2$).  By the claim, 
$x_2$ is unrelated to some vertex of $B_1$, say to $y_1$; the case where $x_2$ is unrelated to $z_1$ is similar -- note that the s-digraphs $F_6$ and $\overline{F_6}$ which we are recovering are isomorphic to their weak complements in the sense of Remark~\ref{weakcomp}.
 Now by set-homogeneity there is  $g\in G$ with $(x_1,x_2,y_1)^g=(x_2,x_1,y_2)$.  In particular,  $(x_1,x_2)^g=(x_2,x_1)$, so $\Delta$ is self-paired; that is, $G$ is transitive on ordered $\sim$-edges. Now let $\{i,j\}=\{1,2\}$. Since by the claim each vertex of $B_i$ is $\sim$-related to some vertex of $B_j$, and since $x_1$ is $\sim$-related to a unique member of $B_2$, it follows that each vertex of $B_i$ is $\sim$-related to a unique vertex of $B_j$. 

 If $y_1\sim z_2$, 
then there is $k\in G$ with $(x_1,z_2,x_2)^k=(z_2,x_1,y_1)$, so in this case the orbital $\Lambda$ on independent pairs
 is self-paired. 
Hence there is $h\in G$ with $(x_1,y_2)^h=(y_1,y_2)$, and we must have $y_1^h=z_1$
 which is impossible as $y_1\not\sim y_2$ and $z_1\sim y_2$.
Thus, $y_1\sim y_2$, and similarly, $z_1 \sim z_2$.
It follows that $B_1 \cup B_2$ carries a structure isomorphic to $F_6$, as indeed does the union of any two blocks.
 Also, $\Lambda$ is not self-paired, for 
if $g\in G$ with $(x_1,y_2)^g=(y_2,x_1)$, 
then also $y_1^g=z_2$ (as $y_1$ is the only vertex with $x_1\rightarrow y_1$), but $y_1\sim y_2$ and $z_2\not\sim x_1$.

Thus, to see that $M\cong F_6$, it suffices to show $M=B_1 \cup B_2$. So suppose that there is a third block
 $B_3=\{x_3,y_3,z_3\}$, with
$x_3\rightarrow y_3\rightarrow z_3 \rightarrow x_3$
and $x_2\sim x_3$, $y_2\sim y_3$, $z_2\sim z_3$. 

We first eliminate the case when 
$x_1\sim x_3$, so suppose this holds. Then also  $y_1\sim y_3$ and 
$z_1\sim z_3$. Thus, there is $g\in G$ with $\{x_1,y_2,y_3\}^g=\{z_1,y_2,y_3\}$. Then $g$ fixes $B_1$ setwise and $y_1^g=y_1$ as $y_1$ is the unique element of
 $B_1$ $\sim$-related to $y_2$ and $y_3$, 
but $x_1\rightarrow y_1$ and $y_1\rightarrow z_1$, contradicting $x_1^g=z_1$. 

Thus,  $x_1\sim z_3$ or $x_1 \sim y_3$.
If $x_1 \sim z_3$ then since $B_1 \cup B_3$ induces a copy of $F_6$ it follows that $z_1 \sim y_3$ and $y_1 \sim x_3$. 
The resulting structure may be viewed as a $9$-gon with the vertices ordered $x_1$, $x_2$, $x_3$, $y_1$, $y_2$, $y_3$, 
$z_1$, $z_2$, $z_3$ and with the three directed triangles inscribed.  Let $Y_9$ denote this $9$ vertex digraph. Viewed in
 this way, it is easy to describe the automorphisms of the graph $Y_9$. 
On the other hand, if $x_1 \sim y_3$ then  since $B_1 \cup B_3$ induces a copy of $F_6$ it follows that 
$z_1 \sim x_3$ and $y_1 \sim z_3$. Again, the three $\equiv$-classes induce a copy of $Y_9$. 

Thus, we may suppose that there 
are at least three $\equiv$-classes, and the induced digraph on the union of any three is isomorphic to $Y_9$.
 In particular
 this means that $M$ does not embed any $\Delta$-triangles (copies of $K_3$). Now fix $x_1 \in M$ and consider $\Delta(x_1)$. Since $M$ does not 
embed any $\Delta$-triangles, $\Delta(x_1)$ is an independent set. It follows that  $G_{x_1}$ is highly homogeneous 
on $\Delta(x_1)$ so
 since  $\Lambda$ is not self-paired the only possibility is $|\Delta(x_1)|=3$. So we just need to consider the case where there
 are exactly four $\equiv$-classes, $B_1$, $B_2$, $B_3$ and $B_4$. Let $x_1 \in B_1$ and put $\Delta(x_1) = \{ \alpha, \beta, \gamma \}$. By
 set-homogeneity there is an automorphism $g \in G_{x_1}$ with $\{ \alpha, \beta \}^g = \{ \beta, \gamma \}$, and since $\Lambda$ 
is not self paired this forces $(\alpha, \beta, \gamma)^g = (\beta, \gamma, \alpha)$.  But then $g$ restricted to 
$B_2 \cup B_3 \cup B_4$ is an automorphism of $Y_9$ that cyclically permutes a copy of $\bar{K_3}$. 
This is a contradiction, since by inspection of $Y_9$ it 
is clear that no such automorphism exists. We conclude that there are just two blocks and that $M\cong F_6$ (or $M\cong \overline{F_6}$ in the other case mentioned after the claim).

\

\noindent \textbf{Case 2:} \textit{There are no independent pairs in the same $\equiv$-class.}

\

\noindent In this case, the argument of Case~1 applies, with 
the roles of 
 independent pairs and $\sim$-connected pairs reversed. We find that $\bar{M} \cong K_n[A]$ for some $A \in {\cal A}$, or 
 again $M$ is isomorphic to $L$ or to $L[D_3]$ for some $L \in {\cal L}$, or to $F_6$ or $\overline{F_6}$. \end{proof}

\begin{lemma} \label{samenbours}

\

\begin{enumerate}[(i)]
\item Let $M\in {\cal A}$, and suppose that $M$ is $\Gamma$-connected and there are distinct $\alpha,\beta\in M$ with $\Gamma(\alpha)=\Gamma(\beta)$. Then
$M$ is isomorphic to  $D_3[\bar{K}_n]$ for some $n\geq 2$.
\item Let $M \in {\cal S}$, and suppose that $M$ is $\Gamma$-connected and there are distinct $\alpha,\beta\in M$ with $\Gamma(\alpha)=\Gamma(\beta)$. 
Then $M$ or $\bar{M}$ is isomorphic to one of: $L$ or $D_3[L]$ for some $L \in {\cal L}$, or $A[K_n]$ for some 
$A\in {\cal A}$ and $n>1$.
\end{enumerate}
\end{lemma}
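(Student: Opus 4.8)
The plan is to work with the equivalence relation $\approx$ on the vertex set of $M$ defined by $\alpha\approx\beta$ if and only if $\Gamma(\alpha)=\Gamma(\beta)$. Since the orbital $\Gamma$ is $G$-invariant, $\approx$ is a $G$-congruence, so by vertex-transitivity all of its classes share a common size $n$, and the hypothesis gives $n\geq 2$. No $\Gamma$-arc can lie inside a class: if $\alpha\approx\beta$ with $\alpha\neq\beta$ and $\alpha\to\beta$, then $\beta\in\Gamma(\alpha)=\Gamma(\beta)$, contradicting irreflexivity. The engine of the argument is a \emph{propagation principle}: the property ``$\Gamma(x)=\Gamma(y)$'' is preserved by every element of $G$, so by $2$-set-homogeneity, if a single pair of a given isomorphism type (an unrelated pair, or a $\sim$-edge) has equal out-neighbourhoods, then every pair of that type does, and therefore lies in one $\approx$-class. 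A companion observation, used constantly, is that arcs between two distinct classes run only one way: if $a,a'\in B$ and $s,s'\in B'$ with $a\to s$ and $s'\to a'$, then $\Gamma$ being constant on $B$ and on $B'$ forces both $a'\to s$ and $s\to a'$, contradicting antisymmetry of $\to$. Finally, if $S_1,S_2$ are isomorphic subsets of a single class $B$, the element of $G$ carrying $S_1$ to $S_2$ must fix $B$ setwise (it maps a nonempty subset of $B$ into $B$, and classes are disjoint); hence each class is itself set-homogeneous and $G_{\{B\}}$ induces a highly homogeneous group on $B$.

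For part (i) this closes rapidly. In an a-digraph the given pair is unrelated, so by propagation the unrelated pairs are exactly the pairs lying in a common class, and vertices of distinct classes are always joined by an arc. Modularity of the classes follows at once: for $u,u'\in B$ and external $w$ we have $u\to w\iff u'\to w$ because $\Gamma(u)=\Gamma(u')$, while if $w\to u$ but $w\not\to u'$ then $w$ and $u'$ are unrelated (an arc $u'\to w$ would give $u\to w$, against antisymmetry), forcing $w\approx u'$ and hence $w\in B$, a contradiction. Therefore $M\cong Q[\bar{K}_n]$ with $Q=M/{\approx}$, and choosing one vertex per class exhibits $Q$ as a set-homogeneous tournament. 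By Lemma~\ref{sethomtourn}, $Q\cong D_1$ or $Q\cong D_3$; since $M$ is $\Gamma$-connected and $n\geq 2$, the arc-free option $Q\cong D_1$ is impossible, leaving $M\cong D_3[\bar{K}_n]$.

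For part (ii) the same scaffolding is used, but a class may now carry $\sim$-edges. Applying the propagation principle to each pair-type gives three cases. If both unrelated pairs and $\sim$-edges have equal out-neighbourhoods, then every non-arc lies inside a class; the classes are arc-free set-homogeneous s-digraphs, i.e.\ members of $\mathcal L$, and between distinct classes all pairs are arcs, which with the one-way observation makes this a complete one-way arc-structure. Hence $M\cong Q[L]$ with $Q$ a set-homogeneous tournament and $L\in\mathcal L$, giving $M\cong D_3[L]$ (the arc-free alternative $Q\cong D_1$ being excluded by $\Gamma$-connectedness). In the remaining two cases exactly one pair-type propagates. If the $\sim$-edges do, then $\sim$ coincides with ``lying in a common class'', so $\sim$ is an equivalence relation whose classes are cliques $K_n$, no $\sim$-edge runs between classes, and inter-class pairs are arcs or unrelated; the target is $M\cong A[K_n]$ with $A=M/{\approx}\in\mathcal A$. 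If instead the unrelated pairs propagate, the classes are independent sets $\bar{K}_n$, the target is $M\cong\overline{A[K_n]}$, and this case is the complement of the previous one and is handled by the symmetric argument.

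The main obstacle is precisely this last reduction: proving that the classes in the $A[K_n]$ case are \emph{modules}, so that the arc-structure between any two classes is complete (in one direction) or empty and $M$ is genuinely the compositional product $A[K_n]$. The clean part-(i) argument fails here because distinct classes may be unrelated: when $w\to u$ but $w\not\to u'$ for a $\sim$-edge $u\sim u'$, one can no longer exclude the possibility $w\parallel u'$. To force modularity I would combine three ingredients: (a) the high homogeneity of the action of $G_{\{B\}}$ on each class $B$, which by Fact~\ref{livwag} makes it $2$-transitive; (b) the propagation principle applied to \emph{in}-neighbourhoods, making ``$\Gamma^*(x)=\Gamma^*(y)$'' an all-or-nothing property on each pair-type; and (c) the one-way-arc observation together with a counting argument on the resulting regular bipartite arc-graph between two classes, to show that this graph must be complete or empty. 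Once modularity is in hand, a transversal of $\approx$ shows the quotient is a set-homogeneous a-digraph, hence lies in $\mathcal A$ by definition (in the both-propagate case it is instead a set-homogeneous tournament, identified by Lemma~\ref{sethomtourn}), and the stated list is recovered. I expect step (c) — ruling out a proper regular bipartite arc-graph between two classes — to be the genuinely delicate point.
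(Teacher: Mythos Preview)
Your scaffolding, part (i), and the three-way split of part (ii) by which pair-types propagate are correct and match the paper (whose split is by whether the $\equiv$-class induces $K_n$, $\bar K_n$, or neither). The gap is exactly where you place it: modularity in the $A[K_n]$ case. But your ingredients (a)--(c) do not close it. Since $\Gamma$ is constant on $B_1$, every vertex of $B_1$ dominates the \emph{same} subset $C_2\subseteq B_2$, so the bipartite arc-graph is not regular on the $B_2$ side (in-degree $n$ on $C_2$, zero off it) and there is no counting argument to run. Your idea (b) only handles the sub-case where $\Gamma^*$ also propagates on $\sim$-pairs; when it does not, you have nothing. And Fact~\ref{livwag} has the $n=3$ exception, so (a) is not available in general.

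The paper's argument is different. Assume $\varnothing\neq C_2\subsetneq B_2$. Since arcs between $B_1,B_2$ run one way, no automorphism swaps $B_1,B_2$, so the unrelated pair $(y_1,y_2)$ (with $y_i\in B_i$, $y_2\notin C_2$) cannot be flipped: thus $\Lambda\neq\Lambda^*$. Next, $|B_2\setminus C_2|=1$: if $y_2\neq z_2\in B_2\setminus C_2$ and $x\in B_1\setminus\{y_1\}$, then $\{x,y_2,z_2\}\cong\{x,y_1,y_2\}$ (one $\sim$-edge, third vertex unrelated to both), and any $g$ realising this carries the $\sim$-edge $\{y_2,z_2\}\subset B_2$ to $\{x,y_1\}\subset B_1$, swapping $B_1,B_2$---contradiction. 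Finally, pick $h\in G$ with $y_2^h=x_2$ and set $B_3:=B_1^h$; then $B_3$ dominates $B_2\setminus\{x_2\}$ and is unrelated to $x_2$. Since every pair of distinct classes contains both arcs and unrelated pairs (compare with $(B_1,B_2)$ via $2$-set-homogeneity), one of $B_1,B_3$ dominates the other, and a suitable $k\in G$ fixes one setwise and sends the other to $B_2$. Tracking the exceptional vertices produces a \emph{rigid} three-point configuration with $(x_1,y_3,x_2)\cong(y_3,x_1,y_2)$; the extending automorphism swaps the unrelated pair $\{x_1,y_3\}$, contradicting $\Lambda\neq\Lambda^*$. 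This third-block rigidity trick, driven throughout by $\Lambda\neq\Lambda^*$, is the missing idea.
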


\begin{proof} (i) By assumption, $|M|>1$. Since $M$ is $\Gamma$-connected, $M$ is not an independent set. Define $x\equiv y$ to hold if and only if $\Gamma(x)=\Gamma(y)$. Then $\equiv$ is a $G$-congruence. Since $x\to y$ implies $x\not\equiv y$, 
there is more than one class. The classes are isomorphic, of size greater than one. By set-homogeneity applied to arcs,
the $\equiv$-classes are all isomorphic to $\bar{K}_n$ for some fixed $n>1$. Also by
 2-set-homogeneity applied to independent pairs, if $x\not\equiv y$ then
$x\rightarrow y$ or $y\rightarrow x$ (recall that a $\equiv$-class contains no $\Gamma$-arcs). 

Suppose $B_1,B_2$ are distinct $\equiv$-classes, $\alpha_1\in B_1$ and $\alpha_2\in B_2$, and $\alpha_1 \rightarrow \alpha_2$. Then, as $B_1$ is a $\equiv$-class,
$x\rightarrow \alpha_2$ for all $x\in B_1$. It follows that $x\rightarrow y$ for all $x\in B_1$ and $y\in B_2$; for as $y\in B_2$ we have $y\equiv \alpha_2$, and as  $x\to \alpha_2$ holds,
$y \to x$ cannot hold. Thus,
 there is a tournament structure on $\{B_1^g:g\in G\}$, with $B_1^g\rightarrow B_1^h$ if and only if there is $x\in B_1^g$ and 
$ y\in B_1^h$ with $x\rightarrow y$. This tournament is clearly set-homogeneous, so by Lemma~\ref{sethomtourn} must be isomorphic to $D_3$ (since $M$ is $\Gamma$-connected). The conclusion follows.

(ii) We argue as in (i), with the same congruence $\equiv$.  The structure induced on each $\equiv$-class is a graph with respect to $\sim$, and is set-homogeneous so belongs to ${\cal L}$. If this graph is neither complete nor independent, then no pair from $B_1 \times B_2$ is $\sim$-related or unrelated, and as in (i) we find that
$M$ is isomorphic to $L$ or $D_3[L]$ for some $L \in {\cal L}$. Suppose next that each $\equiv$-class induces 
$K_n$. We claim that in this case, for any two distinct $\equiv$-classes $B_1$ and $B_2$,  we have one of: all possible arcs from $B_1$ to $B_2$, or all possible arcs from $B_2$ to $B_1$, or no arcs between $B_1$ and $B_2$. From this it will follow that there is an induced member of ${\cal A}$ on $M/\equiv$, so $M \cong A[K_n]$ for some $A\in {\cal A}$ and $n>1$.
 Likewise, 
if each $\equiv$-class is an independent set then $\overline{M} \cong A[{K_n}]$ for some $A\in {\cal A}$ and $n>1$.

To prove the claim, suppose  there are distinct $\equiv$-classes $B_1$, $B_2$, and $x_1\in B_1$, 
$x_2\in B_2$ with
$x_1\rightarrow x_2$. We cannot have $y_1\in B_1$, $y_2\in B_2$ with $y_2\rightarrow y_1$: indeed, otherwise, by definition of $\equiv$, $z\rightarrow x_2$ for all $z\in B_1$ and
$w\rightarrow y_1$ for all $w\in B_2$, so $y_1\rightarrow x_2$ and $x_2\rightarrow y_1$, which is impossible. Thus, seeking a contradiction we may suppose there are unrelated 
$y_1\in B_1$ and $y_2\in B_2$.
Since $B_1$ is a $\equiv$-class, $x\rightarrow x_2$ for all $x\in B_1$; in fact, 
there is a subset $C_2$ of $B_2$ with $x_2\in C_2$, $y_2\not\in C_2$,
such that for any $x\in B_1$, $x\rightarrow y$ for all $y\in C_2$ and $x$ is unrelated to all elements of $B_2\setminus C_2$. 
Since the pair of $\equiv$-classes $(B_1,B_2)$ cannot be interchanged, there are two distinct paired orbitals $\Lambda, \Lambda^*$ on unrelated pairs.

Now $|B_2\setminus C_2|=1$; for if there is $z_2\in B_2\setminus C_2$ with $z_2\neq y_2$ then by set-homogeneity, for any 
 $x\in B_1\setminus \{y_1\}$
there is $g\in G$ with
$\{x, y_2,z_2\}^g=\{x,y_1,y_2\}$, and this $g$ interchanges $B_1$ and $B_2$, which is a contradiction. 

Pick $h\in G$ with $y_2^h=x_2$ and put $B_3:=B_1^h$, so $B_3 \neq B_1,B_2$. 
Then $z\to y_2$ and $\{z,x_2\}$ are unrelated for all $z\in B_3$.
We can write the relationship between $B_1$ and $B_2$ as $B_1\Rightarrow
B_2$.  So $B_1\Rightarrow B_2$, and $B_3\Rightarrow B_2$, and either
$B_1\Rightarrow B_3$ or $B_3 \Rightarrow B_1$. In the first case, by 2-set-homogeneity there is
 $k\in G$ such that $B_1^k=B_1$ and $B_2^k=B_3$, and in the second case there is $k$ fixing $B_3$ setwise with $B_1^k=B_2$. We assume the former, the second case being similar.

Now put $C_3:=C_2^k$, and let 
$B_3\setminus C_3:=\{y_3\}$. We have $x_1\to x_2$, $y_3 \to y_2$, and the pairs $\{x_1,y_2\}, \{x_1,y_3\}$ and $\{x_2,y_3\}$ are unrelated.
Thus, the map $(x_1,y_3,x_2)\mapsto (y_3,x_1,y_2)$ is an isomorphism; this extends by 
set-homogeneity to an automorphism
(as the structure on $\{x_1,y_3,x_2\}$ is rigid), and the latter interchanges the pair $\{x_1,y_3\}$, contrary to 
the fact that these are unrelated. 
This is a contradiction and so establishes the claim in the first paragraph of the proof of part (ii). 
\end{proof}

Of course, the dual of the above lemma also holds, and deals with the case that there are distinct vertices $\alpha, \beta \in M$ with $\Gamma^*(\alpha) = \Gamma^*(\beta)$. 

\begin{lemma}\label{nhood}
Let $M \in {\cal S}$, and $\alpha \in M$. Then the induced s-digraphs on $\Gamma(\alpha)$ and
 $\Gamma^*(\alpha)$ are set-homogeneous, so lie in
${\cal S}$.
\end{lemma}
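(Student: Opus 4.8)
The plan is to deduce set-homogeneity of the induced structure on $\Gamma(\alpha)$ (the argument for $\Gamma^*(\alpha)$ being dual) from set-homogeneity of $M$ by adjoining the vertex $\alpha$ to the configurations we wish to match. So suppose $U,V$ are isomorphic induced sub-s-digraphs of $\Gamma(\alpha)$, via an isomorphism $\phi\colon U\to V$. Since $U,V\subseteq\Gamma(\alpha)$ and $\to$ is irreflexive, $\alpha\notin U\cup V$, and $\alpha$ dominates every vertex of $U$ and of $V$. Hence extending $\phi$ by $\alpha\mapsto\alpha$ gives an isomorphism $\hat{\phi}\colon\{\alpha\}\cup U\to\{\alpha\}\cup V$ of induced sub-s-digraphs of $M$: the only new pairs involve $\alpha$, and each such pair is an arc directed out of $\alpha$ on both sides, so the relation is matched. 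By set-homogeneity of $M$, there is $g\in G$ with $(\{\alpha\}\cup U)^g=\{\alpha\}\cup V$ setwise.

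The main obstacle is that set-homogeneity yields only a \emph{setwise} map, so a priori $g$ need not fix $\alpha$. If it does fix $\alpha$, then since $\alpha^g=\alpha$ we have $\Gamma(\alpha)^g=\Gamma(\alpha)$, so $g$ restricts to an automorphism of the induced structure on $\Gamma(\alpha)$ carrying $U$ to $V$, as required. To force $\alpha^g=\alpha$ I would give an isomorphism-invariant characterization of $\alpha$ inside $\{\alpha\}\cup U$: namely, $\alpha$ is the unique vertex dominating all the others. Indeed $\alpha$ dominates $U$ by construction, while for any $u\in U$ we have $\alpha\to u$, so by antisymmetry of $\to$ it is not the case that $u\to\alpha$; hence no vertex of $U$ dominates $\alpha$, and $\alpha$ is the unique dominating vertex. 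The same holds verbatim for $\{\alpha\}\cup V$. Since domination is preserved by isomorphism, $g$ must send the unique dominating vertex of $\{\alpha\}\cup U$ to that of $\{\alpha\}\cup V$, that is, $\alpha^g=\alpha$, completing the argument.

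For $\Gamma^*(\alpha)$ the identical scheme works with the dual characterization: every vertex of $U\subseteq\Gamma^*(\alpha)$ dominates $\alpha$, so $\alpha$ is now the unique vertex of $\{\alpha\}\cup U$ that is dominated by all the others (again using antisymmetry of $\to$ to rule out any $u\in U$), and likewise for $\{\alpha\}\cup V$; hence the element $g$ provided by set-homogeneity of $M$ again fixes $\alpha$ and restricts to the desired automorphism of $\Gamma^*(\alpha)$. Finally, since these induced neighbourhoods are themselves finite set-homogeneous s-digraphs, they lie in ${\cal S}$.
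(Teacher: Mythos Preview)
Your proposal is correct and follows essentially the same approach as the paper: adjoin $\alpha$ to both $U$ and $V$, apply set-homogeneity of $M$, and then use the fact that $\alpha$ is the unique vertex dominating all others in $\{\alpha\}\cup U$ to force $\alpha^g=\alpha$. The paper's proof is identical in structure, only more tersely written.
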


\begin{proof} Suppose that $U,V$ are induced subdigraphs of $\Gamma(\alpha)$, and $\phi:U\rightarrow V$ is an isomorphism. Then 
$\phi$ extends to an isomorphism
$U \cup \{\alpha\}\rightarrow V\cup \{\alpha\}$ fixing $\alpha$. Thus, by set-homogeneity of $M$, there is $g\in G$ 
with
$(U\cup\{\alpha\})^g=V\cup\{\alpha\}$. Since $\alpha$ is the unique element of $U \cup \{\alpha\}$ which dominates the rest 
of this set, $g$ must fix $\alpha$, so induces an automorphism of $\Gamma(\alpha)$.

The same argument applies to $\Gamma^*(\alpha)$.
\end{proof}

\begin{lemma} \label{isom}
Let $M\in {\cal A}$, $\alpha \in M$, and suppose that the induced a-digraphs on $\Gamma(\alpha)$ and $\Gamma^*(\alpha)$ are isomorphic. Then
$M$ is isomorphic to one of the a-digraphs listed in Theorem~\ref{maintheorem}(ii). 
\end{lemma}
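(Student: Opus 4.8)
The goal is to classify finite set-homogeneous a-digraphs $M$ for which $\Gamma(\alpha)\cong\Gamma^*(\alpha)$. By Lemmas~\ref{disconnect}(i) and~\ref{samenbours}(i) I may assume $M$ is $\Gamma$-connected and that no two vertices share the same out-neighbourhood (and, dually, the same in-neighbourhood), since those cases have already been disposed of. Since $M\in{\cal A}$ is $\Gamma$-connected and finite, it is strongly connected, so both $\Gamma(\alpha)$ and $\Gamma^*(\alpha)$ are nonempty. By Lemma~\ref{nhood} the induced a-digraphs on $\Gamma(\alpha)$ and $\Gamma^*(\alpha)$ are themselves set-homogeneous, hence lie in ${\cal A}$; arguing inductively on $|M|$, I may assume they appear in the list of Theorem~\ref{maintheorem}(ii). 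The plan is to exploit the hypothesis $\Gamma(\alpha)\cong\Gamma^*(\alpha)$ to produce a group element witnessing a strong symmetry, and then to run through the (short) list of possible neighbourhoods.

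\textbf{Key steps.} First I would invoke set-homogeneity: since $\Gamma(\alpha)$ and $\Gamma^*(\alpha)$ are isomorphic induced substructures, there is $g\in G$ with $\Gamma(\alpha)^g=\Gamma^*(\alpha)$. This is precisely the situation feeding into Lemma~\ref{isom2} (as flagged in the introduction), which tells me that $\alpha$, $\alpha^g$ and $\alpha^{g^{-1}}$ become identified under a natural $G$-congruence; I would use this to control the local structure around $\alpha$ and to get a handle on the common value $k:=|\Gamma(\alpha)|=|\Gamma^*(\alpha)|$. Second, I would split into cases according to which member of the ${\cal A}$-list the neighbourhood $N:=\Gamma(\alpha)$ is isomorphic to: $D_1,D_3,D_4,D_5,H_0$, $\bar K_n$, $\bar K_n[D_3]$, or $D_3[\bar K_n]$. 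For each candidate I would pin down $M$ by counting and by small local-configuration arguments: the size and arc-structure of $N$ force the diameter and the suborbit sizes of $M$, and set-homogeneity applied to $2$-arcs and independent pairs forces the remaining relations. The cases where $N$ is an independent set $\bar K_n$ (giving $M\cong\bar K_n[D_3]$ or $D_3[\bar K_n]$ or small cycles) I expect to recover $D_3,D_4,D_5$ and the composition products; the case $N\cong H_0$ is exactly the local structure of the $27$-vertex constructions, and I would need to check that no genuinely new a-digraph arises here (it should collapse, since $H_0$ as a neighbourhood belongs to the s-digraph world, not the a-digraph $H_0$ itself appearing as $M$).

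\textbf{Main obstacle.} The hardest part will be the middle-sized cases, where $N$ is $D_3$, $D_4$, or $D_5$: here $M$ is small enough that no asymptotic counting helps, yet large enough that several a priori distinct gluings of the in- and out-neighbourhoods are consistent with the local data. The work is to show that set-homogeneity, applied to $2$-arcs $(x,\alpha,y)$ with $x\in\Gamma^*(\alpha)$ and $y\in\Gamma(\alpha)$ together with the rigidity of the directed triangle, forces the global structure to close up into exactly one of the listed digraphs and eliminates spurious candidates. I expect this to be a finite but delicate ad hoc case analysis, exactly of the flavour promised in the introduction's remark that ``these are handled by ad hoc arguments.'' The payoff is that, once $\Gamma(\alpha)\cong\Gamma^*(\alpha)$ is treated, combined with the later reduction showing every $M$ satisfies this isomorphism, the a-digraph classification in Theorem~\ref{maintheorem}(ii) is complete.
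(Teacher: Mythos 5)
Your reduction steps (to $\Gamma$-connectedness via Lemma~\ref{disconnect}, to distinct neighbourhoods via Lemma~\ref{samenbours}, and the use of Lemma~\ref{nhood} with induction) match the paper, but from there your route diverges, and the divergence leaves a genuine gap. The paper does \emph{not} case-split on the isomorphism type of $\Gamma(\alpha)$. Its engine is the following observation, which your proposal never isolates: once $\Gamma(x)=\Gamma(y)\Rightarrow x=y$ and $\Gamma^*(x)=\Gamma^*(y)\Rightarrow x=y$ are in force, the vertices $\beta:=\alpha^{g^{-1}}$ and $\gamma:=\alpha^{g}$ (where $\Gamma(\alpha)^g=\Gamma^*(\alpha)$) are \emph{uniquely determined} by $\alpha$, hence fixed by $G_\alpha$, so $G_\alpha=G_\beta=G_\gamma$. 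Everything then follows from the position of $\gamma$ relative to $\alpha$: if $\gamma\in\Gamma(\alpha)$ one contradicts the uniqueness assumption outright; if $\beta=\gamma$ one shows $M=\{\alpha,\gamma\}\cup\Gamma(\alpha)\cup\Gamma^*(\alpha)$ with a size-$2$ congruence whose pairs of classes induce $D_4$, forcing $\Gamma(\alpha)$ to be a set-homogeneous tournament, hence $D_3$, hence $M\cong H_0$; and if $\beta\neq\gamma$ one gets a size-$3$ congruence with independent classes and kills it by the parity of the $9$ arcs between two classes. In particular, under the standing reductions the only surviving a-digraphs with $|\Gamma(\alpha)|>1$ are forced to have $\Gamma(\alpha)\cong D_3$; there is never any need to consider $D_4$, $D_5$, $H_0$, $\bar K_n$, $\bar K_n[D_3]$ or $D_3[\bar K_n]$ as neighbourhoods individually.

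By contrast, your plan defers precisely the content of the lemma to an unexecuted ``delicate ad hoc case analysis'' over eight neighbourhood types, and some of your expectations about how those cases resolve are off. For example, you expect the case $N\cong\bar K_n$ to ``recover the composition products,'' but $\bar K_n[D_3]$ is not $\Gamma$-connected and $D_3[\bar K_n]$ has vertices with equal out-neighbourhoods, so both are already excluded by your own standing assumptions; that case must instead be eliminated entirely, and you give no mechanism for doing so. Similarly, ruling out $\Gamma(\alpha)\cong H_0$ or $D_3[\bar K_n]$ for an \emph{a-digraph} $M$ is nontrivial without the $G_\alpha=G_\beta=G_\gamma$ observation (the paper gets it for free from the parity contradiction in the $\beta\neq\gamma$ case). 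Finally, your appeal to Lemma~\ref{isom2} is a forward reference to the s-digraph machinery of Section~4; the paper's proof of this lemma is self-contained and does not use it. To repair your argument you would either need to import the paper's uniqueness-of-$\beta,\gamma$ idea (at which point the case split on $N$ becomes unnecessary) or actually carry out each elimination, which is where all the work lies.
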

\begin{proof} By Lemma~\ref{disconnect}, we may assume that $M$ is $\Gamma$-connected. 
If $|\Gamma(\alpha)|=1$, then $M$ has in and out degree 1, so by connectedness is isomorphic to $D_n$ for some $n$.
In this case, by set-homogeneity on unrelated pairs, $n\leq 5$. Thus, we may suppose $|\Gamma(\alpha)|>1$. 

By set-homogeneity, there is $g\in G$ with $(\Gamma(\alpha))^g=\Gamma^*(\alpha)$. Put
 $\gamma:=\alpha^g$
and $\beta:=\alpha^{g^{-1}}$. Then $\Gamma^*(\alpha)=\Gamma(\gamma)$ and $\Gamma(\alpha)=\Gamma^*(\beta)$. 

We may  assume  that for any vertices $x$ and $y$, if $\Gamma(x)=\Gamma(y)$ or $\Gamma^*(x)=\Gamma^*(y)$ then $x=y$; for otherwise our result follows by Lemma~\ref{samenbours} and the remark following it. As $G_\alpha$ fixes $\Gamma(\alpha)$ and $\Gamma^*(\alpha)$, it follows that $G_\alpha$ fixes $\beta$ and $\gamma$, and hence that $G_\alpha=G_\beta=G_\gamma$.

Suppose first that $\gamma\in \Gamma(\alpha)$.  Now $\beta = \alpha^{g^{-1}} \not\in
\Gamma(\alpha)^{g^{-1}}= \Gamma(\alpha)$, so $\gamma\neq \beta$. 
Since $G_\alpha$ acts transitively on $\Gamma(\alpha)$ and fixes $\Gamma^*(\alpha)$ setwise, 
$\Gamma(\gamma)=\Gamma( \gamma')$ for all $\gamma'\in \Gamma(\alpha)$. 
Since by assumption, $|\Gamma(\alpha)|>1$, this contradicts our assumption in the last paragraph. Thus $\gamma\not\in\Gamma(\alpha)\cup \Gamma^*(\alpha)$ so $\alpha$ and $\gamma$ are unrelated.

Next, suppose that $\beta=\gamma$. If $x\in \Gamma(\alpha)$ and $y\in \Gamma^*(\alpha)$ then by set-homogeneity
there is $g\in G$ with $(\alpha,x,\gamma)^g=(\gamma,y,\alpha)$. Thus, the ordered pairs of unrelated points form a single self-paired orbital.

We claim now that
$M=\{\alpha,\gamma\}\cup \Gamma(\alpha) \cup \Gamma^*(\alpha)$. For suppose $x\in \Gamma(\alpha)$, and $y\in \Gamma(x)$, with $y\not\in \Gamma(\alpha)\cup\Gamma^*(\alpha)$. Then $\alpha$ and $y$ are unrelated, so by 3-set-homogeneity there is $g\in G$ with $(\alpha,x,y)^g=(\alpha,x,\gamma)$. As $G_\alpha$ fixes $\gamma$, it follows that $y=\gamma$, as required.

It follows 
that there is a $G$-congruence
$\equiv$ on $M$ with classes of size 2, where $x\equiv y$ if and only if 
$\Gamma(x)\cup \Gamma^*(x)=\Gamma(y)\cup \Gamma^*(y)$.
 By 2-set-homogeneity, any two vertices in  distinct $\equiv$-classes
 are joined by an arc. Clearly if $x\equiv y$ and $x\neq y$ then $\Gamma(x)=\Gamma^*(y)$, so the digraph
 induced on the union of 
any two blocks is isomorphic to $D_4$. By set-homogeneity applied to arcs, $G$ acts 2-transitively on the 
set of
 $\equiv$-classes. Thus,
$\Gamma(\alpha)$ (and likewise $\Gamma^*(\alpha)$) meets each $\equiv$-class other than $\{\alpha,\gamma\}$ in 
a singleton, 
so carries the structure
of a set-homogeneous tournament. Since we have $|\Gamma(\alpha)|>1$, it follows from Lemma~\ref{sethomtourn} that  each of $\Gamma(\alpha)$
and $\Gamma^*(\alpha)$ is isomorphic to $D_3$, so $|M|=8$. 
Once the arcs within $\Gamma(\alpha)$ are chosen, the rest of the construction is forced, and it can be checked that whatever the arcs are on $\Gamma(\alpha)$ the resulting digraph  is isomorphic to $H_0$.

Finally, suppose that $\beta\neq \gamma$ and $\gamma\not\in \Gamma(\alpha)$. Then, as in the fourth paragraph
 of 
the proof, we may assume $\beta \not\in \Gamma^*(\alpha)$. Thus, $\{\alpha,\beta\}$ and $\{\alpha,\gamma\}$ are both 
unrelated pairs. There is no $x\in \Gamma^*(\alpha)$ with $\beta \rightarrow x$, for otherwise
there would be $g\in G$ with $(\alpha,x,\beta)^g=(\alpha,x,\gamma)$, which is impossible as $G_\alpha$ fixes $\beta$. Thus,
 $d(\beta,\alpha)>2$ and likewise $d(\alpha,\gamma)>2$, so the orbital which contains $(\alpha,\beta)$ and 
$(\gamma,\alpha)$ is not
self-paired.  Define $x\equiv y$ on $M$ if $G_x=G_y$; then $\beta\equiv \alpha$ and $\gamma \equiv \alpha$,
 as noted above. 
If $\alpha \equiv x$ then clearly $x\not\in \Gamma(\alpha) \cup \Gamma^*(\alpha)$, so if also $x\neq \alpha$
 then $\{\alpha,x\}$
 is an unrelated pair.  In this case,
$(\alpha,x)$ lies in the same orbit as one of $(\alpha,\beta)$, $(\alpha,\gamma)$, and since $G_\alpha$ fixes $x$ this implies that
$x\in\{\beta,\gamma\}$. 
Thus $\equiv$-classes have size three, and must be independent sets. 
Moreover, two elements in distinct $\equiv$-classes cannot be unrelated. Hence it follows
from set-homogeneity on arcs that $G$ is
 2-homogeneous on 
$M/\equiv$.
If $x\in \Gamma(\alpha)$, then there is an arc from $\alpha$ to $x$ and an arc from $x$ to $\beta$. Since $\alpha\equiv \beta$, it follows by 2-homogeneity on the set of $\equiv$-classes that there are arcs in both directions between any two classes. 
  Thus, if $B_1$ and $B_2$ are $\equiv$-classes then  there is $g\in G$ interchanging $B_1$ and $B_2$. 
However, the number of arcs between $B_1$ and $B_2$ is 9, which is odd, 
so the number of arcs from $B_1$ to $B_2$ does not equal the number from $B_2$ to $B_1$, a contradiction to 
the existence of $g$.
\end{proof}

\begin{lemma}\cite[Lemma 1.2(4)]{lachlan} \label{2verts}
Suppose that $M \in {\cal S}$,  $\alpha \in M$, $\beta\in \Gamma(\alpha)$ and $\gamma \in \Gamma^*(\alpha)$. Then
$\Gamma(\alpha) \cap \Gamma^*(\beta) \cong \Gamma(\gamma) \cap \Gamma^*(\alpha) $.
\end{lemma}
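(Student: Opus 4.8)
The plan is to reduce the whole statement to a single application of arc-transitivity of $G=\Aut(M)$. First I would record that $G$ acts transitively on the orbital $\Gamma$, i.e.\ on arcs. This is immediate from set-homogeneity applied to two-vertex substructures: a single arc $(u,v)$ with $u\to v$ is a rigid configuration in which source and target are distinguishable, so any isomorphism between two one-arc substructures must send source to source and target to target. Hence for any two arcs $(u,v)$ and $(u',v')$ there is $g\in G$ with $u^g=u'$ and $v^g=v'$.

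Next I would observe that both ordered pairs occurring in the statement are arcs. Since $\beta\in\Gamma(\alpha)$ we have $\alpha\to\beta$, so $(\alpha,\beta)\in\Gamma$; since $\gamma\in\Gamma^*(\alpha)$ we have $\gamma\to\alpha$, so $(\gamma,\alpha)\in\Gamma$. Arc-transitivity then supplies $g\in G$ with $(\gamma,\alpha)^g=(\alpha,\beta)$, that is, $\gamma^g=\alpha$ and $\alpha^g=\beta$.

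The last step is a transport-of-structure computation. Because $g$ is an automorphism it satisfies $\Gamma(x)^g=\Gamma(x^g)$ and $\Gamma^*(x)^g=\Gamma^*(x^g)$ for every vertex $x$; taking $x=\gamma$ and $x=\alpha$ gives $\Gamma(\gamma)^g=\Gamma(\alpha)$ and $\Gamma^*(\alpha)^g=\Gamma^*(\beta)$. Intersecting,
\[
\big(\Gamma(\gamma)\cap\Gamma^*(\alpha)\big)^g=\Gamma(\gamma)^g\cap\Gamma^*(\alpha)^g=\Gamma(\alpha)\cap\Gamma^*(\beta),
\]
and the restriction of $g$ to $\Gamma(\gamma)\cap\Gamma^*(\alpha)$ is therefore an isomorphism onto $\Gamma(\alpha)\cap\Gamma^*(\beta)$; since isomorphism is symmetric this is exactly the assertion.

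I do not expect any genuine obstacle in this argument. The one point that deserves a line of justification is the arc-transitivity of $G$, and that in turn rests only on the elementary observation that the orientation of an arc is preserved by every isomorphism, so that the two endpoints of an arc can never be interchanged by a structure-preserving map. In particular the proof needs neither the inductive hypothesis, nor the classification, nor any analysis of the internal structure of the neighbourhoods $\Gamma(\alpha)$ and $\Gamma^*(\alpha)$.
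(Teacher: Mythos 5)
Your proof is correct and is essentially the paper's own argument: the paper's entire proof reads ``This follows from 2-set-homogeneity, which ensures that there is $g\in G$ with $(\alpha,\beta)^g=(\gamma,\alpha)$,'' which is your arc-transitivity step (with the map oriented the other way) followed by the same transport-of-structure computation that the paper leaves implicit.
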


\begin{proof} This follows from 2-set-homogeneity, which ensures that there is $g\in G$ with $(\alpha,\beta)^g=(\gamma,\alpha)$.
\end{proof}

We are now in a position to prove the first two parts of our main result. 

\subsection*{Proof of Theorem~\ref{maintheorem}(i), (ii).} For part (i), by \cite{enomoto} and \cite{Ronse1978}, if $M \in {\cal L}$ then $M$ is homogeneous, and by \cite{gardiner} the listed graphs are 
precisely the finite homogeneous graphs.

For part (ii), first, by Lemma~\ref{disconnect} (i), we may suppose that $M$ is connected. We now argue by induction on $|M|$. By
Lemma~\ref{nhood} and the inductive hypothesis, $\Gamma(\alpha)$ and $\Gamma^*(\alpha)$ belong to the list in (ii), 
and as $\Gamma$ and $\Gamma^*$ are paired orbitals,
$|\Gamma(\alpha)|=|\Gamma^*(\alpha)|$. 

By Lemma~\ref{isom}, we may suppose that $\Gamma(\alpha)\not\cong \Gamma^*(\alpha)$. Also, if one of $\Gamma(\alpha)$ 
or $\Gamma^*(\alpha)$ has no arc,
then $M$ does not embed $P_3$, and hence both are isomorphic to $\bar{K}_n$ for some $n$, so are isomorphic, a contradiction. 
Thus, by inspection of the list in (ii),
we may suppose that for some $n>1$,  $\Gamma(\alpha)\cong \bar{K}_n[D_3]$ and $\Gamma^*(\alpha)\cong D_3[\bar{K}_n]$, or vice 
versa. Since the first case is obtained from the second by reversing all arcs (and the list of examples  in Theorem~\ref{maintheorem}(ii) is closed under this operation), we suppose
$\Gamma(\alpha)\cong \bar{K}_n[D_3]$ and $\Gamma^*(\alpha)\cong D_3[\bar{K}_n]$.
Now, however, we find that if $\beta\in \Gamma(\alpha)$ and $\gamma \in \Gamma^*(\alpha)$ then
$|\Gamma(\alpha)\cap \Gamma^*(\beta)|=1$ but $|\Gamma^*(\alpha)\cap \Gamma(\gamma)|=n>1$. This contradicts Lemma~\ref{2verts}. \qed

\section{Proof of Theorem~\ref{maintheorem}(iii): case $\Gamma(\alpha)\cong\Gamma^*(\alpha)$}
\label{sec_CaseInNEqualsOutN}

Let $M$ be a finite set-homogeneous s-digraph, with $G:=\Aut(M)$.
In proving Theorem~\ref{maintheorem}(iii), it follows from 
Lemma~\ref{disconnect} that we may assume that $M$ is $\Gamma$-connected.
If $|\Gamma(\alpha)|=1$, then the $\Gamma$-structure is just a cycle, and it can be checked using  set-homogeneity that $M$ is up to complementation one of $D_1$, $D_3$, $D_4$, $D_5$, $E_6$, $E_7$. In this section we deal with the case where $\Gamma(\alpha)\cong
\Gamma^*(\alpha)$. We argue by induction on $|M|$, and note that the result certainly holds when 
$|M|\leq2$. By Lemma~\ref{nhood}, and the inductive 
hypothesis, we therefore have that $\Gamma(\alpha)$ and $\Gamma^*(\alpha)$ lie in 
the list of $s$-digraphs in Theorem~\ref{maintheorem}(iii). 
Thus,  our assumption throughout the section is:

\begin{center}
\begin{tabular}{ll}
(A)&
\begin{tabular}{l}
$M$ is a finite $\Gamma$-connected 
set-homogeneous s-digraph with $\Gamma$-connected complement;\\ 
$\Gamma(\alpha)\cong\Gamma^*(\alpha)$ for some  $\alpha \in M$, and 
$|\Gamma(\alpha)|>1$;\\
$\Gamma(\alpha)$ and $\Gamma^*(\alpha)$ belong to the list in Theorem~\ref{maintheorem}(iii);\quad
and the following statement $(*)$,\\ justified by Lemma~\ref{samenbours}(ii)
and the remark following Lemma~\ref{samenbours}:\\
$(*)$~~for any distinct $\alpha,\beta$, $\Gamma(\alpha)\neq \Gamma(\beta)$ and $\Gamma^*(\alpha)\neq \Gamma^*(\beta)$.\\
\end{tabular}
\end{tabular}
 
\end{center}

Observe that the complement of any s-digraph satisfying (A) also satisfies (A).

Our first lemma uses Lemma~\ref{samenbours} to
identify two major sub-cases to be considered.

\begin{lemma}\label{isom2}
Suppose $M\in {\cal S}$ satisfies (A). Then
 there exist unique vertices $\beta,\gamma$ such that $\Gamma^*(\beta) = 
\Gamma(\alpha)$, and $\Gamma^*(\alpha)=\Gamma(\gamma)$. Moreover, after replacing $M$ by its complement if necessary, one of the following holds. 
\begin{enumerate}
\item[$(a)$] \label{c5}
$\beta\ne\gamma$ and there is an equivalence relation $\equiv$ on $M$ given by
\[
u \equiv v \Leftrightarrow G_u = G_v 
\] 
such that
		the induced structure on each $\equiv$-class is $C_j$ for some $j \in \{3,4,5 \}$, with the corresponding cyclic group induced by the setwise stabiliser;
		
\item[$(b)$]
$\beta=\gamma$ and there is a $G$-congruence $\equiv$ on $M$ given by
\[
u \equiv v \Leftrightarrow \left({\left(\Gamma(u) = \Gamma^*(v)\right) \wedge 
\left(\Gamma^*(u) = \Gamma(v)\right) \vee u=v}\right)
\]
with $\equiv$-classes of size two such that:  $u\equiv v \Leftrightarrow u\sim v$ and the restriction of
the relation $\Delta$ to $\Gamma(\alpha)\cup\Gamma^*(\alpha)$ is a 
matching between $\Gamma(\alpha)$ and $\Gamma^*(\alpha)$; moreover
$\Gamma(\alpha)$ is isomorphic to one of the $a$-digraphs of 
Theorem~{\rm\ref{maintheorem}(ii)}, and the 
$s$-digraph induced on the union of any two $\equiv$-classes is isomorphic to either 
$\overline{D_4}$ or $\overline{K_{2,2}}$.

\end{enumerate}
\end{lemma}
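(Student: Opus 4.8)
The plan is to first manufacture the vertices $\beta,\gamma$ and understand where they sit, and then to split on whether $\beta=\gamma$. Since (A) gives $\Gamma(\alpha)\cong\Gamma^*(\alpha)$, set-homogeneity produces $g\in G$ with $\Gamma(\alpha)^g=\Gamma^*(\alpha)$; putting $\gamma:=\alpha^g$ and $\beta:=\alpha^{g^{-1}}$ yields $\Gamma(\gamma)=\Gamma^*(\alpha)$ and $\Gamma^*(\beta)=\Gamma(\alpha)$, and $(*)$ makes $\beta,\gamma$ the unique vertices with these properties. Because $G_\alpha$ fixes the sets $\Gamma(\alpha),\Gamma^*(\alpha)$, uniqueness forces it to fix $\beta$ and $\gamma$; conversely $G_\beta$ fixes $\Gamma^*(\beta)=\Gamma(\alpha)$ and hence its unique ``owner'' $\alpha$, and similarly for $\gamma$, so $G_\alpha=G_\beta=G_\gamma$ and $\alpha,\beta,\gamma$ lie in one class of the congruence $u\equiv v\Leftrightarrow G_u=G_v$. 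As $\Gamma$ is a single orbital, $G_\alpha$ is transitive on each of $\Gamma(\alpha),\Gamma^*(\alpha)$; these have size $>1$ yet $G_\alpha$ fixes $\beta,\gamma$, so $\beta,\gamma\notin\Gamma(\alpha)\cup\Gamma^*(\alpha)$, i.e. $\alpha$ is $\sim$-related or unrelated to each. I would record the ``successor'' map $\sigma$ sending $u$ to the unique $v$ with $\Gamma^*(v)=\Gamma(u)$: by $(*)$ it is a bijection commuting with $G$, with $\sigma(\alpha)=\beta$, $\sigma^{-1}(\alpha)=\gamma$. The dichotomy of the lemma is then exactly $\beta=\gamma$ versus $\beta\neq\gamma$, i.e. $\sigma$ an involution (case (b)) or of order $\geq3$ (case (a)); throughout I will use that the complement again satisfies (A), that complementation interchanges $\sim$ with ``unrelated'' and replaces $\sigma$ by $\sigma^{-1}$ (so respects the dichotomy), and hence that I may normalise the relation holding inside a class.

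In case (b) ($\beta=\gamma$) the map $\sigma$ is a fixed-point-free involution, the ``mate'' map $u\mapsto\bar u$, and ``being mates'' is the $G$-congruence of (b), with all classes of size $2$ by $(*)$. After complementing if necessary I would assume $\alpha\sim\beta$, so by transitivity every mate pair is a $\sim$-edge. The crucial step is to show $\Delta$ is self-paired: for $x\in\Gamma(\alpha)=\Gamma^*(\beta)$ one has $\alpha\to x\to\beta$, the mate $\bar x$ lies in $\Gamma^*(\alpha)$ with $\beta\to\bar x\to\alpha$, the induced triple $\{\alpha,x,\beta\}$ is rigid, and $(\alpha,x,\beta)\mapsto(\beta,\bar x,\alpha)$ is an isomorphism onto an induced triple; by set-homogeneity together with rigidity the witnessing automorphism must realise this (unique) isomorphism and so swaps $\alpha,\beta$. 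With $\Delta$ self-paired, $G_\alpha$ is transitive on $\Delta(\alpha)$ and fixes $\beta\in\Delta(\alpha)$, forcing $\Delta(\alpha)=\{\beta\}$; thus the $\sim$-pairs are exactly the mate pairs, which is $u\equiv v\Leftrightarrow u\sim v$. From this, no two out-neighbours of $\alpha$ are mates (a short computation with $\Gamma^*(x)=\Gamma(\bar x)$), so $\Gamma(\alpha)$ carries no $\sim$-edge, hence lies in $\mathcal A$ and, being set-homogeneous by Lemma~\ref{nhood}, appears in the list of Theorem~\ref{maintheorem}(ii); the mate map matches $\Gamma(\alpha)$ bijectively with $\Gamma^*(\alpha)$, giving the asserted matching; and for two classes $\{\alpha,\beta\}$, $\{u,\bar u\}$ the rule $\alpha\to v\Rightarrow v\to\beta$ forces the cross-pairs to be a directed $4$-cycle with the two $\sim$-edges as diagonals when there is an arc between the classes (i.e. $\overline{D_4}$) and all unrelated otherwise (i.e. $\overline{K_{2,2}}$).

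In case (a) ($\beta\neq\gamma$) the class $B$ of $\alpha$ under $\equiv$ has at least the three elements $\gamma,\alpha,\beta$. The common stabiliser $G_\alpha$ fixes $B$ pointwise, yet is transitive on $\Gamma(\alpha)$ of size $>1$, so $B$ contains no arc and induces a graph. The setwise stabiliser $G_B$ is transitive on $B$ with point-stabiliser fixing $B$ pointwise, so its image $R$ on $B$ is regular. By $2$-set-homogeneity of $M$ and $G$-invariance of $\equiv$, any two $\sim$-pairs inside $B$, and any two unrelated pairs inside $B$, are interchanged by an element of $G$ fixing $B$; hence the $\sim$-pairs of $B$ form a single $R$-orbit and the unrelated pairs another, so $R$ has at most two orbits on unordered pairs. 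For a regular group this reads $|R|+t\leq5$ with $t$ the number of involutions, leaving only $R\in\{\mathbb{Z}_3,\mathbb{Z}_4,\mathbb{Z}_5\}$; since the edge-set must be one $R$-orbit, the graph on $B$ is, up to complementation, $C_3$, $C_4$ or $C_5$ (the only alternative to $C_4$ for $\mathbb{Z}_4$ being $\overline{C_4}$). Complementing $M$ if needed makes it $C_j$ with $j\in\{3,4,5\}$ and cyclic group $R$ induced by $G_B$, as required.

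The main obstacle I expect is in case (a): distilling from $2$-set-homogeneity the fact that the block carries at most two orbital pairs, and then converting this, via the regular action, into the numerical bound $|R|+t\le5$ that pins the cycle length down to $\{3,4,5\}$ and simultaneously forces $R$ cyclic. The delicate point in case (b) is the self-pairing of $\Delta$: since set-homogeneity only moves \emph{isomorphic sets}, one must locate a rigid configuration — the triple $\{\alpha,x,\beta\}$ — on which the required swap is the unique isomorphism and is therefore automatically realised by an automorphism.
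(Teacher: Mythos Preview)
Your proof is correct and follows the same overall architecture as the paper --- produce $\beta,\gamma$ via an automorphism carrying $\Gamma(\alpha)$ to $\Gamma^*(\alpha)$, observe $G_\alpha=G_\beta=G_\gamma$, and split on $\beta=\gamma$ --- but the two sub-arguments are handled by genuinely different devices.

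In case~(a), the paper simply notes that the $\equiv$-class carries a graph on which the setwise stabiliser acts set-homogeneously and regularly, and then appeals to Theorem~\ref{maintheorem}(i) (the Gardiner--Ronse--Enomoto list) to read off that the only such graphs up to complementation are $C_3,C_4,C_5$, each with cyclic regular group. Your route is more self-contained: you compute that a regular group $R$ with at most two orbits on unordered pairs satisfies $(|R|-1+t)/2\le 2$, forcing $R\in\{\mathbb{Z}_3,\mathbb{Z}_4,\mathbb{Z}_5\}$, and then identify the possible edge-sets orbit by orbit. This avoids quoting the graph classification, at the cost of a small case analysis; both give the same output.

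In case~(b), the paper's argument is shorter than yours. Having normalised so that $\alpha\sim\beta$, the paper observes directly that, since $\equiv$ is a $G$-congruence with classes of size~$2$ and $\{\alpha,\beta\}$ is a $\sim$-pair, vertex-transitivity makes every $\equiv$-class a $\sim$-pair, while $2$-set-homogeneity makes every $\sim$-pair an $\equiv$-class; this yields $u\equiv v\Leftrightarrow u\sim v$ in one line, without ever needing that $\Delta$ is self-paired. Your detour through the rigid triple $\{\alpha,x,\beta\}$ to prove $\Delta=\Delta^*$ and then deduce $|\Delta(\alpha)|=1$ is valid (the triple is indeed rigid and the isomorphism $(\alpha,x,\beta)\mapsto(\beta,\bar x,\alpha)$ exists for exactly the reasons you give), but it is doing more work than necessary. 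The remaining assertions about the $\Delta$-matching, the absence of $\sim$-edges in $\Gamma(\alpha)$, and the $\overline{D_4}$/$\overline{K_{2,2}}$ dichotomy on pairs of classes are handled essentially the same way in both proofs.
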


\begin{proof} 
By set-homogeneity, and the assumption of the lemma, there exists 
$g\in G$ such that $\Gamma(\alpha)^g=\Gamma^*(\alpha)$, so
for $\beta:=\alpha^{g^{-1}}$ and $\gamma:=\alpha^g$ we have $\Gamma^*(\beta) 
= \Gamma(\alpha)$ and $\Gamma^*(\alpha)=\Gamma(\gamma)$. 
By assumption (A),
 the vertices $\beta,\gamma$ are unique. 
There are then two possibilities, depending on whether or not $\beta = \gamma$. 

First suppose $\beta \neq \gamma$. Define an 
equivalence relation $\equiv$ on $M$ by $x\equiv y$ if and only 
if $G_x=G_y$. 
In this case the vertices $\alpha,\beta,\gamma$ 
are $\equiv$-equivalent. Since $|\Gamma(\alpha)|>1$, there are distinct $\delta,\delta'\in \Gamma^*(\alpha)$ and $g\in G_\alpha$ with $\delta^g=\delta'$, so $\alpha\not\equiv\delta$.
Thus, 
 the induced structure on each $\equiv$-class is an undirected graph 
and its setwise stabiliser in $G$ 
induces on it a set-homogeneous group of automorphisms that is 
regular on vertices, by definition of $\equiv$. It follows from Theorem~\ref{maintheorem}(i) 
that the only such graphs are, up to 
complementation, one of: 
$K_3\cong C_3$, $K_{2,2}\cong C_4$, or the pentagon $C_5$. Moreover, in each case, the
only set-homogeneous group that is regular on vertices is a cyclic group. Thus, (a) holds.

Now suppose that
$\beta=\gamma$ (so that $g$ interchanges $\alpha$ and $\beta$). 
Because of assumption (A) the relation given in part (b) is a 
$G$-congruence $\equiv$ on $M$  and the $\equiv$-class containing $\alpha$ is
$[\alpha]:=\{\alpha,\beta\}$.
All $\equiv$-classes have size $2$, and  we cannot have a 
$\Gamma$-arc between $\equiv$-related elements. 
By 2-set-homogeneity and since $\equiv$ is 
a $G$-congruence, either  `$u \equiv v \Leftrightarrow (u=v \vee u \sim v)$', or
`$u \equiv v \Leftrightarrow (u=v \vee u$ and $v$ are unrelated)'. In the latter case, 
we may replace $M$ by $\bar{M}$. Thus we may assume that
$u \equiv v \Leftrightarrow (u=v \vee u \sim v)$.

Let $a\in\Gamma(\alpha)$. By 2-set-homogeneity there is an element 
$h\in G$ such that $(\alpha,a)^h=(a,\beta)$, and then $b:=\beta^h$ must 
lie in $\Gamma(\beta)=\Gamma^*(\alpha)$ and the $\equiv$-class 
$[a]=[\alpha]^h=\{a,b\}$.
This yields the claimed $\Delta$-matching between $\Gamma(\alpha)$ and 
$\Gamma^*(\alpha)$, and that $[\alpha]\cup [a] \cong \overline{D_4}$. It also implies that $\Gamma(\alpha)$ does not
contain any $\equiv$-classes and hence the induced digraph on
$\Gamma(\alpha)$ does not contain undirected edges. Thus by induction
$\Gamma(\alpha)$  is isomorphic to one of the $a$-digraphs of 
Theorem~{\rm\ref{maintheorem}(ii)}. 
Finally, to determine the possibilities for
the induced $s$-digraph on
the union of two $\equiv$-classes we may assume that one of the classes is 
$\{\alpha,\beta\}$. If the other class is $[a]$ for some $a\in\Gamma(\alpha)$
then the induced $s$-digraph is $\overline{D_4}$, and otherwise it is
$\overline{K_{2,2}}$.
\end{proof}

It follows from Lemma~\ref{isom2} that, for the case where $\Gamma(\alpha)
\cong\Gamma^*(\alpha)$,  it is sufficient to prove 
Theorem~\ref{maintheorem}(iii) in the case where 
part (a) holds, and the case where part (b) holds.  
From now on we shall use $[\alpha]$ to denote the $\equiv$-class of the vertex $\alpha$, with $\equiv$ as in Lemma~\ref{isom2}(a) or (b).

\subsection{Dealing with Lemma~\ref{isom2}(b)}

Throughout this subsection $M$ will denote an s-digraph satisfying assumption (A) and the conditions of 
Lemma~\ref{isom2}(b). 
In what follows we prove that the pair $(\Gamma(\alpha),M)$ is isomorphic
to  $(D_3, \overline{H_0})$,  $(D_4, H_2)$, or 
$(\overline{K_2}, H_1)$, completing the proof of Theorem~\ref{maintheorem}(iii)
in this case. We use the notation of Lemma~\ref{isom2}.

\begin{lemma}\label{d5}
$\Gamma(\alpha)$ is not isomorphic to $D_5$.
\end{lemma}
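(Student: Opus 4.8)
The plan is to assume $\Gamma(\alpha)\cong D_5$ and to derive a numerical contradiction by passing to the quotient $Q:=M/\!\equiv$ and counting unrelated pairs. Write $\Gamma(\alpha)=\{b_0,\dots,b_4\}$ with $b_i\to b_{i+1}$ (subscripts mod $5$), and recall from Lemma~\ref{isom2}(b) that the $\equiv$-classes have size $2$, coincide with the $\sim$-edges, and that the union of any two classes induces $\overline{D_4}$ or $\overline{K_{2,2}}$. Call two classes \emph{adjacent} if their union induces $\overline{D_4}$, equivalently if there is an arc between them. Since $\Gamma(\alpha)$ has no $\sim$-edges, the $b_i$ lie in five distinct classes $[b_i]=\{b_i,b_i'\}$ with $b_i'\in\Gamma^*(\alpha)$, and these are exactly the classes adjacent to $[\alpha]$. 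The arc $b_i\to b_{i+1}$ makes $[b_i],[b_{i+1}]$ adjacent, while the unrelated pair $b_i\parallel b_{i+2}$ forces the union of $[b_i],[b_{i+2}]$ to be $\overline{K_{2,2}}$, so these are non-adjacent. Hence the neighbourhood of $[\alpha]$ in $Q$ is a pentagon $C_5$, and by vertex-transitivity $Q$ is a finite connected graph all of whose neighbourhoods induce $C_5$.

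I would then invoke the classical fact that the icosahedron is the unique finite connected graph that is locally $C_5$; thus $Q$ is the icosahedron, $|M|=24$, and $Q$ has $\binom{12}{2}-30=36$ non-adjacent pairs of classes. Each such pair induces $\overline{K_{2,2}}$ and so contributes four unrelated pairs of vertices of $M$, giving exactly $36\times4=144$ unrelated pairs in $M$. Since $M$ does contain unrelated pairs (for instance $b_0\parallel b_2$) and any two unrelated pairs induce isomorphic substructures, set-homogeneity forces all $144$ of them into a single $G$-orbit; in particular $|G|\ge 144$.

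Finally I would bound $|G|$ from above through $|G_\alpha|\le 5$. First, $G_\alpha$ acts faithfully on $\Gamma(\alpha)$: an element fixing $\alpha$ and every $b_i$ also fixes $\beta$ and every mate $b_i'$, hence fixes the six classes meeting $\{\alpha\}\cup\Gamma(\alpha)\cup\Gamma^*(\alpha)$ pointwise, and a short propagation along the edges of $Q$ (each $\overline{D_4}$-matching rigidly determines the next class from a pointwise-fixed one) forces it to be the identity. Therefore $G_\alpha$ embeds in $\Aut(\Gamma(\alpha))=\Aut(D_5)=\mathbb{Z}_5$, so $|G_\alpha|\le 5$ and $|G|=|M|\cdot|G_\alpha|\le 24\cdot5=120$. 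This contradicts $|G|\ge144$, and hence $\Gamma(\alpha)\not\cong D_5$.

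The crux is the middle step: identifying $Q$ as the icosahedron and being confident the local-$C_5$ condition is rigid enough, both to pin down the count of $144$ unrelated pairs and to run the propagation that yields faithfulness of $G_\alpha$ on $\Gamma(\alpha)$. The reduction of $\Aut(D_5)$ to the cyclic group $\mathbb{Z}_5$, rather than the dihedral group one gets from the undirected link, is exactly what makes $120<144$, so it is essential to use that $\Gamma(\alpha)$ is a \emph{directed} pentagon and not merely that its link is a $C_5$. The counting punchline is then immediate.
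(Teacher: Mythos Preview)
Your proof is correct and takes a genuinely different route from the paper's. The paper argues directly inside $M$: it writes $\Gamma(\alpha)=\{a_1,\dots,a_5\}$, tracks the $\overline{D_4}$ structure on pairs of $\equiv$-classes to identify two rigid directed $3$-paths $(a_1,a_2,a_3,a_4)$ and $(a_1,b_5,a_4,b_3)$, uses $4$-set-homogeneity to produce an automorphism $\phi$ taking one to the other, then uses $5$-set-homogeneity on a rigid five-vertex configuration to produce a second automorphism $\pi$ fixing $\alpha$ and cycling the $a_i$. Chasing where $\pi$ sends $\Gamma(\alpha')$ yields a description of $\Gamma(\beta')$ that fails to be a $D_5$, giving the contradiction.

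Your argument instead passes to the quotient $Q=M/\!\equiv$, recognises it as a locally-$C_5$ graph, invokes the classical rigidity result that forces $Q$ to be the icosahedron, and then wins by a clean counting comparison: $144$ unrelated unordered pairs versus $|G|\le 24\cdot|\Aut(D_5)|=120$. The faithfulness of $G_\alpha$ on $\Gamma(\alpha)$ via the $\overline{D_4}$-propagation is sound, and your emphasis that $\Aut(D_5)=\mathbb{Z}_5$ (not $D_{10}$) is exactly the point that makes the inequality bite. Your approach is more conceptual and arguably more illuminating about \emph{why} the case fails globally, at the cost of importing an external (though standard) structural fact about locally-$C_5$ graphs; the paper's argument is entirely self-contained and stays within the set-homogeneity toolkit used throughout the section, but is more ad hoc.
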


\begin{proof}
Suppose that $\Gamma(\alpha) \cong D_5$. Let $\Gamma(\alpha) = 
\{a_1, a_2, a_3, a_4, a_5 \}$ with 
$a_i \rightarrow a_{i+1 \pmod{5}}$ for all $i$. Let $b_i$ be the 
corresponding elements of $\Gamma(\beta)$ under the matching
given by $\equiv$.  Since $a_5\rightarrow a_1$, it follows from
Lemma~\ref{isom2}(b) that the subdigraph induced on $[a_1]\cup[a_5]$ is
isomorphic to $\overline{D_4}$, and in particular $a_1\rightarrow b_5 \rightarrow b_1 \rightarrow a_5$.
Likewise $b_1\to b_2 \to b_3\to b_4 \to b_5$.
Similarly $b_5\rightarrow a_4\rightarrow b_3$, and the 
subdigraphs induced on $\{a_1,a_2,a_3,a_4\}$ and $B:=\{a_1,b_5,a_4,b_3\}$
are both isomorphic to a directed path of length 3 (which is rigid).
Thus there is a unique isomorphism mapping
$
( a_1, a_2, a_3, a_4 ) \mapsto 
( a_1, b_5, a_4, b_3 )
$
and by 4-set-homogeneity this extends to an automorphism $\phi$ of $M$.
Let $\alpha' = \alpha^\phi$ and $\beta' = \beta^\phi$ so that
$[\alpha']=\{\alpha',\beta'\}$. Then $\Gamma(\alpha')$ 
contains $B$, and this implies that $\alpha'\not\in\{\alpha,\beta\}\cup
\Gamma(\alpha)\cup\Gamma^*(\alpha)$. A similar consideration of 
$\Gamma^*(\beta')$ shows that $\beta'$ does not lie in this set either.

Now $[\alpha']\cup[a_1]=([\alpha]\cup[a_1])^\phi$ induces $\overline{D_4}$,
by Lemma~\ref{isom2}(b), and it follows that the subdigraph induced
on $\{\alpha\}\cup[\alpha']\cup[a_1]$ is rigid. A similar argument 
shows that this digraph is isomorphic to the digraph 
induced on $\{\alpha\}\cup[\alpha']\cup[a_5]$.
By 5-set-homogeneity there is 
an automorphism $\pi$ of $M$ extending the unique isomorphism
$
( \alpha, a_1, b_1, \alpha', \beta' ) \mapsto
( \alpha, a_5, b_5, \beta', \alpha' ). 
$
Since $\pi$ fixes $\alpha$ and maps $a_1$ to $a_5$ it follows that $\pi:a_i
\mapsto a_{i-1\pmod{5}}, \ b_i\mapsto b_{i-1\pmod{5}}$ 
for each $i$. Consequently 
$
\Gamma(\beta')=
\Gamma(\alpha')^\pi \supseteq \{ a_1, b_5, a_4, b_3 \}^\pi = \{ a_5, b_4, a_3, b_2 \}.
$
Now (considering the induced subdigraphs on $[\alpha']\cup[a_i]$ for 
$i=1,3,4,5$), 
$ 
\Gamma(\beta') \supseteq \{ b_1, a_5, b_4, a_3 \},
$
and we conclude that
$
\Gamma(\beta') = \Gamma(\alpha')^\pi = \{ a_5, b_4, a_3, b_2, b_1 \}.
$
This is a contradiction since the digraph induced by these vertices is not isomorphic to $D_5$. 
\end{proof}

\begin{lemma}\label{tournament}
If $\Gamma(\alpha)$ is a tournament,  or is not $\Gamma$-connected, then 
$(\Gamma(\alpha),M)\cong (\overline{K_2},H_1)$ or
$(\Gamma(\alpha),M)$ $\cong (D_3,\overline{H_0})$.
\end{lemma}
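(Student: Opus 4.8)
The plan is to run the analysis through the mate structure of Lemma~\ref{isom2}(b) and its quotient. First I would record which $a$-digraphs of Theorem~\ref{maintheorem}(ii) can actually occur here. Among those of size $>1$, the only tournament is $D_3$ (every other entry contains an unrelated pair), and the only non-$\Gamma$-connected ones are $\bar{K}_n$ and $\bar{K}_n[D_3]$ with $n\geq 2$. So it suffices to treat $\Gamma(\alpha)\cong D_3$, $\Gamma(\alpha)\cong\bar{K}_n$, and $\Gamma(\alpha)\cong\bar{K}_n[D_3]$, the case $D_5$ already being excluded by Lemma~\ref{d5}.

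Next I would exploit the $\equiv$-matching: each vertex $v$ has a unique mate $v'$ with $v\sim v'$ and $\Gamma(v)=\Gamma^*(v')$, classes have size $2$, and the $s$-digraph induced on the union of two classes is $\overline{D_4}$ or $\overline{K_{2,2}}$. A key first observation is that whenever $a_i,a_j\in\Gamma(\alpha)$ are unrelated the pair of classes $[a_i],[a_j]$ cannot be $\overline{D_4}$ (which would force an arc between them), so it is $\overline{K_{2,2}}$; this pins down all relations inside $[\alpha]\cup\Gamma(\alpha)\cup\Gamma^*(\alpha)$ in the $\bar{K}_n$ case. I would then pass to the quotient graph $\bar{M}:=M/\equiv$, declaring two classes adjacent precisely when they are $\overline{D_4}$-related. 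Since an $\overline{D_4}$ carries arcs in both directions this adjacency is symmetric, the neighbourhood of $[\alpha]$ in $\bar{M}$ is exactly the quotient of $\Gamma(\alpha)$, and one checks that $\bar{M}$ inherits set-homogeneity from $M$ (the isomorphism type of a union of classes is $G$-invariant). By Theorem~\ref{maintheorem}(i), $\bar{M}$ is then a known graph, and its neighbourhood is forced: $K_3$ when $\Gamma(\alpha)\cong D_3$, an independent set $\bar{K}_n$ when $\Gamma(\alpha)\cong\bar{K}_n$, and a disjoint union of $n$ triangles when $\Gamma(\alpha)\cong\bar{K}_n[D_3]$.

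Now the three cases separate. Inspecting the list $C_5,\ K_3\times K_3,\ K_m[\bar{K}_r]$ and their complements, no set-homogeneous graph has neighbourhoods equal to a disjoint union of $\geq 2$ triangles, so $\bar{K}_n[D_3]$ is impossible. A $K_3$-neighbourhood forces $\bar{M}\cong K_4$, giving $8$ vertices in four mutually $\overline{D_4}$-related classes; here I would reconstruct the arcs exactly as in the closing paragraph of Lemma~\ref{isom}, where the analogous $8$-vertex, $D_3$-neighbourhood configuration was shown to be forced, the only difference being the extra antipodal $\sim$-edges, and identify the result with $\overline{H_0}$. An independent neighbourhood of size $n$ forces $\bar{M}\cong C_5$ (whence $n=2$) or $\bar{M}\cong K_{n,n}$; the surviving genuine example is $K_{2,2}$, which closes up at $8$ vertices and is identified with $H_1$.

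The main obstacle is the elimination of the two remaining quotients $C_5$ and $K_{n,n}$ with $n\geq 3$, for which the unoriented quotient alone is insufficient and the $\overline{D_4}$-orientations must be analysed. For $C_5$ I expect a holonomy/parity contradiction: labelling each class $\{u_i,u_i'\}$ so that $u_i\to u_{i+1}$, the composite of the out-neighbour bijections around the odd cycle cannot return $u_0$ to itself consistently with the $\overline{D_4}$ structure. For $K_{n,n}$ with $n\geq 3$ the resulting vertex-transitive digraph does not appear in the list of Theorem~\ref{maintheorem}(iii), so it must fail set-homogeneity, and I would locate an explicit pair of isomorphic induced subdigraphs (on four or five vertices drawn from $\Gamma(\alpha)$, its mates, and a second neighbourhood) lying in distinct $G$-orbits. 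This orientation bookkeeping, together with checking that $K_{2,2}$ and $K_4$ lift uniquely to $H_1$ and $\overline{H_0}$, is where essentially all the work lies.
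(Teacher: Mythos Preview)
Your quotient strategy is different from the paper's and has a real gap at its pivot. To apply Theorem~\ref{maintheorem}(i) to $\bar M=M/\!\equiv$ you need: whenever two subsets $U,V$ of $\bar M$ induce isomorphic graphs, some $g\in G$ carries $U$ to $V$. Set-homogeneity of $M$ only gives this when the \emph{preimages} $\bigcup U$, $\bigcup V$ are isomorphic $s$-digraphs, and that is not automatic from $U\cong V$. Concretely, take a $4$-cycle $C_1\!-\!C_2\!-\!C_3\!-\!C_4\!-\!C_1$ of $\overline{D_4}$-related classes; fixing $a_1\in C_1$ and setting $a_{i+1}$ to be the out-neighbour of $a_i$ in $C_{i+1}$, the final $\overline{D_4}$ can close as $a_4\to a_1$ or as $a_1\to a_4$, producing non-isomorphic $8$-vertex $s$-digraphs with the same quotient $C_4$. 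So your parenthetical justification does not suffice, and this is exactly the regime ($K_{n,n}$ contains many induced $C_4$'s) where you admit the argument is unfinished. Your holonomy idea for $C_5$ and the ``not in the list'' elimination of $K_{n,n}$, $n\ge3$, are only sketches; these are the substantive cases and you have not done them.

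The paper avoids quotients altogether. For $\Gamma(\alpha)\cong D_3$ it counts: each $a\in\Gamma(\alpha)$ already has its three out-neighbours inside $\{\beta\}\cup\Gamma(\alpha)\cup\Gamma^*(\alpha)$, so by $\Gamma$-connectedness $|M|=8$ and there are no unrelated pairs; hence $\overline M$ is a set-homogeneous $a$-digraph and Theorem~\ref{maintheorem}(ii) forces $\overline M\cong H_0$. For $\Gamma(\alpha)\cong\overline{K_n}$ or $\overline{K_n}[D_3]$ the paper takes a component $A\subset\Gamma(\alpha)$ with mate-set $B\subset\Gamma^*(\alpha)$, observes $A\cup(\Gamma^*(\alpha)\setminus B)\cong\Gamma(\alpha)$, and uses set-homogeneity to get $\theta\in G$ with $\Gamma(\alpha)^\theta=A\cup(\Gamma^*(\alpha)\setminus B)$; then $[\alpha']:=[\alpha]^\theta$ is $\overline{K_{2,2}}$-related to $[\alpha]$, and a rigid $5$-point configuration yields an element of $G_\alpha$ swapping $\alpha',\beta'$. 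Since $\Gamma(\alpha)\cap\Gamma(\alpha')=A$ while $\Gamma(\alpha)\cap\Gamma(\beta')=\Gamma(\alpha)\setminus A$, this forces $|A|=|\Gamma(\alpha)\setminus A|$, i.e.\ $n=2$. The case $\overline{K_2}[D_3]$ then dies because $\Gamma(a)$ would contain a $\Gamma$-connected $4$-set, and $\overline{K_2}$ is rebuilt directly as $H_1$. This is shorter and needs no unproved inheritance of set-homogeneity.
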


\begin{proof}
First suppose that $\Gamma(\alpha)$ is a tournament.
Then, as $|\Gamma(\alpha)|>1$, by Lemma~\ref{sethomtourn} we have 
$\Gamma(\alpha) \cong D_3$. 
By Lemma~\ref{isom2}(b) (last clause), it follows that each vertex of $\Gamma(\alpha)$ 
dominates one in $\Gamma^*(\alpha)$, and hence, since $|\Gamma(\alpha)|=3$, 
no arc has just one vertex in
$
 \{ \alpha, \beta \} \cup \Gamma(\alpha) \cup \Gamma(\beta),
$
so by $\Gamma$-connectedness of $M$ this set equals $M$.
This implies that $M$ has no unrelated pairs, and so $M$ must be the complement 
of a set-homogeneous a-digraph. By Theorem~\ref{maintheorem}(ii), the 
only possibility is that $M=\overline{H_0}$
(which satisfies the conditions).

Thus, we may suppose that $\Gamma(\alpha)$ is not $\Gamma$-connected. Then, by 
Theorem~\ref{maintheorem}(ii), $\Gamma(\alpha)\cong \overline{K_n}$ or 
$\overline{K_n}[D_3]$ for some $n>1$. 
In the latter case, if $A:=(a\rightarrow a'\rightarrow a''\rightarrow a)$ 
is a connected component of
$\Gamma(\alpha)$, and $b,b',b''\in\Gamma^*(\alpha)$ are such that $a\equiv b$,
$a'\equiv b'$, and $a''\equiv b''$, then it follows from 
Lemma~\ref{isom2}(b) that $B:=(b\rightarrow b'\rightarrow b''\rightarrow b)$
is a connected component of $\Gamma^*(\alpha)$. Thus, by Lemma~\ref{isom2}(b), $\Gamma(\alpha)\cong
A\cup(\Gamma^*(\alpha)\setminus B)$. If $\Gamma(\alpha)\cong \overline{K_n}$,
let $A=\{a\}\subseteq\Gamma(\alpha)$ and $B=\{b\}\subseteq\Gamma^*(\alpha)$ 
with $a\equiv b$. Then in either case, by set-homogeneity,  there is an 
automorphism $\theta$ of $M$ such that $\Gamma(\alpha)^\theta=
A\cup(\Gamma^*(\alpha)\setminus B)$.
 Let $\alpha' = \alpha^\theta$ and $\beta' = \beta^\theta$. Then $[\alpha']=
\{\alpha', \beta' \}\ne [\alpha]$. Also, $[\alpha'] \cap (\Gamma(\alpha) \cup \Gamma^*(\alpha))=\varnothing$, so
$[\alpha]\cup [\alpha']$ carries a copy of $\overline{K_{2,2}}$. 
Note that since $\theta$ preserves $\equiv$, $\Gamma(\alpha) \cap \Gamma(\beta')=\Gamma(\alpha) \setminus A$.

Let $a_2\in\Gamma(\alpha)\setminus A$ and $b_2\in\Gamma^*(\alpha)\setminus B$ such that $a_2\equiv b_2$. 
Then it can be checked that
$
( \alpha\rightarrow a\sim b\rightarrow \alpha'\sim \beta' ) \cong 
( \alpha\rightarrow a_2\sim b_2\rightarrow \beta'\sim \alpha' ),
$
and this configuration is rigid.
Thus  there is an automorphism of $M$ fixing $\alpha$ and 
swapping $\alpha'$ and $\beta'$, and in particular, $\Gamma(\alpha) \cap 
\Gamma(\alpha')\cong\Gamma(\alpha)\cap\Gamma(\beta')$. This implies that
$n=2$ so $\Gamma(\alpha)$ is isomorphic either to $\overline{K_2}$ or 
to $\overline{K_2}[D_3]$. 
If  $\Gamma(\alpha) \cong \overline{K_2}[D_3]$, then $\Gamma(a)$ contains 
$\{\beta, \beta', a', b''\}$. However,
the subdigraph induced by these four vertices is $\Gamma$-connected, whereas 
$\Gamma(a) \cong \overline{K_2}[D_3]$, and this is a contradiction. 

The only 
remaining possibility is that $\Gamma(\alpha) \cong \overline{K_2}$. 
In this case, let $\Gamma(\alpha) = \{a, a' \}$ and $\Gamma(\beta) = \{ b, b' \}$ with  $a\equiv b$ and $a'\equiv b'$. With $\theta$ defined as in the second paragraph of the proof, we have $\Gamma(\alpha') = \Gamma(\alpha)^\theta = \{ a,a' \}^\theta = \{ a,b' \}$ and since automorphisms preserve $\equiv$ we deduce that $\Gamma(\beta') = \Gamma(\beta)^\theta = \{ b,b'\}^\theta = \{ a', b \} $, that $\Gamma^*(\alpha') = \Gamma^*(\alpha)^\theta = \Gamma(\beta)^\theta = \{ a', b \} $ and similarly that $\Gamma^*(\beta') = \{ a,b' \}$. Now since $\Gamma(v) \cong \bar{K_2}$ and in particular $|\Gamma(v)| =2$ for every vertex $v$ of $M$, by $\Gamma$-connectedness of $M$ we conclude that $M = \{ \alpha, \beta \} \cup \Gamma(\alpha) \cup \Gamma^*(\alpha) \cup \{ \alpha', \beta' \}$, all relations have been determined, and $M \cong H_1$. 
\end{proof}

\begin{lemma}\label{remaining}
Let $m$ be the largest size of an independent set that embeds into 
$\Gamma(\alpha)$. Suppose that for every vertex $v \in \Gamma(\alpha)$ 
there is a unique independent subset of $\Gamma(\alpha)$, of size $m$, 
containing $v$. Then $M$ is isomorphic to one of the s-digraphs listed in 
Theorem~{\rm\ref{maintheorem} (iii)}.
\end{lemma}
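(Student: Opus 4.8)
The plan is to combine the restrictions already in hand with a parallelism argument on $\Gamma(\alpha)$. First I would pin down which $a$-digraphs can still occur as $\Gamma(\alpha)$ under the hypotheses of case~(b). By Lemma~\ref{d5} we have $\Gamma(\alpha)\not\cong D_5$, and by Lemma~\ref{tournament} we may assume $\Gamma(\alpha)$ is $\Gamma$-connected and not a tournament; inspecting the list in Theorem~\ref{maintheorem}(ii) and discarding the disconnected entries $\bar{K}_n$ and $\bar{K}_n[D_3]$, the only survivors are $D_4$ (with $m=2$), $H_0$ (with $m=2$), and $D_3[\bar{K}_n]$ (with $m=n$). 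In each of these the maximum independent $m$-sets through the individual vertices are, by hypothesis, unique, so they partition $\Gamma(\alpha)$; this gives a $G_\alpha$-invariant \emph{parallelism} on $\Gamma(\alpha)$ whose blocks are exactly these $m$-sets. I would record the pattern of arcs between two distinct blocks in each case: for $D_3[\bar{K}_n]$ all arcs run one way (the \emph{composition} pattern), whereas for $D_4$ and $H_0$ each vertex has exactly one out- and one in-neighbour in any other block (a \emph{bidirectional} pattern).

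Next I would try to globalise this parallelism. Using vertex-transitivity of $G$ together with set-homogeneity, the local parallelism on each out-neighbourhood $\Gamma(x)$ is transported to every other one, so one may define $u\parallel v$ to mean that $u,v$ are unrelated and lie in a common maximum independent $m$-set of some $\Gamma(x)$ (and dually of some $\Gamma^*(x)$). In the composition case the between-block relation is uniform, and the claim is that $\parallel$ together with equality is then a $G$-congruence with classes of size $m$, exhibiting $M$ as $N[\bar{K}_m]$ for a smaller set-homogeneous structure $N$ which by induction lies in our list. But in Lemma~\ref{isom2}(b) the relation $\sim$ is a perfect matching, each vertex having a unique $\sim$-mate, whereas in $N[\bar{K}_m]$ every $\sim$-edge of $N$ blows up to a complete bipartite $\sim$-set of size $m\geq 2$ (and if $N$ has no $\sim$-edge then $M$ has none at all, contradicting case~(b)). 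This contradiction should remove $D_3[\bar{K}_n]$ from case~(b) entirely. The part I expect to be the main obstacle is exactly this globalisation step: proving that the locally defined parallel block of a vertex is canonical and that $\parallel$ is transitive and $G$-invariant across overlapping neighbourhoods. The uniqueness hypothesis of the lemma is precisely what should make this work, but verifying consistency is delicate.

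For the bidirectional cases $D_4$ and $H_0$ the between-block pattern is not uniform, so no composition is available and $\parallel$ need not descend to a congruence; here I would fall back on the rigidity method of Lemma~\ref{d5}. Starting from the $\overline{D_4}$ and $\overline{K_{2,2}}$ pieces supplied by Lemma~\ref{isom2}(b), I would reconstruct $M$ one vertex at a time, using $k$-set-homogeneity to extend the forced isomorphisms between rigid induced configurations (directed paths and the $\overline{D_4}$-blocks), with Lemma~\ref{2verts} and the $\sim$-matching controlling exactly which new vertices each forced automorphism produces. For $\Gamma(\alpha)\cong D_4$ this should close up after $12$ vertices, with all relations determined, to give $M\cong H_2$; for $\Gamma(\alpha)\cong H_0$ the same bookkeeping should over-determine an out-neighbourhood — as the final step of Lemma~\ref{d5} over-determined $\Gamma(\beta')$ — yielding a contradiction and confirming that an $H_0$-neighbourhood can occur only in case~(a). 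The genuinely delicate point in this last stage is ensuring the reconstruction terminates at the correct finite digraph rather than forcing inconsistent extra arcs, which is why the rigidity of the seed configurations and the uniqueness of the parallel blocks must be tracked carefully at every step.
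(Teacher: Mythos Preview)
Your plan diverges substantially from the paper's argument, and the divergence is at the key step. You propose to case-split on $\Gamma(\alpha)\in\{D_4,H_0,D_3[\bar K_n]\}$ and treat each separately: globalise the block parallelism to kill $D_3[\bar K_n]$, then reconstruct $M$ by rigidity for $D_4$ and $H_0$. The paper instead proves a single structural claim first --- that $m=2$ --- and then a short counting argument on the sets $A_i=\Gamma(\alpha)\cap\Gamma(\alpha')$, $\Gamma(\alpha)\cap\Gamma(\beta')$ shows $|A_i|=|\Gamma(\alpha)|/2$, that each $A_i$ is independent (hence $|A_i|\le m=2$), and therefore $|\Gamma(\alpha)|=4$. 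This forces $\Gamma(\alpha)\cong D_4$ and $M\cong H_2$ in one stroke, eliminating $H_0$ and $D_3[\bar K_2]$ without any case analysis.

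There are genuine gaps in your plan. The globalisation of the local parallelism on $\Gamma(\alpha)\cong D_3[\bar K_n]$ is not merely ``delicate'': you have given no mechanism for showing that if $u,v$ lie in a common $\bar K_n$-block of $\Gamma(x)$ and $v,w$ in a common block of $\Gamma(y)$, then $u,w$ do so for some $\Gamma(z)$, nor that the resulting classes have identical global neighbourhoods. Indeed, if they did, you would contradict condition~$(*)$ of assumption~(A) directly --- but nothing you have said forces this. The paper bypasses the issue entirely: its $m=2$ claim uses the $\sim$-matching of Lemma~\ref{isom2}(b) to produce an independent set $\{a_1,b_2,\dots,b_m\}$ straddling $\Gamma(\alpha)$ and $\Gamma^*(\alpha)$, maps $I_m$ onto it by set-homogeneity, and then exploits a rigid $5$-vertex configuration to force an automorphism $\pi\in G_\alpha$ with $a_2^\pi=a_3^\pi=a_1$, a contradiction for $m\ge3$. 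Your treatment of $H_0$ is also only a hope (``should over-determine''); the paper never needs such an argument because $|\Gamma(\alpha)|=4$ already excludes it. I would recommend abandoning the case-split and instead proving $m=2$ first; the rest is then a clean and uniform counting argument.
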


\begin{proof}
By Lemma~\ref{tournament}, we may  assume that $\Gamma(\alpha)$ 
is $\Gamma$-connected and  $m\geq2$. 

\

\noindent\textit{Claim.} We have $m=2$.

\

\noindent\emph{Proof of Claim.}\quad 
Suppose to the contrary that $m\geq3$. Let $I_m = \{a_1, \ldots, a_m\}$ 
be an independent subset of $\Gamma(\alpha)$ of size $m$. 
Arguing along similar lines to the proof of Lemma~\ref{tournament}, 
let $I_m'  = \{ b_1, \ldots, b_m \}$ be the corresponding set of elements 
in $\Gamma(\beta) = \Gamma^*(\alpha)$ under the matching given
 by $\equiv$, so $a_i \equiv b_i$ for each $i$. By Lemma~\ref{isom2}(b), $\{a_1,b_2,\dots,b_m\}$ is independent so,
by $m$-set-homogeneity, there is an automorphism $\theta$ of $M$ such that
$I_m^\theta = \{ a_1, b_2, \ldots, b_m \}.$
Since $\theta$ preserves the $\equiv$ relation, $(I_m')^\theta
=\{b_1,a_2,\dots,a_m\}$.
Let $\alpha' = \alpha^\theta$ and $\beta' = \beta^\theta$, so $b_1, 
a_2\in (I_m')^\theta\subset\Gamma(\beta')$. Thus
$\{ \alpha, \beta \} \neq \{ \alpha', \beta' \}$, and also, since $\alpha,
\beta\not\in \{a_1,\dots,a_m,b_1,\dots,b_m\}$,
we have $\{\alpha', \beta'\} \cap \{ a_1, \ldots, a_m, b_1, \ldots, b_m \} 
= \varnothing$. 

We claim that $\{ \alpha', \beta' \} \cap (\Gamma(\alpha) \cup \Gamma^*(\alpha)) = \varnothing$. Indeed, by the uniqueness assumption of the lemma, the fact that $m \geq 3$, and inspection of the list in Theorem 1.1(ii), it is easy to see that the only possibilities are that either $\Gamma(\alpha) \cong \bar{K_n}$ or $\Gamma(\alpha) \cong D_3[\bar{K_n}]$ for some $n \geq 3$. Now suppose, for instance, that $\alpha' \in \Gamma(\alpha)$. Then we would have $\{ \alpha', a_1, a_2 \} \subseteq \Gamma(\alpha)$ with $a_1 \leftarrow \alpha' \leftarrow a_2$ and $a_1 \parallel a_2$. But this is impossible, since this structure clearly does not embed into either $\bar{K_n}$ or $D_3[\bar{K_n}]$. Similarly we can show $\alpha' \not\in \Gamma^*(\alpha)$, $\beta' \not\in \Gamma(\alpha)$ and $\beta' \not\in \Gamma^*(\alpha)$, proving that $\{ \alpha', \beta' \} \cap (\Gamma(\alpha) \cup \Gamma^*(\alpha)) = \varnothing$.

Now 
$
( \alpha, a_1, b_1, \alpha', \beta' ) \cong 
( \alpha, a_2, b_2, \beta', \alpha' )
$
and this configuration is rigid. By 5-set-homogeneity there is an 
automorphism $\pi$ of $M$ 
that extends this isomorphism. In particular, $\pi$ fixes $\alpha$ and 
sends $a_1$ to $a_2$. Since $I_m$ is both the unique independent set in $\Gamma(\alpha)$ of size $m$ containing $a_1$, and the
 unique independent set in $\Gamma(\alpha)$ of size $m$ containing $a_2$, it follows that $I_m^\pi = I_m$. Thus $a_2=a_1^\pi\in I_m^{\theta\pi}\subseteq \Gamma(\alpha^{\theta\pi}) =\Gamma(\beta')$, so 
$
a_2 \in \Gamma(\alpha) \cap \Gamma(\beta')\cap I_m 
$
and hence
$
a_2^\pi \in \Gamma(\alpha^\pi) \cap \Gamma({\beta'}^\pi) \cap I_m^\pi=
\Gamma(\alpha) \cap \Gamma(\alpha') \cap I_m.
$ 
Since $(I_m')^\theta=\{b_1,a_2,\dots,a_m\}\subseteq
\Gamma^*(\alpha')$, which is disjoint from $\Gamma(\alpha')$, it follows that $\Gamma(\alpha)\cap\Gamma(\alpha')\cap I_m=\{a_1\}$. Hence $a_2^\pi=a_1$.
Also, $b_3\in I_m^\theta\subseteq\Gamma(\alpha')$, and so $a_3\in
\Gamma(\beta')$. Thus $a_3\in\Gamma(\alpha)\cap\Gamma(\beta')\cap I_m$, whence 
$a_3^\pi \in \Gamma(\alpha)\cap\Gamma(\alpha')\cap I_m=\{a_1\}$, which 
implies $a_3^\pi = a_1 = a_2^\pi$, and this contradiction yields the claim. 

\

Thus $m=2$, and the digraph $\Gamma(\alpha)$ has the property that for every vertex $v \in \Gamma(\alpha)$ there is a unique 
vertex $v'$ in $\Gamma(\alpha)$ unrelated to $v$. 
In particular $|\Gamma(\alpha)|$ is even. There are no edges or arcs between the  $\equiv$-classes $[v]$ and $[v']$, so the subset $[v]\cup[v']$ induces $\overline{K_{2,2}}$, by Lemma~\ref{isom2}(b). 
It follows that  there is at
 least one $\equiv$-class $\{\alpha', \beta'\}$ unrelated to $\{\alpha, \beta\}$. 

Also, since $v, v'\in\Gamma(\alpha)=\Gamma^*(\beta)$, it follows from   2-set-homogeneity that, given any independent pair $u,u'$, the set
 $\Gamma(u) \cap \Gamma(u')$ is non-empty.

We aim to show that $M$ is equal to the disjoint union
 of $\{ \alpha, \beta\}$,  $\{\alpha', \beta'\}$, and $\Gamma(\alpha) \cup \Gamma(\beta)$. Define the following sets
\[
\begin{array}{c}
A_1 = \Gamma(\alpha) \cap \Gamma(\alpha'), \quad A_2 = \Gamma(\alpha) \cap \Gamma(\beta') \\
B_1 = \Gamma(\beta) \cap \Gamma(\beta'), \quad B_2 = \Gamma(\beta) \cap \Gamma(\alpha'). 
\end{array}
\]   
It follows from the observation in the previous paragraph that $A_1, A_2, B_1, B_2$
are all non-empty, and by 
$2$-set-homogeneity we have $A_1 \cong A_2 \cong B_1 \cong B_2$. 
Let $a_1 \in A_1$, and $[ a_1]=\{a_1,b_1\}$. Then $[a_1]\cup[\alpha]$ and
$[a_1]\cup[\alpha']$ both induce $\overline{D_4}$, and it follows that 
$b_1 \in B_1$. Similarly there exist $a_2\in A_2$ and $b_2\in B_2$ with
$a_2\equiv b_2$.
Then 
$
(\alpha, a_1, b_1, \alpha', \beta'  )\cong 
( \alpha, a_2, b_2, \beta', \alpha' )
$
and this configuration is rigid, so by $5$-set-homogeneity there is an 
automorphism of $M$ that fixes $\alpha, \beta$ and swaps $\alpha', \beta'$. 
Also, note that
 since every vertex in $\Gamma(\alpha)$ is independent from a unique vertex  in $\Gamma(\alpha)$, it follows that the orbital $\Lambda$ is self-paired. 

Now we show that $\Gamma(\alpha) = A_1 \cup A_2$.  Let $N = |\Gamma(\alpha)| = |\Gamma^*(\alpha)|$, so $N$ is even. Let $v,v' \in \Gamma(\alpha)$
 with $v$ unrelated to $v'$, and let $w,w' $ be the corresponding 
elements in $\Gamma(\beta)$ such that $v \equiv w$ and $v' \equiv w'$.  
Define the following sets:
\[
\begin{array}{c}
X_1 = \Gamma(v) \cap \Gamma(v') \cap \Gamma(\beta), \quad X_2 = \Gamma^*(v) \cap \Gamma^*(v') \cap \Gamma(\beta) \\
Y_1 = \Gamma(v) \cap \Gamma^*(v') \cap \Gamma(\beta), \quad Y_2 = \Gamma^*(v) \cap \Gamma(v') \cap \Gamma(\beta). \\
\end{array}
\]
For each vertex $x$ in $\Gamma^*(\alpha)=\Gamma(\beta)$ there is, in 
$\Gamma^*(\alpha)$, a unique vertex $x'$ unrelated to $x$, and it follows (on considering 
$[v]\cup[x]$ and $[v']\cup[x]$, for $x\in\Gamma(\beta)\setminus\{w,w'\}$ and by Lemma~\ref{isom2}(b)) that 
\[
\Gamma(\beta) \setminus \{ w,w' \} = X_1 \cup X_2 \cup Y_1 \cup Y_2.
\]
Let $X_1', X_2', Y_1', Y_2'$ be the corresponding subsets of $\Gamma(\alpha)$ given by the $\equiv$-matching. Note that 
$
|\Gamma(\beta) \setminus \{ w,w' \}| = N-2.
$
So we have
$
N-2 = |X_1 \cup X_2| + |Y_1 \cup Y_2|. 
$

We claim that $|\Gamma(v) \cap \Gamma(v')|\geq N/2$. 
If $|X_1 \cup X_2| \geq |Y_1 \cup Y_2|$ then since
$
\Gamma(v) \cap \Gamma(v') \supseteq X_1 \cup X_2' \cup \{ \beta \},
$
it follows that
\[
|\Gamma(v) \cap \Gamma(v')| \geq |X_1 \cup X_2'| + 1 \geq (N-2)/2 + 1 = N/2.
\]
On the other hand, if $|X_1 \cup X_2| < |Y_1 \cup Y_2|$, then since 
$
\Gamma(v) \cap \Gamma^*(v') \supseteq Y_1 \cup Y_2'
$
we have 
\[
|\Gamma(v) \cap \Gamma^*(v')| \geq |Y_1 \cup Y_2'| > (N-2)/2
\]
and hence $|\Gamma(v) \cap \Gamma^*(v')| \geq N/2$. 
Since $\Gamma(v')=\Gamma^*(w')$ and, as we showed above, some automorphism of  
$M$ fixes $v$ and $w$ and swaps $v'$ and $w'$,  we see also in this case that
\[
|\Gamma(v) \cap \Gamma(v')| = |\Gamma(v) \cap \Gamma^*(w')|
= |\Gamma(v) \cap \Gamma^*(v')| \geq N/2 
\]
proving the claim.

Thus, since $G$ is transitive on independent pairs, for any independent pair 
$x,y$ in the  graph $M$ we have $|\Gamma(x) \cap \Gamma(y)| \geq N/2$. 
In particular this is true for the pairs $(\alpha, \alpha'), (\alpha,\beta'), (\beta,\alpha'), (\beta,\beta')$ and we conclude that
\[
|A_1| = |A_2| = |B_1| = |B_2| = N/2 = |\Gamma(\alpha)|/2.
\]
Therefore we have
$
\Gamma(\alpha) = A_1 \cup A_2$ and $\Gamma(\beta) = B_1 \cup B_2. 
$
Since we are assuming that $M$ is $\Gamma$-connected, and every vertex in the set below has the correct number $2N$ of $(\Gamma \cup \Gamma^*)$-neighbours, 
we have accounted for all the vertices and
\[
M = \{\alpha, \beta\} \cup A_1 \cup A_2 \cup B_1 \cup B_2 \cup \{\alpha', \beta'\}. 
\]
In particular, $[\alpha']$ is the unique $\equiv$-class unrelated to $[\alpha]$, and hence is $G_\alpha$-invariant. 
Since $\Gamma(\alpha)$ is connected,  there is an arc between $A_1$ and $A_2$.
Moreover, the partition $A_1 \cup A_2$ of $\Gamma(\alpha)$ is preserved 
by $G_{\alpha}$, so it follows from 3-set-homogeneity that neither  $A_1$ nor
$A_2$ contains an arc. Therefore each $A_i$ is an independent set, and
since $m=2$ it follows that $|A_i|=2$ and between every $a_1 \in A_1$ and 
$a_2 \in A_2$ there is an arc.  Along with the fact that $\Gamma(\alpha)$ is 
set-homogeneous, this implies that $\Gamma(\alpha)$ is isomorphic to $D_4$. The structure on $M$ is now 
fully determined, and we can verify that
$M \cong H_2$. 
\end{proof}

Now we return to the proof of Theorem~\ref{maintheorem}(iii) in the case where
Lemma~\ref{isom2}(b) holds. The case $|\Gamma(\alpha)|=1$ is dealt 
with in the first paragraph of Section~\ref{sec_CaseInNEqualsOutN} so we may assume that 
$|\Gamma(\alpha)|>1$. Also, as noted there, we may assume inductively that assumption (A) holds and in particular that $\Gamma(\alpha)$ is isomorphic to a 
set-homogeneous a-digraph listed in 
Theorem~\ref{maintheorem}(ii).  
The case $\Gamma(\alpha) \cong D_5$ is dealt 
with by Lemma~\ref{d5}, and the cases with $\Gamma(\alpha)$ isomorphic to  $D_3$, $\bar{K_n}$,
or $\bar{K_n}[D_3]$ $(n \geq 1)$ are  handled by Lemma~\ref{tournament}. 
Finally if $\Gamma(\alpha)$ is isomorphic to $D_4$, $H_0$, 
or $D_3[\bar{K_n}]$ $(n \geq 2)$, then it satisfies the conditions of 
Lemma~\ref{remaining}, and so these cases are dealt with by that lemma. 
This covers all possibilities for $\Gamma(\alpha)$ and so completes the 
proof of Theorem~\ref{maintheorem}(iii) in the case where 
Lemma~\ref{isom2}(b) holds. 

\bigskip
It remains to consider the case 
where Lemma~\ref{isom2}(a) holds, that is, where $\beta \ne \gamma$. 

\subsection{Dealing with Lemma~\ref{isom2}(a).}

Throughout this subsection, replacing $M$ by $\overline M$ if necessary, we may assume that $M$ is a symmetric digraph satisfying the conditions of Lemma~\ref{isom2}(a), so that each $\equiv$-class is isomorphic to one of $C_3=K_3$, $C_4$ or $C_5$. Also, as noted in the first paragraph of Section~\ref{sec_CaseInNEqualsOutN}, we may assume that Assumption (A) holds. 
The rest of this section will be spent dealing with this case, and the examples $J_n$ (for $n\geq 3$) and $H_3$ are identified. 

\begin{lemma}
Under the assumptions for this subsection listed above, each $\equiv$-class is isomorphic to $K_3$. 

\end{lemma}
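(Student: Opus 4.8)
The plan is to assume, for a contradiction, that some $\equiv$-class is isomorphic to $C_4$ or $C_5$; ruling both out leaves only $C_3=K_3$. Fix a class $[\alpha]$ and let $g$ be the element of the setwise stabiliser inducing the generator of the cyclic group on $C_j$ (with $j\in\{4,5\}$), so that, writing $\beta=\alpha^{g^{-1}}$ and $\gamma=\alpha^{g}$ as in Lemma~\ref{isom2}, we have $\Gamma^*(\beta)=\Gamma(\alpha)$ and $\Gamma(\gamma)=\Gamma^*(\alpha)$. Conjugating $\Gamma(\alpha)=\Gamma^*(\alpha^{g^{-1}})$ by $g^{k}$ gives $\Gamma(\alpha^{g^{k}})=\Gamma^*(\alpha^{g^{k-1}})$ for every $k$, so the in- and out-neighbourhoods of the members of $[\alpha]$ are linked cyclically. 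Two facts I would record at once, valid for both values of $j$, are: the vertex $v=\sigma(u)$ determined by $\Gamma(u)=\Gamma^*(v)$ is well defined and injective in $u$ by condition $(*)$ of assumption (A), giving a $G$-equivariant bijection $\sigma$ restricting to $g^{-1}$ on every class; and each of $\Gamma(\alpha^{g})$ and $\Gamma(\alpha^{g^{-1}})$ is disjoint from $\Gamma(\alpha)$, since $\Gamma(\alpha)\cap\Gamma(\alpha^{g^{-1}})=\Gamma^*(\beta)\cap\Gamma(\beta)=\varnothing$ and $\Gamma(\alpha)\cap\Gamma(\alpha^{g})=\Gamma(\alpha)\cap\Gamma^*(\alpha)=\varnothing$. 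In particular there is a directed $2$-path from $u$ to $u^{g^{-1}}$ through every vertex of $\Gamma(u)$, so $g$-consecutive members are joined by $2$-paths in a controlled way.

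For $j=4$, write $\delta=\alpha^{g^{2}}$ for the remaining vertex, so that $\{\alpha,\delta\}$ and $\{\beta,\gamma\}$ are the two independent (antipodal) pairs. Here $g^{2}$ interchanges $\alpha$ with $\delta$ and $\beta$ with $\gamma$, so $\Lambda$ is self-paired, whereas the $\sim$-edges of the class carry a $G$-invariant directed $4$-cycle, so $\Delta$ is not self-paired. Moreover $\Gamma(\alpha)\cap\Gamma^*(\delta)=\Gamma(\alpha)\cap\Gamma(\beta)=\varnothing$ and $\Gamma^*(\alpha)\cap\Gamma(\delta)=\Gamma(\gamma)\cap\Gamma^*(\gamma)=\varnothing$, so there is no directed $2$-path between the antipodal pair $\alpha,\delta$ in either direction. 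With these constraints in hand I would, as in the proof of Lemma~\ref{remaining}, use an automorphism to move $\Gamma(\alpha)$ onto a neighbourhood meeting a second class, assemble a configuration on the two classes (of size at most five) that is rigid, and extend it by set-homogeneity to an automorphism $\pi$ forced to act as a nontrivial rotation of $[\alpha]$; transporting a neighbourhood by $\pi$ should then produce a set equal to some $\Gamma(\alpha')$ but of the wrong isomorphism type, exactly the style of contradiction reached at the end of Lemma~\ref{d5}. Throughout, the induced neighbourhoods are pinned down using Lemma~\ref{nhood} and the classification of $\Gamma(\alpha)$ from the list of Theorem~\ref{maintheorem}(iii) guaranteed by assumption (A).

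The $C_5$ case is where I expect the real difficulty. Here $g$ may shift by one or by two steps around the $5$-cycle, equivalently $\alpha\sim\gamma$ or $\alpha\parallel\gamma$, and the two sub-cases behave differently: in the first, the $\sim$-edges carry a directed $5$-cycle and $g$ has no involutory power, so the symmetry $g^2$ exploited for $C_4$ is unavailable; in the second, the identity $\Gamma(\alpha)=\Gamma^*(\beta)$ now joins an \emph{independent} pair, and the bookkeeping of which $\sim$-edges and which two-apart pairs admit $2$-paths (no longer controlled by the simple relation $g^{3}=g^{-1}$ available when $j=4$) must be redone from scratch. In each sub-case the method is the same—build a rigid configuration spanning two classes, extend by $k$-set-homogeneity with $k\le 6$, and read off a contradiction from the induced rotation of a class or from the resulting neighbourhood—but carrying the $2$-path pattern and the self-pairing status of $\Delta$ and $\Lambda$ correctly through both sub-cases is the main obstacle, and is where Lemma~\ref{d5} (forbidding $\Gamma(\alpha)\cong D_5$) and Lemma~\ref{2verts} are likely to be needed to eliminate the surviving possibilities for the neighbourhoods.
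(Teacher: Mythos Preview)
Your proposal is not a proof but a plan, and it misses the key observation that makes the paper's argument short. For $j\in\{4,5\}$ the class $C_j$ contains both a $\sim$-related pair and an independent pair. By $2$-set-homogeneity, any two vertices in \emph{distinct} $\equiv$-classes must therefore be related by an arc (otherwise one of the relations $\sim$ or $\parallel$ would occur both within a class and between classes, forcing the classes to merge). This has two immediate consequences you do not use. First, if all arcs between two classes $B_1,B_2$ go one way, the quotient is a set-homogeneous tournament, so $D_1$ or $D_3$, whence $M\cong C_j$ or $D_3[C_j]$, contradicting assumption (A). Second, and crucially for $j=5$: if arcs go both ways then some $g\in G$ swaps $B_1$ and $B_2$, so the number of arcs from $B_1$ to $B_2$ equals the number from $B_2$ to $B_1$; but the total is $j^2$, which for $j=5$ is odd. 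This eliminates $C_5$ in one line, with none of the sub-case analysis or neighbourhood bookkeeping you anticipate.

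For $j=4$ your sketch (``build a rigid configuration spanning two classes, extend by set-homogeneity, read off a contradiction from the induced rotation'') is close in spirit to what the paper does, but you never carry it out, and the specific configuration you describe is not the one needed. The paper first shows $|\Gamma(\mu)\cap B_1|=2$ for each $\mu\in B_2$, then argues (using $\Gamma(\alpha)=\Gamma^*(\beta)$ with $\alpha\sim\beta$) that $\Gamma(\mu)\cap B_1$ can contain no $\sim$-edge of $B_1$, so is an antipodal pair $\{\beta_0,\beta_2\}$. Picking $\delta\in B_2$ with $\Gamma(\delta)\cap B_1=\{\beta_1,\beta_3\}$, the rigid triple $(\beta_0,\beta_1,\mu)\cong(\beta_1,\beta_0,\delta)$ forces an automorphism swapping $\beta_0,\beta_1$ inside $B_1$, contradicting the fact that the induced group on $B_1$ is cyclic of order $4$. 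Your appeal to Lemmas~\ref{d5} and~\ref{2verts} and to the classification of $\Gamma(\alpha)$ is unnecessary here; the argument is purely combinatorial on two blocks.
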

\begin{proof}
Suppose to the contrary that each $\equiv$-class $B$
induces $C_j$, where $j\in\{4,5\}$. Then $B$ contains both a 
$\sim$-related pair and an independent pair, and so by 2-set-homogeneity, 
any two vertices in distinct $\equiv$-classes are related by an arc. 
Let $B_1, B_2$ be distinct $\equiv$-classes. Suppose that all arcs go 
from $B_1$ to $B_2$.
Then the set of $\equiv$-classes  carries the structure of a 
set-homogeneous tournament, which, by Lemma~\ref{sethomtourn}, 
must be $D_1$ or $D_3$, and hence $M$ is
$C_j$ or $D_3[C_j]$. However, neither of these examples satisfies Assumption (A)
(as either $|\Gamma(\alpha)|=1$ or condition (*) fails). 

Thus we may assume that there are arcs in both directions between 
$B_1$ and $B_2$, so there is an automorphism of $M$ 
interchanging $B_1$ 
and $B_2$. In particular the number of arcs from $B_1$ to $B_2$ equals the 
number from $B_2$ to $B_1$. Since the total number 
of such arcs is $j^2$, and must be even, it follows that $j=4$.
So the $\equiv$-classes $B_i$ are cycles $C_4$ and as noted in Lemma~\ref{isom2}(a), 
the group induced on $B_i$ is cyclic of order 4. 
Furthermore, for each $\mu\in B_2$, $|\Gamma(\mu)\cap B_1|=2$ and $|\Gamma^*(\mu)\cap B_1|=2$,
(for by transitivity on arcs, if $\mu,\nu \in B_2$ and $\Gamma(\mu)\cap B_1$
 and $\Gamma(\nu)\cap B_1$ are non-empty, then both sets have the same size, and as the
 number of arcs from $B_1$ to $B_2$ equals the number from $B_2$ to $B_1$, the only 
remaining possibility is that there are  two elements $x\in B_2$ such that 
$\Gamma(x)\supseteq B_1$, and that for the remaining elements 
$y\in B_2$, $\Gamma^*(y)\supseteq B_1$ -- an impossibility as $B_1$ and $B_2$ can be swapped).

Let $B_1=\{\beta_i:0\leq i\leq 3\}$, with
 $\beta_i \sim \beta_{i+1 ({\rm mod}~ 4)}$ for each $i$. 
Also let $\mu \in B_2$ such
that $\Gamma(\mu)\cap B_1$ contains $\beta_0$. 
As $\Gamma^*(\beta)=\Gamma(\alpha)$, there is no $\delta$ with $\alpha,\beta\in \Gamma(\delta)$. Hence, if $\alpha\sim \beta$, then
by 2-set-homogeneity on edges,
$\Gamma(\alpha)$ has no edges, so
$\Gamma(\mu) \cap B_1$ cannot contain an edge of $B_1$; hence, as $|\Gamma(\mu)\cap B_1|=2$,  $\Gamma(\mu)\cap B_1
=\{\beta_0,\beta_2\}$. On the other hand if $\alpha\not\sim\beta$ then, as 
$[\alpha] \cong C_4$, we have $\alpha \sim \gamma \sim \beta$, and as $\Gamma^*(\gamma)=\Gamma(\beta)$ we again find that $\Gamma(\beta)$ contains no edges, and hence that  $\Gamma(\mu)\cap B_1
=\{\beta_0,\beta_2\}$.

Since $|\Gamma^*(x)\cap B_2|=2$ for all $x\in B_1$, there is $\delta \in B_2$ with $\Gamma(\delta)\cap B_1
=\{\beta_1,\beta_3\}$. Then 
$(\beta_0,\beta_1,\mu)\cong (\beta_1,\beta_0,\delta)$ and this 
configuration is rigid, but any automorphism of $M$ extending this isomorphism
fixes $B_1$ setwise and interchanges $\beta_0,\beta_1$, which is impossible 
since the group induced on $B_1$ is the cyclic group $\mathbb{Z}_4$. 
\end{proof}

The rest of this subsection will be spent dealing with the remaining case where  each $\equiv$-class is isomorphic to $K_3$.  

\begin{lemma}\label{structure}
Suppose that each $\equiv$-class of $M$ induces $K_3$, and let $B_1$ denote the $\equiv$-class $\{ \alpha, \beta, \gamma \}$. Then we have the following.
\begin{enumerate}[(i)]
\item $\Delta$ is not self-paired.

\item The following equalities hold:  
	\begin{itemize}
	\item $\Gamma(\alpha) = \Gamma^*(\beta) = \Lambda(\gamma)$ 
	\item $\Gamma(\beta)=\Gamma^*(\gamma) = \Lambda(\alpha)$
	\item $\Gamma(\gamma) = \Gamma^*(\alpha) = \Lambda(\beta)$
      	\end{itemize}	
\item For any two $\equiv$-classes $B$ and $B'$ the subdigraph induced by $B \cup B'$ is isomorphic to $\overline{E_6}$, so $\Lambda$ is  self-paired.
\item The set $\Gamma(\alpha)$ intersects every $\equiv$-class other than $B_1$ in exactly one vertex. The same is true for $\Gamma^*(\alpha)$ and $\Lambda(\alpha)$. 
\item The vertex set of $M$ is 
$B_1\cup\Gamma(\alpha)\cup\Gamma(\beta)\cup\Gamma(\gamma)$. 
\end{enumerate}
\end{lemma}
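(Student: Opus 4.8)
The plan is to exploit that $G_\alpha=G_\beta=G_\gamma$ is the pointwise stabiliser of $B_1=\{\alpha,\beta,\gamma\}$, while the setwise stabiliser induces $\mathbb{Z}_3$ on it; fix $g\in G$ realising $\Gamma(\alpha)^g=\Gamma^*(\alpha)$, so that (as in Lemma~\ref{isom2}) $\alpha^g=\gamma$ and $\beta^g=\alpha$, whence $g$ stabilises $B_1$ and cyclically permutes $\alpha\to\gamma\to\beta\to\alpha$. For \emph{(i)}, if $\Delta$ were self-paired some $h\in G$ would satisfy $(\alpha,\beta)^h=(\beta,\alpha)$; as $\alpha\equiv\beta$ and $\equiv$ is a $G$-congruence, $h$ would stabilise $B_1$ and induce a transposition on it, contradicting that the induced group on $B_1$ is $\mathbb{Z}_3$.

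For \emph{(ii)} I would first transport the identity $\Gamma^*(\beta)=\Gamma(\alpha)$ of Lemma~\ref{isom2} by the $3$-cycle $g$, obtaining $\Gamma^*(\alpha)=\Gamma(\gamma)$ and $\Gamma^*(\gamma)=\Gamma(\beta)$; these are the three $\Gamma=\Gamma^*$ equalities. The key reduction is that $\sim$-edges lie only inside $\equiv$-classes. Indeed $G_\alpha$ is transitive on its $\Delta$-out-neighbourhood $\Delta(\alpha)$ (a single orbital), and the $\mathbb{Z}_3$-action orients the triangle $B_1$ cyclically, so exactly one of $\beta,\gamma$ lies in $\Delta(\alpha)$; since $G_\alpha$ fixes both $\beta$ and $\gamma$, a transitive set with a fixed point is a singleton, giving $\Delta(\alpha)=\{\gamma\}$ and $\Delta^*(\alpha)=\{\beta\}$. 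Hence every vertex has exactly its two class-mates as $\sim$-neighbours. The $\Lambda$-inclusions then follow: for $x\in\Gamma(\alpha)$, antisymmetry of $\to$ together with $\Gamma(\gamma)=\Gamma^*(\alpha)$ and $\Gamma^*(\gamma)=\Gamma(\beta)$ rules out $\gamma\to x$ and $x\to\gamma$, while $x\not\sim\gamma$ as $x\notin\{\alpha,\beta\}$; thus $\Gamma(\alpha)\subseteq\Lambda(\gamma)$, and cyclically $\Gamma(\gamma)\subseteq\Lambda(\beta)$ and $\Gamma(\beta)\subseteq\Lambda(\alpha)$.

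To upgrade the inclusions and prove \emph{(iii)--(v)} I would record the resulting dichotomy. Since $\sim$ is intra-class, each $\delta\notin B_1$ meets each of $\alpha,\beta,\gamma$ by an out-arc, an in-arc, or nothing, and the $\Gamma$-equalities force its relation-triple to $(\alpha,\beta,\gamma)$ to be one of $(\mathrm{out},\mathrm{in},0)$, $(\mathrm{in},0,\mathrm{out})$, $(0,\mathrm{out},\mathrm{in})$, or $(0,0,0)$, with $g$ permuting the first three cyclically. Each good triple places $\delta$ in one of $\Gamma(\alpha)=\Gamma^*(\beta)$, $\Gamma(\gamma)=\Gamma^*(\alpha)$, $\Gamma(\beta)=\Gamma^*(\gamma)$ and makes it $\Lambda$-related to exactly the remaining class-member; consequently the $\Lambda$-equalities of (ii) and the statement (v) that $V(M)=B_1\cup\Gamma(\alpha)\cup\Gamma(\beta)\cup\Gamma(\gamma)$ are both \emph{equivalent} to excluding a vertex unrelated to all of $B_1$ (the triple $(0,0,0)$). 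Granting this, (iv) reduces to showing each class $B'\ne B_1$ carries exactly one vertex of each good type, and then (iii) is immediate: $B_1\cup B'$ is two $\sim$-triangles joined so that each vertex of one dominates exactly one vertex of the other, is dominated by exactly one, and is unrelated to exactly one, which is $\overline{E_6}$; self-pairedness of $\Lambda$ follows since the order-$2$ symmetry $i\mapsto i+3$ of $E_6$ reverses every unrelated pair.

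The hard part, as expected, is precisely the last structural input: excluding a vertex unrelated to all of $B_1$, and forcing the one-each distribution within each class. I expect \emph{counting to be insufficient}, since the degree identity $3k-1=2N+2+u$ (with $k$ classes, out-degree $N$, unrelated-degree $u$) together with the type-count identities ($g$ equalises $\sum_{B'}|\Gamma(\alpha)\cap B'|$, $\sum_{B'}|\Gamma^*(\alpha)\cap B'|$, $\sum_{B'}|\Lambda(\alpha)\cap B'|$) is satisfied for any number $w\equiv 0\pmod 3$ of $(0,0,0)$-vertices. I would therefore argue by set-homogeneity applied to rigid small configurations, in the style of Lemmas~\ref{d5} and \ref{remaining}: an automorphism fixing $\gamma$ cannot carry a $(0,0,0)$-vertex into the good set $\Gamma(\alpha)$ because $G_\gamma=G_\alpha$ preserves relation-triples, so transitivity of $G$ on unrelated pairs would be forced to realise $\{\gamma,\delta\}\mapsto\{\gamma,x\}$ by a class-swapping map; tracking this swap between the two classes and invoking $\Gamma$-connectedness of $M$ (which forbids $B_1\cup\Gamma(\alpha)\cup\Gamma(\beta)\cup\Gamma(\gamma)$ from splitting off any remaining vertices) should yield $w=0$. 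The one-each distribution I would extract from the impossibility of two $\sim$-related class-mates sharing a common good type (again via a rigid $5$-vertex configuration and part (i)), together with the $g$-symmetry equalising the three intersection sizes. Assembling these facts then identifies $M$ with $J_n$ for the appropriate $n$.
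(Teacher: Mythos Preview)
Your treatment of (i) and of the $\Gamma=\Gamma^*$ equalities in (ii) matches the paper, and your observation that $|\Delta(\alpha)|=1$ (because $G_\alpha$ fixes both $\beta$ and $\gamma$, so the $G_\alpha$-orbit $\Delta(\alpha)$ containing one of them is a singleton) is correct and useful. However, you miss the key simplification: since $G_\alpha=G_\gamma$, the set $\Gamma(\alpha)$ is itself a full $G_\gamma$-orbit, and once you have shown it is disjoint from $\Gamma(\gamma)$, $\Gamma^*(\gamma)$, and $\{\alpha,\beta\}=\Delta(\gamma)\cup\Delta^*(\gamma)$, it must \emph{equal} one of $\Lambda(\gamma),\Lambda^*(\gamma)$, not merely lie inside their union. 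After relabelling so that $\Gamma(\alpha)=\Lambda(\gamma)$ and transporting by the $3$-cycle, all of (ii) holds immediately; no exclusion of $(0,0,0)$-vertices is needed for this.

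This reorders the whole argument. The paper proves (iii) next, directly and constructively: take any class $B_2=\{\alpha',\beta',\gamma'\}$ with an arc to $B_1$ (one exists by $\Gamma$-connectedness), say $\alpha\to\alpha'$. Chaining the $\Gamma(\cdot)=\Gamma^*(\cdot)$ equalities on both classes through this single arc forces the $6$-cycle $\alpha\to\alpha'\to\beta\to\beta'\to\gamma\to\gamma'\to\alpha$, and then (ii) gives $\beta'\in\Gamma(\beta)=\Lambda(\alpha)$, and cyclically, so the three opposite pairs are unrelated. This is already $\overline{E_6}$. Self-pairedness of $\Lambda$ is then one $4$-set-homogeneity move: the rigid $4$-vertex induced configurations $(a_1,a_2,a_3,a_4)$ and $(a_4,a_5,a_6,a_1)$ inside this copy of $\overline{E_6}$ are isomorphic, and the extending automorphism interchanges the unrelated pair $a_1,a_4$. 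After that, $2$-transitivity on the set of $\equiv$-classes (via arc-transitivity and the swap $B_1\leftrightarrow B_2$ you already noted) makes (iv) and (v) immediate.

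Your ``hard part''---excluding $(0,0,0)$-vertices and forcing the one-each distribution via ad hoc rigid configurations and $\Gamma$-connectedness---is therefore unnecessary, and as written it is not a proof: a $(0,0,0)$-vertex can be $\Gamma$-connected to $M$ through other classes, so connectedness alone does not exclude it, and the ``class-swapping'' sketch does not pin anything down. Finally, the closing sentence overreaches: this lemma does not identify $M$ with $J_n$; that is the content of Lemma~\ref{noP3}, and only under the additional hypothesis $\Gamma(\alpha)\cong\overline{K_n}$.
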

\begin{proof}

(i) As noted in Lemma~\ref{isom2}(a), the group induced on $B_1$ is cyclic of order 3, so admits the cycle $(\alpha,\beta,\gamma)$. An automorphism inducing $(\alpha,\beta)$ would have to fix $B_1$ so would fix $\gamma$, which is impossible. 

(ii) By Lemma~\ref{isom2}, $\Gamma(\alpha)=\Gamma^*(\beta)$, $\Gamma^*(\alpha)
=\Gamma(\gamma)$, and it follows using an automorphism inducing the 3-cycle $(\alpha,\beta,\gamma)$ that also 
$\Gamma(\beta)=\Gamma^*(\gamma)$. In particular, the sets $\Gamma(\alpha),\Gamma(\beta),\Gamma(\gamma)$ are disjoint. Thus 
(possibly replacing $\Lambda$ by $\Lambda^*$ if necessary -- but see the second assertion of (iii))
we have  $\Gamma(\alpha)=
\Lambda(\gamma)$, and hence also
$\Lambda(\alpha)=\Gamma(\beta)$ and $\Lambda(\beta)=\Gamma(\gamma)$.

(iii) Let $B_2$ be another $\equiv$-class such that there are arcs 
between $B_1$ and $B_2$. Without loss of generality we may suppose that $\alpha'\in B_2$ with $\alpha\rightarrow \alpha'$, and hence also
$\alpha'\to \beta$. Thus, by 2-set-homogeneity there is $g\in G$ with $(\alpha,\alpha')^g=(\alpha',\beta)$, and $g$ interchanges $B_1$ and $B_2$.  As $\Gamma(\alpha')\cap B_1\neq \varnothing$, we also have $\Gamma(\beta)\cap B_2\neq \varnothing$. It is now easily seen that $B_1 \cup B_2$ carries the structure of $\overline{E_6}$.
 If $\{a_1,a_2,a_3,a_4,a_5,a_6\}$ is a copy of $\overline{E_6}$
with $a_i\to a_{i+1}$ ($\mod 6$) then by 4-set-homogeneity there is an automorphism taking $(a_1,a_2,a_3,a_4)$ to $(a_4,a_5,a_6,a_1)$, and hence interchanging $a_1$ and $a_4$. So $\Lambda$ is self-paired.

Since $B_1 \times B_2$ has pairs in $\Gamma$, $\Gamma^*$ and $\Lambda$, and since, for each  pair $C_1,C_2$ of distinct $\equiv$-classes, $C_1 \times C_2$ contains a pair in at least one of these orbitals, it follows from 2-set-homogeneity that $G$ is 2-homogeneous on the set of $\equiv$-classes, and in fact (as $g$ above swaps $B_1$ and $B_2$) this action is 2-transitive. Thus $B\cup B'$ induces $\overline{E_6}$ for each pair $B, B'$ of distinct $\equiv$-classes.

(iv), (v) These follow immediately from (iii) and 2-transitivity on the set of $\equiv$-classes.
\end{proof}

As noted in Assumption (A), the subdigraph induced by $\Gamma(\alpha)$ is a set-homogeneous s-digraph isomorphic to one of the digraphs listed in Theorem~\ref{maintheorem}(iii). We restrict the possibilities further.

\begin{lemma}\label{Nisadigraph}
If each $\equiv$-class of $M$ induces $K_3$, then
the subdigraph induced by $\Gamma(\alpha)$ is isomorphic to one of: $D_3$, $D_4$, $D_5$, $H_0$, $\bar{K}_n$, $\bar{K}_n[D_3]$ or
$D_3[\bar{K}_n]$, for some $n\geq2$. Also, $M$ has more than two $\equiv$-classes.
\end{lemma}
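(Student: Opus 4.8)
The plan is to deduce everything from the structure already established in Lemma~\ref{structure}, the crucial point being that $\Gamma(\alpha)$ turns out to carry no $\Delta$-edges and so is an a-digraph. The first step is to pin down exactly when two vertices of $M$ are $\sim$-related. By Lemma~\ref{structure}(iii), any two distinct $\equiv$-classes $B,B'$ induce a copy of $\overline{E_6}$. Inspecting $\overline{E_6}$ --- whose vertex set splits into the two triangles $B$ and $B'$, joined by a directed $6$-cycle, with the three ``opposite'' cross-class pairs unrelated --- one sees that every $\sim$-edge of $\overline{E_6}$ lies inside one of the two triangles, that is, inside a single $\equiv$-class. Since every pair of distinct classes induces $\overline{E_6}$, it follows that for distinct $u,v\in M$ we have $u\sim v$ if and only if $u\equiv v$.

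Next I would invoke Lemma~\ref{structure}(iv): $\Gamma(\alpha)$ meets each $\equiv$-class other than $B_1=[\alpha]$ in exactly one vertex, and (since $\alpha,\beta,\gamma$ are mutually $\sim$-related, so that $\beta,\gamma\notin\Gamma(\alpha)$) it meets $B_1$ not at all. Hence distinct vertices of $\Gamma(\alpha)$ always lie in distinct $\equiv$-classes, so by the previous step no two of them are $\sim$-related. Thus the induced subdigraph on $\Gamma(\alpha)$ has no $\Delta$-edges; that is, $\Gamma(\alpha)$ is an a-digraph. By Lemma~\ref{nhood} it is set-homogeneous, so it lies in $\mathcal{A}$, and since Theorem~\ref{maintheorem}(ii) has already been proved it must be isomorphic to one of $D_1$, $D_3$, $D_4$, $D_5$, $H_0$, $\bar{K}_n$, $\bar{K}_n[D_3]$, $D_3[\bar{K}_n]$. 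Assumption (A) forces $|\Gamma(\alpha)|>1$, which rules out $D_1$ and yields exactly the list in the statement (with $n\geq 2$).

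For the count of classes, Lemma~\ref{structure}(iv) shows that the map sending each $\equiv$-class $C\neq B_1$ to the unique vertex of $\Gamma(\alpha)\cap C$ is a bijection onto $\Gamma(\alpha)$, so the number of $\equiv$-classes equals $|\Gamma(\alpha)|+1\geq 3$; in particular $M$ has more than two classes. The only point requiring genuine care is the explicit verification of the $\overline{E_6}$ structure in the first step, since the whole argument hinges on $\sim$-edges being confined to single $\equiv$-classes; once that is in hand, the remainder is a direct application of Lemma~\ref{structure} together with the already-established classification of finite set-homogeneous a-digraphs.
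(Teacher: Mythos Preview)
Your proof is correct and follows essentially the same approach as the paper. The paper's argument is simply a terser version of yours: it cites Lemma~\ref{structure}(iv) directly to conclude that $\Gamma(\alpha)$ has no $\sim$-related pairs, leaving implicit the step you spell out (that $\sim$-edges occur only within $\equiv$-classes, which you verify by inspecting $\overline{E_6}$, and which also follows immediately from parts (ii) and (v) of Lemma~\ref{structure}). Your explicit bijection giving the class count is likewise just a more detailed version of the paper's one-line remark.
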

\begin{proof}
It follows from Lemma~\ref{structure}(iv) that $\Gamma(\alpha)$ contains no pairs of vertices that are $\sim$-related. Along with Lemma~\ref{nhood} this implies that $\Gamma(\alpha)$ is a set-homogeneous a-digraph, and hence is in the list of Theorem~\ref{maintheorem}(ii). Note that, by Assumption (A), $|\Gamma(\alpha)|>1$, and each $\equiv$-class meets $\Gamma(\alpha)$ in at most one point, so there are at least 3 $\equiv$-classes.
\end{proof}

\begin{lemma}
\label{lem_list}
If each $\equiv$-class of $M$ induces $K_3$ then $\Gamma(\alpha)\not\cong D_3$, and in particular $\Gamma(\alpha)$ contains an unrelated pair.
\end{lemma}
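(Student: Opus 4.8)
The plan is to reduce everything to a single forbidden local configuration. By Lemma~\ref{Nisadigraph} the out-neighbourhood $\Gamma(\alpha)$ is one of $D_3$, $D_4$, $D_5$, $H_0$, $\bar K_n$, $\bar K_n[D_3]$ or $D_3[\bar K_n]$ with $n\geq 2$, and every entry of this list \emph{except} $D_3$ manifestly contains an independent pair; so the ``in particular'' clause follows the instant we show $\Gamma(\alpha)\not\cong D_3$. Hence I would assume for contradiction that $\Gamma(\alpha)\cong D_3$ and work inside the resulting digraph. Since $\Gamma(\alpha)$ meets every $\equiv$-class other than $B_1=\{\alpha,\beta,\gamma\}$ in exactly one vertex (Lemma~\ref{structure}(iv)), there are precisely four $\equiv$-classes, and by Lemma~\ref{structure}(ii),(v) the three sets $\Gamma(\alpha),\Gamma(\beta),\Gamma(\gamma)$ partition $M\setminus B_1$, each inducing a copy of $D_3$.

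First I would fix notation compatible with the $\equiv$-matching: write $\Gamma(\alpha)=\{a_1,a_2,a_3\}$ with $a_1\to a_2\to a_3\to a_1$, and label $\Gamma(\beta)=\{b_1,b_2,b_3\}$ and $\Gamma(\gamma)=\{c_1,c_2,c_3\}$ so that $B_{i+1}:=\{a_i,b_i,c_i\}$ is a $\equiv$-class, hence a $\sim$-triangle, for each $i$. The only inputs I then need are: vertex-transitivity, giving $\Gamma(v)\cong\Gamma^*(v)\cong D_3$ for every vertex $v$; the identities $\Gamma^*(\beta)=\Gamma(\alpha)$ and $\Gamma^*(\alpha)=\Gamma(\gamma)$ of Lemma~\ref{structure}(ii); and the fact that no arc lies inside a $\equiv$-class.

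The heart of the argument is a short dual computation of the out- and in-neighbourhoods of the single vertex $a_1$. Working in $\Gamma(a_1)\cong D_3$: it contains $\beta$ (since $a_1\to\beta$, as $a_1\in\Gamma^*(\beta)$) and $a_2$, joined by the arc $a_2\to\beta$, so the directed triangle is $a_2\to\beta\to t\to a_2$; here $\beta\to t$ forces $t\in\Gamma(\beta)$, while $t\to a_2$ and $a_1\to t$ forbid $t$ from lying in the class of $a_2$ or of $a_1$, leaving $t=b_3$ and hence $\Gamma(a_1)=\{\beta,a_2,b_3\}$. In particular $\Gamma(a_1)\cap\Gamma^*(\alpha)=\varnothing$, that is, there is \emph{no} directed $2$-arc from $a_1$ to $\alpha$. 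Dually, working in $\Gamma^*(a_1)\cong D_3$, which contains $\alpha$ and $a_3$ joined by $\alpha\to a_3$, the triangle is $\alpha\to a_3\to s\to\alpha$, producing a vertex $s$ with $s\to\alpha$ and $s\to a_1$.

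This $s$ delivers the contradiction: since $\alpha,a_1\in\Gamma(s)\cong D_3$ with $\alpha\to a_1$, the directed triangle on $\Gamma(s)$ must close through a third vertex $u$ with $a_1\to u\to\alpha$, i.e.\ $u\in\Gamma(a_1)\cap\Gamma^*(\alpha)$ --- exactly the intersection just shown to be empty. I expect the only delicate point to be the bookkeeping in the two neighbourhood computations: at each step one must check that the arc forced by the $D_3$-cycle does not fall inside a $\sim$-triangle $\equiv$-class, which is what pins down $t$ and $s$ and, crucially, keeps $\Gamma(a_1)$ disjoint from $\Gamma^*(\alpha)=\Gamma(\gamma)$. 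Everything else is immediate from the structure lemmas and vertex-transitivity, and no higher set-homogeneity is required.
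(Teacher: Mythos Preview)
Your argument is correct. The setup via Lemma~\ref{Nisadigraph} and Lemma~\ref{structure} is the same as in the paper, but the endgame is genuinely different. The paper, having the arc $\alpha\to x_2$, invokes $2$-set-homogeneity (arc-transitivity) to place this arc in some copy of $D_3$, obtaining a vertex $z_j\in\Gamma(x_2)\cap\Gamma^*(\alpha)$; it then reaches a contradiction by observing that $\Gamma^*(z_j)$ contains the \emph{unrelated} pair $\{\gamma,x_2\}$ (since $x_2\in\Gamma(\alpha)=\Lambda(\gamma)$), which is impossible inside a $D_3$. You instead compute $\Gamma(a_1)=\{\beta,a_2,b_3\}$ explicitly to see that $\Gamma(a_1)\cap\Gamma^*(\alpha)=\varnothing$, and then manufacture by hand a vertex $s$ (via $\Gamma^*(a_1)$) whose out-neighbourhood forces that intersection to be nonempty. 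So the two proofs establish opposite facts about the same intersection and each derives a contradiction from its own direction. Your route has the pleasant feature of using only vertex-transitivity and the $\equiv$-class structure, never touching the $\Lambda$-relation or arc-transitivity; the paper's route is shorter and more conceptual, trading the neighbourhood bookkeeping for one appeal to $2$-set-homogeneity. In fact your first computation already meshes with the paper's: once you know $\Gamma(a_1)\cap\Gamma^*(\alpha)=\varnothing$, a single application of arc-transitivity to the arc $\alpha\to a_1$ finishes immediately, so your $\Gamma^*(a_1)$--$\Gamma(s)$ detour can be viewed as an explicit, transitivity-free substitute for that step.
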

\begin{proof} \begin{sloppypar} 
By Lemma~\ref{Nisadigraph} the only possibility for $\Gamma(\alpha)$ that does not contain an unrelated pair is $D_3$, so it is sufficient to prove that $D_3$ does not arise. Assume to the contrary that $\Gamma(\alpha) \cong D_3$. \end{sloppypar}

Now by Lemma~\ref{structure}, $M$ has exactly four $\equiv$-classes, say $B_1 = \{ \alpha, \beta, \gamma \}$ and $B_i = \{ x_i, y_i, z_i  \}$, for $i = 2,3,4$, where the vertices have been labelled so that $\Gamma(\alpha) = \Gamma^*(\beta) = \Lambda(\gamma) = \{ x_2,x_3,x_4 \}$, $\Gamma(\beta)=\Gamma^*(\gamma) = \Lambda(\alpha) = \{ y_2, y_3, y_4 \}$, and 
$\Gamma(\gamma) = \Gamma^*(\alpha) = \Lambda(\beta) = \{ z_2, z_3, z_4 \}$. 

By assumption $\Gamma(\alpha) = \{ x_2, x_3, x_4 \} \cong D_3$, and we may suppose that the vertices are labelled so that  $x_2 \rightarrow x_3 \rightarrow x_4 \rightarrow x_2$. Since $M$ embeds $D_3$, by set-homogeneity applied to arcs it follows that every arc of $M$ belongs to at least one copy of $D_3$. In particular this applies to the arc $\alpha \rightarrow x_2$. Now since $\Gamma^*(\alpha) = \Gamma(\gamma) = \{ z_2, z_3, z_4 \}$, and since it is not the case that $x_2 \rightarrow z_2$ (because $x_2\sim z_2$), it follows that either we have $\alpha \rightarrow x_2 \rightarrow z_3 \rightarrow \alpha$ or  $\alpha \rightarrow x_2 \rightarrow z_4 \rightarrow \alpha$. Suppose the latter -- the former case is dealt with in  the same way. Now we have $z_4 \in \Gamma(\gamma)$ so $\gamma \in \Gamma^*(z_4)$, and also $x_2 \in \Gamma^*(z_4)$. But $x_2 \in \Gamma(\alpha) = \Lambda(\gamma)$ (by Lemma~\ref{structure}) and it follows that $\{ \gamma, x_2 \} \subseteq \Gamma^*(z_4)$ is an unrelated pair,  contradicting the fact that $\Gamma^*(z_4) \cong D_3$. 
\end{proof}

Now we deal with the case that $M$ does not embed the $3$-chain $P_3$. Recall from Section~1
 the definition of the s-digraph $J_n$ listed in Theorem~\ref{maintheorem}(iii).

\begin{lemma}\label{noP3}
If each $\equiv$-class 
induces $K_{3}$, and $\Gamma(\alpha)\cong \overline{K_n}$, then
$M$ is isomorphic to $J_n$, where $n=1+|\Gamma(\alpha)|$. 
\end{lemma}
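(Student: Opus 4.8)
The plan is to produce an explicit tripartition of $M$ that exhibits the isomorphism with $J_n$. Throughout write $n$ for the number of $\equiv$-classes, so that $\Gamma(\alpha)\cong\overline{K_{n-1}}$ and $n=1+|\Gamma(\alpha)|$; recall that $|M|=3n$ by Lemma~\ref{structure}(v). Write $B_1=\{\alpha,\beta,\gamma\}$ and, as in the proof of Lemma~\ref{lem_list}, label each remaining $\equiv$-class $\{x_i,y_i,z_i\}$ so that, by Lemma~\ref{structure}(ii),(iv),
\[
\Gamma(\alpha)=\Lambda(\gamma)=\{x_i\},\quad \Gamma(\beta)=\Lambda(\alpha)=\{y_i\},\quad \Gamma(\gamma)=\Lambda(\beta)=\{z_i\}.
\]
First I would note that, by vertex-transitivity and the hypothesis, each of $\Gamma(\alpha),\Gamma(\beta),\Gamma(\gamma)$ is an independent set isomorphic to $\overline{K_{n-1}}$. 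Hence the three sets
\[
P_0=\{\alpha\}\cup\{y_i\},\qquad P_1=\{\beta\}\cup\{z_i\},\qquad P_2=\{\gamma\}\cup\{x_i\}
\]
are independent (for instance $\{y_i\}=\Gamma(\beta)$ is independent and $\alpha\parallel y_i$ since $\{y_i\}=\Lambda(\alpha)$), each has size $n$, and by Lemma~\ref{structure}(v) they partition $M$ with each $\equiv$-class meeting each $P_j$ in exactly one vertex.

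Next I would pin down the relations between the parts. By Lemma~\ref{structure}(iii) any two $\equiv$-classes induce $\overline{E_6}$, which has exactly three $\parallel$-pairs and six arcs running between the two classes. Since $\{x_i\},\{y_i\},\{z_i\}$ are independent, the three $\parallel$-pairs between two classes are precisely the two pairs lying in a common $P_j$, so the remaining six cross-pairs must all be arcs. Consequently any two vertices in distinct parts are related (by $\sim$ if they share a class, by an arc otherwise), while any two vertices in a common part are $\parallel$-related; thus $P_0,P_1,P_2$ are exactly the maximal independent sets of $M$, and so $G$ permutes them. Reading off the arcs incident to $B_1$ (namely $\beta\to y_i$ and $z_i\to\alpha$, giving $P_1\to P_0$; $\alpha\to x_i$ and $y_i\to\gamma$, giving $P_0\to P_2$; and $x_i\to\beta$ and $\gamma\to z_i$, giving $P_2\to P_1$) shows that the arcs at $B_1$ respect the single cyclic orientation $P_1\to P_0\to P_2\to P_1$.

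The crux, and the main obstacle, is to show that \emph{every} arc of $M$ respects this one cyclic orientation; equivalently, that the homomorphism $\phi\colon G\to\Sym(\{P_0,P_1,P_2\})$ has image in $A_3$, i.e.\ that no automorphism reverses the orientation by transposing two parts. Here I would exploit the local rigidity of the classes: by Lemma~\ref{isom2}(a) the setwise stabiliser $G_B$ of any $\equiv$-class $B$ induces the cyclic group $\mathbb{Z}_3$ on its three vertices, and since these vertices lie one in each part, the permutation $G_B$ induces on the parts has the same cycle type, hence is even. Thus $G_B\le N:=\ker(\mathrm{sgn}\circ\phi)$ for every class $B$. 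If $N\ne G$ then $[G:N]=2$ and $N$ is a nontrivial (as $|M|=3n\ge 9$) normal subgroup of $G$ containing the class-stabiliser $G_{B_1}$; but $G$ is $2$-transitive, hence primitive, on the $n\ge 3$ classes, so $N$ must be transitive on classes, forcing $|B_1^N|=n$, whereas $G_{B_1}\le N$ gives $|B_1^N|=[N:G_{B_1}]=n/2$, a contradiction. Hence $\mathrm{im}(\phi)\le A_3$.

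Finally, since the arcs of $M$ form the single orbital $\Gamma$, the group $G$ is transitive on them, and as no element of $G$ reverses the orientation of the parts, every arc has the same cyclic pattern as the arcs at $B_1$: all cross-part arcs run $P_1\to P_0\to P_2\to P_1$. Combining this with the previous paragraphs determines all relations of $M$ — within a class the three vertices (one per part) are mutually $\sim$-related, within a part all pairs are $\parallel$-related, and between distinct parts in distinct classes all pairs are arcs directed cyclically. Relabelling $(B_0,B_1,B_2):=(P_1,P_0,P_2)$ and taking ``$u,v$ lie in a common $\equiv$-class'' as the matchings (these give bijections between the parts that automatically satisfy $f_{02}=f_{12}\circ f_{01}$, since lying in a common class is transitive) yields an isomorphism $M\cong J_n$ with $n=1+|\Gamma(\alpha)|$, as required.
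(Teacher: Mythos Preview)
Your proof is correct and reaches the same tripartition as the paper, but the crucial step --- showing that all arcs respect the single cyclic orientation $P_1\to P_0\to P_2\to P_1$ --- is handled quite differently. The paper observes that ``independence or equality'' is a $G$-congruence $E$ with exactly these three classes, and then argues directly: given $\alpha'\in\Lambda(\alpha)$ and $\beta'\in\Lambda(\beta)$ with $\alpha'\not\equiv\beta'$, both $(\alpha,\beta')$ and $(\alpha',\beta)$ lie in $\Gamma^*$, so arc-transitivity gives $g\in G$ with $(\alpha,\beta')^g=(\alpha',\beta)$; this $g$ visibly fixes two $E$-classes, hence all three, and then transports the known arc relations at $\alpha$ to $\alpha'$. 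Your route instead passes through a parity argument on $\phi:G\to\Sym\{P_0,P_1,P_2\}$: you use Lemma~\ref{isom2}(a) to force each class-stabiliser into $\ker(\mathrm{sgn}\circ\phi)$, and then derive a contradiction from index and primitivity considerations. This is a nice alternative --- it trades a one-line set-homogeneity computation for a clean structural obstruction --- though it is heavier machinery for the same conclusion.

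One small point to tighten: in the primitivity step you assert ``$N$ must be transitive on classes'' from nontriviality of $N$ as a subgroup, but the correct dichotomy is that a normal subgroup of a primitive group acts either transitively or trivially on the underlying set. You should explicitly dispose of the trivial-action case: if $N$ fixes every class then $N=G_{B_1}$ (since $G_{B_1}\le N\le G_{B_1}$), whence $[G:N]=n\ge 3\ne 2$. With that line added your argument is complete. (Also, ``the two pairs lying in a common $P_j$'' should read ``the three pairs''.)
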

\begin{proof}
Let $m=|\Gamma(\alpha)|$. By assumption $m \geq 2$ and $\Gamma(\alpha)$ contains no arcs. It follows from Lemma~\ref{structure} (especially part (v)) that `independence or equality' is an 
equivalence relation $E$ on $M$, and there are precisely three 
$E$-classes, namely $\{\alpha\}\cup\Lambda(\alpha)$, $\{\beta\}
\cup\Lambda(\beta)$, and $\{\gamma\}\cup\Lambda(\gamma)$.

It is now straightforward to show that $M \cong J_{m+1}$. For example, we must show that if $\alpha'\in \Lambda(\alpha)$ and $\beta'\in \Lambda(\beta)$ with $\alpha'\not\equiv \beta'$, then
$\beta' \to \alpha'$. To see this, by 2-set-homogeneity there is $g\in G$ with $(\alpha,\beta')^g=(\alpha',\beta)$ (as both pairs are in $\Gamma^*$). Then $g$ fixes the three $E$-classes, and as $x \to \alpha $ for all $x\in (\{\beta\}\cup \Lambda(\beta))$ with $x\not\equiv \alpha$, also
$x \to\alpha'$ for all $x\in (\{\beta\}\cup \Lambda(\beta))$ with $x\not\equiv\alpha'$.
\end{proof}

By Lemmas~\ref{Nisadigraph}, \ref{lem_list} and \ref{noP3}, we may assume that $\Gamma(\alpha)$ is isomorphic to one of $D_4$, $D_5$, $H_0$, $\overline{K_n}[D_3]$ or $D_3[\overline{K_n}]$ (for some $n \geq 2$). In each case $\Gamma(\alpha)$ contains an arc and hence $M$ embeds a 3-chain $P_3$. We shall see that under the latter assumption $M$ is uniquely determined. 
 Over the next few lemmas we consider each of the possibilities for $\Gamma(\alpha)$ in turn, ruling them all out, with the exception of $H_0$. Then in Lemma~\ref{27VertexExample} we show that  $\Gamma(\alpha) \cong H_0$ implies that $M \cong H_3$.   

\begin{lemma}\label{NOTD5}
If each $\equiv$-class of $M$ induces $K_3$, then $\Gamma(\alpha)\not\cong D_5$. 
\end{lemma}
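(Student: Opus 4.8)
The plan is to assume $\Gamma(\alpha)\cong D_5$ and force a vertex of the induced copy of $D_5$ on $\Gamma(\alpha)$ to have out-degree $2$, which is absurd. First I would set up coordinates using Lemma~\ref{structure}. By part (v), $V(M)=B_1\cup\Gamma(\alpha)\cup\Gamma(\beta)\cup\Gamma(\gamma)$, so $M$ has exactly six $\equiv$-classes: $B_1=\{\alpha,\beta,\gamma\}$ together with five further classes $C_0,\dots,C_4$, one meeting $\Gamma(\alpha)$ in each of its vertices (part (iv)). Write $\Gamma(\alpha)=\{a_0,\dots,a_4\}$ with $a_i\to a_{i+1}$ (indices read mod $5$) and $a_i\in C_i$, and label each class $C_i=\{a_i,b_i,c_i\}$ so that $a_i\in\Gamma(\alpha)$, $b_i\in\Gamma(\beta)$, $c_i\in\Gamma(\gamma)$; each $C_i$ induces $K_3$. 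Applying Lemma~\ref{structure}(ii) to each class $C_i$ yields the cyclic identities $\Gamma(a_i)=\Lambda(c_i)$ and $\Gamma(c_i)=\Gamma^*(a_i)$; the internal orientation of each class (i.e. that $a_i,b_i,c_i$ play the roles of $\alpha,\beta,\gamma$ in this order) is pinned down by comparing, inside $B_1$, the out-neighbours and unrelated-partners of $a_i,b_i,c_i$ with those predicted by the three possible rotations.

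Next I would control the $c$-layer $\Gamma(\gamma)=\{c_0,\dots,c_4\}$, which is $\cong D_5$, and in particular show that $c_0$ and $c_1$ are arc-adjacent in it. From $a_0\to a_1$ we get $a_0\in\Gamma^*(a_1)=\Gamma(c_1)$, hence $c_1\to a_0$, so $c_1\in\Gamma^*(a_0)$; by antisymmetry $c_1\notin\Gamma(a_0)=\Lambda(c_0)$, so $c_0\not\parallel c_1$. Since $c_0,c_1$ lie in distinct classes they are not $\sim$-related (by Lemma~\ref{structure}(iii) the only $\sim$-edges lie within classes, the between-class structure being $\overline{E_6}$), and therefore $c_0,c_1$ are joined by an arc, i.e. consecutive vertices of the directed $5$-cycle $\Gamma(\gamma)$.

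Now in $D_5$ any two arc-adjacent vertices have a \emph{unique} common non-neighbour, so there is a single $c^\ast\in\Gamma(\gamma)$ unrelated to both $c_0$ and $c_1$. Using the identifications above, $c^\ast\in\Lambda(c_0)\cap\Lambda(c_1)=\Gamma(a_0)\cap\Gamma(a_1)$. On the other hand, since $a_0,a_1\in\Gamma(\alpha)=\Gamma^*(\beta)$ we have $a_0\to\beta$ and $a_1\to\beta$, so $\beta\in\Gamma(a_0)\cap\Gamma(a_1)$ as well, and $\beta\ne c^\ast$ because $c^\ast\in\Gamma(\gamma)$ while $\beta\in B_1$. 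Hence $|\Gamma(a_0)\cap\Gamma(a_1)|\ge 2$. But $\Gamma(a_0)\cong\Gamma(\alpha)\cong D_5$ by vertex-transitivity, and $a_1\in\Gamma(a_0)$, so $a_1$ has a unique out-neighbour inside the induced $D_5$; that out-neighbourhood is exactly $\Gamma(a_1)\cap\Gamma(a_0)$, forcing $|\Gamma(a_0)\cap\Gamma(a_1)|=1$, a contradiction. This rules out $\Gamma(\alpha)\cong D_5$.

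The step I expect to be most delicate is the coordinate bookkeeping together with the correct internal role assignment in each class, and especially the arc-adjacency of $c_0,c_1$: one must exclude the possibility that the induced $D_5$ on $\Gamma(\gamma)$ is oriented as a ``pentagram'' $c_i\to c_{i+2}$ relative to the chosen indexing, since in that configuration $c_0,c_1$ would be unrelated and would have no common non-neighbour, breaking the argument. The short deduction $c_1\to a_0\Rightarrow c_1\notin\Lambda(c_0)$ is exactly what resolves this; once the orientation is fixed the contradiction is immediate. (An alternative, slightly longer route would first produce the rotational automorphism $\sigma\in G_\alpha$ inducing $a_i\mapsto a_{i+1}$, $b_i\mapsto b_{i+1}$, $c_i\mapsto c_{i+1}$, determine the full gluing between consecutive and distance-$2$ classes, and read off $\Gamma(a_0)=\{a_1,\beta,c_2,c_3,b_4\}$ directly, but the common-non-neighbour argument above avoids this.)
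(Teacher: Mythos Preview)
Your argument is correct and takes a genuinely different route from the paper's proof. The paper argues via point-stabilisers: it picks $v\in\Lambda(\alpha)$, observes that $G_{\alpha v}=1$ (since $\Aut(D_5)$ is regular and $G_{\alpha v}$ fixes $[v]\cap\Gamma(\alpha)$), produces the involution $\pi$ swapping $\alpha$ and $v$ using self-pairedness of $\Lambda$, and then runs a case analysis on $|\Gamma(v)\cap\Gamma(\alpha)|$, deriving contradictions from rigidity of $3$- and $4$-element configurations together with $G_{\alpha v}=1$.

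Your approach instead exploits the cyclic identities of Lemma~\ref{structure}(ii) layer-by-layer to compute relations directly: from $a_0\to a_1$ you obtain $c_1\to a_0$ via $\Gamma^*(a_1)=\Gamma(c_1)$, hence $c_1\in\Gamma^*(a_0)=\Gamma(c_0)$, so in fact $c_0\to c_1$ outright (a slightly sharper conclusion than the $\Lambda$-exclusion you wrote, but your version also works since $\Lambda$ is self-paired). The common-non-neighbour count in $D_5$ then forces $|\Gamma(a_0)\cap\Gamma(a_1)|\ge 2$, contradicting the $D_5$ out-degree condition inside $\Gamma(a_0)$. This is more structural and avoids any automorphism-counting or case analysis; it also makes the inconsistency completely explicit. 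The paper's approach, by contrast, is closer in spirit to the other lemmas in this subsection (e.g.\ the use of $G_{\alpha\beta}=1$ in Lemma~\ref{lem_basic2}) and does not require pinning down the internal role assignment in each $\equiv$-class. Your verification that $(a_i,b_i,c_i)$ inherit the $(\alpha,\beta,\gamma)$ roles in that cyclic order is the only delicate bookkeeping, and your justification via the $\overline{E_6}$ structure on $B_1\cup C_i$ (equivalently, checking which element of $C_i$ dominates $\alpha$) is sound.
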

\begin{proof}
Suppose that $\Gamma(\alpha) \cong D_5$. Let $v \in \Lambda(\alpha)$. Then $G_{\alpha v}$ fixes the $\equiv$-class $[v]$, and hence fixes the unique point $u$ in $[v]\cap\Gamma(\alpha)$. It follows, since $\Aut(D_5)$ is regular, that $G_{\alpha v}$ fixes $M$ pointwise. 
Let $w\equiv v$ with $w \in \Gamma^*(\alpha)$, so $[v]=\{u,v,w\}$.
By Lemma~\ref{structure}(ii), $\Gamma(\alpha) = \Gamma^*(\beta) = \Lambda(\gamma)$ and it follows that $\alpha\rightarrow u \rightarrow \beta\rightarrow v\rightarrow \gamma\rightarrow w\rightarrow\alpha$. 

Since $\Lambda$ is self-paired (by Lemma~\ref{structure}(iii)), there is an automorphism  $\pi$ extending the transposition $(\alpha \ v)$, and $\pi$ must swap the 
$\equiv$-classes of $\alpha$ and $v$, and fix $\Gamma(\alpha)\cap \Gamma(v)$ setwise. Thus, as $|G_{\alpha v}|=1$, $\pi$ induces one of the transpositions $(\beta \ u)$ or $(\gamma \ u)$, and it follows on considering the directed 6-cycle in the previous paragraph that $\pi$ must induce $(\gamma\ u)(\beta\ w)$. 

Now, as $|\Gamma(v)|=5$ it follows that $\Gamma(v)$ contains at least two points $u_1,u_2$ in one of 
$\Lambda(\alpha)$, $\Gamma^*(\alpha)$ or $\Gamma(\alpha)$. In the first two cases, $\{\alpha, v,u_i\}$ is rigid for each $i$, so by set-homogeneity there is an automorphism fixing $\alpha$ and $v$ and mapping 
$u_1$ to $u_2$, contradicting $G_{\alpha v}=1$. Thus $\Gamma(\alpha)\cap\Gamma(v)\supseteq\{u_1,u_2\}$.

Suppose that equality holds. By Lemma~\ref{structure}(ii), $\Gamma(\alpha) = \Gamma^*(\beta) = \Lambda(\gamma)$ and since $\{ u_1, u_2 \} \subseteq \Gamma(\alpha)$ it follows that $u_i \rightarrow \beta$ and $(\gamma, u_i ) \in \Lambda$ for $i=1,2$.
Since $\pi$  induces $(\gamma\ u)(\beta\ w)$ and fixes $\{u_1,u_2\}$, it follows that
$u_i \rightarrow w$ and $(u,u_i)\in\Lambda$ for $i=1,2$. 
But now the isomorphism $(\alpha, v,w,u_1) \mapsto
(\alpha, v, w, u_2)$ extends to an automorphism, since this configuration 
is rigid; this is impossible since $G_{\alpha v}=1$.

Thus $|\Gamma(v) \cap \Gamma(\alpha)| \geq 3$, say $\Gamma(v) \cap \Gamma(\alpha) \supseteq \{ u_1, u_2, u_3 \}$. The induced digraphs on $\{\alpha, v, u_i\}$ are isomorphic but not rigid, for $i=1,2,3$, so by 3-set-homogeneity there are automorphisms
$\pi_{ij}$ of $M$ that map  $\{\alpha, v, u_i\}$ to $\{\alpha, v, u_j\}$ for distinct $i,j$. Now $u_i^{\pi_{ij}}=u_j$, and since $G_{\alpha v}=1$ it follows that $\pi_{ij}$ interchanges $\alpha$ and $v$. However this implies that the composition $\pi_{12}\circ\pi_{23}$ lies in $G_{\alpha v}$ and maps $u_1$ to $u_3$, which is a contradiction.
\end{proof}

\begin{lemma}\label{morerulingout}
If each $\equiv$-class of $M$ induces $K_3$, then $\Gamma(\alpha)\not\cong D_3[\bar{K_n}]$ for any $n \geq 2$. 
\end{lemma}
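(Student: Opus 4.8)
The plan is to exploit the $\mathbb{Z}_3$-torsor structure of the $\equiv$-classes supplied by Lemma~\ref{structure}, and to extract a numerical contradiction from the transitivity of $G$ on the set of classes. Throughout write $B_1=\{\alpha,\beta,\gamma\}$ and, for each other class $B_j$, label its vertices $x_j,y_j,z_j$ so that $x_j\in\Gamma(\alpha)\cap B_j$, $y_j\in\Gamma(\beta)\cap B_j$, $z_j\in\Gamma(\gamma)\cap B_j$ (legitimate by Lemma~\ref{structure}(iv)). Since $|\Gamma(\alpha)|=3n$, there are exactly $3n$ classes besides $B_1$, so $M$ has $3n+1$ classes, and by Lemma~\ref{structure}(iii) $G$ acts transitively (indeed $2$-transitively) on them. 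The relations $\Gamma(x_j)=\Gamma^*(y_j)=\Lambda(z_j)$ etc.\ give each class a canonical cyclic order, so each class is a $\mathbb{Z}_3$-torsor via $(x,y,z)$.

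First I would record the combinatorial data of the $\overline{E_6}$-gluings. Viewing classes as $\mathbb{Z}_3$-torsors, the induced copy of $\overline{E_6}$ on $B\cup B'$ is determined by a single \emph{rotation} $\rho(B,B')\in\mathbb{Z}_3$, the shift of the out-neighbour of $x_B$ in $B'$. One checks from Lemma~\ref{structure}(ii) that $\rho(B_1,B_j)=0$ for every $j$, while the hypothesis $\Gamma(\alpha)\cong D_3[\overline{K_n}]$ forces $\rho(B_j,B_{j'})=f(j')-f(j)-1$ for $j,j'\ne 1$, where $f(B_j)\in\{0,1,2\}$ records which of the three parts of $\Gamma(\alpha)\cong D_3[\overline{K_n}]$ contains $x_j$ (the three values arise according as $x_j\to x_{j'}$, $x_j\parallel x_{j'}$, or $x_{j'}\to x_j$ inside $\Gamma(\alpha)$). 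The crucial point is the asymmetry: the rule for the coloured classes is a ``coboundary plus the constant $-1$'', whereas $B_1$ obeys the constant $0$, and $f$ cannot be extended to $B_1$.

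Next I would isolate the isomorphism invariant that detects this anomaly. For a triple of classes $\{B_a,B_b,B_c\}$, call a triple of pairwise-unrelated vertices, one in each class, an \emph{independent transversal}. Writing a transversal by its positions $s_a,s_b,s_c\in\mathbb{Z}_3$, the pair in $B_a,B_b$ is unrelated iff $s_b-s_a=\rho(B_a,B_b)+1$, so an independent transversal exists iff the three such equations are consistent, i.e.\ iff the \emph{holonomy} $\rho(B_a,B_b)+\rho(B_b,B_c)+\rho(B_c,B_a)$ vanishes; when it does there are exactly $3$ of them, otherwise $0$. Since this count is preserved by any isomorphism of induced subdigraphs, ``holonomy $=0$'' is an isomorphism invariant of the $9$-vertex subdigraph carried by the triple. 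By the rotation formula every triple avoiding $B_1$ has holonomy $\sum\bigl(f(\cdot)-f(\cdot)-1\bigr)\equiv-3\equiv 0$, hence is of the untwisted type; while, using $\rho(B_c,B_1)=1-\rho(B_1,B_c)=1$, a triple $\{B_1,B_b,B_c\}$ has holonomy $f(c)-f(b)$, and is untwisted precisely when $f(b)=f(c)$.

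Finally I would count, for a fixed class $B$, the number $t(B)$ of triples through $B$ of untwisted type; transitivity of $G$ on classes forces $t(B)$ to be independent of $B$. But $t(B_1)=3\binom{n}{2}$ (choose two classes of equal colour), whereas for a coloured class $B_2$ one gets $t(B_2)=\binom{3n-1}{2}+(n-1)$ (all triples avoiding $B_1$, together with those $\{B_1,B_2,B_c\}$ having $f(c)=f(B_2)$). Equating these gives $3n(n-1)=(3n-1)(3n-2)+2(n-1)$, i.e.\ $2n(3n-2)=0$, impossible for $n\ge 2$; this contradiction proves the lemma. I expect the main obstacle to be the bookkeeping of the middle steps: deriving the rotation formula with its asymmetry at $B_1$ from $\Gamma(\alpha)\cong D_3[\overline{K_n}]$, and verifying that the independent-transversal count is exactly the holonomy invariant (this also guards against the twisted and untwisted $9$-vertex digraphs being accidentally isomorphic). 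Once these are in place the count is immediate. A more hands-on alternative, in the spirit of Lemmas~\ref{d5} and~\ref{NOTD5}, would be to build two isomorphic rigid configurations spanning a few classes whose common extension to an automorphism must simultaneously fix and move $B_1$; but the counting argument seems cleaner and avoids the case analysis.
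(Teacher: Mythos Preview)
Your argument is correct, but it takes a markedly different and more elaborate route than the paper's. The key steps all check out: the labelling $(x_j,y_j,z_j)$ really is the intrinsic cyclic orientation of $B_j$ (since $\beta\in\Gamma(x_j)\cap\Gamma^*(y_j)$ and Lemma~\ref{structure}(ii) then forces $\Gamma(x_j)=\Gamma^*(y_j)$), so that going twice around the $\overline{E_6}$ six-cycle on $B_j\cup B_{j'}$ advances the position in each block by one; this makes $\rho$ well-defined with $\rho(B,B')+\rho(B',B)=1$, and the single relation between $x_j$ and $x_{j'}$ inside $\Gamma(\alpha)$ then pins down $\rho(B_j,B_{j'})=f(j')-f(j)-1$ as you claim. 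The independent-transversal count is a genuine isomorphism invariant of the nine-vertex subdigraph (indeed it is exactly the number of induced $\overline{K_3}$'s), so the twisted and untwisted types are not accidentally isomorphic, and your final equation $2n(3n-2)=0$ gives the contradiction.

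The paper's proof, by contrast, is four lines and entirely local. Fix $a\in\Gamma(\alpha)$; since every arc of $M$ lies in at least $n$ copies of $D_3$ (because $\Gamma(\alpha)\cong D_3[\overline{K_n}]$ embeds), one has $|\Gamma(a)\cap\Gamma^*(\alpha)|\ge n$. But also $\beta\in\Gamma(a)$ (as $a\in\Gamma(\alpha)=\Gamma^*(\beta)$), and $\beta$ is $\Lambda$-related to every element of $\Gamma^*(\alpha)=\Lambda(\beta)$. Hence $\beta$ has at least $n$ vertices in $\Gamma(a)$ that are $\Lambda$-related to it, whereas each vertex of $D_3[\overline{K_n}]$ has exactly $n-1$ such neighbours---contradiction. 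So the paper bypasses all the holonomy bookkeeping by exploiting a single out-neighbourhood together with the identity $\Gamma^*(\alpha)=\Lambda(\beta)$ from Lemma~\ref{structure}(ii). What your approach buys is a structural explanation of why $B_1$ is anomalous among the classes, and the technique could transfer to other configurations; but for this particular lemma the local argument is far quicker.
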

\begin{proof}
Suppose to the contrary that $\Gamma(\alpha) \cong D_3[\bar{K_n}]$ $(n \geq 2)$. Fix $a \in \Gamma(\alpha)$ and consider $\Gamma(a)$. Since $M$ embeds $D_3[\bar{K_n}]$ as an induced subgraph it follows that every arc in $M$ extends to a copy of $D_3$ in at least $n$ different ways, in particular this is true of the arc $(\alpha,a)$. In other words $|\Gamma^*(\alpha) \cap \Gamma(a)| \geq n$. Let $\Gamma^*(\alpha) \cap \Gamma(a) \supseteq  \{x_1, \ldots, x_n  \}$. By Lemma~\ref{structure} we have $\Gamma^*(\beta) = \Gamma(\alpha)$ so $a \rightarrow \beta$, and $\Lambda(\beta) = \Gamma^*(\alpha)$. Now $\{ \beta, x_1, \ldots, x_n  \} \subseteq \Gamma(a)$ where $\beta$ is $\Lambda$-related to $x_i$ for all $i$. 
This is a contradiction since in $\Gamma(a)$ each vertex is $\Lambda$-related to exactly $n-1$ other vertices. 
\end{proof}

\begin{lemma}\label{2HomAction}
If $\Gamma(\alpha)$ is isomorphic to one of: $D_4$, $H_0$, or $\bar{K}_n[D_3]$ for some  $n>1$, then for all $a_1, a_2 \in \Gamma(\alpha)$, if $(a_1,a_2) \in \Lambda$ then there exists $g \in G_{\alpha}$ such that $a_1^g = a_2$ and $a_2^g=a_1$. 
\end{lemma}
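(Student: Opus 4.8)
The plan is to \emph{force} the transposition of $a_1$ and $a_2$ by producing, inside $\{\alpha\}\cup\Gamma(\alpha)$, a small \emph{rigid} $4$-point configuration $W$ together with a distinct isomorphic copy $W'$, so that the \emph{unique} isomorphism $W\to W'$ is precisely the map fixing $\alpha$ and swapping $a_1,a_2$; set-homogeneity of $M$ applied to $W\cong W'$ then supplies an automorphism realising this map, which lies in $G_\alpha$. The first thing to note is why the naive attempt fails: the induced structure on $\{\alpha,a_1,a_2\}$, namely $\alpha\to a_1$, $\alpha\to a_2$, $a_1\parallel a_2$, is symmetric in $a_1,a_2$, so set-homogeneity applied to this set only returns an automorphism stabilising it \emph{setwise} (possibly the identity on it) and never pins down the transposition. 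The entire point is therefore to break this symmetry with a fourth vertex.

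Concretely, I would look for $w,c\in\Gamma(\alpha)$ with $w\neq c$ such that the map $\psi\colon \alpha\mapsto\alpha,\ a_1\mapsto a_2,\ a_2\mapsto a_1,\ w\mapsto c$ is an isomorphism from $W:=\{\alpha,a_1,a_2,w\}$ onto $W':=\{\alpha,a_1,a_2,c\}$. Since $w,c\in\Gamma(\alpha)$, the vertex $\alpha$ dominates the other three in each of $W,W'$, so any automorphism of $W$ fixes $\alpha$ and permutes $\{a_1,a_2,w\}$; hence $W$ is rigid as soon as $a_1,a_2,w$ are pairwise distinguished by their mutual relations, and then $\psi$ is the unique isomorphism $W\to W'$. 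Consequently any $g\in\Aut(M)$ with $W^g=W'$ (which exists by set-homogeneity, as $W\cong W'$) must satisfy $g|_W=\psi$, so $g\in G_\alpha$ and $g$ interchanges $a_1,a_2$, as required. This reduces the lemma to exhibiting such a pair $(w,c)$ in each admissible $\Gamma(\alpha)$.

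I would then treat the three cases in turn. If $\Gamma(\alpha)\cong D_4$, write the cycle $a_1\to a_2\to a_3\to a_4\to a_1$; the unrelated pairs are the two diagonals, and for $\{a_1,a_3\}$ I take the $2$-arc witness $w=a_2$ (so $a_1\to a_2\to a_3$) with mirror $c=a_4$ (so $a_3\to a_4\to a_1$), the configuration $a_1\to a_2\to a_3$, $a_1\parallel a_3$ being rigid. If $\Gamma(\alpha)\cong \bar{K}_n[D_3]$, every unrelated pair lies in two distinct $D_3$-copies, say $a_1$ in $a_1\to a_1'\to a_1''\to a_1$ and $a_2$ in another copy $a_2\to a_2'\to a_2''\to a_2$; here no $2$-arc joins $a_1$ to $a_2$, so instead I take $w=a_1'$ (with $a_1\to a_1'$ and $a_1'\parallel a_2$) and mirror $c=a_2'$, the induced structure with single arc $a_1\to a_1'$ and $a_2$ unrelated to both being rigid. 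If $\Gamma(\alpha)\cong H_0$, I realise $H_0$ on the nonzero vectors of $\mathrm{GF}(3)^2$, where the unrelated pairs are exactly the linearly dependent pairs $\{v,-v\}$; for such a pair I choose $w$ with $|v^{T}\,w^{T}|=1$ (whence $v\to w$ and, by the sign rule for the determinant, $w\to -v$) and set $c=-w$, then verify from the determinant description that $\psi$ is an isomorphism $v\to w\to -v\ \mapsto\ -v\to -w\to v$ and that the $2$-arc $v\to w\to -v$ with $v\parallel -v$ is rigid. Applying the construction to each unrelated pair in turn covers all cases.

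The main obstacle here is conceptual rather than computational: one must recognise that set-homogeneity does not directly deliver a non-trivial automorphism of a symmetric configuration, and hence engineer an \emph{asymmetric} rigid witness $(w,c)$. Of the three cases, only $H_0$ requires genuine checking, where the determinant rule must be used both to locate a valid mirror $c$ and to confirm rigidity of the four-point configuration; the $D_4$ and $\bar{K}_n[D_3]$ cases are immediate once the witness is named. A minor care point is to ensure $w\neq c$ in every case (so that $W\neq W'$), which is needed for consistency with rigidity and is clear in each instance.
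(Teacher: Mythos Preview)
Your proposal is correct and follows exactly the same strategy as the paper: exhibit a rigid four-vertex configuration $W\subset\{\alpha\}\cup\Gamma(\alpha)$ and an isomorphic copy $W'$ so that the unique isomorphism $W\to W'$ fixes $\alpha$ and swaps the $\Lambda$-related pair, then invoke set-homogeneity. The paper carries this out only for $D_4$ (using precisely your witness $(\alpha,d_1,d_2,d_3)\mapsto(\alpha,d_3,d_4,d_1)$) and dismisses $H_0$ and $\bar K_n[D_3]$ with ``similar arguments''; your version is simply a more explicit rendering of those two omitted cases.
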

\begin{proof}
Suppose first that $\Gamma(\alpha) \cong D_4$. Say $\Gamma(\alpha) = \{ d_1, d_2, d_3, d_4 \}$ with 
$d_i \rightarrow d_{i+1} ~~({\rm mod}~4)$. Then by set-homogeneity the isomorphism $(\alpha, d_1, d_2, d_3) \mapsto (\alpha,d_3, d_4, d_1 )$ extends to an automorphism (since this configuration is rigid) interchanging the non-adjacent pair $\{ d_1, d_3  \}$ and also the pair $\{ d_2, d_4\} $. 

Similar arguments allow us to deal with the cases $\Gamma(\alpha)$ isomorphic to $H_0$, and $\bar{K}_n[D_3]$ for $n>1$.
\end{proof}

\begin{lemma}\label{doubleedgeconfigurations}
Suppose that each $\equiv$-class induces $K_3$. Let $f: \Gamma(\alpha) \rightarrow \Gamma^*(\alpha)$ be the bijection 
defined by $(a,f(a)) \in \Delta\cup\Delta^*$ for all $a \in \Gamma(\alpha)$. Then for all $a_1, a_2 \in \Gamma(\alpha)$, the subdigraph induced by $\{ a_1, a_2, f(a_1), f(a_2) \}$ is one of the $6$ digraphs illustrated in Figure~{\rm\ref{fig_6digraphs}}. In particular $f$ is a digraph isomorphism. 
\end{lemma}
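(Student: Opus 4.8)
The plan is to reduce everything to the $\overline{E_6}$ structure supplied by Lemma~\ref{structure}(iii) and then carry out a short finite check, the only delicate point being the orientation bookkeeping.

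First I would record that $f$ is a well-defined bijection. For $a\in\Gamma(\alpha)$ the $\equiv$-class $[a]$ is a triangle $K_3$ meeting $\Gamma^*(\alpha)$ in exactly one vertex, by Lemma~\ref{structure}(iv); call it $f(a)$, and note $a\sim f(a)$ since both lie in the same $K_3$. Moreover, by Lemma~\ref{structure}(iii) any two distinct $\equiv$-classes induce $\overline{E_6}$, and in $\overline{E_6}$ every $\sim$-edge lies inside one of the two triangles; hence $a$ is $\sim$-related to no vertex of $\Gamma^*(\alpha)$ outside $[a]$, so $f(a)$ is the unique such vertex and $f\colon\Gamma(\alpha)\to\Gamma^*(\alpha)$ is a bijection (its inverse sends $c\in\Gamma^*(\alpha)$ to the unique vertex of $[c]\cap\Gamma(\alpha)$).

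Next, fix $a_1\ne a_2$ in $\Gamma(\alpha)$ and write $[a_i]=\{a_i,f(a_i),g_i\}$, where $g_i$ is the third vertex of the class; by Lemma~\ref{structure}(iv) we have $g_i\in\Lambda(\alpha)$. By Lemma~\ref{structure}(iii), $[a_1]\cup[a_2]$ induces a copy of $\overline{E_6}$ whose two triangles are $[a_1]$ and $[a_2]$, whose arcs form a directed $6$-cycle alternating between the triangles, and whose three unrelated pairs are the ``diameters'' of that cycle. The set $\{a_1,a_2,f(a_1),f(a_2)\}$ is obtained by deleting $g_1$ and $g_2$, i.e. one vertex from each triangle, so its induced subdigraph is $\overline{E_6}$ with one vertex removed from each triangle. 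Reading off the induced relations for the possible deletions is a finite computation, and the surviving configurations are exactly the six displayed in Figure~\ref{fig_6digraphs}.

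The heart of the matter, and the step I expect to be the main obstacle, is to control the roles of $a_i$, $f(a_i)$, $g_i$ inside this $\overline{E_6}$, since this is what forces $f$ to be an isomorphism. Here I would apply Lemma~\ref{structure}(ii) to $B_1=\{\alpha,\beta,\gamma\}$ and a class $[a]$: it forces, inside the $\overline{E_6}$ on $B_1\cup[a]$, the directed $6$-cycle $\alpha\to a\to\beta\to g\to\gamma\to f(a)\to\alpha$ together with $a\parallel\gamma$, $f(a)\parallel\beta$, $g\parallel\alpha$, which fixes the cyclic order $(a,g,f(a))$ of each triangle relative to $B_1$. The crux is then to show that the $6$-cycle of the $\overline{E_6}$ on $[a_1]\cup[a_2]$ traverses $[a_1]$ and $[a_2]$ in the \emph{same} orientation relative to these role-labels; equivalently, that ``mixed orientation'' configurations (in which one triangle is traversed in reverse) are excluded. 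This is genuinely needed: a direct check shows that a mixed orientation produces, say, $a_1\to a_2$ while $f(a_1)\parallel f(a_2)$, which would violate the isomorphism claim. I would rule it out in the style used throughout the paper, by exhibiting under a mixed orientation a rigid small subconfiguration together with an isomorphic copy whose extension to an automorphism (guaranteed by set-homogeneity) contradicts either the self-pairedness of $\Lambda$ from Lemma~\ref{structure}(iii) or the regular $\mathbb{Z}_3$-action on each class from Lemma~\ref{isom2}(a); the $3$-cycle automorphism $\sigma=(\alpha\,\beta\,\gamma)$ and $2$-transitivity on $\equiv$-classes may be used to transport orientations consistently. Once mixed orientations are excluded, each common orientation leaves three relative positions of $[a_2]$ against $[a_1]$, giving the six configurations of Figure~\ref{fig_6digraphs}; and in every one of them the relation between $a_1$ and $a_2$ agrees with that between $f(a_1)$ and $f(a_2)$, as an arc with the same direction or as an unrelated pair. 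Since this holds for all pairs, $f$ preserves arcs, their directions, and unrelated pairs, so $f\colon\Gamma(\alpha)\to\Gamma^*(\alpha)$ is a digraph isomorphism.
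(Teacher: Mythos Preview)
Your setup is sound: $f$ is well defined, and the four vertices sit inside an $\overline{E_6}$ on $[a_1]\cup[a_2]$, so the induced subdigraph is one of a small list obtained by deleting one vertex from each triangle. The difficulty you correctly isolate---ruling out the ``mixed orientation'' in which (say) $a_1\to a_2$ but $f(a_1)\parallel f(a_2)$ or $f(a_2)\to f(a_1)$---is exactly where your argument is incomplete. You say you would ``rule it out in the style used throughout the paper'' via some rigid configuration, possibly using the $\mathbb{Z}_3$-action on $B_1$, but you do not exhibit the configuration, and the $3$-cycle $(\alpha\,\beta\,\gamma)$ need not fix $[a_1]$ or $[a_2]$ setwise, so it does not obviously transport orientations as you suggest.

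The paper closes this gap differently, and crucially \emph{uses the earlier lemmas that you do not invoke}. By Lemmas~\ref{Nisadigraph}--\ref{morerulingout}, $\Gamma(\alpha)$ is already known to be one of $D_4$, $H_0$, or $\bar K_n[D_3]$; Lemma~\ref{2HomAction} then shows that for any $\Lambda$-related pair $a_1,a_2\in\Gamma(\alpha)$ there is $g\in G_\alpha$ interchanging them. This immediately forces $(f(a_1),f(a_2))\in\Lambda$ when $(a_1,a_2)\in\Lambda$, pinning down the two ``unrelated'' pictures. For the arc case $a_1\to a_2$, the paper excludes $f(a_2)\to f(a_1)$ by a concrete rigidity argument: in that configuration $(f(a_1),\alpha,a_2)$ and $(f(a_2),\alpha,a_1)$ are isomorphic rigid triples, and the extending automorphism swaps $a_1,a_2$, contradicting $a_1\to a_2$. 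So your approach omits the essential input of Lemma~\ref{2HomAction} (and hence of the prior restriction on $\Gamma(\alpha)$), and does not supply a replacement for it; as written, the orientation step is a genuine gap.
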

\begin{figure}
\def\x{0.55}
\begin{center}
\begin{tikzpicture}[decoration={ 
markings,
mark=
at position \x
with 
{ 
\arrow[scale=1.5]{stealth} 
} 
} 
]
\tikzstyle{vertex}=[circle,draw=black, fill=white, inner sep = 0.75mm]

\matrix[row sep=5mm,column sep = 2cm]{
 

\node (A1)  [vertex,label={90:{\small $a_1$}}] at (0,1.5) {};
\node (A2)  [vertex,label={90:{\small $a_2$}}] at (1.5,1.5) {};
\node (FA1) [vertex,label={-90:{\small $f(a_1)$}}] at (0,0) {};
\node (FA2) [vertex,label={-90:{\small $f(a_2)$}}] at (1.5,0) {};
\arrowdraw{A1}{A2}
\arrowdraw{A2}{FA1}
\arrowdraw{FA1}{FA2}
\edgedraw{A1}{FA1}
\edgedraw{A2}{FA2}

&

\node (A1)  [vertex,label={90:{\small $a_1$}}] at (0,1.5) {};
\node (A2)  [vertex,label={90:{\small $a_2$}}] at (1.5,1.5) {};
\node (FA1) [vertex,label={-90:{\small $f(a_1)$}}] at (0,0) {};
\node (FA2) [vertex,label={-90:{\small $f(a_2)$}}] at (1.5,0) {};
\arrowdraw{A1}{A2}
\arrowdraw{FA2}{A1}
\arrowdraw{FA1}{FA2}
\edgedraw{A1}{FA1}
\edgedraw{A2}{FA2}

&

\node (A1)  [vertex,label={90:{\small $a_1$}}] at (0,1.5) {};
\node (A2)  [vertex,label={90:{\small $a_2$}}] at (1.5,1.5) {};
\node (FA1) [vertex,label={-90:{\small $f(a_1)$}}] at (0,0) {};
\node (FA2) [vertex,label={-90:{\small $f(a_2)$}}] at (1.5,0) {};
\arrowdraw{A2}{A1}
\arrowdraw{A1}{FA2}
\arrowdraw{FA2}{FA1}
\edgedraw{A1}{FA1}
\edgedraw{A2}{FA2}

\\


\node (A1)  [vertex,label={90:{\small $a_1$}}] at (0,1.5) {};
\node (A2)  [vertex,label={90:{\small $a_2$}}] at (1.5,1.5) {};
\node (FA1) [vertex,label={-90:{\small $f(a_1)$}}] at (0,0) {};
\node (FA2) [vertex,label={-90:{\small $f(a_2)$}}] at (1.5,0) {};
\arrowdraw{A2}{A1}
\arrowdraw{FA1}{A2}
\arrowdraw{FA2}{FA1}
\edgedraw{A1}{FA1}
\edgedraw{A2}{FA2}

&

\def\x{0.75}; 

\node (A1)  [vertex,label={90:{\small $a_1$}}] at (0,1.5) {};
\node (A2)  [vertex,label={90:{\small $a_2$}}] at (1.5,1.5) {};
\node (FA1) [vertex,label={-90:{\small $f(a_1)$}}] at (0,0) {};
\node (FA2) [vertex,label={-90:{\small $f(a_2)$}}] at (1.5,0) {};

\arrowdraw{A2}{FA1}
\arrowdraw{A1}{FA2}
\edgedraw{A1}{FA1}
\edgedraw{A2}{FA2}

&

\def\x{0.75}; 

\node (A1)  [vertex,label={90:{\small $a_1$}}] at (0,1.5) {};
\node (A2)  [vertex,label={90:{\small $a_2$}}] at (1.5,1.5) {};
\node (FA1) [vertex,label={-90:{\small $f(a_1)$}}] at (0,0) {};
\node (FA2) [vertex,label={-90:{\small $f(a_2)$}}] at (1.5,0) {};
\arrowdraw{FA2}{A1}
\arrowdraw{FA1}{A2}
\edgedraw{A1}{FA1}
\edgedraw{A2}{FA2}
\\
};
\end{tikzpicture}
\end{center}
\caption{The $6$ possibilities for the digraph induced by $\{ a_1, a_2, f(a_1), f(a_2) \}$ in Lemma~\ref{doubleedgeconfigurations}.} 
\label{fig_6digraphs}
\end{figure}
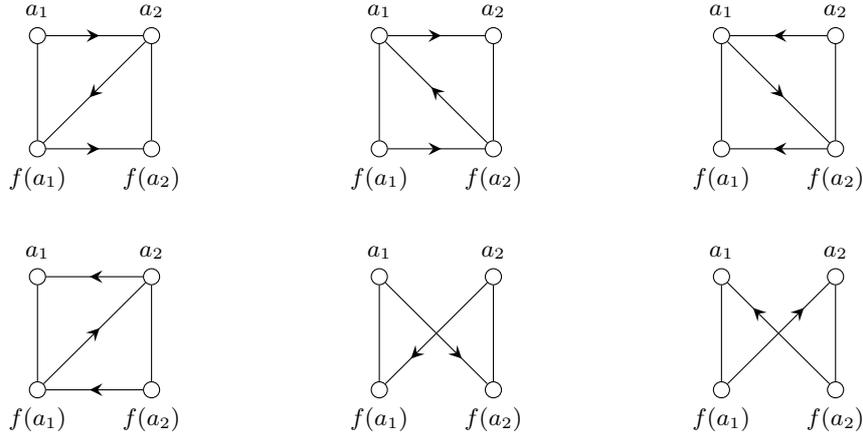
\begin{proof}
By Lemmas~\ref{Nisadigraph} -- \ref{morerulingout} the digraph $\Gamma(\alpha)$ is isomorphic to one of: $D_4$, $H_0$, or $\bar{K}_n[D_3]$ for some $n>1$. Let  $a_1, a_2 \in \Gamma(\alpha)$. By Lemma~\ref{structure}(iii), the substructure induced by $\{ a_1, a_2, f(a_1), f(a_2) \}$ embeds in the s-digraph $\bar{E_6}$. Also if  $(a_1,a_2) \in \Lambda$ then  by Lemma~\ref{2HomAction} it follows that there exists $g \in G_{\alpha}$ such that $a_1^g = a_2$ and $a_2^g=a_1$.  

First suppose that $(a_1, a_2) \in \Lambda$. Then from the comment in the previous paragraph there exists $g \in G_{\alpha}$ such that $a_1^g = a_2$ and $a_2^g=a_1$, and so $f(a_1)^g = f(a_2)$ and $f(a_2)^g=f(a_1)$, which since $\Gamma^*(\alpha)$ is an a-digraph implies $(f(a_1), f(a_2)) \in \Lambda$. With these restrictions, since  $\{ a_1, a_2, f(a_1), f(a_2) \}$ embeds in the digraph $\bar{E_6}$ we see by inspection that the only possibilities for the digraph induced by $\{ a_1, a_2, f(a_1), f(a_2) \}$ are the last two of the six illustrated in Figure~\ref{fig_6digraphs}. 

Now suppose that $a_1 \rightarrow a_2$ (the case $a_2 \rightarrow a_1$ is dual to this one). If $f(a_1)$ and $f(a_2)$ were unrelated then a similar argument to that of the prevous paragraph would imply that $a_1$ and $a_2$ are unrelated, which is not the case. 
It follows that either $f(a_1) \rightarrow f(a_2)$ or $f(a_2) \rightarrow f(a_1)$. We claim that the second of these possibilities cannot happen. Indeed, if $f(a_2) \rightarrow f(a_1)$ then by inspection of the substructures of $\bar{E_6}$ it would follow that the subdigraph induced by $\{ a_1, a_2, f(a_1), f(a_2) \}$ has $a_1 \rightarrow a_2$, $f(a_2) \rightarrow f(a_1)$, $(a_1,f(a_1)) \in \Delta\cup \Delta^*$, $(a_2,f(a_2)) \in \Delta\cup\Delta^*$, and every other pair $\Lambda$-related.  
But then by set-homogeneity the isomorphism $(f(a_1),\alpha,a_2) \mapsto (f(a_2),\alpha,a_1)$ extends to an automorphism $ \pi$ (since this configuration is rigid). But this forces $a_1^\pi = a_2$ (since $f(a_1)^\pi = f(a_2)$) and $a_2^\pi = a_1$, contradicting $a_1 \rightarrow a_2$. 

We conclude that if $a_1 \rightarrow a_2$ then $f(a_1) \rightarrow f(a_2)$.  Consideration of the substructures of $\bar{E_6}$ shows that the only possibilities for the subdigraph induced by $\{ a_1, a_2, f(a_1), f(a_2) \}$ are those illustrated in Figure~\ref{fig_6digraphs}. \end{proof}

\begin{lemma}
Suppose that each $\equiv$-class induces $K_3$.
Let $A$ and $B$ be any distinct pair from $\{  \Gamma(\alpha)$, $\Gamma^*(\alpha), \Lambda(\alpha) \}$. Then the bijection $f:A \rightarrow B$ defined by $(a,f(a)) \in \Delta\cup\Delta^*$ for all $a \in A$, is a digraph isomorphism.  
\end{lemma}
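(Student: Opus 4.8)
The plan is to reduce every case to the single pair $\{\Gamma(\alpha),\Gamma^*(\alpha)\}$, already settled in Lemma~\ref{doubleedgeconfigurations}, by exploiting the $3$-fold symmetry of the $\equiv$-class $B_1=\{\alpha,\beta,\gamma\}$. First I would record that $f$ is nothing but the $\equiv$-matching. By Lemma~\ref{structure}(iv) each $\equiv$-class other than $B_1$ meets each of $\Gamma(\alpha),\Gamma^*(\alpha),\Lambda(\alpha)$ in exactly one vertex, and by Lemma~\ref{structure}(iii) (the $\overline{E_6}$ description of $B\cup B'$) a vertex is $\sim$-related only to the other two members of its own $\equiv$-class. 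Hence for $a\in A$ the unique vertex of $B$ that is $\sim$-related to $a$ is its $\equiv$-class mate, so $f$ is a well-defined bijection $A\to B$ sending each vertex to its $\equiv$-mate in $B$. Since the map $B\to A$ is $f^{-1}$, and the inverse of a digraph isomorphism is a digraph isomorphism, it suffices to treat the three unordered pairs, of which $\{\Gamma(\alpha),\Gamma^*(\alpha)\}$ is Lemma~\ref{doubleedgeconfigurations}.

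Next I would bring in the cyclic symmetry. By Lemma~\ref{isom2}(a) together with Lemma~\ref{structure}(i), the setwise stabiliser of $B_1$ induces on it the cyclic group $\mathbb{Z}_3$, so there is $\sigma\in G$ with $\alpha^\sigma=\beta$, $\beta^\sigma=\gamma$, $\gamma^\sigma=\alpha$. Using $\Gamma(\alpha)^\sigma=\Gamma(\alpha^\sigma)=\Gamma(\beta)$ and the equalities of Lemma~\ref{structure}(ii), namely $\Gamma(\beta)=\Lambda(\alpha)$, $\Gamma(\gamma)=\Gamma^*(\alpha)$ and $\Gamma^*(\beta)=\Gamma(\alpha)$, one checks that $\sigma$ cyclically permutes the three neighbourhoods:
\[
\Gamma(\alpha)\ \xrightarrow{\ \sigma\ }\ \Lambda(\alpha)\ \xrightarrow{\ \sigma\ }\ \Gamma^*(\alpha)\ \xrightarrow{\ \sigma\ }\ \Gamma(\alpha).
\]

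Finally I would transfer the isomorphism around this cycle by conjugation. Writing $f_{A,B}$ for the $\equiv$-matching from $A$ to $B$, I observe that since $\sigma$ is an automorphism it preserves the congruence $\equiv$: for $a\in A$ the images $a^\sigma$ and $f_{A,B}(a)^\sigma$ again lie in a common $\equiv$-class, with $a^\sigma\in\sigma(A)$ and $f_{A,B}(a)^\sigma\in\sigma(B)$, so by the uniqueness noted above $f_{\sigma(A),\sigma(B)}(a^\sigma)=f_{A,B}(a)^\sigma$; that is,
\[
f_{\sigma(A),\sigma(B)}=\sigma\circ f_{A,B}\circ\sigma^{-1}.
\]
Because $\sigma$ restricts to a digraph isomorphism between any induced subdigraph and its image, the right-hand side is a digraph isomorphism exactly when $f_{A,B}$ is. Starting from $f_{\Gamma(\alpha),\Gamma^*(\alpha)}$ (an isomorphism by Lemma~\ref{doubleedgeconfigurations}) and applying the relation once shows $f_{\Lambda(\alpha),\Gamma(\alpha)}$ is an isomorphism, hence so is its inverse $f_{\Gamma(\alpha),\Lambda(\alpha)}$; applying it a second time shows $f_{\Gamma^*(\alpha),\Lambda(\alpha)}$ (and its inverse) is an isomorphism. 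This covers all three unordered pairs and completes the proof. The only real bookkeeping, and the single place to be careful, is matching $\sigma$'s action against the equalities of Lemma~\ref{structure}(ii) so that the neighbourhoods rotate in the stated cyclic order; once that is pinned down the argument is purely formal and requires no further case analysis.
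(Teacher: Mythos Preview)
Your proof is correct and rests on the same two ingredients as the paper's: Lemma~\ref{structure}(ii) and Lemma~\ref{doubleedgeconfigurations}. The paper's argument is slightly more direct in packaging: rather than constructing the automorphism $\sigma$ and conjugating, it simply observes via the equalities of Lemma~\ref{structure}(ii) that each of the remaining pairs \emph{is} literally a pair $\{\Gamma(x),\Gamma^*(x)\}$ for some $x\in\{\beta,\gamma\}$ (e.g.\ $\{\Lambda(\alpha),\Gamma(\alpha)\}=\{\Gamma(\beta),\Gamma^*(\beta)\}$), so Lemma~\ref{doubleedgeconfigurations} applies verbatim with $x$ in place of $\alpha$. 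Your explicit cyclic automorphism is just a dynamical way of expressing the same identification, so the two arguments are essentially equivalent.
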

\begin{proof}
This follows from Lemma~\ref{structure}(ii) and Lemma~\ref{doubleedgeconfigurations}.

\end{proof}

\begin{lemma}\label{speciald3s}
Suppose that each $\equiv$-class induces $K_3$. Let $a_1, a_2 \in \Gamma(\alpha)$ with $a_1 \rightarrow a_2$. Then there does not exist $c \in \Gamma^*(\alpha) \cup \Lambda(\alpha)$ such that both $a_2 \rightarrow c$ and $c \rightarrow a_1$.
\end{lemma}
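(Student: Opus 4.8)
Assume for a contradiction that such a vertex $c$ exists, so that $\{a_1,a_2,c\}$ induces a directed triangle $a_1\to a_2\to c\to a_1$ with $a_1,a_2\in\Gamma(\alpha)$. By Lemma~\ref{structure}(ii) there are exactly two possibilities, which I treat as Case~1, $c\in\Gamma^*(\alpha)=\Gamma(\gamma)=\Lambda(\beta)$, and Case~2, $c\in\Lambda(\alpha)=\Gamma(\beta)=\Gamma^*(\gamma)$. The first step is to record, using Lemma~\ref{structure}(ii), how each of $\alpha,\beta,\gamma$ relates to the three triangle vertices. Since $a_1,a_2\in\Gamma(\alpha)=\Gamma^*(\beta)=\Lambda(\gamma)$ one always has $\alpha\to a_1,\alpha\to a_2$, then $a_1\to\beta,a_2\to\beta$, and $\gamma\parallel a_1,\gamma\parallel a_2$; in Case~1 we add $c\to\alpha$, $\beta\parallel c$, $\gamma\to c$, and in Case~2 the relations of $\beta,\gamma$ to $c$ change to $\beta\to c$, $c\to\gamma$. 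Because $a_1,a_2,c$ are pairwise joined by arcs they lie in three \emph{distinct} $\equiv$-classes, none of which is $B_1=\{\alpha,\beta,\gamma\}$.

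The central observation is that the three representatives $\alpha,\beta,\gamma$ of the single $\equiv$-class $B_1$ occupy three genuinely different positions relative to the directed triangle $T=\{a_1,a_2,c\}$: writing each pattern as a triple of relations to $(a_1,a_2,c)$, the patterns of $\alpha,\beta,\gamma$ lie in three distinct orbits under the cyclic rotation of $T$ (this holds in both cases). Hence \emph{no} automorphism of $M$ can simultaneously stabilise $B_1$ setwise and induce a non-trivial rotation of $T$. I would exploit this together with the rigidity of the $4$-point configuration $\{\alpha,a_1,a_2,c\}$: in Case~1 it is a tournament with out-degree sequence $(2,1,1,2)$, and in Case~2 it carries the single unrelated pair $\{\alpha,c\}$; a direct inspection shows each is rigid, so by set-homogeneity any isomorphism onto another copy extends uniquely.

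To control the arcs $c\to a_1$ and $a_2\to c$, which unavoidably cross between the neighbourhoods $\Gamma(\alpha),\Gamma^*(\alpha),\Lambda(\alpha)$, I would bring in the $\sim$-mate $c^{*}$ of $c$ lying in $\Gamma(\alpha)$; it exists because, by Lemma~\ref{structure}(iv), the $\equiv$-class of $c$ meets each of $\Gamma(\alpha),\Gamma^*(\alpha),\Lambda(\alpha)$ in exactly one vertex, and these are mutually $\sim$-related. Now $a_1,a_2,c^{*}$ all lie inside $\Gamma(\alpha)$, and $c=f(c^{*})$ for the $\Delta$-matching $f$. Applying Lemma~\ref{doubleedgeconfigurations} (and its analogue for $\Lambda(\alpha)$ in Case~2) to the pairs $\{a_1,c^{*}\}$ and $\{a_2,c^{*}\}$, the prescribed directions of $c\to a_1$ and $a_2\to c$ cut the six admissible local configurations down to a short list of possibilities for how $c^{*}$ relates to $a_1$ and $a_2$ within $\Gamma(\alpha)$. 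One then reads each surviving configuration against the isomorphism type of $\Gamma(\alpha)$, which by Lemmas~\ref{Nisadigraph}--\ref{morerulingout} is one of $D_4$, $H_0$, $\overline{K_n}[D_3]$.

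The final step, and the place where the real work lies, is to turn each surviving local configuration into an automorphism --- produced by set-homogeneity from a rigid sub-configuration such as $\{\alpha,a_1,a_2,c\}$ or $\{\alpha,a_1,a_2,c,c^{*}\}$ --- whose action is impossible: either it rotates $T$ while fixing $B_1$ (excluded by the central observation), or it reverses a $\sim$-edge inside some $\equiv$-class (excluded since $\Delta$ is not self-paired, Lemma~\ref{structure}(i)), or it forces two distinct vertices to share an out- or in-neighbourhood (excluded by condition $(*)$ of Assumption~(A)). I expect the main obstacle to be exactly this cross-neighbourhood bookkeeping: the three vertices $a_1,a_2,c$ never lie in a common neighbourhood, so Lemma~\ref{doubleedgeconfigurations} is the only available handle on the closing arcs, and the delicate point is to verify \emph{uniformly} that every configuration it permits forces one of the three forbidden symmetries above.
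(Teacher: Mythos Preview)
Your setup is essentially the paper's: bring in the $\sim$-mate $c'\in\Gamma(\alpha)$ of $c$ (your $c^{*}$), apply Lemma~\ref{doubleedgeconfigurations} to the pairs $\{a_1,c'\}$ and $\{a_2,c'\}$, and use that $\Gamma(\alpha)\in\{D_4,H_0,\overline{K_n}[D_3]\}$. The paper also brings in the $\sim$-mates $a_1',a_2'$ of $a_1,a_2$ lying in the same neighbourhood as $c$, which you do not mention explicitly but which are implicit once you invoke the lemma.

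Where your proposal diverges is the endgame. Your ``central observation'' about the three positions of $\alpha,\beta,\gamma$ relative to the triangle $T$, and the three proposed forbidden symmetries (rotation of $T$ fixing $B_1$; reversal of a $\sim$-edge; shared neighbourhoods via $(*)$), are not what the paper uses, and it is not clear they suffice. The paper's argument is a direct four-case split. From $c\to a_1$ and $a_2\to c$, Lemma~\ref{doubleedgeconfigurations} leaves, for each of $i=1,2$, two possibilities: (a) an arc between $a_i$ and $c'$, or (b) $(a_i,c')\in\Lambda$. The mixed cases (1a)(2b) and (1b)(2a) die structurally: they force a three-vertex pattern inside $\Gamma(\alpha)$ (two vertices with a common in- or out-neighbour and a $\Lambda$-pair) that does not embed in any of $D_4,H_0,\overline{K_n}[D_3]$. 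Case (1a)(2a) dies via the rigid isomorphism $(\alpha,c',a_2)\mapsto(\alpha,a_1,c')$, whose extension forces $c'\to a_1'$ while (1a) says $(c',a_1')\in\Lambda$. The crucial case you have not isolated is (1b)(2b), where $a_1$ and $a_2$ are both $\Lambda$-related to $c'$: here the paper invokes Lemma~\ref{2HomAction} to get $g\in G_\alpha$ swapping $a_1,a_2$ and fixing $c'$, hence fixing $c$; then $a_2\to c$ and $c\to a_1$ become $a_1\to c$ and $c\to a_2$, a contradiction.

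So the gap is that your proposed contradictions are aimed at the wrong targets. The $\equiv$-class $B_1$ plays no role; what matters is the internal structure of $\Gamma(\alpha)$ (to kill the mixed cases) and Lemma~\ref{2HomAction} (to kill the symmetric $\Lambda$-case).
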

\begin{proof}
Suppose, seeking a contradiction, that there exists $c \in \Gamma^*(\alpha) \cup \Lambda(\alpha)$ with $a_2 \rightarrow c$ and $c \rightarrow a_1$. If $c \in \Gamma^*(\alpha)$ then let $\{a_1', a_2' \} \subseteq \Gamma^*(\alpha)$ where $a_i$ is $\sim$-related to $a_i'$ for $i=1,2$. If $c \in \Lambda(\alpha)$ then let $\{a_1', a_2' \} \subseteq \Lambda(\alpha)=\Gamma(\beta)$ where $a_i$ is $\sim$-related to $a_i'$ for $i=1,2$. Also let $c' \in \Gamma(\alpha)$ be the unique vertex of $\Gamma(\alpha)$ to which $c$ is $\sim$-related. 
By Lemmas~\ref{Nisadigraph}--\ref{morerulingout}, $\Gamma(\alpha)$ is isomorphic to one of: $D_4$, $H_0$, or $\bar{K}_n[D_3]$ for some $n>1$.

For $i=1,2$, if $c\in\Gamma^*(\alpha)$ then Lemma~\ref{doubleedgeconfigurations} applies with $c', a_i\in\Gamma(\alpha)$ and $c,a_i'\in\Gamma^*(\alpha)$, while if $c\in\Lambda(\alpha)$ then the lemma applies with $c, a_i'\in\Gamma(\beta)$ and $c',a_i\in\Gamma^*(\beta)$. In both cases, by comparing the configuration on $\{c,c',a_i,a_i'\}$ with those in Lemma~\ref{doubleedgeconfigurations}, since $c \rightarrow a_1$ and $a_2\rightarrow c$, we see that both of the following hold:
\begin{itemize}
\item[(1)] either (a)\quad $a_1' \rightarrow c\ \&\ a_1 \rightarrow c'\ \&\ (c',a_1')\in\Lambda$, or (b)\quad $(c',a_1), (c,a_1')\in\Lambda$;  
\item[(2)] either (a)\quad $ c \rightarrow a_2'\ \&\ c' \rightarrow a_2\ \&\ (c',a_2')\in\Lambda$, or (b)\quad $(c',a_2), (c,a_2')\in\Lambda$.  
\end{itemize}

Suppose first that (1a) and (2a) hold. 
Then the isomorphism 
 $(\alpha, c', a_2) \mapsto (\alpha, a_1, c')$ is between rigid configurations 
and so  extends to an automorphism $g$, and we must have $c^g=a_1'$.
However, this is impossible since $a_2 \rightarrow c$ implies $c' = a_2^g \rightarrow c^g = a_1'$ while in (1a), $(c',a_1')\in\Lambda$.
 
Suppose next that (1b) and (2a) hold. In this case $a_1 \rightarrow a_2 \leftarrow c'$ (with $(c',a_1) \in \Lambda$) embeds into $\Gamma(\alpha)$ which is a contradiction since none of $D_4$, $H_0$, or $\bar{K}_n[D_3]$ embeds this digraph. 
Similarly if (1a) and (2b) hold then  $c' \leftarrow a_1 \rightarrow a_2$ (with $(c',a_2) \in \Lambda$) embeds into $\Gamma(\alpha)$, again a contradiction.

This leaves the case where (1b) and (2b) hold. Here, by 3-set-homogeneity and Lemma~\ref{2HomAction}, there exists $g\in G_\alpha$ with $(a_1,c')^g=(a_2,c')$, and we must have $c^g=c$. This implies that $a_2\rightarrow c$ and $a_2=a_1^g\leftarrow c^g=c$ both hold, whch is a contradiction. 
\end{proof}

\begin{lemma}\label{NOTD4}
If each $\equiv$-class of $M$ induces $K_3$, then $\Gamma(\alpha)\not\cong D_4$. 
\end{lemma}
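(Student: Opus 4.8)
The plan is to assume $\Gamma(\alpha)\cong D_4$ and derive a contradiction: first I would show that the automorphism group is extremely rigid, then pin down the whole structure, and finally exhibit a forced automorphism that cannot exist. To fix coordinates, note that by Lemma~\ref{structure} (especially (iv) and (v)), writing $B_1=\{\alpha,\beta,\gamma\}$, the s-digraph $M$ has exactly five $\equiv$-classes $B_1,\dots,B_5$ (since $\Gamma(\alpha)$ meets each class other than $B_1$ in a single vertex and $|\Gamma(\alpha)|=4$), hence $15$ vertices; I write $B_j=\{x_j,y_j,z_j\}$ for $j\geq 2$ with $x_j\in\Gamma(\alpha)$, $y_j\in\Gamma(\beta)=\Lambda(\alpha)$ and $z_j\in\Gamma(\gamma)=\Gamma^*(\alpha)$. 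Labelling so that $x_2\to x_3\to x_4\to x_5\to x_2$ is the directed $4$-cycle on $\Gamma(\alpha)$, the $\sim$-matching isomorphisms between $\Gamma(\alpha)$, $\Gamma(\beta)$ and $\Gamma(\gamma)$ transport this cycle to $y_2\to y_3\to y_4\to y_5\to y_2$ and $z_2\to z_3\to z_4\to z_5\to z_2$.

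Next I would establish that $G_\alpha\cong\mathbb{Z}_4$ acts regularly. Since $\{\alpha,x_2,x_3\}$ induces the rigid chain $P_3$ (as $\alpha\to x_2$, $\alpha\to x_3$ and $x_2\to x_3$), and likewise for $\{\alpha,x_3,x_4\}$, set-homogeneity yields $h\in G_\alpha$ realising the rotation $x_j\mapsto x_{j+1}$; as $\Aut(D_4)\cong\mathbb{Z}_4$, this shows $G_\alpha$ induces the full cyclic group on $\Gamma(\alpha)$ and permutes $B_2,\dots,B_5$ as a single $4$-cycle. Moreover, for $v=y_2\in\Lambda(\alpha)$, any element of $G_{\alpha v}$ fixes $B_2$ setwise and fixes $\alpha$, hence fixes $x_2=B_2\cap\Gamma(\alpha)$; regularity of $\Aut(D_4)$ then forces it to fix $\Gamma(\alpha)$ pointwise, and running the same argument at $\beta$ and $\gamma$ (recall $G_\alpha=G_\beta=G_\gamma$ from Lemma~\ref{isom2}(a)) shows it fixes $\Gamma(\beta)$ and $\Gamma(\gamma)$ pointwise too, whence it is the identity. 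Thus $G_{\alpha v}=1$ and $G_\alpha\cong\mathbb{Z}_4$.

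I would then determine the arcs between classes. By Lemma~\ref{structure}(iii) each pair $B_i\cup B_j$ induces $\overline{E_6}$, so between two classes the three matchings ($\Gamma$, $\Gamma^*$, $\Lambda$) thread a single directed hexagon; together with the diagonal $\Gamma$-matching $x_i\to x_j,\ y_i\to y_j,\ z_i\to z_j$ coming from the cycles, this leaves only two ``mirror'' possibilities for each pair. Using Lemma~\ref{doubleedgeconfigurations} to control each configuration on $\{a_1,a_2,f(a_1),f(a_2)\}$, Lemma~\ref{speciald3s} to forbid a backward-closing vertex $c\in\Gamma^*(\alpha)\cup\Lambda(\alpha)$ for each arc of $\Gamma(\alpha)$, and the $4$-cycle symmetry $h$ (which determines the pairs $(B_3,B_4),(B_4,B_5),(B_5,B_2)$ from $(B_2,B_3)$ and constrains $(B_2,B_4)$), I would reduce to a single fully determined candidate structure on the $15$ vertices.

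Finally, I would produce the impossible automorphism. In the determined structure one can locate a rigid configuration (of the shape used in Lemmas~\ref{d5} and \ref{NOTD5}) whose unique isomorphic image forces, by set-homogeneity, an automorphism $\pi$ that either interchanges two vertices of a single $\equiv$-class---impossible, since $\Delta$ is not self-paired and so the group induced on each $K_3$-class is $\mathbb{Z}_3$ (Lemma~\ref{structure}(i))---or fixes both $\alpha$ and some $v\in\Lambda(\alpha)$ while moving a third vertex, contradicting $G_{\alpha v}=1$. Either way $\Gamma(\alpha)\cong D_4$ is untenable. I expect the main difficulty to be exactly the bookkeeping in the third step: tracking the ``mirror'' choice of the $\overline{E_6}$-matching consistently across all pairs of classes while respecting $2$-transitivity on the classes, so that the structural lemmas leave a unique candidate on which the final rigid configuration can be exhibited.
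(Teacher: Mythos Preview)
Your overall plan is sound in spirit---you invoke exactly the right structural lemmas---but it is far more laborious than necessary, and your Steps~3 and~4 are not carried out in enough detail to constitute a proof. More importantly, your expectation of reaching Step~4 (a unique surviving candidate structure on which you then exhibit a forbidden automorphism) is misplaced: the constraints from Lemmas~\ref{doubleedgeconfigurations} and~\ref{speciald3s} are already inconsistent, so the bookkeeping in your Step~3, if done carefully, would yield a contradiction before any candidate structure emerges.

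The paper's proof is dramatically shorter and avoids determining the full $15$-vertex structure altogether. It splits on whether $M$ embeds $D_3$. If $M$ embeds $D_3$, then by arc-transitivity every arc lies in a copy of $D_3$; in particular an arc $a_1\to a_2$ inside $\Gamma(\alpha)\cong D_4$ extends to some $a_1\to a_2\to c\to a_1$, but $D_4$ contains no $D_3$, so $c\notin\Gamma(\alpha)$, hence $c\in\Gamma^*(\alpha)\cup\Lambda(\alpha)$ by Lemma~\ref{structure}---and this is exactly what Lemma~\ref{speciald3s} forbids. If $M$ does not embed $D_3$, then for $a\in\Gamma(\alpha)$ and $b\in\Gamma^*(\alpha)$ the only possible relations are $b\to a$, $a\sim b$, or $a\parallel b$; writing $\Gamma(\alpha)=\{a_1,a_2,a_3,a_4\}$ and $\Gamma^*(\alpha)=\{b_1,b_2,b_3,b_4\}$ cyclically with $a_i\sim b_i$, Lemma~\ref{doubleedgeconfigurations} together with the non-embedding of $D_3$ forces $b_1\to a_4$, $b_1\to a_3$, $b_2\to a_4$. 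But then $\{b_2,a_3,a_4\}\subseteq\Gamma(b_1)$ with $a_3\to a_4\leftarrow b_2$, which does not embed in $D_4\cong\Gamma(b_1)$.

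So the key observation you are missing is that Lemma~\ref{speciald3s} is not merely a constraint to feed into a structure-determination algorithm: in the presence of $D_3$ it is an immediate contradiction, because $D_4$ has no internal $D_3$. Your preliminary work in Steps~1--2 (the $15$-vertex count, the labelling, and $G_\alpha\cong\mathbb{Z}_4$) is correct but unnecessary for the argument.
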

\begin{proof}
Suppose to the contrary that $\Gamma(\alpha) \cong D_4$. There are two cases to consider depending on whether or not $M$ embeds $D_3$.

Suppose that $M$ embeds $D_3$. Let $a_1, a_2 \in \Gamma(\alpha)$ with $a_1 \rightarrow a_2$. Since $M$ embeds $D_3$, by set-homogeneity applied to arcs it follows that there exists $c \in M$ with $a_1 \rightarrow a_2 \rightarrow c \rightarrow a_1$. Since $\Gamma(\alpha) \cong D_4$ we know $c \not\in \Gamma(\alpha)$, and so by Lemma~\ref{structure}(ii) and (iv) it follows that $c \in \Gamma^*(\alpha) \cup \Lambda(\alpha)$. But this contradicts Lemma~\ref{speciald3s}. 

Now suppose that $M$ does not embed $D_3$. Therefore for any $a \in \Gamma(\alpha)$ and 
$b \in \Gamma^*(\alpha)$ either $(a,b) \in \Lambda$, $(a,b)\in \Delta\cup\Delta^*$, or $b \rightarrow a$.
Let $\Gamma(\alpha) = \{ a_1, a_2, a_3, a_4 \}$ with $a_i \rightarrow a_{i+1~({\rm mod}~4)}$, and $\Gamma^*(\alpha) = \{ b_1, b_2, b_3, b_4  \}$ with $b_i \rightarrow b_{i+1~({\rm mod}~4)}$, and with $(a_i,b_i) \in \Delta\cup\Delta^*$ for $i=1,2,3,4$ (that such $b_i$ exist follows from Lemmas~\ref{structure} and~\ref{doubleedgeconfigurations}). By Lemma~\ref{doubleedgeconfigurations}, and since $M$ does not embed $D_3$ by assumption, we have $b_1 \rightarrow a_4$, $b_1 \rightarrow a_3$ and $b_2 \rightarrow a_4$. But then $\{b_2, a_3, a_4  \} \subseteq \Gamma(b_1)$ with $a_3 \rightarrow a_4 \leftarrow b_2$, contradicting $\Gamma(b_1) \cong D_4$.
\end{proof}

\begin{lemma}\label{NOTKND3}
If each $\equiv$-class of $M$ induces $K_3$, then $\Gamma(\alpha)\not\cong \bar{K}_n[D_3]$ for any $n\geq2$. 
\end{lemma}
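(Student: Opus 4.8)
The plan is to derive a contradiction from the assumption $\Gamma(\alpha)\cong\bar{K}_n[D_3]$ (with $n\geq2$) by producing, inside some in-neighbourhood of $M$, a configuration that $\bar{K}_n[D_3]$ cannot contain. The combinatorial fact I would isolate first is the following \emph{embedding obstruction}: in $\bar{K}_n[D_3]$ the arcs occur only within the $n$ directed triangles, and any two vertices in distinct triangles are unrelated; hence every directed $2$-path $u\to v\to w$ forces $u,v,w$ into a common triangle and closes up with $w\to u$. In particular $\bar{K}_n[D_3]$ contains no directed $2$-path whose two endpoints are unrelated. Since we are in the case $\Gamma(\alpha)\cong\Gamma^*(\alpha)$ and $M$ is vertex-transitive, every out- and in-neighbourhood of $M$ is isomorphic to $\Gamma(\alpha)\cong\bar{K}_n[D_3]$ (using Lemma~\ref{nhood}); the strategy is to exhibit such a forbidden $2$-path inside $\Gamma^*(a_2)$ for a well-chosen vertex $a_2$.

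For the construction I would start from an arc $a_1\to a_2$ lying in a triangle of $\Gamma(\alpha)$ (such arcs exist, as $\Gamma(\alpha)$ has arcs). By Lemma~\ref{structure}(ii) we have $\Gamma(\alpha)=\Gamma^*(\beta)$, so both $a_1\to\beta$ and $a_2\to\beta$ hold; thus $\{a_2,\beta\}\subseteq\Gamma(a_1)$ with $a_2\to\beta$. As $\Gamma(a_1)\cong\bar{K}_n[D_3]$ and every arc of $\bar{K}_n[D_3]$ lies in a unique directed triangle, the arc $a_2\to\beta$ completes to a triangle $a_2\to\beta\to c\to a_2$ inside $\Gamma(a_1)$, yielding a vertex $c$ with $\beta\to c$, $c\to a_2$, and $a_1\to c$ (the last because $c\in\Gamma(a_1)$). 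Since $\beta\to c$, Lemma~\ref{structure}(ii) gives $c\in\Gamma(\beta)=\Lambda(\alpha)$, so $\alpha\parallel c$. Now $\{\alpha,a_1,c\}\subseteq\Gamma^*(a_2)$ (because $\alpha\to a_2$, $a_1\to a_2$ and $c\to a_2$), and this triple induces the directed $2$-path $\alpha\to a_1\to c$ with unrelated endpoints $\alpha\parallel c$. Since $\Gamma^*(a_2)\cong\bar{K}_n[D_3]$, this contradicts the embedding obstruction.

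I expect the only genuine content to be spotting the embedding obstruction and choosing the in-neighbourhood $\Gamma^*(a_2)$ as the place to violate it; the remaining work is routine bookkeeping with Lemma~\ref{structure}(ii) to keep track of which layer ($X=\Gamma(\alpha)$, $Y=\Gamma(\beta)=\Lambda(\alpha)$, $Z=\Gamma(\gamma)=\Gamma^*(\alpha)$) each constructed vertex occupies, together with the elementary remark that in $\bar{K}_n[D_3]$ every arc lies in a unique triangle. All the vertices $\alpha,a_1,a_2,c$ are automatically distinct, since they lie in $B_1$, $X$ and $Y$ respectively, so no degenerate coincidences arise, and the argument is uniform in $n\geq2$ (indeed it would also exclude $D_3$, already ruled out by Lemma~\ref{lem_list}).
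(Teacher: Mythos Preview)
Your argument is correct. It is genuinely different from, and considerably shorter than, the paper's proof of this lemma.

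The paper proceeds via the machinery built up in Lemmas~\ref{2HomAction}, \ref{doubleedgeconfigurations} and \ref{speciald3s}: it first establishes that $|\Gamma(a)\cap\Gamma^*(\alpha)|=1$ for every $a\in\Gamma(\alpha)$ (using Lemma~\ref{speciald3s} to exclude the case $\geq 2$), then pins down the unique element of this intersection via Lemma~\ref{doubleedgeconfigurations}, and finally derives a counting contradiction through a chain of structural deductions about $\Gamma(b_1)$. Your approach bypasses all of this apparatus: you exhibit directly a directed $2$-path $\alpha\to a_1\to c$ with unrelated endpoints inside $\Gamma^*(a_2)$, using only Lemma~\ref{structure}(ii) to keep track of which layer each vertex occupies, together with the elementary embedding obstruction that in $\bar{K}_n[D_3]$ every $2$-arc closes up. In spirit your argument is closer to the paper's treatment of the complementary case $D_3[\bar{K}_n]$ in Lemma~\ref{morerulingout}, which also works by a one-shot forbidden-configuration trick. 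What the paper's approach buys is that Lemmas~\ref{doubleedgeconfigurations} and \ref{speciald3s} are reused in the later cases $\Gamma(\alpha)\cong D_4$ and $\Gamma(\alpha)\cong H_0$ (Lemmas~\ref{NOTD4} and~\ref{27VertexExample}), so the investment pays off downstream; your argument is tailored to this $\Gamma(\alpha)$ but gets there with no prerequisites beyond Lemma~\ref{structure}.
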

\begin{proof}
Suppose that $\Gamma(\alpha)$ is isomorphic to $\bar{K}_n[D_3]$ for some $n > 1$. Since $M$ embeds $D_3$ it follows that for all $a \in \Gamma(\alpha)$ we have $|\Gamma(a) \cap \Gamma^*(\alpha)| \geq 1$. 

Suppose that $|\Gamma(a) \cap \Gamma^*(\alpha)| > 1$. Then there exist distinct $d,d' \in \Gamma^*(\alpha)$ with $\alpha \rightarrow a \rightarrow d \rightarrow \alpha$ and $\alpha \rightarrow a \rightarrow d' \rightarrow \alpha$. Choosing $a_1, a_2 \in \Gamma(\alpha)$ with $a_1 \rightarrow a_2$, by set-homogeneity applied to arcs, mapping the arc $\alpha \rightarrow a$ to the arc $a_1 \rightarrow a_2$, shows that there  exist distinct vertices $e,e'$ with $a_1 \rightarrow a_2 \rightarrow e \rightarrow a_1$ and $a_1 \rightarrow a_2 \rightarrow e' \rightarrow a_1$. At most one of $e$ or $e'$ can belong to $\Gamma(\alpha)$ (since $\Gamma(\alpha) \cong \bar{K}_n[D_3]$), and hence one of $e$ or $e'$ belongs to $\Gamma^*(\alpha) \cup \Lambda(\alpha)$, contradicting Lemma~\ref{speciald3s}. 

We conclude that 

\begin{align}
\label{eqn_WeConcludeThat}
\mbox{$|\Gamma(a) \cap \Gamma^*(\alpha)| = 1$ for all $ a \in \Gamma(\alpha)$.} 
\end{align}

Now let $a_1, a_2, a_3 \in \Gamma(\alpha)$ with 
$a_1 \rightarrow a_2 \rightarrow a_3 \rightarrow a_1$, and let 
$b_1, b_2, b_3 \in \Gamma^*(\alpha)$ with $(a_i,b_i) \in \Delta\cup\Delta^*$ for
 $i=1,2,3$. Let $\{ d \} = \Gamma(a_1) \cap \Gamma^*(\alpha)$. We claim that 
$d \in \{ b_1, b_2, b_3 \}$. Indeed, suppose $d \not\in \{ b_1, b_2, b_3 \}$, say 
$d = b_1' \in \{ b_1', b_2', b_3' \} \subseteq \Gamma^*(\alpha)$ with 
$b_1' \rightarrow b_2' \rightarrow b_3' \rightarrow b_1'$ and 
$\{ a_1', a_2', a_3'  \} \subseteq \Gamma(\alpha)$ and 
$(a_i', b_i') \in \Delta\cup\Delta^*$ for $i=1,2,3$. Then by 3-set-homogeneity and Lemma~\ref{2HomAction} applied to the subdigraphs induced on $\{\alpha,a_1,a_1\}$ and $\{\alpha, a_1, a_2'\}$, there 
is an automorphism $g \in G_{\alpha,a_1}$ with ${a_1'}^g = a_2'$. But then 
${b_1'}^g = b_2'$ and so as $a_1 \rightarrow d=b_1'$, we have  
$a_1 = a_1^g \rightarrow {b_1'}^g = b_2'$, contradicting \eqref{eqn_WeConcludeThat}.

Therefore with $\{ d \} = \Gamma(a_1) \cap \Gamma^*(\alpha)$ we have $d \in \{ b_1, b_2, b_3 \}$. 
By Lemma~\ref{doubleedgeconfigurations}, $d=b_3$. Now by Lemma~\ref{doubleedgeconfigurations}, we find  $\Gamma(b_1) \cap \{ a_1, a_2, a_3 \} = \varnothing$. 

Now since $\Gamma(b_1) \cong \bar{K}_n[D_3]$ we have $|\Gamma(b_1)| = 3n$. Since
 $\Gamma(b_1) \cap \{ a_1, a_2, a_3 \} = \varnothing$ it follows that
 $|\Gamma(b_1) \cap \Gamma(\alpha)| \leq 3n-3$. Now if $|\Gamma(b_1) \cap \Gamma(\alpha)| = 3n-3$ then
 since $\Gamma(\alpha) \cong \bar{K}_n[D_3]$ and $\Gamma(b_1) \cap \{ a_1, a_2, a_3 \} = \varnothing$ there 
would exist $\{ a_1', a_2', a_3'  \} \subseteq \Gamma(\alpha) \cap \Gamma(b_1)$ with $a_i' \rightarrow a_{i+1~({\rm mod}~3)}'$. But then the digraph induced by $\{ \alpha, a_1', a_2', a_3' \} \subseteq \Gamma(b_1)$ would contradict $\Gamma(b_1) \cong  \bar{K}_n[D_3]$. 

We conclude that $|\Gamma(b_1) \cap \Gamma(\alpha)| < 3n-3$ which, since $\Gamma(b_1) \cap \{\alpha, \beta, \gamma\} = \{ \alpha \}$,  implies that $|\Gamma(b_1) \cap \Lambda(\alpha)| \geq 2$. But then by Lemma~\ref{structure}(ii) we have $b_1 \in \Gamma^*(\alpha) = \Gamma(\gamma)$ and $|\Gamma(b_1) \cap \Gamma^*(\gamma)| = |\Gamma(b_1) \cap \Lambda(\alpha)| \geq 2$, which contradicts \eqref{eqn_WeConcludeThat} above. 
\end{proof}

\begin{lemma}\label{27VertexExample}
If each $\equiv$-class of $M$ induces $K_3$, then 
$(\Gamma(\alpha),M) \cong (H_0,H_3)$. 
\end{lemma}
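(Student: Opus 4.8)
The plan is to first pin down the out-neighbourhood and then show that the whole digraph is forced. By the reduction preceding Lemma~\ref{NOTD5}, together with Lemmas~\ref{NOTD5}, \ref{morerulingout}, \ref{NOTD4} and \ref{NOTKND3}, the only surviving possibility for $\Gamma(\alpha)$ (once the case $\Gamma(\alpha)\cong\bar{K}_n$, which yields $J_n$ by Lemma~\ref{noP3}, has been set aside) is $\Gamma(\alpha)\cong H_0$. In particular $|\Gamma(\alpha)|=8$, so by Lemma~\ref{structure}(iv) there are exactly nine $\equiv$-classes and hence $|M|=27$, in agreement with $H_3$. I would then record the global skeleton supplied by Lemma~\ref{structure}: the three ``petals'' $\Gamma(\alpha)=\Gamma^*(\beta)=\Lambda(\gamma)$, $\Gamma(\beta)=\Gamma^*(\gamma)=\Lambda(\alpha)$ and $\Gamma(\gamma)=\Gamma^*(\alpha)=\Lambda(\beta)$ partition $M\setminus B_1$; each of them is isomorphic to $H_0$ (via the $\Delta$-matching isomorphisms of Lemma~\ref{doubleedgeconfigurations}); $\Lambda$ is self-paired while $\Delta$ is not; and $G$ acts $2$-transitively on the nine classes, with every pair of classes inducing $\overline{E_6}$.

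Next I would reconstruct all relations of $M$. The within-class relations ($K_3$), the relations from $B_1=\{\alpha,\beta,\gamma\}$ to each petal, and the relations inside each petal (a copy of $H_0$) are already determined. What remains is to fix, for a chosen labelling of $\Gamma(\alpha)\cong H_0$, the relations \emph{between} the petals. For the pair $(\Gamma(\alpha),\Gamma^*(\alpha))$ this is governed by Lemma~\ref{doubleedgeconfigurations}: the $\Delta$-matching $f$ is a digraph isomorphism, and for each pair $a_1,a_2\in\Gamma(\alpha)$ the configuration on $\{a_1,a_2,f(a_1),f(a_2)\}$ is one of the six in Figure~\ref{fig_6digraphs}. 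The remaining ambiguity (configurations~1 versus~2, and 3 versus~4, for an arc $a_1\to a_2$) I would resolve using Lemma~\ref{speciald3s}, which forbids a vertex $c\in\Gamma^*(\alpha)\cup\Lambda(\alpha)$ closing an arc of $\Gamma(\alpha)$ into a $D_3$, together with the self-pairedness of $\Lambda$ and the now-standard device of this section (exhibit a rigid configuration, extend the isomorphism to an automorphism, and read off the forced relation). The corresponding data for $(\Gamma(\alpha),\Lambda(\alpha))$ and $(\Gamma(\beta),\Gamma(\gamma))$ is then obtained by transporting along an automorphism inducing the $3$-cycle $(\alpha,\beta,\gamma)$, and by $2$-transitivity on the classes every remaining pair of classes is an image of $(\Gamma(\alpha),\Gamma^*(\alpha))$.

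It then remains to check that these forced relations are mutually consistent, define a single s-digraph, and that this s-digraph is $H_3$. Since $H_3$ was shown in Section~\ref{sec_27vertex} to be set-homogeneous with $\Gamma(\omega)\cong H_0$, with $K_3$-classes, self-paired $\Lambda$, and every two classes inducing $\overline{E_6}$, it realises all the constraints above; so the task is precisely to verify that these constraints leave no freedom, whence any such $M$ must be isomorphic to $H_3$. The cleanest way to finish is to transport the labelling of $\Gamma(\alpha)$ by the eight non-zero vectors of $\GF(3)^2$ exactly as in Section~\ref{sec_27vertex}, and to check that the determined relations agree with the formulas defining $\Gamma=\mathcal{O}_2^+$, $\Delta=\mathcal{O}_3^+$ and $\Lambda=\mathcal{O}_1$ there.

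The main obstacle will be the bookkeeping in the second step: showing that the cross-petal relations are uniquely forced, and in particular that the configuration choices left open by Lemma~\ref{doubleedgeconfigurations} are pinned down globally in a mutually consistent way across all pairs of classes. This is where the rigidity of $H_0$ and the interplay of Lemmas~\ref{structure}, \ref{doubleedgeconfigurations} and \ref{speciald3s} have to be used most carefully; as with the verification of set-homogeneity of $H_3$, it may be most efficient to confirm the final identification with $H_3$ by an explicit coordinate comparison, or computationally.
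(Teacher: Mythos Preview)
Your proposal is correct and follows essentially the same route as the paper: reduce to $\Gamma(\alpha)\cong H_0$ via the preceding lemmas, use Lemma~\ref{doubleedgeconfigurations} to constrain the cross-petal configurations, invoke Lemma~\ref{speciald3s} to eliminate the wrong branch, transport by the $3$-cycle $(\alpha,\beta,\gamma)$, and identify the unique resulting structure with $H_3$.

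One point worth sharpening: the paper does not apply Lemma~\ref{speciald3s} directly to the ambiguous configuration, but via a short counting detour. If the ``wrong'' branch held (your configurations~2/4, the paper's case~(ii)), one deduces $|\Gamma(a)\cap\Gamma^*(\alpha)|\ge 2$, so the arc $\alpha\to a$ lies in at least two copies of $D_3$; arc-transitivity then forces the same for an arc $w\to a$ \emph{inside} $\Gamma(\alpha)\cong H_0$, and since each arc of $H_0$ lies in a unique $D_3$, the extra triangle must use a vertex of $\Gamma^*(\alpha)\cup\Lambda(\alpha)$, contradicting Lemma~\ref{speciald3s}. Your sketch gestures at this but leaves the mechanism implicit. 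The paper also treats the $\Lambda$-related pairs in $\Gamma(\alpha)$ by a separate short argument (pick $b'\in\Gamma^*(\alpha)$ with $a\to b'$, force $(a,b)\in\Lambda$, then use transitivity of $G_\alpha$ on independent $2$-sets), rather than folding it into the arc case. Finally, the paper argues uniqueness abstractly and then simply observes that $H_3$ satisfies all the constraints; your proposed explicit coordinate check against Section~\ref{sec_27vertex} would work equally well but is not needed.
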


\begin{proof}
Since $M$ embeds $P_3$, it follows by Lemmas~\ref{Nisadigraph}--\ref{NOTKND3} that that $\Gamma(\alpha) \cong \Gamma^*(\alpha) \cong \Lambda(\alpha) \cong H_0$. 

Let $a \in \Gamma(\alpha)$, let $\{u,v\} \subseteq \Gamma(\alpha) \cap \Gamma(a)$, and $\{w,x\} \subseteq \Gamma(\alpha) \cap \Gamma^*(a)$. Also let $a' \in \Gamma^*(\alpha)$ be the unique vertex $\sim$-related to $a$. Similarly we define $w'$, $x'$, $u'$, and $v'$ in $\Gamma^*(\alpha)$ which are $\sim$-related to $w, x, u$ and $v$ respectively. By Lemma~\ref{doubleedgeconfigurations} either (i) $u' \rightarrow a$ and $(a',u) \in \Lambda$, or (ii) $u \rightarrow a'$ and $(a,u') \in \Lambda$. 

We now rule out possibility (ii). Indeed, if (ii) holds then we  have $a \to w'$ 
(since there is an automorphism
taking $(\alpha, a, u)$ to $(\alpha, w, a)$, and this takes $a'$ to $w'$). Similarly, $a \to x'$. Thus, 
  $|\Gamma(a) \cap \Gamma^*(\alpha)| \geq 2$ and hence the arc $\alpha \rightarrow a$ extends 
to a copy of $D_3$ in at least $2$ distinct ways. Thus by set-homogeneity applied to arcs, 
the arc $w \rightarrow a$ extends to $D_3$ in at least $2$ ways, and since in $H_0$ every arc extends to a unique copy of $D_3$ it follows that there exists $y \in \Gamma^*(\alpha) \cup \Lambda(\alpha)$ such that $\{ w,a,y \}$ induces a copy of $D_3$, and this contradicts Lemma~\ref{speciald3s}. Hence configuration (ii) cannot arise.

Since $H_0$, and therefore also $M$, embeds $D_3$ it follows that there exists $b' \in \Gamma^*(\alpha)$ with $a \rightarrow b'$. Let $a' \in \Gamma^*(\alpha)$ with $(a,a') \in \Delta\cup\Delta^*$ and $b \in \Gamma(\alpha)$ with $(b,b') \in \Delta\cup\Delta^*$. If there were an arc between $a$ and $b$ then by Lemma~\ref{doubleedgeconfigurations}, we would have $b\rightarrow a$, but then we would have configuration (ii) (with $(b,a)$ in place of $(a,u)$). It follows that $(a,b)\in\Lambda$ and then from Lemma~\ref{doubleedgeconfigurations} that  $(a',b') \in \Lambda$ and $b \rightarrow a'$. Since $G_{\alpha}$ is transitive on independent 2-sets from $\Gamma(\alpha)$ it follows that for all $c,d \in \Gamma(\alpha)$ with $c$ and $d$ unrelated the digraph induced by $\{ c,d,c',d'\}$ is isomorphic to that induced by $\{a,b,a',b'\}$. 

These observations determine all the relations between $\Gamma(\alpha)$ and $\Gamma^*(\alpha)$. Applying automorphisms mapping $\alpha$ to $\beta$, and separately $\alpha$ to $\gamma$, this determines all the other relations in the structure $M$. 
(That is to say, such a structure is unique if it exists.)
It can be checked that  the $27$ vertex set-homogeneous digraph $H_3$ described in Section~\ref{sec_27vertex} satisfies the conditions of this lemma. Thus, $M\cong H_3$. \end{proof}

This completes the proof of Theorem~\ref{maintheorem}(iii) in the case $\Gamma(\alpha)\cong\Gamma^*(\alpha)$.

\section{Proof of Theorem~\ref{maintheorem}(iii): case $\Gamma(\alpha)\not\cong\Gamma^*(\alpha)$}
\label{sec_5}

In this section we complete the proof of Theorem~\ref{maintheorem}(iii) and therefore also the proof of Theorem~\ref{maintheorem}. We argue inductively as in the proof of Theorem~\ref{maintheorem}(ii). 
Let $M$ be a finite set-homogeneous s-digraph. As $\Gamma(\alpha)$ and $\Gamma^*(\alpha)$ are set-homogeneous, by induction we may assume that 
they belong to the list in Theorem~\ref{maintheorem}(iii).
From the main result of Section~\ref{sec_CaseInNEqualsOutN}, along with Lemmas~\ref{disconnect}, \ref{isom} and \ref{samenbours}(ii),
 it follows that we may suppose that the following hold. 
\begin{enumerate}[(I)]
\item The $\Gamma$-digraph of $M$ is connected. \label{11}
\item The digraphs $\Gamma(\alpha)$ and $\Gamma^*(\alpha)$ both belong to the list in Theorem~\ref{maintheorem}(iii), and 
$\Gamma(\alpha)$ is not isomorphic to $\Gamma^*(\alpha)$. In particular $|\Gamma(\alpha)|>1$. \label{22}
\item If $\alpha\neq \beta$ then  $\Gamma(\alpha)\neq \Gamma(\beta)$ and $\Gamma^*(\alpha)\neq \Gamma^*(\beta)$. \label{33}
\end{enumerate}

Ultimately we shall prove that there are in fact no finite set-homogeneous s-digraphs satisfying  $\Gamma(\alpha)\not\cong\Gamma^*(\alpha)$. 

\smallskip

\begin{quote}
\begin{center}
\textit{Throughout this section we assume that (I), (II) and (III) all hold.} 
\end{center}
\end{quote}

\smallskip

For any vertex-transitive digraph $\Sigma$ and $\beta\in \Sigma$, let $\Sigma^+$ denote the isomorphism type of
$\{x \in \Sigma: \beta\rightarrow x\}$ and $\Sigma^-$ denote that of $\{x\in \Sigma: x\rightarrow \beta\}$.
 
\begin{lemma}\label{DHom}
Let $M$ be a finite set-homogeneous digraph. Then we have the following. 
\begin{enumerate}[(i)]
\item $|\Gamma(\alpha)| = |\Gamma^*(\alpha)|$, and if one of $G_\alpha^{\Gamma(\alpha)}$, $G_\alpha^{\Gamma^*(\alpha)}$ is $2$-homogeneous, then they both are.
\item $\Gamma(\alpha)^-\cong \Gamma^*(\alpha)^+$.
\end{enumerate}
\end{lemma}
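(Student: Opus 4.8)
```latex
The plan is to prove both parts using the counting and paired-orbital
machinery set up in Notation~\ref{notat}. The two statements are really of the
same flavour: each asserts an equality of sizes (or isomorphism types) between
a structure associated to the out-neighbourhood and one associated to the
in-neighbourhood, and the tool in each case is a single group element produced
by $2$-set-homogeneity that reverses a suitable arc.

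For part (i), the equality $|\Gamma(\alpha)|=|\Gamma^*(\alpha)|$ is the standard
fact recorded in Notation~\ref{notat}: since $\Gamma$ and $\Gamma^*$ are paired
orbitals and $M$ is finite, the easy double-counting argument gives
$|\Gamma(\alpha)|=|\Gamma^*(\alpha)|$. For the second assertion, I would argue
as follows. By set-homogeneity applied to the single arc $\alpha\to\beta$ (where
$\beta\in\Gamma(\alpha)$), there is $g\in G$ with $(\alpha,\beta)^g=(\beta,\alpha)$,
i.e.\ $g$ reverses an arc; such a $g$ exists because any two arcs lie in a
single $G$-orbit. This $g$ conjugates the stabiliser action $G_\alpha^{\Gamma(\alpha)}$
to $G_\beta^{\Gamma^*(\beta)}$, and since by vertex-transitivity
$G_\beta^{\Gamma^*(\beta)}$ is permutation-isomorphic to $G_\alpha^{\Gamma^*(\alpha)}$,
the two actions $G_\alpha^{\Gamma(\alpha)}$ and $G_\alpha^{\Gamma^*(\alpha)}$ are
permutation-isomorphic. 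In particular one is $2$-homogeneous if and only if the
other is. (More directly: $2$-homogeneity of $G_\alpha^{\Gamma(\alpha)}$ says
$G_\alpha$ is transitive on unordered pairs from $\Gamma(\alpha)$, and applying
the arc-reversing element transports this to the corresponding transitivity on
$\Gamma^*(\alpha)$.)

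For part (ii), the point is to count, inside $M$, the configurations
$\alpha\to x$ with $x$ having a prescribed relationship to a second fixed vertex.
Fix $\alpha$ and a vertex $\beta\in\Gamma(\alpha)$. The set
$\Gamma(\alpha)^-$ is, by definition, the isomorphism type of the in-neighbours
of $\beta$ \emph{within} $\Gamma(\alpha)$, that is $\Gamma(\alpha)\cap\Gamma^*(\beta)$;
symmetrically, taking $\gamma\in\Gamma^*(\alpha)$, the set $\Gamma^*(\alpha)^+$
is the type of $\Gamma^*(\alpha)\cap\Gamma(\gamma)$. The identification I would
use is exactly Lemma~\ref{2verts}: $2$-set-homogeneity produces $g\in G$ with
$(\alpha,\beta)^g=(\gamma,\alpha)$, and this $g$ carries
$\Gamma(\alpha)\cap\Gamma^*(\beta)$ isomorphically onto
$\Gamma(\gamma)\cap\Gamma^*(\alpha)$. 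Since $\Gamma(\alpha)^-=\Gamma(\alpha)\cap\Gamma^*(\beta)$
and $\Gamma^*(\alpha)^+=\Gamma(\gamma)\cap\Gamma^*(\alpha)$ as induced
subdigraphs, this gives the required isomorphism $\Gamma(\alpha)^-\cong\Gamma^*(\alpha)^+$.

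The main thing to be careful about, and the only genuine obstacle, is matching
the abstract definition of $\Sigma^+$ and $\Sigma^-$ given just before the lemma
with the concrete intersections $\Gamma(\alpha)\cap\Gamma^*(\beta)$ and
$\Gamma(\gamma)\cap\Gamma^*(\alpha)$: one must check that ``in-neighbours of
$\beta$ inside the digraph $\Gamma(\alpha)$'' is literally the same induced
structure as ``vertices $x\in\Gamma(\alpha)$ with $x\to\beta$,'' and that the
isomorphism type does not depend on the choice of $\beta\in\Gamma(\alpha)$ (which
it does not, because $G_\alpha$ is transitive on $\Gamma(\alpha)$). Once that
bookkeeping is in place, both parts reduce immediately to the already-proved
Lemma~\ref{2verts} together with the arc-reversal argument of part~(i), so no
further work is needed.
```
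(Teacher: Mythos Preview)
Your treatment of part~(ii) is correct and matches the paper's: it is exactly Lemma~\ref{2verts}, and the bookkeeping you mention is routine.

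However, your argument for the second assertion of part~(i) contains a genuine error. You claim that set-homogeneity (or arc-transitivity) produces $g\in G$ with $(\alpha,\beta)^g=(\beta,\alpha)$ when $\alpha\to\beta$. No such $g$ can exist: an automorphism preserves $\to$, so $\alpha\to\beta$ would force $\alpha^g\to\beta^g$, i.e.\ $\beta\to\alpha$, contradicting antisymmetry of the arc relation. Arc-transitivity tells you that any two \emph{arcs} are in one orbit, but $(\beta,\alpha)$ is not an arc when $(\alpha,\beta)$ is. Consequently your conclusion that $G_\alpha^{\Gamma(\alpha)}$ and $G_\alpha^{\Gamma^*(\alpha)}$ are permutation-isomorphic is unsupported; indeed it is known that paired suborbits need not carry permutation-isomorphic actions of the point stabiliser.

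What \emph{is} true, and what the paper invokes, is the more delicate fact (due to Cameron \cite{cameron0}) that for a finite transitive group the actions of $G_\alpha$ on a suborbit and on its paired suborbit have the same permutation character---in particular the same number of orbits on unordered pairs---so one is $2$-homogeneous if and only if the other is. This is not proved by conjugation by a single group element; the standard arguments go via double cosets or an orbit-counting identity. You should either cite this result as the paper does, or supply such a counting argument; the arc-reversal shortcut is not available here.
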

\begin{proof}
Part (i) follows from the fact that $\Gamma(\alpha)$ and $\Gamma^*(\alpha)$  are paired suborbits, along with 
 \cite[p. 230]{cameron0} (see also \cite[Exercise~2.18]{cameron3} for the case when one of $G_\alpha^{\Gamma(\alpha)}$ or $G_\alpha^{\Gamma^*(\alpha)}$ is  2-transitive). Part (ii) was proved above in Lemma~\ref{2verts}. 
\end{proof}

A common theme below will be searching for subdigraphs $X$ of $\Gamma(\alpha)$ which are maximal subject
 to having isomorphic copies in $\Gamma^*(\alpha)$. By set-homogeneity, such $X$
will lie in $\Gamma^*(\mu)$ for some $\mu$, and if there are many copies of $X$ counting arguments are available.

\begin{lemma}\label{NullNeighbours}
If $\Gamma(\alpha) \not\cong \Gamma^*(\alpha)$ then neither $\Gamma(\alpha)$ nor $\Gamma^*(\alpha)$ is isomorphic to $K_n$ or to $\bar{K_n}$. 
\end{lemma}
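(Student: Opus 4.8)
The plan is to argue by contradiction and to reduce all four possibilities to a single ``mixed'' configuration, which I then rule out. Suppose some neighbourhood is $K_n$ or $\bar{K}_n$; by condition (II) we have $n\geq 2$. Say first that $\Gamma(\alpha)$ is $\bar{K}_n$ or $K_n$. For any two $2$-subsets $\{x_1,x_2\},\{y_1,y_2\}$ of $\Gamma(\alpha)$, the induced substructures on $\{\alpha,x_1,x_2\}$ and $\{\alpha,y_1,y_2\}$ are isomorphic, with $\alpha$ the unique vertex dominating the other two (the two lower vertices are either $\sim$-related or unrelated, never joined by an arc). Hence set-homogeneity yields $g\in G_\alpha$ carrying one pair to the other, so $G_\alpha^{\Gamma(\alpha)}$ is $2$-homogeneous, and by Lemma~\ref{DHom}(i) so is $G_\alpha^{\Gamma^*(\alpha)}$. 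Since neither $\bar{K}_n$ nor $K_n$ contains an arc, $M$ embeds no $P_3$, whence $\Gamma^*(\alpha)$ contains no arc either (an arc $x\to y$ inside $\Gamma^*(\alpha)$ would give $x\to y\to\alpha$ together with $x\to\alpha$). A graph admitting a $2$-homogeneous group of automorphisms is complete or edgeless, so $\Gamma^*(\alpha)\cong K_n$ or $\bar{K}_n$; by (II) the two neighbourhoods are non-isomorphic, so $\{\Gamma(\alpha),\Gamma^*(\alpha)\}=\{K_n,\bar{K}_n\}$. Since reversing all arcs of $M$ yields a set-homogeneous s-digraph still satisfying (I)--(III) and interchanges the two orderings, I may assume $\Gamma(\alpha)\cong\bar{K}_n$ and $\Gamma^*(\alpha)\cong K_n$.

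Next I would record the structural consequences of this mixed configuration, all valid at every vertex by vertex-transitivity: each out-neighbourhood is independent, each in-neighbourhood is an $n$-clique, and $M$ has no $P_3$. Two facts follow at once. First, \emph{no edge of $M$ has a common in-neighbour}, since such a vertex would carry an edge inside its independent out-neighbourhood; equivalently, for distinct $\gamma_i,\gamma_j\in\Gamma^*(\alpha)$ we get $\Gamma^*(\gamma_i)\cap\Gamma^*(\gamma_j)=\varnothing$. Second, applying set-homogeneity to the edge between two members of $\Gamma^*(\alpha)$ (which have the common out-neighbour $\alpha$) shows that \emph{every} edge has a common out-neighbour. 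Moreover, by condition (III) the map $v\mapsto\Gamma^*(v)$ is an injection from vertices to $n$-cliques, and the only common out-neighbour of the clique $\Gamma^*(v)$ is $v$ itself. Finally, $P_3$-freeness applied to $\delta\to\gamma\to\alpha$ (for $\gamma\in\Gamma^*(\alpha)$ and $\delta\in\Gamma^*(\gamma)$) gives $\delta\not\to\alpha$, so $\Gamma^*(\gamma)\cap\Gamma^*(\alpha)=\varnothing$; thus the $n+1$ in-cliques $\Gamma^*(\alpha),\Gamma^*(\gamma_1),\dots,\Gamma^*(\gamma_n)$ are pairwise disjoint.

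With these facts in hand I would extract the contradiction. Writing $\Sigma(\alpha)=\{x:\alpha\sim x\}$, the fact that every edge has a common out-neighbour gives $\Sigma(\alpha)=\bigcup_{\beta\in\Gamma(\alpha)}\bigl(\Gamma^*(\beta)\setminus\{\alpha\}\bigr)$, and a double count of the pairs $(w,\beta)$ with $\beta\in\Gamma(\alpha)$ and $w\in\Gamma^*(\beta)\setminus\{\alpha\}$ yields $n(n-1)=|\Sigma(\alpha)|\cdot c_+$, where $c_+$ is the (edge-invariant) number of common out-neighbours of an edge. The aim is then to combine this numerical constraint with the pairwise disjointness of the in-cliques and with the two independent $2$-homogeneous actions of $G_\alpha$, on $\Gamma(\alpha)\cong\bar{K}_n$ and on $\Gamma^*(\alpha)\cong K_n$, in order to locate a rigid configuration whose forced automorphism either identifies two distinct in-cliques $\Gamma^*(v)$ (violating (III)) or fixes a vertex-pair that the configuration requires to be moved.

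I expect this last step to be the main obstacle. The difficulty is exactly that $\bar{K}_n$ and $K_n$ are so symmetric that rigid three- and four-point configurations \emph{inside} one neighbourhood are scarce, so the contradiction must be drawn from the interaction between the in-clique and the out-independent-set: for $\gamma\in\Gamma^*(\alpha)$ and $\beta\in\Gamma(\alpha)$ the only admissible relations are $\beta\to\gamma$, $\beta\sim\gamma$ and $\beta\parallel\gamma$ (never $\gamma\to\beta$, by $P_3$-freeness), and one must force an incompatibility against either the count $n(n-1)=|\Sigma(\alpha)|\,c_+$ or the disjointness of the cliques $\Gamma^*(\gamma_i)$. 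Everything preceding this final extraction is routine once the $2$-homogeneity transfer of Lemma~\ref{DHom}(i) and the disjointness of the in-neighbourhood cliques are in place.
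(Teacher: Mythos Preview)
Your reduction to the case $\{\Gamma(\alpha),\Gamma^*(\alpha)\}=\{K_n,\bar K_n\}$ is correct and matches the paper's opening paragraph. The observations that follow (disjointness of the in-cliques $\Gamma^*(\gamma_i)$, the identity $n(n-1)=|\Sigma(\alpha)|\cdot c_+$) are also correct. But the proof stops precisely where the real work begins, and you say so yourself. The gap is genuine, not merely a matter of writing up a routine endgame.

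Two substantive ingredients are missing. First, you have not excluded the possibility that $M$ embeds $D_3$; in your list of ``admissible relations'' between $\beta\in\Gamma(\alpha)$ and $\gamma\in\Gamma^*(\alpha)$ you explicitly allow $\beta\to\gamma$, and your counting identity does not by itself rule this out. The paper devotes a separate Claim~3 to eliminating $D_3$, and this in turn requires first proving that both $\Delta$ and $\Lambda$ are self-paired (Claim~1, which needs a delicate argument when $n=3$ since Fact~\ref{livwag} fails there) and a symmetry statement (Claim~2) linking the existence of $\Delta$- and $\Lambda$-pairs across $\Gamma(\alpha)\times\Gamma^*(\alpha)$. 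None of this is visible in your outline.

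Second, even once $D_3$ is excluded, the paper does not finish by a counting argument of the type you propose. Instead it observes that the $\sim$-bipartite graph between $\Gamma(\alpha)$ and $\Gamma^*(\alpha)$ is set-homogeneous, hence (by Enomoto's argument) homogeneous, and invokes the Goldstern--Grossberg--Kojman classification of finite homogeneous bipartite graphs to reduce to a perfect matching or its complement. A further structural analysis of $\Delta(\alpha)$ then forces $n=2$ and $|M|=9$, after which the $\Delta$-graph is identified as a $9$-cycle and a direct contradiction is extracted. Your proposed route via ``locating a rigid configuration whose forced automorphism identifies two in-cliques'' is not developed, and it is not clear that it can be made to work without something equivalent to these steps.
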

\begin{proof}
Suppose that one of $\Gamma(\alpha)$ or $\Gamma^*(\alpha)$ is isomorphic to $K_n$ or $\bar{K}_n$.

In this case, one of $G_\alpha^{\Gamma(\alpha)}$, $G_\alpha^{\Gamma^*(\alpha)}$ is 2-homogeneous, so both are, by Lemma~\ref{DHom}. 
 Also, $P_3$ does not embed in $M$, so $\Gamma^*(\alpha)$ has no arcs and 
 $\Gamma(\alpha)$ and $\Gamma^*(\alpha)$ {\em both} lie in ${\cal L}$. By Theorem~\ref{maintheorem}(i) since they are not isomorphic, one of them is isomorphic to $K_n$ and 
the other to $\bar{K}_n$. 

Without loss of generality suppose that
$\Gamma(\alpha)\cong K_n$ and $\Gamma^*(\alpha) \cong \bar{K_n}$. Now, as $\Gamma^*(\alpha)$ has no $\Delta$-edges, er have:

\begin{itemize}
 \item[(a)]  if $\beta_1,\beta_2\in \Gamma(\alpha)$ are distinct then 
$\Gamma(\beta_1) \cap \Gamma(\beta_2)=\varnothing$; 
\item[(b)] likewise, if $\gamma_1,\gamma_2\in \Gamma^*(\alpha)$ are distinct, then
$\Gamma^*(\gamma_1) \cap \Gamma^*(\gamma_2)=\varnothing$;
\item[(c)] furthermore, if $\beta \in \Gamma(\alpha)$ then $|\Gamma(\beta) \cap
\Gamma^*(\alpha)|\leq 1$, and $\Gamma^*(\beta)\cap\Gamma^*(\alpha)=\varnothing$.
\end{itemize}

\

\noindent {\em Claim~1.} Both $\sim$-related pairs and independent pairs come from self-paired orbitals.

\begin{proof}[Proof of Claim~1.] We prove this for $\sim$-related pairs; the proof for unrelated pairs is similar. By set-homogeneity $G_\alpha$ is highly homogeneous on $\Gamma(\alpha)$, 
so by Fact~\ref{livwag} we may suppose that $|\Gamma(\alpha)|=3$. 
Pick distinct $\beta,\gamma,\delta \in \Gamma(\alpha)$, and let $\Gamma(\beta)=\{\beta_1,\beta_2,\beta_3\}$ and
$\Gamma(\gamma)=\{\gamma_1,\gamma_2,\gamma_3\}$. 

First suppose that $\beta\sim\gamma_1$ and $\beta\sim\gamma_2$ with $\gamma_1 \neq \gamma_2$.
Then $\{\beta,\gamma_1,\gamma_2\}$ induces $K_3$, so equals $\Gamma(\epsilon)$ for some $\epsilon \neq \gamma$ (by 3-set-homogeneity) and we have $|\Gamma(\alpha)\cap\Gamma(\epsilon)|=1$. Thus, $\gamma_1,\gamma_2 \in \Gamma(\gamma)\cap \Gamma(\epsilon)$,
so by 2-set-homogeneity, for any 2-subset $X$ of $\Gamma(\alpha)$, there is $\eta\ne\alpha$ with $\Gamma(\eta) \cap \Gamma(\alpha)\supseteq X$. In fact, by property (III), $\Gamma(\eta) \cap \Gamma(\alpha) = X$, and distinct $2$-subsets of $\Gamma(\alpha)$ give rise to distinct  $\eta$. Thus there are distinct such $\eta_1,\eta_2$ with $\eta_1\rightarrow\beta, \eta_2\rightarrow\beta$, and hence the 4-set $\{\alpha,\epsilon,\eta_1,\eta_2\}\subseteq\Gamma^*(\beta)$, contradicting $|\Gamma^*(\beta)|=3$. Thus  $\beta$ is $\sim$-related to at most one $\gamma_i$. 

Now $|\Gamma^*(\beta) \cap \Gamma(\gamma)|\leq 1$ (since $\Gamma^*(\beta)$ has no $\Delta$-edges), and $\Gamma(\beta)\cap\Gamma(\gamma)=\varnothing$, so there is at least one  element of $\Gamma(\gamma)$, say 
$\gamma_1$, which is $\Lambda$-related to $\beta$. Likewise, there is an element of $\Gamma(\beta)$, say 
$\beta_1$ which is $\Lambda$-related to $\gamma$. The isomorphism $(\beta, \gamma, \gamma_1)\rightarrow (\gamma, \beta, \beta_1)$ of these rigid subdigraphs extends to 
an automorphism of $M$ which interchanges $\beta$ and $\gamma$ and hence  $\Delta$ is self-paired.
\end{proof}

\noindent \textit{Claim~2.} There exist $a \in \Gamma(\alpha)$ and $b \in \Gamma^*(\alpha)$ with $(a,b) \in \Delta$, if and only if there exist $a' \in \Gamma(\alpha)$ and $b' \in \Gamma^*(\alpha)$ with $(a',b') \in \Lambda$.  

\begin{proof}[Proof of Claim~2.]
Let $a \in \Gamma(\alpha)$ and $b \in \Gamma^*(\alpha)$ and suppose that $a \sim b$. By transitivity on $\sim$-related pairs, there exists $\gamma$ with $\{ a, b \} \subseteq \Gamma(\gamma)$. Since $\{ \gamma, \alpha \} \subseteq \Gamma^*(a)$ it follows that $(\gamma, \alpha) \in \Lambda$. But $\gamma \in \Gamma^*(b)$ and $\alpha \in \Gamma(b)$, so for any automorphism $g$ of $M$ taking $b$ to $\alpha$ we have $\gamma^g\in\Gamma^*(\alpha), \alpha^g\in\Gamma(\alpha)$ and $(\gamma^g,\alpha^g)\in\Lambda$.

The converse is proved using a dual argument.  
\end{proof}

\noindent {\em Claim~3.} If $\beta\in\Gamma(\alpha)$, then $\Gamma(\beta)\cap\Gamma^*(\alpha)=\varnothing$ 
(that is, $M$ does not embed $D_3$). 

\begin{proof}[Proof of Claim~3.]  Suppose to the contrary that $\beta\in\Gamma(\alpha)$ and $\Gamma(\beta)\cap\Gamma^*(\alpha)\ne\varnothing$. Since, as we observed in (c) before Claim 1, $|\Gamma(\beta)\cap\Gamma^*(\alpha)|\leq1$, it follows that the arcs  
determine a matching between $\Gamma(\alpha)$ and $\Gamma^*(\alpha)$. Also by observation (c), $|\Gamma^*(\alpha) \cap (\Gamma(\beta)\cup \Gamma^*(\beta))|\leq 1$, and hence as $|\Gamma^*(\alpha)|>1$, it follows by Claim 1 that $(\Lambda(\beta) \cup\Delta(\beta))\cap \Gamma^*(\alpha)\neq \varnothing$. Thus,
 by Claims 1 and 2, whenever $(u,v)\in \Delta \cup \Lambda$, 
there is $w$ with $u\rightarrow w\rightarrow v$.

Now if $|\Gamma(\alpha)| \neq 3$ then $G_{\alpha}$ is
doubly transitive on $\Gamma(\alpha)$ and $\Gamma^*(\alpha)$ by Fact~\ref{livwag}, so $G_{\alpha\beta}$ fixes $\beta'$, where $\{\beta'\}=\Gamma^*(\alpha)\cap\Gamma(\beta)$, and is transitive on both $\Gamma(\alpha)\setminus\{\beta\}$ and $\Gamma^*(\alpha)\setminus\{\beta'\}$. Since $\Delta(\beta)\cap \Gamma^*(\alpha)\neq \varnothing$,  
 it follows that there is a $\Delta$-edge between $\beta$ and every
vertex in $\Gamma^*(\alpha)\setminus\{\beta'\}$. But
this is true of every vertex in $\Gamma(\alpha)$ and
that contradicts Claim 2. 

Now suppose $|\Gamma(\alpha)| = 3$, say
$\Gamma(\alpha) = \{a,b,c\}$.  Let $A = \Gamma(a)
\setminus \Gamma^*(\alpha)$ and note that (by Claim 2 with $a$ replacing $\alpha$) $|A|=2$, $\alpha$ is $\sim$-related to one vertex of $A$, and
$\alpha$ is unrelated to the other vertex
of $A$. The same is true for each of the sets $\Gamma(b)
\setminus \Gamma^*(\alpha)$ and $\Gamma(c) \setminus
\Gamma^*(\alpha)$, and the three sets $\Gamma(a)\setminus\Gamma^*(\alpha)$, $\Gamma(b)\setminus \Gamma^*(\alpha)$ and $\Gamma(c)\setminus\Gamma^*(\alpha)$ are pairwise disjoint by observation (a) above.  It follows that $|\Delta(\alpha)| =
3$, since $\Delta(\alpha) \subseteq \Gamma \circ \Gamma(\alpha)$ and we have  described all elements of $\Gamma \circ \Gamma(\alpha)$.  As $\Delta$ is self-paired it follows that $G_a$
is transitive on $\Delta(a)$.
However, $\Delta(a)$ consists of a disjoint union of the $\Delta$-edge $\{b,c\}$ together with a point $d$ of $\Gamma^*(\alpha)$.
 This is a
contradiction since $\Delta(a)$ is  not a
transitive digraph, since $d$ is $\sim$-related to at most one element of $\Gamma(\alpha)$.
\end{proof}

To finish the proof of the lemma, by Claim 3, each pair $\{a,b\}$ with
 $a\in\Gamma(\alpha)$ and $b\in\Gamma^*(\alpha)$ is $\sim$-related or unrelated,
and by Claims 2 and 3, $\Gamma\circ\Gamma(\alpha)$ contains both $\Delta(\alpha)$ and $\Lambda(\alpha)$. It follows using set-homogeneity fixing $\alpha$ 
that the bipartite graph ``between'' $\Gamma(\alpha)$ and $\Gamma^*(\alpha)$ is a set-homogeneous bipartite graph. (This bipartite graph is obtained by dropping the edges within $\Gamma(\alpha)$, and the two parts are `coloured' as in the next paragraph.)
Using the argument of Enomoto (see \cite{enomoto} and also the proof of Lemma~\ref{sethomtourn}) this implies that this 
bipartite graph is actually a homogeneous bipartite graph. 

The finite homogeneous bipartite graphs were classified in \cite{goldstern}. They are the complete bipartite graph, the null bipartite graph (with no edges), the perfect matching, and the `complement of perfect matching' bipartite graph (with vertex set $X \cup Y$ and edge set $\{ \{ x, y \}: x \in X, y \in Y, y \neq f(x)  \}$ where $f:X \rightarrow Y$ is a fixed bijection). 
(Here, as in \cite{goldstern}, we view the two parts of the bipartition as `coloured'
 with distinct unary predicates.)
From Claim 3 
it follows that the bipartite graph ``between'' $\Gamma(\alpha)$ and $\Gamma^*(\alpha)$ is not isomorphic either to the complete bipartite graph 
$K_{n,n}$ or to the null bipartite graph. Thus the only remaining possibility is the case of 
 a perfect matching (or its complement). We deal with the former case. The latter case is then eliminated automatically, since it is complementary to the former. 

Put $\Gamma(\alpha) = \{ \beta_1, \ldots, \beta_n \}$ and $\Gamma^*(\alpha) = \{ \gamma_1, \ldots, \gamma_n \}$ 
with $\beta_i \sim \gamma_i$ for each $i$. Define $B_i = \Gamma(\beta_i)$ noting that $i \neq j$ implies 
$B_i \cap B_j = \varnothing$ by observation (a). The arc $\gamma_1 \rightarrow \alpha$ has the property that there is a unique vertex $\beta_1$ such that $\alpha \rightarrow \beta_1$ and $(\gamma_1, \beta_1) \in \Delta$. By set-homogeneity applied to arcs it follows that the same is true for each arc $\alpha \rightarrow \beta_i$ $(i=1,\ldots,n)$. It follows that for each set $B_i$ there is a unique $b_i \in B_i$ with $(\alpha, b_i) \in \Delta$ and hence, as $\Delta(\alpha) \subseteq \Gamma\circ\Gamma(\alpha)$, we have  $\Delta(\alpha) = \{ b_1, b_2, \ldots, b_n \}$. Note that $\alpha$ is unrelated to every $b \in B_i \setminus \{b_i \}$ for $i=1,\ldots,n$.

Now consider $\Delta(b_1)$.
The vertex $b_1$ is $\sim$-related to $\alpha$, and to all of the other $n-1$ vertices of $B_1$. So the digraph induced 
by $\Delta(b_1)$ is isomorphic to a disjoint union of $K_1$ and $K_{n-1}$. But 
$\Delta(b_1)$ is a vertex transitive graph (since $\Delta$ is self-paired by Claim~1) so the only possibility is $n=2$.

Now consider the case $n=2$. Then by the last paragraph,
 $|\Delta(\alpha)|=2$, and similarly $|\Lambda(\alpha)|=2$, so $|M|=9$. Let $N$ be the undirected $\Delta$-subgraph of 
$M$ (that is, the graph obtained by replacing every arc with an independent pair). Now $N$ is a vertex transitive graph all of whose vertices 
have degree $2$. Thus $N$ is a disjoint union of isomorphic cycles. By considering $\Gamma(\alpha) \cup \Gamma^*(\alpha)$ we
see that $N$ embeds a path with $4$ vertices (namely $\gamma_1,\beta_1,\beta_2,\gamma_2$), hence each of the cycles of $N$ must have at least $5$ vertices. Since $N$ has 
exactly $9$ vertices, there can only be one cycle, hence $N$ is isomorphic to a $9$-cycle. Label the vertices $v_1, \ldots, v_9$
so that $v_i \sim v_{i+1~({\rm mod}~9)}$. 
Now as $\Delta$ is self-paired,  $G_{v_1}$ is transitive on $\Delta(v_1)=\{v_2,v_9\}$, so
 $(v_2, v_9) \in \Lambda$. Let $g\in G_{v_1}$ interchange $v_2$ and $v_9$.
Since $\Gamma(v_1) \cong K_2$, we have 
$\Gamma(v_1) = \{ v_i, v_{i+1} \}$ for some $i \in \{3,4,\ldots, 7\}$. Since $g$ fixes $v_1$ we have $\Gamma(v_1)^g = \Gamma(v_1)$.
However, as $g$ lies in the dihedral group of order 18, and reflects the cycle in $v_1$, the only edge fixed by $g$ is $\Gamma(v_1)=\{v_5,v_6\}$. Under the rotation of $N$ mapping $v_1$ to $v_5$ we see that $\Gamma(v_5)=\{v_1,v_9\}$. However we also have $v_1\in \Gamma^*(v_5)$, a contradiction. 

This completes the proof of the lemma. 
\end{proof}

\begin{lemma}
If  $\Gamma(\alpha) \not\cong \Gamma^*(\alpha)$ and $\Gamma(\alpha)$ is isomorphic to either $K_m[\bar{K_n}]$ or its complement (for some $m,n > 1$), then $\Gamma^*(\alpha) \cong \bar{K_r}[K_s]$ (or its complement) for some $r,s >1$. 
\end{lemma}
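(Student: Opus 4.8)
The plan is to reduce, in a few cheap moves, to the single genuinely delicate configuration $\Gamma^*(\alpha)\cong K_3\times K_3$, and then to eliminate that by counting. First I would record that every graph in the hypothesis is arc-free: both $K_m[\bar{K}_n]$ and its complement $\bar{K}_m[K_n]$ are ordinary graphs, so $\Gamma(\alpha)$ contains no arc. Hence $M$ embeds no $P_3$ (a copy of $P_3$ has a vertex dominating both ends of an arc, which would force an arc inside some $\Gamma(\mu)$), and therefore $\Gamma^*(\alpha)$ is arc-free as well, since an arc $\gamma_1\to\gamma_2$ inside $\Gamma^*(\alpha)$ would produce the $P_3$ on $\{\gamma_1,\gamma_2,\alpha\}$. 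By Lemma~\ref{nhood} both $\Gamma(\alpha)$ and $\Gamma^*(\alpha)$ are set-homogeneous, and being arc-free they are set-homogeneous \emph{graphs}, so by Theorem~\ref{maintheorem}(i) each is, up to complement, one of $C_5$, $K_3\times K_3$, $K_r[\bar{K}_s]$. By Lemma~\ref{DHom}(i) they have the same order $mn$, and since $m,n>1$ the graph $K_m[\bar{K}_n]$ has both edges and non-edges, so $G_\alpha^{\Gamma(\alpha)}$ is not $2$-homogeneous; hence by Lemma~\ref{DHom}(i) neither is $G_\alpha^{\Gamma^*(\alpha)}$, which rules out $\Gamma^*(\alpha)$ being complete or null. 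Thus it suffices to show $\Gamma^*(\alpha)$ is neither $C_5$ nor $K_3\times K_3$, for then it is forced into the family $\bar{K}_r[K_s]$ (or its complement) with $r,s>1$, exactly as claimed.

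The case $C_5$ is immediate from orders, since $mn$ is a product of two integers greater than $1$ and so cannot equal $5$. This leaves the one hard case, $\Gamma^*(\alpha)\cong K_3\times K_3$, which forces $mn=9$, hence $\{m,n\}=\{3,3\}$ and $\Gamma(\alpha)\cong K_3[\bar{K}_3]$ or $\bar{K}_3[K_3]$. Excluding this is the main obstacle. The difficulty is structural and numerical: $K_3\times K_3$ is strongly regular and primitive, with neither its adjacency nor its non-adjacency relation transitive, whereas in each of $K_3[\bar{K}_3]$ and $\bar{K}_3[K_3]$ one of the relations $\sim$, $\parallel$ restricts to an equivalence relation with three classes of size $3$; correspondingly the three candidates carry $27$, $3$ and $6$ triangles respectively. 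The built-in annoyance is that out-neighbourhood data and in-neighbourhood data are linked only across the (possibly non-self-paired) arc relation $\Gamma$, so any single double-count will leave a free ``dual'' constant and prove nothing on its own.

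To push this through I would use the maximal-common-subgraph counting theme flagged before Lemma~\ref{NullNeighbours}. Concretely, for a $\sim$-triangle $T$ (a maximal clique common to all three candidate neighbourhoods, and a single $G$-orbit by set-homogeneity) write $d^{-}=|\bigcap_{t\in T}\Gamma^*(t)|$ and $d^{+}=|\bigcap_{t\in T}\Gamma(t)|$ for the numbers of common in- and out-neighbours, and let $t^{+}$, $t^{-}$ be the constant numbers of triangles inside $\Gamma(\alpha)$ and $\Gamma^*(\alpha)$. Double-counting incidences (vertex, triangle in its out- resp. in-neighbourhood) gives $N t^{+}=Q d^{-}$ and $N t^{-}=Q d^{+}$, where $N=|M|$ and $Q$ is the number of $\sim$-triangles of $M$. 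Running the same scheme for $\sim$-edges and for independent pairs, and feeding in the strongly-regular parameters of $K_3\times K_3$ (each edge in a unique triangle, each non-edge in exactly two common neighbours), together with the constraint that $M$ has no $P_3$ — which in particular forbids any arc from $\Gamma^*(\alpha)$ to $\Gamma(\alpha)$ and hence pins down the bipartite pattern between the two neighbourhoods — should over-determine the constants $d^{\pm}$ and the neighbour-distribution of a fixed $\beta\in\Gamma(\alpha)$ across $\Gamma^*(\alpha)$, and I expect the resulting integer constraints to be inconsistent.

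The cleanest route, which I would attempt first, is to upgrade these counts into a propagation statement: that the canonical partition of $\Gamma(\alpha)$ into its three $\parallel$- or $\sim$-blocks transfers to a $G_\alpha$-invariant partition of $\Gamma^*(\alpha)$ into three blocks of size $3$ \emph{with complete (or empty) bipartite pattern between blocks}. Such a structure is precisely complete-multipartite or a union of cliques, which $K_3\times K_3$ is not — between any two of its cocliques (or cliques) the induced bipartite graph is $2$-regular, not complete or empty. Establishing this between-block completeness, rather than merely a block system, is where the real work lies, and the triangle- and edge-incidence counts above are the natural tool for forcing it; this is the step I would expect to be the crux of the whole lemma.
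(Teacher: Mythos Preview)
Your reduction to the single residual case $\Gamma^*(\alpha)\cong K_3\times K_3$ is correct and essentially the paper's: the paper cites Lemma~\ref{NullNeighbours} rather than Lemma~\ref{DHom}(i) to exclude the complete and null graphs, but your route via $2$-homogeneity of the suborbit actions is a legitimate shortcut, and the exclusion of $C_5$ by compositeness of $mn$ is identical.

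The genuine gap is the elimination of $K_3\times K_3$. What you have written there is a strategy, not a proof: phrases such as ``should over-determine the constants'' and ``I expect the resulting integer constraints to be inconsistent'' leave the crux unverified, and the alternative propagation-of-blocks argument you flag as the cleanest route is never carried out. Your proposed machinery (global double counts in unknowns $d^{\pm}$, $Q$, $N$, plus strongly-regular parameters) is heavy, and it is not clear it closes without further input.

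The paper dispatches this case in two lines using precisely the maximal-common-subgraph idea you name but then set aside. With $\Gamma(\alpha)\cong K_3[\bar K_3]$ (the complementary case is symmetric), fix $\delta\in\Gamma(\alpha)$. The copies of $K_3$ in $\Gamma(\alpha)$ through $\delta$ are exactly the transversals of the three $\bar K_3$-blocks containing $\delta$; there are $3\cdot 3=9$ of them. Each embeds in $\Gamma^*(\alpha)\cong K_3\times K_3$, so by set-homogeneity each dominates some vertex $\mu$, giving $\mu\in\Gamma(\delta)$. Distinct transversals yield distinct $\mu$, since $K_3$ is \emph{maximal} subject to embedding in both neighbourhoods: the union of two distinct transversals through $\delta$ contains a vertex with three pairwise adjacent neighbours, and no such configuration sits inside $K_3\times K_3$ (where every edge lies in a unique triangle). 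That already produces $9$ elements of $\Gamma(\delta)$. In addition the $\bar K_3$-block of $\delta$ embeds in $K_3\times K_3$, so dominates a further vertex $\mu'$; inspection of $K_3\times K_3$ shows $\mu'$ is distinct from the nine above. Hence $|\Gamma(\delta)|\geq 10>9=|\Gamma(\alpha)|$, a contradiction. You should replace your double-counting plan with this direct count.
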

\begin{proof}
Suppose that $\Gamma(\alpha)$  has the form $K_m[\bar{K}_n]$, for $m,n>1$.
In this case, $M$ again embeds no copies of $P_3$, so both $\Gamma(\alpha)$ and $\Gamma^*(\alpha)$ are set-homogeneous graphs, and hence are homogeneous by Enomoto's result. By 
Lemma~\ref{NullNeighbours}, neither is complete nor independent. As $|\Gamma(\alpha)|=mn$, it is composite, so $\Gamma^*(\alpha)\not\cong C_5$. 
So by Theorem~\ref{maintheorem}(i) either $\Gamma^*(\alpha) \cong K_3\times K_3$,
 or $\Gamma^*(\alpha)$ also has form $K_r[\bar{K}_s]$ or its complement. Note here that $K_3\times K_3$ is isomorphic to its (graph) complement.

We now rule out the first possibility: $\Gamma(\alpha)\cong K_m[\bar{K}_n]$, $\Gamma^*(\alpha)\cong K_3 \times K_3$. Here $m=n=3$. The graph $K_3$ 
is maximal subject to embedding in both $\Gamma(\alpha)$ and $\Gamma^*(\alpha)$. 
Fix $\delta\in \Gamma(\alpha)$ and consider copies of $K_3$ in $\Gamma(\alpha)$ containing $\delta$ -- namely, those
 transversals of the three $\Gamma(\alpha)$-blocks
which contain
$\delta$. There are 9 of these, each one dominates a vertex $\beta$ (by 3-set-homogeneity), and different copies dominate different $\beta$, since $K_3$ is maximal with respect to embedding in both $\Gamma(\alpha)$ and $\Gamma^*(\alpha)$. In
 addition, since  $\bar{K}_3$ embeds in $\Gamma^*(\alpha)$, there is $\beta'$ dominated by the copy of $\bar{K}_3$ in $\Gamma(\alpha)$ which contains $\delta$
 and it is different from the other $9$ vertices $\beta$ by consideration of substructures of $\Gamma^*(\alpha) \cong K_3 \times K_3$. 
Thus, $|\Gamma(\delta)|\geq 10$, which is impossible.

The case where $\Gamma(\alpha)\cong \bar{K}_m[K_n]$ is handled similarly: it leads to consideration of $\Gamma(\alpha)\cong \bar{K}_3[K_3]$ with $\Gamma^*(\alpha)\cong K_3 \times K_3$ -- and one 
considers copies of $\bar{K}_3$ in $\Gamma(\alpha)$ which contain $\delta$.
\end{proof}

The next lemma will be needed for some of the arguments that follow. It is straightforward, so the proof has been omitted. 

\begin{lemma}
\label{lem_basic2}
Let $M$ be a finite set-homogeneous s-digraph and let $T \subseteq M$ induce a copy of $D_3$ in $M$. Moreover, suppose that $G_{\alpha \beta} = 1$ where $G = \Aut(M)$ and $\alpha \rightarrow \beta$. Then the following are equivalent:  
\begin{enumerate}[(i)]
\item
the group induced by $G$ on $T$ has size $1$ (respectively has size $3$);
\item
for any pair $T'$ and $T''$ of copies of $D_3$ in $M$ exactly one (respectively every) isomorphism $\phi: T' \rightarrow T''$ extends to an automorphism of $M$;
\item
for every arc $\alpha \rightarrow \beta$ of $M$, $\Gamma(\beta) \cap \Gamma^*(\alpha)$ has size at least $2$ (respectively has size $1$). 
\item
for every arc $\alpha \rightarrow \beta$ of $M$, $\Gamma(\beta) \cap \Gamma^*(\alpha)$ has size $3$ (respectively has size $1$). 
\end{enumerate}
\end{lemma}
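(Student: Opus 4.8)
The plan is to reduce the whole lemma to a single double-counting identity together with one coset argument, using the hypothesis $G_{\alpha\beta}=1$ at every turn. Throughout write $G=\Aut(M)$ and exploit set-homogeneity in two ways: since a single arc is a rigid $2$-vertex structure, $G$ is transitive on ordered arcs $(\alpha,\beta)$ with $\alpha\to\beta$, and since every copy of $D_3$ is an induced substructure of the same isomorphism type, $G$ is transitive on the induced copies of $D_3$.

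First I would record the consequences of $G_{\alpha\beta}=1$. By arc-transitivity and orbit--stabiliser, the number of arcs of $M$ equals $|G|/|G_{\alpha\beta}|=|G|$. Fix one induced triangle $T=\{\alpha,\beta,z\}$ with $\alpha\to\beta\to z\to\alpha$; its pointwise stabiliser $G_{(T)}$ lies in $G_{\alpha\beta}=1$ and so is trivial, whence the setwise stabiliser $H:=G_{\{T\}}$ maps isomorphically onto the group $H^T$ it induces on $T$. As $\Aut(D_3)\cong\mathbb{Z}_3$, we get $|H|=|H^T|=s$ with $s\in\{1,3\}$. Set $t:=|\Gamma(\beta)\cap\Gamma^*(\alpha)|$, which is arc-independent by arc-transitivity and counts exactly the induced $D_3$'s through the arc $(\alpha,\beta)$ (a triangle through $\alpha\to\beta$ is forced to be $\alpha\to\beta\to z\to\alpha$, i.e. a choice of $z\in\Gamma(\beta)\cap\Gamma^*(\alpha)$).

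The main step is to double count the set $\mathcal{F}$ of pairs $(T',a)$ where $T'$ is an induced $D_3$ and $a$ is one of its three arcs. Counting triangles first gives $|\mathcal{F}|=3N$, where $N=|G|/|H|=|G|/s$ is the number of copies of $D_3$. Counting arcs first gives $|\mathcal{F}|=(\text{number of arcs})\cdot t=|G|\,t$. Equating yields $t=3/s$. Hence $s=3\Leftrightarrow t=1$ and $s=1\Leftrightarrow t=3$; in particular $t\neq 2$, so $t\geq 2\Leftrightarrow t=3$. This is exactly the equivalence of (i) with (iii) and with (iv), with the two ``respectively'' alternatives paired as $s=1\leftrightarrow t\geq 2\leftrightarrow t=3$ and $s=3\leftrightarrow t=1$. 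For (i)$\Leftrightarrow$(ii) I would use a coset argument: for copies $T',T''$ there are exactly three isomorphisms $T'\to T''$ (since $\Aut(D_3)\cong\mathbb{Z}_3$), and by transitivity some $g_0$ carries $T'$ to $T''$, so the automorphisms doing so form the coset $H'g_0$ with $H'=G_{\{T'\}}$. The restriction map $g\mapsto g|_{T'}$ identifies the extendable isomorphisms with $(H')^{T'}$, of order $s$ (constant across the orbit), so exactly $s$ of the three isomorphisms extend: one when $s=1$, all three when $s=3$.

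There is no serious obstacle here, which is why the authors omit the proof; the only points needing care are that arc-transitivity with $G_{\alpha\beta}=1$ forces both the arc count to be $|G|$ and the pointwise triangle-stabiliser to be trivial (so that $|H|=s$), and that the two ``respectively'' branches are kept consistently paired. The cleanest route is the identity $t=3/s$, which simultaneously forces $t\in\{1,3\}$ and pins down the correspondence, after which (ii) follows from the standard coset-restriction count.
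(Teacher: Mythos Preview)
Your proof is correct and takes a genuinely different route from the paper's. The paper argues more directly: for the case where the induced group on $T$ has size $3$, it uses that the isomorphism $(\alpha,\beta,\gamma_1)\mapsto(\alpha,\beta,\gamma_2)$ extends to an element of $G_{\alpha\beta}=1$, forcing $\gamma_1=\gamma_2$; for the case where the induced group is trivial, it introduces ``types'' $1,2,3$ for the three arcs of $T$, transports these types unambiguously to every triangle via the unique extending automorphism, and then uses arc-transitivity to exhibit three distinct triangles through $(\alpha,\beta)$ (one for each type), with $G_{\alpha\beta}=1$ showing there are no others.

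Your double-counting identity $t=3/s$ is slicker: it packages both cases into one equation and immediately rules out $t=2$, whereas the paper handles the two branches separately and gets (iii)$\Rightarrow$(i) only by contraposition through (iv). The paper's type argument, on the other hand, is perhaps more transparent about \emph{why} exactly three triangles appear when $s=1$ --- it actually names them --- and avoids invoking orbit--stabiliser globally on the arc set and triangle set. Both approaches lean on the same essential facts (arc-transitivity from rigidity of an arc, trivial pointwise triangle stabiliser from $G_{\alpha\beta}=1$), so the difference is one of packaging rather than depth.
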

\begin{proof} The implications $(i)\Leftrightarrow (ii)$ and $(iv)\Rightarrow (iii)$ are immediate, and do not require the assumption $G_{\alpha\beta}=1$. 

$(i)\Rightarrow (iv).$ This does require the assumption that $G_{\alpha\beta}=1$, under either hypothesis. In practice, we work with the conditions in (ii) corresponding to those of (i). First, suppose the group induced by $G$ on $T$ has order $3$, and let $\gamma_1,\gamma_2\in \Gamma(\beta)\cap \Gamma^*(\alpha)$. Then, by the second clause of (ii), 
the isomorphism $(\alpha,\beta,\gamma_1)\mapsto (\alpha,\beta,\gamma_2)$ extends to an automorphism of $M$, which must be the identity as $G_{\alpha\beta}=1$. Hence $\gamma_1=\gamma_2$.

On the other hand, suppose the group induced on $T$ is trivial, and let $T=\{\alpha,\beta,\gamma\}$ with $\alpha\to \beta\to \gamma\to \alpha$. We may say that the arcs $\alpha\to \beta$, $\beta \to \gamma$ and $\gamma \to \alpha$ are arcs of types $1,2,3$ respectively {\em of $T$}. For any copy $T'=\{\alpha',\beta',\gamma'\}$ of $T$ with  $\alpha'\to\beta'\to \gamma'\to \alpha'$, there is a unique automorphism of $M$ taking $T$ to $T'$, so $T'$ has unambiguously a unique arc of each of the types $1,2,3$. Now since $\Aut(M)$  is transitive on arcs, there are $\gamma_1, \gamma_2,\gamma_3$ such that 
$\alpha\to\beta\to \gamma_i \to \alpha$ and the arc $\alpha\to \beta$ has type $i$ in the copy $T_i:=\{\alpha,\beta,\gamma_i\}$ of $D_3$ (for $i=1,2,3$). Thus, $|\Gamma(\beta)\cap\Gamma^*(\alpha)|\geq 3$. Also, for any $\delta$ such that $\alpha\to\beta\to\delta\to\alpha$, there is $i=\{1,2,3\}$ such that $\alpha\to \beta$ has type $i$ in the copy $T':=\{\alpha,\beta,\delta\}$ of $D_3$. Hence, by 3-set-homogeneity, some automorphism of $M$ induces $(\alpha,\beta,\delta)\mapsto (\alpha,\beta,\gamma_i)$, so $\delta=\gamma_i$ as $G_{\alpha\beta}=1$. Thus, $|\Gamma(\beta)\cap \Gamma^*(\alpha)|=3$.

$(iii)\Rightarrow (i).$ Again, under either hypothesis, this requires the assumption
 that $G_{\alpha\beta}=1$. If $|\Gamma(\beta)\cap \Gamma^*(\alpha)|=1$ then the group induced by $G$ on $T$ has size $3$, since otherwise it would have size $1$ and the direction $(i)\Rightarrow (iv)$ would imply $|\Gamma(\beta)\cap \Gamma^*(\alpha)| \geq2$. Likewise, if $|\Gamma(\beta)\cap \Gamma^*(\alpha)|\geq 2$ then the group induced by $G$ on $T$ is trivial, since otherwise it would have size three and  the direction $(i) \Rightarrow (iv)$ would imply $|\Gamma(\beta)\cap \Gamma^*(\alpha)| =1$. 
\end{proof}
\begin{lemma}
If $\Gamma(\alpha) \not\cong \Gamma^*(\alpha)$, then $\{\Gamma(\alpha), \Gamma^*(\alpha)\}\not \cong \{K_m[\bar{K_n}], \bar{K_r}[K_s]\}$ (with 
$m, n, r, s$ at least $2$). 
\end{lemma}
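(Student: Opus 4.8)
The plan is to adapt the maximal-common-subdigraph counting method used in the preceding lemma (the $K_3\times K_3$ case). First I would normalise using the available symmetries. Since the statement concerns the unordered pair $\{\Gamma(\alpha),\Gamma^*(\alpha)\}$, and reversing all arcs of $M$ produces a set-homogeneous digraph with the same automorphism group but with $\Gamma$ and $\Gamma^*$ interchanged, I may assume $\Gamma(\alpha)\cong K_m[\bar{K}_n]$ (complete multipartite) and $\Gamma^*(\alpha)\cong\bar{K}_r[K_s]$ (a disjoint union of cliques). By Lemma~\ref{DHom}(i) we have $mn=|\Gamma(\alpha)|=|\Gamma^*(\alpha)|=rs$. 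Passing to the s-digraph complement $\bar{M}$ (again set-homogeneous, and preserving the shape of the hypothesis) interchanges the parameter pairs $(m,n)$ and $(r,s)$, so I am free to break ties among $m,n,r,s$ conveniently. Note that $\Gamma(\alpha)$ contains no arcs, so $M$ embeds no $P_3$; in particular no vertex dominates an arc.

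Next I would record that the only graphs embedding simultaneously in $K_m[\bar{K}_n]$ and in $\bar{K}_r[K_s]$ are cliques and independent sets (a union of cliques is complete multipartite only if it is a single clique or is edgeless), so the maximal common substructures are $K_c$ with $c=\min(m,s)$ and $\bar{K}_d$ with $d=\min(n,r)$, both of size at least $2$. The workhorse is the following injectivity principle, proved exactly as in the $K_3\times K_3$ case: if $X$ is a maximal common substructure contained in $\Gamma(\alpha)$ and containing a fixed vertex $\delta$, then $X$ has a common out-neighbour (since $X$ embeds in $\Gamma^*(\alpha)$, hence by set-homogeneity every copy of $X$ lies in some $\Gamma^*(\mu)$), every such out-neighbour lies in $\Gamma(\delta)$, and two distinct maximal common substructures through $\delta$ inside $\Gamma(\alpha)$ can share no common out-neighbour; for otherwise their union would embed in both $\Gamma(\alpha)$ and some $\Gamma^*(\beta)\cong\Gamma^*(\alpha)$, strictly enlarging a maximal common structure.

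With this in hand I would count the copies of $K_c$ and of $\bar{K}_d$ through $\delta$ inside $\Gamma(\alpha)\cong K_m[\bar{K}_n]$, namely $\binom{m-1}{c-1}n^{c-1}$ cliques and $\binom{n-1}{d-1}$ independent sets, together with the double-counting identities obtained by comparing, over all vertices $\beta$, the number of copies of a given structure lying in $\Gamma^*(\beta)\cong\bar{K}_r[K_s]$: these yield $K_s^{\#}d^{+}=mn$, where $K_s^{\#}$ is the number of maximal cliques (the blocks) through a vertex and $d^{+}$ is the constant number of common out-neighbours of a block, and they show that the blocks through $\delta$ partition $\Gamma(\delta)$. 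Comparing the lower bound coming from the injectivity principle with this partition of $\Gamma(\delta)$ is designed to force the out-degree $mn$ to be exceeded, giving the contradiction that proves the lemma.

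The main obstacle, I expect, is that for the smallest admissible parameters (for instance $\{K_3[\bar{K}_2],\bar{K}_3[K_2]\}$) the crude count of maximal common substructures through $\delta$ only meets, rather than exceeds, the out-degree $mn$, so the naive over-count is not by itself conclusive. Handling these borderline configurations is where the real work lies: one must pin down the exact domination multiplicity $d^{+}$ (and its independent-set analogue), using the block-partition identity $K_s^{\#}d^{+}=mn$, an integrality constraint, and the fact that a maximal common clique inside $\Gamma(\alpha)$ is forced into a unique block of each $\Gamma^*(\beta)$ it dominates; and very likely split according to whether or not $M$ embeds a directed triangle $D_3$, treating the resulting finitely many small cases by explicit rigid-configuration arguments of the kind used throughout Section~\ref{sec_CaseInNEqualsOutN}.
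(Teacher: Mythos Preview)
Your counting via maximal common substructures is exactly the paper's opening move (its Case~1), and your complement normalisation is correct: it interchanges $(m,n)\leftrightarrow(r,s)$. The inequality you would obtain, $\binom{m-1}{c-1}n^{c-1}+1\le mn$ with $c=\min(m,s)$ (together with its dual for $d=\min(n,r)$), forces $\min(m,s)=\min(n,r)=2$, apart from the isolated exception $(m,n,r,s)=(3,2,2,3)$ which the paper disposes of by a separate orbit-structure argument inside Case~1. What remains are precisely three families: $m=n=r=s=2$; $m=r=2$, $n=s>2$; and $n=s=2$, $m=r>2$.

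This is where your proposal becomes too vague to be a proof, and where the paper spends most of its effort. None of the three families is handled by a refined count or a clean $D_3$/no-$D_3$ split. For $m=n=r=s=2$ the paper shows that both $\Lambda$ and $\Delta$ are self-paired, computes $|\Lambda(\alpha)|=|\Delta(\alpha)|=4$ so that $|M|=17$, invokes Burnside's theorem to get $G_\alpha\cong\mathbb{Z}_4$, and then derives a contradiction from Lemma~\ref{lem_basic2} (which forces $3\mid|G|$). For the other two families the paper analyses the $G_{\epsilon\alpha}$-orbit decomposition of $\Gamma(\alpha)$ (finding orbits of sizes $1,1,2n-2$, or an orbit of size $2m-2$, etc.), uses this to locate vertices $\epsilon,\delta,\delta''$ whose induced triple cannot embed in $\Gamma^*(\mu)\cong\bar{K}_r[K_s]$, and splits further into subcases according to whether certain $\sim$- or $\Lambda$-relations hold between blocks. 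These are genuinely ad hoc, and your sketch does not anticipate them.

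One specific point: your ``block-partition identity $K_s^{\#}d^{+}=mn$'' is unclear as stated. In $\Gamma(\alpha)\cong K_m[\bar K_n]$ the maximal cliques through a fixed vertex are the $n^{m-1}$ transversals of size $m$, not ``$K_s$-blocks'', so the identity does not parse; and the injectivity principle already tells you distinct maximal common substructures through $\delta$ dominate distinct vertices, so there is no further partition information to extract in the way you suggest. The borderline cases genuinely require the kind of detailed configuration-chasing the paper carries out.
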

\begin{proof}
The proof divides into several cases. It is sufficient to deal with the case where
$(\Gamma(\alpha), \Gamma^*(\alpha))\not \cong (K_m[\bar{K_n}], \bar{K_r}[K_s])$

\

\noindent \textbf{Case 1:}
\noindent \textit{ $\Gamma(\alpha)\cong K_m[\bar{K}_n]$, $\Gamma^*(\alpha)\cong \bar{K}_r[K_s]$, with $m,s>2$, or with
$n,r>2$.}

\

\noindent We suppose $m,s>2$ -- the other case is similar.
Put $p:=\min\{m,s\}\geq 3$. Let $\delta\in \Gamma(\alpha)$, and consider sets $T$
with $\delta\in T\subset\Gamma(\alpha)$ such that $|T|=p$ and $T$ meets each $\Gamma(\alpha)$-block in at most one point.
Such a set $T$ induces $K_p$, so as $K_p$ embeds in $\Gamma^*(\alpha)$, there is $\beta$ with
 $T\subset \Gamma^*(\beta)$. Since $T$ is maximal subject to embedding in both $\Gamma(\alpha)$ and $\Gamma^*(\alpha)$, distinct sets $T$ give distinct  $\beta$. In addition, since $\overline{K_2}$ embeds in $\Gamma^*(\alpha)$, there is at least one  further
 $\beta'$ (not one of the above $\beta$) such that $\Gamma^*(\beta')$ contains $\delta$ and at least one other point in the same $\Gamma(\alpha)$-block 
as $\delta$. As $|\Gamma(\delta)|=mn=rs$ and there are 
$\binom{m-1}{p-1}n^{p-1}$ such sets $T$, we have
\[
\binom{m-1}{p-1} n^{p-1}+1\leq mn=rs.  
\]
In both the cases $p=m$ and $p=s$, this has no solutions, with the exception of the case $n=r=2$, $m=s=3$.
So  $\Gamma(\alpha)\cong K_3[\bar{K}_2]$ and $\Gamma^*(\alpha)\cong \bar{K}_2[K_3]$.
Here the four $K_3$ in $\Gamma(\alpha)$ containing $\delta$ give rise to four distinct elements of $\Gamma(\delta)$. Denote this set of four vertices by $X$.
We also fix $\epsilon\in \Gamma^*(\alpha)$.

By set-homogeneity, $G_\alpha$ acts transitively on the copies of $K_3$ in $\Gamma(\alpha)$ and so $X$ is contained in a $G_\alpha$-orbit. It follows that every vertex in $X$ relates to $\alpha$ in the same way. Let $x \in X$. Clearly we do not have $\alpha \rightarrow x$, since $\Gamma(\alpha)$ does not embed an arc. If $x \rightarrow \alpha $ then
$|\Gamma(\delta)\cap\Gamma^*(\alpha)|\geq |X|=4$; this is impossible, as $\Gamma(\delta)\cong \Gamma(\alpha)$, and the largest structure which embeds in both $\Gamma(\alpha)$ and $\Gamma^*(\alpha)$ has size three.
  Therefore we must have either:
\begin{itemize}
\item
$X\subseteq \Lambda(\alpha)$ or $X\subseteq \Lambda^*(\alpha)$; or 
\item $X\subseteq \Delta(\alpha)$ or $X\subseteq \Delta^*(\alpha)$. 
\end{itemize}
In either case, by rigidity of the configurations $(\alpha, \delta, x)$ (where $x \in X$) we conclude that $G_{\alpha \delta}$ acts transitively on $X$, and that $X$ is equal to the intersection of $\Gamma(\delta)$ and one of the $G_\alpha$-orbits 
$\Lambda(\alpha), \Lambda^*(\alpha), \Delta(\alpha)$ or $\Delta^*(\alpha)$. We next prove that both $\Lambda$ and $\Delta$ are self-paired. 

Let $Y = \Gamma(\delta) \setminus X$, noting that $|Y|=2$. 
Conjugating $G_{\alpha \delta}$ by an element of $G$ mapping the arc $(\alpha, \delta)$ to $(\epsilon,\alpha)$, we see that $G_{\epsilon\alpha}$ has the same orbit structure on $\Gamma(\alpha)$ as $G_{\alpha \delta}$ on $\Gamma(\delta)$. Let $X'$ be the $G_{\epsilon\alpha}$-orbit of length 4 in $\Gamma(\alpha)\cong K_3[\bar{K_2}]$. Then $X'$ induces a $4$-element vertex transitive subgraph of $\Gamma(\alpha)$, and this forces $X'$ to be a union of two $\Gamma(\alpha)$-blocks $\{ \mu, \mu' \} \cup \{ \nu, \nu' \}$ (where $(\mu, \mu') \in \Lambda$, $(\nu, \nu') \in \Lambda$), and 
$X'\cong K_{2,2}$. There is an element of $G_{\epsilon\alpha}$ mapping $\mu$ to $\mu'$ and such an element must interchange $\mu$ and $\mu'$. It follows that $\Lambda$ is self-paired. 

Relabelling $\epsilon$ if necessary, we may suppose that $\delta\not\in X'$, so
$Y':=\Gamma(\alpha)\setminus X'=\{\delta, \delta'\}$. We claim that $\Delta$ is also self-paired. Suppose to the contrary that $\Delta\ne\Delta^*$. Then no element of $G_{\epsilon\alpha}$ interchanges the $\sim$-related pair $\{\mu, \nu\}$, so $G_{\epsilon\alpha}$ induces the cyclic group $\langle (\mu,\nu,\mu',\nu')\rangle$ on $X'$, and one of $(\mu,\nu), (\mu,\nu')$ lies in $\Delta$ and the other in $\Delta^*$. The same holds for $(\delta,\mu), (\delta,\mu')$, so no element of $G_{\epsilon \alpha \delta}$ maps $\mu$ to $\mu'$. However, $G_{\epsilon\alpha \delta}$ has index at most 2 in $G_{\epsilon\alpha}$ and hence the group it induces on $X'$ contains the involution $(\mu,\mu')(\nu,\nu')$. This contradiction proves that $\Delta$ is self-paired. 

By our comments above about $X$ we have also that $X'=\Gamma(\alpha)\cap\Lambda(\epsilon)$ or 
$X'=\Gamma(\alpha)\cap\Delta(\epsilon)$.
We claim further that $G_{\epsilon\alpha}$ is transitive on $Y'$. Suppose to the contrary that this is not so, so that the $G_{\epsilon\alpha}$-orbits in $\Gamma(\alpha)$ have lengths 1, 1, 4. In particular $G_{\epsilon\alpha}$ fixes $\delta$ and, since $|\Gamma(\alpha)|=|\Gamma^*(\alpha)|=6$, it follows that $G_{\epsilon\alpha}=G_{\epsilon\alpha\delta}=G_{\alpha\delta}$ and the groups induced by $G_\alpha$ on  $\Gamma(\alpha)$ and $\Gamma^*(\alpha)$ are permutationally isomorphic.
Thus the $G_{\epsilon\alpha}$-orbits in $\Gamma^*(\alpha)\cong \overline{K_2}[K_3]$ have lengths 1, 1, 4, and this is impossible. 
Therefore $G_{\epsilon\alpha}$ is transitive on $Y'$. 

For each $\sigma \in \Gamma^*(\alpha)$ let $B_\sigma$ denote the $G_{\sigma \alpha}$-orbit of $\Gamma(\alpha)$ with size $2$ (so $B_\epsilon = Y'$). 
Then for every $\sigma \in \Gamma^*(\alpha)$, $B_\sigma$ is an unrelated pair of $\Gamma(\alpha)$, and for every $h \in G_\alpha$ and $\sigma \in \Gamma^*(\alpha)$ we have $(B_\sigma)^h = B_{\sigma^h}$.
We claim that there exist distinct $\sigma, \sigma' \in \Gamma^*(\alpha)$ with $\sigma \sim \sigma'$ such that $B_\sigma = B_{\sigma'}$. Indeed, suppose otherwise. Then since $|\Gamma^*(\alpha)|=6$ and $\Gamma(\alpha)$ has only $3$ unrelated pairs, there must be $\sigma_1, \sigma_2 \in \Gamma^*(\alpha)$ with $B_{\sigma_1} = B_{\sigma_2}$ and $\sigma_1$ and $\sigma_2$ in distinct copies of $K_3$ in $\Gamma^*(\alpha)$. Let the two copies of $K_3$ in $\Gamma^*(\alpha)$ be $\{ \sigma_1, \sigma_1', \sigma_1'' \}$ and $\{\sigma_2, \sigma_2', \sigma_2'' \}$. By set-homogeneity there is an automorphism $g$ such that $\{ \alpha, \sigma_1, \sigma_2, \sigma_2' \}^g = \{ \alpha, \sigma_1, \sigma_2'', \sigma_2' \}$. By inspection of the substructure induced by these sets of vertices we see that $\alpha^g=\alpha$, $\sigma_1^g = \sigma_1$ and $\sigma_2^g \in \{ \sigma_2'', \sigma_2' \}$. Thus we have $\sigma_2 \sim \sigma_2^g$ and $B_{\sigma_2} = B_{\sigma_1} =  B_{\sigma_1^g} = (B_{\sigma_1})^g = (B_{\sigma_2})^g = B_{\sigma_2^g}$, and this contradiction proves the claim. 

Now we have shown that there exist distinct $\sigma, \sigma' \in \Gamma^*(\alpha)$ with $\sigma \sim \sigma'$ and $B_\sigma = B_{\sigma'}$. By 3-set-homogeneity (on substructures $\{\sigma,\sigma',\alpha\}$) it follows that 
\begin{equation*} \label{eqn_fact} \forall \tau_1, \tau_2 \in \Gamma^*(\alpha), \quad \tau_1 \sim \tau_2 \Rightarrow B_{\tau_1} = B_{\tau_2}. 
\end{equation*}
Since $\Gamma^*(\alpha)\cong \overline{K_2}[K_3]$ it follows that there are at most two distinct $B_\sigma$ for $\sigma\in\Gamma^*(\alpha)$. This is a contradiction since 
$G_\alpha$ acts transitively on the $3$ unrelated pairs of $\Gamma(\alpha)$, and hence each unrelated pair $B$ of $\Gamma(\alpha)$ occurs as $B_\sigma$ for some $\sigma \in \Gamma^*(\alpha)$.

\

Thus we have proved that $\min\{m,s\}=\min\{n,r\}=2$ and $mn=rs$. If $m=2$ then we must have $r=2$, and similarly if $n=2$ then we must have $s=2$. We complete the proof by considering three cases: all of $m,n,r,s$ are 2; $m=r=2$, $n=s>2$; and $n=s=2$, $m=r>2$.

\

\

\noindent \textbf{Case 2:} \textit{$m=n=r=s=2$.}

\

\noindent The following argument is a variant of  4.1 of \cite{lachlan}.

Suppose that $m=n=r=s=2$, so $\Gamma(\alpha) \cong K_2[\bar{K_2}]$ and $\Gamma^*(\alpha) \cong \bar{K_2}[K_2]$. In this case,
 by moving each vertex to another in the same
$G_\alpha$-block (in $\Gamma(\alpha)$ and $\Gamma^*(\alpha)$) we see that $\Lambda$ and $\Delta$ are both self-paired.
 For all $a \in \Gamma(\alpha)$ we have $|\Gamma(\alpha) \cap \Delta(a)| = 2$ and $|\Gamma(\alpha) \cap \Lambda(a)| = 1$.
 By set-homogeneity applied to arcs it follows that $|\Gamma(x) \cap \Delta(\alpha)| = 2$ and 
$|\Gamma(x) \cap \Lambda(\alpha)| = 1$ for all $x \in \Gamma^*(\alpha)$. Similarly, $|\Gamma^*(a) \cap \Delta(\alpha)|=1$
 and $|\Gamma^*(a) \cap \Lambda(\alpha)| = 2$ for all $a \in \Gamma(\alpha)$. Since $\Lambda$ is self-paired, $G_{\alpha}$ is transitive on 
$\Lambda(\alpha)$, so every vertex of $\Lambda(\alpha)$ is dominated by at least one vertex of $\Gamma^*(\alpha)$.
 So $\rightarrow$ gives a surjective map from $\Gamma^*(\alpha)$ to $\Lambda(\alpha)$ and it follows that 
$|\Lambda(\alpha)| \leq |\Gamma^*(\alpha)|=4$. Similarly $|\Delta(\alpha)| \leq 4$. We claim 
 that $|\Lambda(\alpha)| \in \{2,4\}$. 
 Indeed, $|\Lambda(\alpha)| \neq 1$, for as $\Gamma^*(\alpha)\cong \bar{K}_2[K_2]$, every element of $\Gamma^*(\alpha)$ has at least two non-neighbours. Also, the partition of $\Gamma^*(\alpha)$ given by the fibres of the $\rightarrow$ surjection from $\Gamma^*(\alpha)$ to $\Lambda(\alpha)$ must be preserved by all automorphisms $g \in G_\alpha$. 
 Since $G_\alpha$ is transitive on $\Gamma^*(\alpha)$ it follows that $|\Lambda(\alpha)| \in \{2,4\}$. 

If $|\Lambda(\alpha)| = 2$ then it follows that for all $a \in \Gamma(\alpha)$ we have
 $\Gamma^*(a) \cap \Lambda(\alpha) = \Lambda(\alpha)$, since $|\Gamma^*(a)\cap \Lambda(\alpha)|=2$. 
 Since $G_\alpha$ acts transitively on $\Gamma(\alpha)$ it follows that for any $\lambda \in \Lambda(\alpha)$ we
 have $\Gamma(\alpha) = \Gamma(\lambda)$ with $\alpha \neq \lambda$ contrary to assumption (III). 
 
Thus  $|\Lambda(\alpha)| = 4$, and similarly $|\Delta(\alpha)|=4$.
Therefore $|M|=17$ and $G$ acts transitively but not 2-transitively on vertices. By Burnside's Theorem (see \cite[p. 36]{cameron3}), $G$ is soluble and $G_\alpha$ is semiregular. As $G_\alpha$ has orbits of size 4, $G_\alpha=\mathbb{Z}_4$ and $|G|=17\times 4$.

Let $\epsilon\in \Gamma^*(\alpha)$. Then $\Gamma^*(\epsilon)\cap \Lambda(\alpha)$ has size at most one, 
for if it contained distinct points $\lambda_1,\lambda_2$ there would be an 
automorphism inducing $(\lambda_1, \epsilon, \alpha) \mapsto (\lambda_2, \epsilon, \alpha)$,
 which is impossible as $G_{\alpha\epsilon}=1$. 
Also $|\Gamma(\epsilon)\cap \Delta(\alpha)|=2$, so $|\Gamma^*(\epsilon)\cap \Delta(\alpha)|\leq 2$. Thus, as $|\Gamma^*(\epsilon)|=4$, we have $\Gamma^*(\epsilon) \cap \Gamma(\alpha)\neq \varnothing$. So $M$ embeds $D_3$, and
as $G_{\epsilon\alpha}=1$, Lemma~\ref{lem_basic2} applies. We cannot have $|\Gamma^*(\epsilon)\cap \Gamma(\alpha)|=3$ as $\Gamma(\alpha)$ and $\Gamma^*(\alpha)$ do not have three-vertex isomorphic substructures. Thus,  by Lemma~\ref{lem_basic2},
$G$ induces  a cyclic group of order three on each copy of $D_3$. In particular, 3 divides
$|G|$. This contradicts the last paragraph.

\

\noindent \textbf{Case 3:} \textit{$m=r=2$ and $n=s>2$.}

\

\noindent We have $\Gamma(\alpha)\cong K_2[\bar{K}_n]$ and $\Gamma^*(\alpha)\cong \bar{K}_2[K_n]$ with $n>2$. 
Pick $\delta\in \Gamma(\alpha)$. For each $\delta'\in \Gamma(\alpha)\setminus\{\delta\}$ there is $\beta$ with 
$\delta,\delta'\rightarrow \beta$. 
Furthermore, by counting arguments and consideration of the induced subdigraphs of $\Gamma^*(\alpha)$, distinct $\delta'$ yield distinct $\beta$, and $\beta$ is uniquely determined by 
$\delta$ and $\delta'$ (as otherwise $|\Gamma(\delta)|>2n$). Thus a set $X$ of size $2n-1$ of elements $\beta$ of $\Gamma(\delta)$ arises in this way, so as
$|\Gamma(\delta)|=2n$, $G_{\alpha\delta}$ has an orbit of size 1 in $\Gamma(\delta)$. Hence, if $\epsilon \in \Gamma^*(\alpha)$,
$G_{\epsilon\alpha}$ has an orbit of size 1, say $\{\epsilon'\}$, in $\Gamma(\alpha)$.  In particular, $G_{\epsilon\alpha}$ fixes each block (that is, copy of $\bar{K}_n$) of $\Gamma(\alpha)$. Also, by $(n+1)$-set-homogeneity, $G_{\alpha\delta}$ is $(n-1)$-homogeneous, and hence is transitive, on the block of $\Gamma(\alpha)$ not containing $\delta$. Similarly $G_{\alpha\delta}$ has an orbit of size $n$ on the subset $X$ of $\Gamma(\delta)$, and
 hence $G_{\epsilon\alpha}$ has an orbit of size $n$ on $\Gamma(\alpha)$. This must be 
the block of $\Gamma(\alpha)$ not containing $\epsilon'$.

As $P_3$ does not embed in $M$, $\Gamma(\epsilon) \cap \Gamma(\alpha)=\varnothing$. 
Now $\epsilon$ cannot be dominated by a block of $\Gamma(\alpha)$ (for $\Gamma^*(\epsilon)$ does not embed $\bar{K}_3$, as $\Gamma^*(\alpha)$ does not), 
so we may suppose that $\epsilon$ is unrelated or $\sim$-related
to all elements of some block $T$ of $\Gamma(\alpha)$, say the block containing $\delta$. Let $S$ be the block of $\Gamma^*(\alpha)$ containing $\epsilon$. Then by 3-set-homogeneity, $G_\alpha^S$ is 2-homogeneous, and hence primitive. 
Hence, as $G_{\alpha\{S\}\{T\}}$ is a normal subgroup of $G_{\alpha\{S\}}$ of index at most 2,
$G_{\alpha\{S\}\{T\}}$ is transitive on $S$.
It follows  that all elements of 
$S$ are related in the same way to all elements of $T$. Thus there are two cases to consider: either every vertex in $S$ is $\sim$-related to every vertex in $T$, or every vertex in S is unrelated to every vertex in $T$.

First suppose the former. Now for every $\epsilon'$ in $S$ there exists $\beta$ with $\epsilon', \delta \rightarrow \beta$. At least two distinct $\beta$ must arise in this way, since $S \cup \{ \delta \}$ induces a copy of $K_{n+1}$ which does embed into $\Gamma^*(\beta)$. Now  
since $|\Gamma(\delta)|=2n$ and $|X|=2n-1$ it follows that at least one such $\beta$ belongs to $X$. In other words (recall the definition of $X$), there exists $\delta' \in \Gamma(\alpha) \setminus \{ \delta \} $ and $\epsilon'\in S$ with $\{ \delta', \delta, \epsilon'  \} \subseteq \Gamma^*(\beta)$. If $\delta'\in T$ then the subdigraph induced by  $\{ \delta', \delta, \epsilon'  \}$ is a path of length 2 which does not embed into $\Gamma^*(\alpha) \cong \Gamma^*(\beta)$. Therefore $\delta'\not\in T$. We cannot have $\epsilon'$ $\sim$-related to $\delta'$ for otherwise by 3-set-homogeneity there would be an automorphism taking $(\epsilon', \alpha, \delta)$ to $(\epsilon', \alpha, \delta')$ so swapping the two $\Gamma(\alpha)$-blocks, a contradiction. Thus again
the substructure induced by  $\{ \delta', \delta, \epsilon'  \}$ does not embed into $\Gamma^*(\alpha) \cong \Gamma^*(\beta)$. This is  a contradiction. 

Now suppose that every element of S is $\Lambda$-related to every element of $T$. In this case the argument is given by interchanging the roles of $\epsilon$ and $\delta$ in the argument of the previous paragraph, and working with in-neighbourhood sets of vertices in $\Gamma^*(\alpha)$. So for each $\epsilon' \in \Gamma^*(\alpha) \setminus \{\epsilon\}$, there is a vertex $\beta$ with $\{ \epsilon, \epsilon'  \} \subseteq \Gamma(\beta)$, distinct $\epsilon'$ give rise to distinct $\beta$, and since $|\Gamma^*(\epsilon)|=2n$ we may argue that $\beta$ is uniquely determined by $\epsilon$ and $\epsilon'$. Thus the set $X'$ of all $\beta$ arising in this way has size $|X'| = 2n-1$. Arguing similarly to the previous paragraph, since $\Gamma(\epsilon)$ does not embed $\overline{K_{n+1}}$ it follows that there is a vertex $\beta$ with $\beta \rightarrow \epsilon, \delta$ and $\beta \in X'$. Thus there is a vertex $\epsilon' \in \Gamma^*(\alpha)$ with $\{  \epsilon', \epsilon, \delta \} \subseteq \Gamma(\beta)$, and regardless of whether or not $\epsilon'$ belongs to the same $\Gamma^*(\alpha)$-component as $\epsilon$ this gives a $3$-element substructure that cannot embed into $\Gamma(\alpha) \cong \Gamma(\beta)$, a contradiction.   

\

\noindent \textbf{Case 4:} $n=s=2$ and $m=r>2$. 

\

So $\Gamma(\alpha) \cong K_m[\bar{K_2}]$ and $\Gamma^*(\alpha) \cong \bar{K_m}[K_2]$. Fix $\delta \in \Gamma(\alpha)$: for $\delta' \in \Gamma(\alpha) \setminus \{ \delta \}$ there exists $\beta$ with $\delta, \delta' \rightarrow \beta$ and distinct $\delta'$ give distinct $\beta$. Now $\Lambda$ is self-paired, for if $\delta,\delta'$ are unrelated in $\Gamma(\alpha)$ then an automorphism mapping $(\alpha,\delta)$ to $(\alpha,\delta')$ must interchange $\delta,\delta'$. Likewise (arguing in $\Gamma^*(\alpha)$) $\Delta$ is self-paired.
If $\delta,\delta'$ are in distinct $\Gamma(\alpha)$-blocks (that is, in distinct copies of $\overline{K_2}$ in $\Gamma(\alpha)$) such $\beta$ is unique (otherwise $\Gamma(\delta)$ is too large),
 but if they are in the same block there could be two such $\beta$. Again, let $X$ (depending on 
$\alpha,\delta$) be the set of all points $\beta$ arising in this way.

\

\noindent \textbf{Case 4.1:} \textit{For every independent pair $\delta, \delta'\in \Gamma(\alpha)$, there is a unique $\beta$ with
 $\delta,\delta'\rightarrow \beta$, and for every $\sim$-related pair $\epsilon, \epsilon' \in \Gamma^*(\alpha)$ there is a unique $\beta'$ with $\beta' \rightarrow \epsilon, \epsilon'$.}

\

\noindent Let $\epsilon\in \Gamma^*(\alpha)$. As in Case~3, $G_{\alpha\delta}$ has an orbit of size one on $\Gamma(\delta)$, so $G_{\epsilon\alpha}$ has an orbit of size one on $\Gamma(\alpha)$. Thus $G_{\epsilon\alpha}$ has at least three orbits in $\Gamma(\alpha)$, with at least two of them of size 1.
By 3-set-homogeneity, $G_{\epsilon\alpha}$ is transitive on $\Gamma(\alpha) \cap \Lambda(\epsilon)$ and $\Gamma(\alpha) \cap \Delta(\epsilon)$, and since $\Gamma(\alpha)$ contains no arcs, $\Gamma(\alpha) \cap \Gamma(\epsilon)=\varnothing$. Thus $\Gamma(\alpha) \cap \Gamma^*(\epsilon)\ne\varnothing$ and as it cannot consist of the union of $m-1$ blocks of $\Gamma(\alpha)$ (since such a structure does not embed in $\Gamma^*(\alpha)$), we find that $\Gamma(\alpha) \cap \Gamma^*(\epsilon)$ is a singleton orbit of $G_{\alpha\epsilon}$, and the latter has orbits of sizes $1,1,2m-2$ on $\Gamma(\alpha)$.
This gives a $G_\alpha$-invariant matching between
 $\Gamma(\alpha)$-blocks and $\Gamma^*(\alpha)$-blocks.

If $\epsilon$ is $\sim$-related to $2m-2$ elements of $\Gamma(\alpha)$, choose $\delta$ in the latter set.
There is $\beta$ with $\epsilon,\delta\in \Gamma^*(\beta)$. Then $\beta\not\in \Gamma(\alpha)$ as 
the latter set has no arcs, and likewise $\beta\not\in \Gamma^*(\alpha)$. 
 We now claim that we may suppose that there is $\delta'\in (\Gamma^*(\beta)\cap 
\Gamma(\alpha))\setminus \{\delta\}$. 
Indeed, since each vertex of $\Gamma^*(\alpha)$ is $\sim$-related to $2m-2$ vertices of $\Gamma(\alpha)$, we see that   
the total number of pairs $(\epsilon^*, \delta^*) \in \Gamma^*(\alpha) \times \Gamma(\alpha)$ with $\epsilon^* \sim \delta^*$ is $2m(2m-2)$. On the other hand, since $G_\alpha$ acts transitively on $\Gamma(\alpha)$ it follows that every vertex of $\Gamma(\alpha)$ is $\sim$-related to the same number, $k$ say, of vertices of $\Gamma^*(\alpha)$. Therefore $2mk = 2m(2m-2)$ and 
we conclude that $\delta$ is $\sim$-related to $2m-2$ vertices of $\Gamma^*(\alpha)$. Now we can replace $\epsilon$ by any of the $2m-2$ points $\epsilon'$ of $\Gamma^*(\alpha)$ that are $\sim$-related to $\delta$, in each case obtaining a $\beta'$ with $\Gamma^*(\beta') \supseteq \{ \delta, \epsilon' \}$. By consideration of common substructures of $\Gamma(\alpha)$ and $\Gamma^*(\alpha)$ we conclude that at least two elements of $\Gamma(\delta)$ arise this way, so for at least one such element, $\delta'$ say, the corresponding $\beta'$ must lie in $X$, since $|X| = |\Gamma(\delta)|-1$.
This $\delta'$ is as claimed, and
we find that for all such $\delta'$, $\{\epsilon,\delta,\delta'\}$ carries a structure which does not embed in 
$\Gamma^*(\alpha)$, which is a contradiction.

If $\epsilon$ is unrelated to $2m-2$ elements of $\Gamma(\alpha)$ then, as in Case~3, we repeat the argument of the previous paragraph, with the roles of $\delta$ and $\epsilon$ interchanged, working with in-neighbour sets of vertices of $\Gamma^*(\alpha)$, and a set $X'$ of vertices each dominating a pair $\epsilon, \epsilon'$ from $\Gamma^*(\alpha)$. Here we make use of the assumption that pairs in the same $\Gamma^*(\alpha)$-block are dominated by a unique vertex. 

\

\noindent \textbf{Case 4.2:} \textit{For every independent pair $\delta, \delta'\in \Gamma(\alpha)$, there are at least two vertices $\beta, \beta'$ dominated by $\{ \delta, \delta' \}$, or for every $\sim$-related pair $\epsilon, \epsilon' \in \Gamma^*(\alpha)$ there are at least two vertices $\beta, \beta'$ dominating $\{ \epsilon, \epsilon' \}$.}

\

\noindent Suppose the former. The latter possibility may be dealt with using a similar argument. 

Let $\epsilon \in \Gamma^*(\alpha)$. Let $\{ \delta, \delta' \}$ be a $\bar{K_2}$-block of $\Gamma(\alpha)$ and let $\{ \beta, \beta' \}$ satisfy $\delta, \delta' \rightarrow \beta, \beta'$. The remaining $2m-2$ elements of $\Gamma(\delta)$ arise as the set of $\beta''$ dominated by pairs $\{ \delta, \delta'' \}$ with the $\delta''$ coming from $\Gamma(\alpha)$-blocks different from $\{ \delta, \delta' \}$. Let $Y = \Gamma(\delta) \setminus \{ \beta, \beta' \}$. Recall that $\Lambda$ and $\Delta$ are both self-paired (proved in the first paragraph of Case 4). 

We claim that $Y$ is a single $G_{\alpha \delta}$-orbit of size $2m - 2$, and moreover  either $Y\subseteq\Delta(\alpha)$ or $Y\subseteq\Lambda(\alpha)$. We show first that $Y$ is contained in a $G_\alpha$-orbit: for by $3$-set-homogeneity, $G_\alpha$ acts (unordered) edge-transitively on $\Gamma(\alpha)$, and for $\delta_1,\delta_2\in \Gamma(\alpha)\setminus \{ \delta, \delta' \}$, an element $g\in G_\alpha$ mapping the $\sim$-related pairs $\{\delta,\delta_1\}$ to $\{\delta,\delta_2\}$ takes $\beta_1$ to $\beta_2$, where $\{\beta_i\}= Y\cap\Gamma(\delta)\cap\Gamma(\delta_i)$. It follows that every $y \in Y$ relates to $\alpha$ in the same way. Let $y \in Y$ be arbitrary. Clearly we do not have $\alpha \rightarrow y$. If $y \rightarrow \alpha$ then $Y \subset\Gamma^*(\alpha)$ and hence $|\Gamma^*(\alpha)\cap\Gamma(\delta)|\geq 2m-2$. However, since $m>2$, the digraphs induced on $\Gamma^*(\alpha)$ and $\Gamma(\delta)\cong\Gamma(\alpha)$ do not contain isomorphic subdigraphs of size $2m-2$.  We conclude that either $y\sim\alpha$ for all $y \in Y$, or $y,\alpha$ are unrelated for all $y \in Y$. But then in either of these two  cases, by rigidity of the configurations $(\alpha, \delta, y)$ $(y \in Y)$ we conclude that $Y$ is a single $G_{\alpha \delta}$-orbit of size $2m-2$, and hence  either $Y\subseteq\Delta(\alpha)$ or $Y\subseteq\Lambda(\alpha)$. This completes the proof of the claim.

Now we study the pair $B=\{\beta,\beta'\}$ defined in the first paragraph. Since $B\subset\Gamma(\delta)$, $B$ is either $\sim$-related or unrelated. Suppose we have $B\subset\Gamma^*(\alpha)$. If $B$ is unrelated then  
every unrelated pair from $\Gamma^*(\alpha)$ is dominated by an unrelated pair from $\Gamma(\alpha)$, and distinct pairs are dominated by distinct pairs: this is a contradiction as $\Gamma(\alpha)$ and $\Gamma^*(\alpha)$ have different numbers of such pairs. 
We conclude that $B$ is $\sim$-related, but then $\Gamma^*(\beta) \cap \Gamma(\alpha) \not\cong \Gamma^*(\alpha) \cap \Gamma(\delta)$, contradicting 2-set-homogeneity. Thus $B\not\subset\Gamma^*(\alpha)$.

Next suppose that $\beta \in \Lambda(\alpha)$ and $\beta' \rightarrow \alpha$. Then the arc $\alpha \rightarrow \delta$ has the property that there is a unique vertex $\beta'$ such that $\delta \rightarrow \beta' \rightarrow \alpha$. On the other hand, for the arc $\beta' \rightarrow  \alpha$ there are two distinct vertices $\delta, \delta'$ satisfying $\alpha \rightarrow \delta \rightarrow \beta'$, $\alpha \rightarrow  \delta' \rightarrow \beta'$. This contradicts $2$-set-homogeneity.  Similarly we can rule out the case $\beta \in \Delta(\alpha)$ and $\beta' \rightarrow \alpha$. Thus $B\cap\Gamma^*(\alpha)=\varnothing$.

Now suppose that $\beta$ is contained in the same $G_\alpha$-orbit as $Y$ (either $\Delta(\alpha)$ or $\Lambda(\alpha)$). Then 3-set-homogeneity gives an automorphism mapping $(\alpha,\delta,y)$ to $(\alpha,\delta,\beta)$, which is a contradiction since $Y$ is fixed setwise by $G_{\alpha\delta}$. Thus either $B\subset\Delta(\alpha)$, $Y\subseteq\Lambda(\alpha)$, or $Y\subset\Delta(\alpha)$, $B\subseteq\Lambda(\alpha)$.

Suppose first that $B\subset \Lambda(\alpha)$. Then $\alpha\in \Lambda(\beta)\cap \Gamma^*(\delta)$ and it follows that $\Lambda(\delta)\cap \Gamma^*(\alpha)\neq \varnothing$. Thus we may suppose $\epsilon \in \Gamma^*(\alpha)$ satisfies $(\epsilon, \delta) \in \Lambda$. By set-homogeneity, $\{ \beta, \beta' \}$ is a $G_{\alpha \delta}$-orbit, and recall that in this case we have $Y\subset\Delta(\alpha)$. By arc-transitivity mapping $(\alpha, \delta)$ to $(\epsilon, \alpha)$ we conclude that $\Delta(\epsilon) \cap \Gamma(\alpha)$ is a $G_{\epsilon \alpha}$-orbit of size $2m-2$ (and $\epsilon$ is unrelated to the other two vertices of $\Gamma(\alpha)$). Since $m \geq 3$ and $|\Delta(\epsilon) \cap \Gamma(\alpha)| = 2m-2$ there must be at least one $\Gamma(\alpha)$-block $\{ \mu, \mu' \}$ satisfying $(\epsilon, \mu) \in \Delta$ and $(\epsilon, \mu') \in \Delta$. By $2$-set-homogeneity there exists $\nu$ such that $\mu, \epsilon \rightarrow \nu$.  

Note that $\delta \in \Gamma(\alpha)$ has the following property: for all $ \kappa \in \Gamma(\delta), |\Gamma^*(\kappa) \cap \Gamma(\alpha)| \geq 2$.

Since $G_\alpha$ acts transitively on $\Gamma(\alpha)$ it follows that $\mu$ also has this property, and hence $|\Gamma^*(\nu) \cap \Gamma(\alpha)| \geq 2$. In other words, there exists some $\mu'' \in \Gamma(\alpha)$ such that $\{ \mu'', \mu, \epsilon \} \subseteq \Gamma^*(\nu)$. There are two possibilities:
either $\mu'' = \mu'$ or $\mu'' \neq \mu'$, but in either case, regardless of the relationship between $\mu''$ and $\epsilon$, the substructure induced by $\{ \mu'', \mu, \epsilon \}$ contains at least two $\Delta$-edges and so does not embed in $\Gamma^*(\alpha) \cong \Gamma^*(\nu)$, which is a contradiction. We conclude that this subcase cannot happen.   

Finally, suppose that $B\subset\Delta(\alpha)$, $Y\subseteq\Lambda(\alpha)$. 
This is dealt with in much that same way as the case considered in the previous paragraphs. We find $\epsilon\in\Gamma^*(\alpha)$ such that $(\epsilon,\delta)\in\Delta$.
Easy arguments using 3-set-homogeneity show that 
$G_{\epsilon\alpha}$ has an orbit of size two on $\Gamma(\alpha)$, namely $\Gamma(\alpha)\cap\Delta(\epsilon)$. 
We may thus suppose that $\epsilon$ is $\sim$-related to $\delta \in \Gamma(\alpha)$. 
Arguing as above we conclude $\Delta(\epsilon) \cap \Gamma(\alpha) = \{ \delta, \delta' \}$ where $\{ \delta, \delta' \}$ is a $\Gamma(\alpha)$-block of $\Gamma(\alpha)$.  
There is $\mu$ with 
$\epsilon,\delta\in \Gamma^*(\mu)$, and again arguing as in the previous paragraph, there is $\delta'' \in \Gamma(\alpha)\setminus\{\delta\}$ with 
$\delta'' \rightarrow \mu$. 
It can then be checked that, wherever $\delta''$ lies in $\Gamma(\alpha)$, $\{\epsilon,\delta,\delta''\}$ is not isomorphic 
to a subdigraph of $\Gamma^*(\alpha)$, which is a contradiction.

\end{proof}

\begin{lemma}
If $M \in \mathcal{S}$ and $\Gamma(\alpha) \not\cong \Gamma^*(\alpha)$ then we cannot have $\Gamma(\alpha) \cong K_m[\bar{K_n}]$ ($m,n > 1$) and $\Gamma^*(\alpha) \cong K_r[\bar{K_s}]$ ($r,s > 1$). 
\end{lemma}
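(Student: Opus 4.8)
The plan is to run the same maximal-common-substructure counting used throughout this section, which will dispose of all but one tight configuration, and then to eliminate that configuration by hand.

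I would first reduce the parameters. Write $\Gamma(\alpha)\cong K_m[\bar K_n]$ and $\Gamma^*(\alpha)\cong K_r[\bar K_s]$ with $m,n,r,s\geq2$. By Lemma~\ref{DHom}(i), $|\Gamma(\alpha)|=|\Gamma^*(\alpha)|$, so $mn=rs$, and as $\Gamma(\alpha)\not\cong\Gamma^*(\alpha)$ we have $(m,n)\neq(r,s)$. Reversing every arc of $M$ yields a set-homogeneous s-digraph in which $\Gamma(\alpha)$ and $\Gamma^*(\alpha)$ are swapped while $\sim$ is unchanged; so I may assume $m>r$, and then $mn=rs$ forces $n<s$. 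Since neither $\Gamma(\alpha)$ nor $\Gamma^*(\alpha)$ carries an arc, $M$ embeds no $P_3$; in particular $\Gamma(\delta)\cap\Gamma(\alpha)=\varnothing$ and $\Gamma^*(\delta)\cap\Gamma^*(\alpha)=\varnothing$ for every $\delta\in\Gamma(\alpha)$.

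For the main step, note that a common induced subgraph of $K_m[\bar K_n]$ and $K_r[\bar K_s]$ is complete multipartite with at most $\min(m,r)=r$ parts, each of size at most $\min(n,s)=n$, so the unique maximal common substructure is $X:=K_r[\bar K_n]$. I would count the copies of $X$ inside $\Gamma^*(\alpha)$ through a fixed $\epsilon$: each must meet every one of the $r$ parts of $\Gamma^*(\alpha)$ in an $n$-subset, so there are $\binom{s-1}{n-1}\binom{s}{n}^{r-1}$ of them. Given such a copy $X'$, set-homogeneity (comparing $X'$ with a copy of $X$ in $\Gamma(\alpha)$, which is dominated by $\alpha$) furnishes a vertex $\mu$ dominating $X'$, and $\mu\in\Gamma^*(\epsilon)$ because $\epsilon\in X'$. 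Distinct copies give distinct $\mu$: if $X_1',X_2'\subseteq\Gamma(\mu)$ were distinct then $X_1'\cup X_2'\subseteq\Gamma(\mu)\cong K_m[\bar K_n]$ would contain an independent set (an enlarged part of $\Gamma^*(\alpha)$) of size exceeding $n$, which is impossible. As $|\Gamma^*(\epsilon)|=rs$, this gives
\[
\binom{s-1}{n-1}\binom{s}{n}^{r-1}\le rs .
\]
An elementary check shows this inequality is violated for every admissible tuple with $m>r\geq2$ and $2\leq n<s$, with the single exception $(m,n,r,s)=(3,2,2,3)$, where it holds with equality.

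It remains to kill the residual case $\Gamma(\alpha)\cong K_{2,2,2}$, $\Gamma^*(\alpha)\cong K_{3,3}$, and this is the crux of the proof: here the counting is exactly tight (every in-neighbour of $\epsilon$ dominates a $4$-cycle of $\Gamma^*(\alpha)$ through $\epsilon$), so no extra vertex can be extracted and a structural argument is unavoidable. I would split according to whether $M$ embeds $D_3$. If not, then for $\delta\in\Gamma(\alpha)$ the set $\Gamma(\delta)$ avoids $\{\alpha\}\cup\Gamma(\alpha)\cup\Gamma^*(\alpha)$ and hence lies in $\Delta(\alpha)\cup\Lambda(\alpha)$, as does $\Gamma^*(\epsilon)$ for $\epsilon\in\Gamma^*(\alpha)$; combining this with the bound $|\Gamma^*(\zeta)\cap\Gamma(\alpha)|\leq4$ (a common substructure of $K_{3,3}$ and $K_{2,2,2}$) and a count of the arcs leaving $\Gamma(\alpha)$ against the $\sim$- and $\parallel$-neighbourhoods of $\alpha$ yields a contradiction with vertex-transitivity. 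If $M$ does embed $D_3$, then every arc lies in a $D_3$; I would count the completions of the arc $\alpha\to\delta$ to a directed triangle, use Lemma~\ref{2verts} to equate $|\Gamma(\delta)\cap\Gamma^*(\alpha)|$ with $|\Gamma^*(\gamma)\cap\Gamma(\alpha)|$, and exploit the very restricted common substructures of $K_{2,2,2}$ and $K_{3,3}$ (which contain no $K_3$ and no $\bar K_3$) to force an impossible local configuration around a dominator of a $4$-cycle. The hard part throughout is precisely this final tight case: the uniform counting gives equality, and excluding it needs a careful, largely hands-on analysis in the spirit of Cases~3 and~4 of the preceding lemma.
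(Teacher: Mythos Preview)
Your counting reduction is essentially the paper's argument (run on the opposite side because you normalised to $m>r$ rather than $m<r$): fix a vertex, count copies of the maximal common multipartite substructure through it, and use that distinct copies force distinct dominators. The resulting inequality and the sole survivor $(m,n,r,s)=(3,2,2,3)$ (equivalently the paper's $(2,3,3,2)$) match exactly.

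The residual case, however, is where your proposal has a genuine gap. First, the split on whether $M$ embeds $D_3$ is unnecessary: the tight equality you already observed says that \emph{every} in-neighbour of $\epsilon\in\Gamma^*(\alpha)$ dominates a copy of $K_{2,2}$ in $\Gamma^*(\alpha)$, so the set $X$ of such dominators is a $G_\alpha$-orbit of size $9$; since $|\Gamma(\alpha)|=|\Gamma^*(\alpha)|=6$, this orbit is disjoint from both, whence $\Gamma^*(\epsilon)\cap\Gamma(\alpha)=\varnothing$ and $D_3$ never embeds. Second, and more importantly, your sketch for the no-$D_3$ case (``a count of the arcs leaving $\Gamma(\alpha)$ against the $\sim$- and $\parallel$-neighbourhoods of $\alpha$ yields a contradiction with vertex-transitivity'') does not by itself yield a contradiction: the arc count only gives $|\Delta(\alpha)|+|\Lambda(\alpha)|\geq 9$, which is consistent with many possibilities.

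What the paper actually does in this case is the following. It first checks (via explicit $3$- and $4$-set-homogeneity arguments inside $\Gamma(\alpha)$ and $\Gamma^*(\alpha)$) that both $\Lambda$ and $\Delta$ are self-paired, so the $9$-element orbit $X$ must equal $\Delta(\alpha)$ or $\Lambda(\alpha)$. It then uses $3$-set-homogeneity to show $G_{\alpha\delta}$ is transitive on $\Gamma(\delta)$ for $\delta\in\Gamma(\alpha)$, hence $G_{\epsilon\alpha}$ is transitive on $\Gamma(\alpha)$. The contradiction is obtained by computing degrees \emph{inside} $\Delta(\delta)$ (respectively $\Lambda(\delta)$), which is a $G_\delta$-orbit of size $9$ and so must be vertex-transitive: if $X=\Delta(\alpha)$ one deduces every vertex of $\Gamma(\alpha)$ is $\sim$-related to every vertex of $\Gamma^*(\alpha)$, and then $\Delta(\delta)$ contains three vertices from $\Gamma(\alpha)$ of $\sim$-degree $6$ and six vertices from $\Gamma^*(\alpha)$ of $\sim$-degree $7$, impossible; the case $X=\Lambda(\alpha)$ is handled by the analogous degree count in $\Lambda(\delta)$. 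This is the missing mechanism in your outline---identifying the size-$9$ orbit explicitly as $\Delta(\alpha)$ or $\Lambda(\alpha)$, and then exhibiting vertices of different degrees inside $\Delta(\delta)$ or $\Lambda(\delta)$.
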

\begin{proof}

Suppose that $\Gamma(\alpha)= K_m[\bar{K}_n]$ and $\Gamma^*(\alpha)=K_r[\bar{K}_s]$. In this case, as 
$\Gamma(\alpha) \not\cong \Gamma^*(\alpha)$, $m\neq r$ and $n\neq s$. By reversing arcs if necessary, we may suppose $m<r$ and $n>s$.

Fix $\delta\in \Gamma(\alpha)$, and consider subsets $T$ of $\Gamma(\alpha)$ which contain $\delta$ and exactly 
$s$ vertices of each block in $\Gamma(\alpha)$. As such sets have isomorphic copies in $\Gamma^*(\alpha)$, each 
such $T\subset \Gamma^*(\mu)$ for some $\mu$, and $T\neq T'$ implies $\mu \neq \mu'$ (for $\Gamma^*(\mu)$ does not
contain a copy of $\overline{K_{s+1}}$). The number of such sets $T$ is ${\binom{n}{s}}^{m-1}{\binom{n-1}{s-1}}$ and as the 
corresponding $\mu$ lie in $\Gamma(\delta)$, we have
$$
{\binom{n}{s}}^{m-1}{\binom{n-1}{s-1}}\leq mn=rs.
$$
The only solution of this is $m=2$, $n=3$, $s=2$, $r=3$, so we consider this case. So $\Gamma(\alpha) \cong K_2[\overline{K_3}]$ and $\Gamma^*(\alpha) \cong K_3[\overline{K_2}]$. In this case the number of subdigraphs $T\cong K_{2,2}$ of $\Gamma(\alpha)$ is $3^2 = 9$,  and since $G_\alpha$ is transitive on the set of (unordered) $\Delta$-edges in $\Gamma(\alpha)$, the set $X$ consisting of those vertices
$\mu$ with $\Gamma^*(\mu)\cap \Gamma(\alpha)\cong K_{2,2}$ is a $G_\alpha$-orbit of size $3^2=9$, and contains $\Gamma(\delta)$. In particular $X\ne\Gamma(\alpha), \Gamma^*(\alpha)$ (as it has size $9$).

Now, $\Lambda$ is clearly self-paired, for if $\epsilon, \epsilon'$ are unrelated in $\Gamma^*(\alpha)$
 then an automorphism $(\alpha, \epsilon)\mapsto (\alpha, \epsilon')$ must swap them. Also, $\Delta$ is self-paired. For if 
$\delta\in \Gamma(\alpha)$ and $B,C$ are distinct 2-sets
 in the copy of $\overline{K_3}$ in $\Gamma(\alpha)$ which does not contain $\delta$ then there is $g\in G_{\alpha\delta}$ 
with
 $B^g=C$, so $G_{\alpha\delta}$ is transitive on $\overline{K_3}$. Now if $C=\{\gamma,\gamma'\}$ and $\delta'$ is in the same
 $\overline{K_3}$ as $\delta$, there is $g\in G_\alpha$ with $\{\gamma,\delta,\delta'\}^g=\{\delta,\gamma,\gamma'\}$. 
Combining the last two observations, there is $h\in G_\alpha$ with
 $(\gamma,\delta)^h=(\delta,\gamma)$, and $h$ flips a $\Delta$-edge.

Thus the set $X$ is either $\Delta(\alpha)$ or $\Lambda(\alpha)$. Let $\delta\in \Gamma(\alpha)$, $\epsilon\in \Gamma^*(\alpha)$. 
Since $\Gamma(\delta) \subseteq \Delta(\alpha)$ or $\Gamma(\delta) \subseteq \Lambda(\alpha)$,
we see by 3-set-homogeneity that $G_{\alpha\delta}$ is transitive on $\Gamma(\delta)$, so $G_{\epsilon\alpha}$ is transitive on $\Gamma(\alpha)$. 
If $X=\Delta(\alpha)$ then (by considering an automorphism taking $(\alpha, \delta)$ to $(\epsilon, \alpha)$) we see that
$\Delta(\epsilon)\cap \Gamma(\alpha)$ has size $6$, so equals $\Gamma(\alpha)$; that is, since $G_\alpha$ is transitive on $\Gamma^*(\alpha)$ and fixes $\Gamma(\alpha)$, every vertex of 
$\Gamma(\alpha)$ is $\sim$-related to each vertex in $\Gamma^*(\alpha)$. Now $\Delta(\delta)$, which has size 9 and is
 vertex transitive (as it is a $G_\delta$-orbit), contains three vertices from $\Gamma(\alpha)$ and six from $\Gamma^*(\alpha)$. The three 
have valency 6 in 
 $\Delta(\delta)$ and the six have valency $7$, contradicting vertex transitivity of $\Delta(\delta)$.

Thus $X=\Lambda(\alpha)$. As in the last paragraph, using an automorphism
 inducing$(\alpha,\delta)\mapsto (\epsilon,\alpha)$, we see that $\Lambda(\epsilon)\cap\Gamma(\alpha)$ contains six vertices, so equals 
$\Gamma(\alpha)$. Hence,   $\Gamma(\alpha)\subseteq \Lambda(\epsilon')$ for 
any $\epsilon'\in \Gamma^*(\alpha)$. Since $\Lambda$ is self-paired, it follows that
 $\Gamma^*(\alpha)\subseteq \Lambda(\delta)$. Thus, 
the digraph $\Lambda(\delta)$,
which is vertex transitive of valency $9$, contains two vertices $\delta',\delta''$ from $\Gamma(\alpha)$ (those in the same copy of $\bar{K_3}$) and six
 vertices from
 $\Gamma^*(\alpha)$, including $\epsilon$. Now $\delta'$ is independent from $\delta''$ and from all vertices in $\Gamma^*(\alpha)$, so has $\Delta$-valency at most $1$ in $\Lambda(\delta)$, but, from the structure of $\Gamma^*(\alpha)$,  $\epsilon$ 
has $\Delta$-valency at least $4$ in $\Lambda(\delta)$. This again contradicts vertex transitivity of $\Lambda(\delta)$.
\end{proof}

\begin{lemma}
If $M \in \mathcal{S}$ and $\Gamma(\alpha) \not\cong \Gamma^*(\alpha)$ then $\Gamma(\alpha)$ embeds an arc. 
\end{lemma}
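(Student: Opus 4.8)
The plan is to argue by contradiction: assume $\Gamma(\alpha)$ embeds no arc, and show that this forces both $\Gamma(\alpha)$ and $\Gamma^*(\alpha)$ into a short list of graphs incompatible with assumption (II) together with $|\Gamma(\alpha)|=|\Gamma^*(\alpha)|$.

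First I would propagate arc-freeness to $\Gamma^*(\alpha)$. If $\Gamma(\alpha)$ has no arc, then $M$ embeds no transitive triangle $P_3$: in any copy $a\to b\to c$, $a\to c$ of $P_3$ the pair $\{b,c\}$ lies in $\Gamma(a)\cong\Gamma(\alpha)$ and carries an arc. Consequently $\Gamma^*(\alpha)$ is also arc-free, since an arc $u\to v$ inside $\Gamma^*(\alpha)$, together with $u\to\alpha$ and $v\to\alpha$, would give a $P_3$ on $\{u,v,\alpha\}$. By Lemma~\ref{nhood} the two neighbourhoods are set-homogeneous, hence, being arc-free, they are set-homogeneous graphs, so by Theorem~\ref{maintheorem}(i) each is, up to graph-complementation, one of $C_5$, $K_3\times K_3$, or $K_m[\bar K_n]$.

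Next I would eliminate the composition-product cases using the lemmas already established in this section. By Lemma~\ref{NullNeighbours} neither neighbourhood is complete or null, so any neighbourhood of the form $K_m[\bar K_n]$ or of the complementary form $\bar K_m[K_n]$ has $m,n\ge 2$; call such a graph \emph{product type}. The claim is that neither $\Gamma(\alpha)$ nor $\Gamma^*(\alpha)$ is product type. If $\Gamma(\alpha)$ were, then the earlier lemma forcing $\Gamma^*(\alpha)\cong\bar K_r[K_s]$ (or its complement) makes $\Gamma^*(\alpha)$ product type as well; but the two following lemmas forbid every such pairing — the lemma excluding $\{K_m[\bar K_n],K_r[\bar K_s]\}$ rules out both being of the form $K[\bar K]$, and the lemma excluding $\{K_m[\bar K_n],\bar K_r[K_s]\}$ rules out the mixed form. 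The only remaining possibility, both of the form $\bar K[K]$, is dispatched by passing to the complement $\bar M$, which also satisfies (A), has arc-free out- and in-neighbourhoods given by the graph-complements of $\Gamma^*(\alpha)$ and $\Gamma(\alpha)$ respectively, and in which both neighbourhoods now take the form $K[\bar K]$. The same argument applied to $\bar M$, using that product type is preserved by graph-complementation, shows $\Gamma^*(\alpha)$ is not product type either.

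It then follows that $\Gamma(\alpha),\Gamma^*(\alpha)\in\{C_5,\,K_3\times K_3\}$. By assumption (II) these are non-isomorphic, so one is $C_5$ and the other $K_3\times K_3$; but then $|\Gamma(\alpha)|$ and $|\Gamma^*(\alpha)|$ equal $5$ and $9$, contradicting Lemma~\ref{DHom}(i). This contradiction proves that $\Gamma(\alpha)$ embeds an arc. I expect the only delicate point to be the complementation bookkeeping in the middle step: one must verify that $\bar M$ inherits (I)--(III), that in the arc-free regime the induced out-neighbourhood of $\bar M$ is exactly the graph-complement of $\Gamma^*(\alpha)$, and that product type is stable both under complementation and under swapping the two neighbourhoods, so that the three earlier lemmas together with $M\mapsto\bar M$ really do cover all four ordered type-combinations. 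No new structural idea is required beyond assembling those lemmas.
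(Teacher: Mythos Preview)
Your argument is correct and follows essentially the same route as the paper: both prove that arc-freeness of $\Gamma(\alpha)$ forces both neighbourhoods into $\mathcal{L}$, invoke Lemma~\ref{NullNeighbours} and the three subsequent lemmas to eliminate all product-type pairings, and finish with the size mismatch between $C_5$ and $K_3\times K_3$. Your expansion of the paper's phrase ``from the lemmas above'' is accurate, including the pass to $\bar M$ for the $\bar K[K]$--$\bar K[K]$ case; the only slip is that you write ``satisfies (A)'' where the operative hypotheses in this section are (I)--(III), which you yourself note in the closing paragraph.
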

\begin{proof}
Otherwise, from the lemmas above, and by Theorem~\ref{maintheorem}(i)
 the only possibilities for $\Gamma(\alpha)$ and $\Gamma^*(\alpha)$ are $C_5$ or $K_3 \times K_3$, and they have different sizes, a contradiction.  
\end{proof}

Now we know that $\Gamma(\alpha)$ embeds an arc, and the problem splits into consideration of the cases $|\Gamma(\alpha)^+| = 1$ and $|\Gamma(\alpha)^+| > 1$ (as defined before Lemma~\ref{DHom}).

\begin{lemma}\label{GammaPlusOne}
Suppose that $|\Gamma(\alpha)^+| = 1$. 
\begin{enumerate}[(a)]
\item For each $\gamma \in \Gamma^*(\alpha)$ there is unique $\delta \in \Gamma(\alpha)$ with $\gamma \to \delta$.

\item The permutation group $G_\alpha$ has isomorphic set-homogeneous actions on $\Gamma(\alpha)$ and on $\Gamma^*(\alpha)$.

\item One of the following holds.
\begin{enumerate}[(i)]
\item
  $\Gamma(\alpha) \cong E_6$ (or its complement) and $\Gamma^*(\alpha) \cong F_6$ (or its complement), or 
vice versa.

\item One of the pair $\Gamma(\alpha),\Gamma^*(\alpha)$ embeds $D_3$, and the other is isomorphic to  $ K_n[\bar{K}_m[D_3]]$ or its complement, with $n,m>1$.

\item Up to complementation, $\Gamma(\alpha)$ is isomorphic to one of the following with $n>1$: 
\[
K_n[D_3], \; K_n[D_4], \; K_n[D_5], \;  D_4, \;
 D_5, \;  E_6, \; E_7, \; F_6,  
\]
and $\Gamma^*(\alpha) \cong \overline{\Gamma(\alpha)}$.
\end{enumerate} 
\end{enumerate}
\end{lemma}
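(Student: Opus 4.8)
The plan is to dispatch (a) and (b) by short structural arguments and then to settle (c) by enumerating the out-valency-one members of the list in Theorem~\ref{maintheorem}(iii) and matching them up. For (a), observe that the hypothesis $|\Gamma(\alpha)^+|=1$ says precisely that each $\beta\in\Gamma(\alpha)$ has a unique out-neighbour inside $\Gamma(\alpha)$, i.e. $|\Gamma(\alpha)\cap\Gamma(\beta)|=1$. Since set-homogeneity makes $G$ transitive on ordered arcs, for any $\gamma\in\Gamma^*(\alpha)$ there is $g\in G$ with $(\alpha,\beta)^g=(\gamma,\alpha)$; applying $g$ to $\Gamma(\alpha)\cap\Gamma(\beta)$ gives $|\Gamma(\gamma)\cap\Gamma(\alpha)|=1$, which is exactly (a).

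For (b), by Lemma~\ref{DHom}(i) we have $|\Gamma(\alpha)|=|\Gamma^*(\alpha)|$, and since these induced digraphs are vertex-transitive their in- and out-valencies agree; together with $\Gamma(\alpha)^-\cong\Gamma^*(\alpha)^+$ (Lemma~\ref{DHom}(ii)) this gives $|\Gamma^*(\alpha)^+|=|\Gamma^*(\alpha)^-|=1$ as well. Define $f\colon\Gamma^*(\alpha)\to\Gamma(\alpha)$ by letting $f(\gamma)$ be the unique out-neighbour of $\gamma$ in $\Gamma(\alpha)$ furnished by (a). As every $g\in G_\alpha$ fixes $\alpha$ and hence preserves $\Gamma(\alpha)$ and $\Gamma^*(\alpha)$ setwise, $f$ is $G_\alpha$-equivariant. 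It is injective: were $f(\gamma_1)=f(\gamma_2)=\delta$ with $\gamma_1\neq\gamma_2$, then $\alpha,\gamma_1,\gamma_2\in\Gamma^*(\delta)$ with $\gamma_1\to\alpha$ and $\gamma_2\to\alpha$, so $\alpha$ would have in-valency at least $2$ inside $\Gamma^*(\delta)\cong\Gamma^*(\alpha)$, contradicting $|\Gamma^*(\alpha)^-|=1$. Since the two sets have equal size, $f$ is a bijection, hence a permutation isomorphism of the $G_\alpha$-actions. Finally, the argument of Lemma~\ref{nhood} shows that any isomorphism between induced subdigraphs of $\Gamma(\alpha)$ is realised by an element of $G_\alpha$, so the action of $G_\alpha$ on $\Gamma(\alpha)$, and via $f$ also on $\Gamma^*(\alpha)$, is set-homogeneous. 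This proves (b).

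For (c), I would use (b) to regard $\Gamma(\alpha)$ and $\Gamma^*(\alpha)$ as two $G_\alpha$-invariant, out-valency-one, set-homogeneous digraph structures on a single $G_\alpha$-set $\Omega$. Out-valency one (with in-valency one) forces the arcs of each to be a disjoint union of directed cycles of a common length, and by assumption (II) and induction both lie in the list of Theorem~\ref{maintheorem}(iii); the first step is therefore to enumerate the out-valency-one members of that list, which up to complementation are $D_3,D_4,D_5,E_6,E_7,F_6$, the composites $K_n[D_4],K_n[D_5]$ $(n>1)$, and $L[D_3]$ for $L\in\mathcal{L}$ (the last family including $K_n[D_3]$ and $K_n[\overline{K}_m[D_3]]=(K_n[\overline{K}_m])[D_3]$). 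The key observation is that the map sending a vertex to its unique out-neighbour is a $G_\alpha$-equivariant permutation of $\Omega$, hence lies in the centraliser of $G_\alpha$ in $\Sym(\Omega)$, while the between-cycle $\sim$/independent structure is a union of $G_\alpha$-orbitals. Running through the candidates: for a composite $L[D_3]$ one has $\overline{L[D_3]}\cong\overline{L}[D_3]$, so the complement is isomorphic exactly when $\overline{L}\cong L$, and the point-stabilisers of $\Aut(L[D_3])$ are large enough that the centraliser admits no successor permutation beyond those yielding the structure and its complement. Thus the self-complementary composites, notably $D_3$, $C_5[D_3]$ and $(K_3\times K_3)[D_3]$, have no non-isomorphic partner and are excluded (explaining their absence from the list), while $K_n[D_3]$ and $K_n[\overline{K}_m[D_3]]$ pair with their complements, giving cases (iii) and (ii); the cycles $D_4,D_5$, the composites $K_n[D_4],K_n[D_5]$, and $E_7$ are treated identically, each forcing $\Gamma^*(\alpha)\cong\overline{\Gamma(\alpha)}$. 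Case (i) arises precisely when $G_\alpha$ acts regularly: here $\Aut(E_6)\cong\Aut(F_6)\cong\ZZ_6$ act regularly on six points, the centraliser $\ZZ_6$ contains both order-$6$ and order-$3$ successor permutations, producing the $D_6$-based structure $E_6$ and the two-triangle structure $F_6$, and since $\overline{E_6}\cong J_2\not\cong F_6$ the pairing $\{E_6,F_6\}$ (and, with $\sim$ and independence interchanged, its complementary variants) is genuinely non-complementary.

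I expect the main obstacle to be this last step: controlling, in the regular case, all the $G_\alpha$-invariant out-valency-one structures on $\Omega$, since that is exactly where the non-complementary pairing $\{E_6,F_6\}$ appears and must be separated from the complementary pairings of (ii) and (iii). By contrast, the enumeration of out-valency-one examples, the identity $\overline{L[D_3]}\cong\overline{L}[D_3]$, and the elimination of the self-complementary composites are routine.
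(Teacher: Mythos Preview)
Your argument for (a) and (b) is correct and matches the paper's proof almost exactly; you give a bit more detail (the injectivity of $f$ via $|\Gamma^*(\alpha)^-|=1$) where the paper simply asserts that $\to$ gives a $G_\alpha$-invariant bijection.

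For (c) your plan and the paper's are essentially the same --- enumerate the out-valency-one members of the list and decide which pairs can carry permutation-isomorphic set-homogeneous $G_\alpha$-actions --- but you organise the check differently. The paper works directly: it isolates $K_n[\overline{K}_m[D_3]]$ as case~(ii), then asserts that for the remaining candidates ``no two distinct digraphs in this list admit a group acting isomorphically and set-homogeneously on both'' except $E_6,F_6$, and finally discards the self-complementary ones $D_3$, $C_5[D_3]$, $(K_3\times K_3)[D_3]$. Your centraliser viewpoint (the successor permutation lies in $C_{\Sym(\Omega)}(G_\alpha)$) is a pleasant way to package the same check, and your identification of the $E_6/F_6$ pairing via the regular $\ZZ_6$-action is exactly the paper's observation that $\Aut(E_6)\cong\Aut(F_6)\cong\ZZ_2\times\ZZ_3$.

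One point to tighten: you phrase the centraliser bound in terms of the point-stabilisers of $\Aut(L[D_3])$, but $G_\alpha$ need only act set-homogeneously and may be strictly smaller, so its centraliser could in principle be larger. What actually pins things down is that $\Gamma^*(\alpha)$ must itself lie in the Theorem~\ref{maintheorem}(iii) list and that set-homogeneity forces each of the three $2$-element isomorphism types (arc, $\sim$-pair, unrelated pair) to be a single $G_\alpha$-orbit on unordered pairs; comparing these orbit sizes then rules out the non-complementary pairings (e.g.\ $K_2[\overline{K}_2[D_3]]$ versus $K_3[D_4]$ have pair-type counts $\{12,36,18\}$ versus $\{12,48,6\}$). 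This is really what the paper's ``by inspection'' is doing. Note also that the paper records a \emph{weaker} conclusion for case~(ii) (only that the partner embeds $D_3$), whereas you claim the partner is exactly the complement; your stronger claim, if carried through, would of course imply theirs, but the lemma as stated only needs the weaker version.
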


\begin{proof}

(a) We have  $|\Gamma(\alpha)^+|=1$. Now if $\alpha\rightarrow \beta$, $|\Gamma(\alpha)\cap \Gamma(\beta)|=1$,
 so if $\gamma \in \Gamma^*(\alpha)$, there is
a unique vertex $\delta\in\Gamma(\alpha)$ such that $\gamma\rightarrow \delta$.

(b) By (a), $\rightarrow$ gives
 a $G_\alpha$-invariant bijection from $\Gamma^*(\alpha)$ to $\Gamma(\alpha)$, so the permutation group
$G_\alpha^{\Gamma^*(\alpha)}$ acts isomorphically on 
$\Gamma(\alpha)$. 

(c) By induction and inspection of the list in Theorem~\ref{maintheorem}(iii), since $|\Gamma(\alpha)^+|=1$
the graph $\Gamma(\alpha)$, or its complement, must be one of the following, where $n,m>1$: 
$K_n[D_3]$, $K_n[D_4]$, $K_n[D_5]$, $K_n[\bar{K}_m[D_3]]$,
$C_5[D_3]$, $D_3$, $D_4$, $D_5$, $(K_3 \times K_3)[D_3]$, $E_6$, $E_7$ or $F_6$.

Suppose first that one of $\Gamma(\alpha)$ or $\Gamma^*(\alpha)$, say $\Gamma(\alpha)$,  is isomorphic to   $K_n[\bar{K_m}[D_3]]$ or its complement, with $n,m>1$.  Using (b), by inspection of the above list, we can check that $\Gamma^*(\alpha)$ then embeds $D_3$, so that (c)(ii) holds. Thus, we may rule out $K_n[\bar{K_m}[D_3]]$.

It can now be checked that, apart from $E_6$, $F_6$ (which give case (c)(i)), 
 for no two {\em distinct} digraphs in this list is there a group which acts isomorphically
 and set-homogeneously on both. (Note here that
 $\Aut(E_6) \cong \Aut(F_6) \cong \mathbb{Z}_2 \times \mathbb{Z}_3$.)  Thus, apart from 
cases (c)(i) and (c)(ii) , $\Gamma(\alpha)$ and $\Gamma^*(\alpha)$ must be, up to  complementation, a graph  and its complement from the list in (c)(iii) or from 
$D_3$, $(K_3 \times K_3)[D_3]$, $C_5[D_3]$.

Finally, the digraphs $D_3$, $(K_3 \times K_3)[D_3]$, $C_5[D_3]$ are all isomorphic to their complements. Thus, if 
one of them occurred as $\Gamma(\alpha)$, we would have
$\Gamma(\alpha) \cong \Gamma^*(\alpha)$,  
 contrary to assumption (III).
\end{proof}

Next we have to consider cases where one of $\Gamma(\alpha)$, $\Gamma^*(\alpha)$ is from the list in Lemma~\ref{GammaPlusOne}(c).  
Over the next four lemmas we eliminate each possibility. 

\begin{lemma}\label{SubcaseA}
If $M \in \mathcal{S}$ and $\Gamma(\alpha) \not\cong \Gamma^*(\alpha)$ then
 $\Gamma(\alpha)\not\cong K_n[D_r]$ (or its complement) 
for $r \in \{ 3,4,5 \}$ and $n\geq 1$.
\end{lemma}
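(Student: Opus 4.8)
The plan is to force $\Gamma^*(\alpha)\cong\overline{\Gamma(\alpha)}$ and then extract a contradiction from the interaction of the two induced block systems. Since every vertex of $K_n[D_r]$ has out-valency exactly $1$ inside $\Gamma(\alpha)$, we have $|\Gamma(\alpha)^+|=1$, so Lemma~\ref{GammaPlusOne} applies. As $K_n[D_r]$ carries two genuinely different $G_\alpha$-orbit types on pairs (within-block arcs and cross-block edges), it is neither a sporadic shape of case (c)(i) nor of the form $K_n[\bar{K}_m[D_3]]$ of case (c)(ii); hence we land in case (c)(iii), which gives $\Gamma^*(\alpha)\cong\overline{\Gamma(\alpha)}=\overline{K_n[D_r]}=\bar{K}_n[\overline{D_r}]$. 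Because assumptions (I)--(III) are invariant under complementation, I may assume throughout that $\Gamma(\alpha)\cong K_n[D_r]$ rather than its complement. The single degenerate value $n=1$, $r=3$ is disposed of at once: there $\Gamma(\alpha)\cong D_3\cong\overline{D_3}\cong\Gamma^*(\alpha)$, contradicting (II); so I may assume $(n,r)\neq(1,3)$.

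Next I would set up the matching. By Lemma~\ref{GammaPlusOne}(a),(b) the relation $\rightarrow$ induces a $G_\alpha$-equivariant bijection $\psi\colon\Gamma^*(\alpha)\to\Gamma(\alpha)$, where $\gamma\to\psi(\gamma)$ is the unique arc from $\gamma$ into $\Gamma(\alpha)$. Both $\Gamma(\alpha)\cong K_n[D_r]$ and $\Gamma^*(\alpha)\cong\bar{K}_n[\overline{D_r}]$ carry a $G_\alpha$-invariant partition into $n$ blocks (the $D_r$- resp. $\overline{D_r}$-factors), and $\psi$ matches these systems; write $P=\{p_0,\dots,p_{r-1}\}$ for a block of $\Gamma(\alpha)$ with $p_i\to p_{i+1}$, and $Q=\psi^{-1}(P)=\{\gamma_0,\dots,\gamma_{r-1}\}$ with $\gamma_i\to p_i$. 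The key rigidity input is that $|\Gamma(\alpha)^+|=1$ forces $\Gamma(\gamma)\cap\Gamma(\alpha)=\{\psi(\gamma)\}$ for every $\gamma\in\Gamma^*(\alpha)$, and dually that inside $\Gamma(\gamma)$ the only in-neighbour of $\alpha$ lying in $\Gamma^*(\alpha)$ is the predecessor of $\alpha$ in its $D_r$-block.

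Using this I would pin down the induced structure on $\{\alpha\}\cup P\cup Q$. For an arc $\gamma_i\to\gamma_{i'}$ inside $Q$ one has $\gamma_{i'}\in\Gamma(\gamma_i)\cap\Gamma^*(\alpha)$; since $\alpha,p_i$ lie in a common $D_r$-block $R$ of $\Gamma(\gamma_i)$ (because $\alpha\to p_i$) and $R$ meets $\Gamma^*(\alpha)$ only in the predecessor of $\alpha$, it follows that $\gamma_{i'}$ is exactly that predecessor. Reading off $R\cong D_r$ then forces the cross-relations between $P$ and $Q$: for $r=3$ one obtains $p_i\to\gamma_{i+1}$, while for $r\in\{4,5\}$ one obtains $p_i\parallel\gamma_{i'}$. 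In every case $\psi$ sends within-block arcs to arcs but flips the non-arc pairs between $\sim$ and $\parallel$: cross-block pairs are edges in $\Gamma(\alpha)$ yet independent in $\Gamma^*(\alpha)$ (a flip present once $n\geq2$), whereas within-block diagonals are independent in $\Gamma(\alpha)$ yet edges in $\Gamma^*(\alpha)$ (a flip present once $r\geq4$); as $(n,r)\neq(1,3)$, at least one such flip always occurs.

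The contradiction I expect to reach is that $k$-set-homogeneity, applied to a suitable rigid sub-configuration of $\{\alpha\}\cup P\cup Q$ together with its image under the cyclic generator of the group induced on a block, produces an automorphism of $M$ that must carry an edge ($\sim$) to an independent pair ($\parallel$) — equivalently, must reverse the cyclic order of a $D_r$-block against the cyclic group acting on it — which is impossible. The main obstacle is that, unlike the preceding lemmas, the substructure common to $\Gamma(\alpha)$ and $\Gamma^*(\alpha)$ is tiny: a single copy of $D_r$ through each vertex when $r=3$, and essentially just an arc when $r\in\{4,5\}$. Consequently the maximal-common-substructure counting that powered the earlier cases is unavailable, and the argument must instead be driven entirely by the matching $\psi$, the uniqueness relation $\Gamma(\gamma)\cap\Gamma(\alpha)=\{\psi(\gamma)\}$, and the edge/independence flip between the two block systems, with $r=3,4,5$ treated in parallel but using the slightly different forced cross-relations recorded above.
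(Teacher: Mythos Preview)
Your setup is sound and matches the paper: Lemma~\ref{GammaPlusOne} does force $\Gamma^*(\alpha)\cong\overline{K_n[D_r]}$, the $G_\alpha$-equivariant bijection $\psi$ exists, and the observation that $\psi$ swaps $\sim$-pairs and $\parallel$-pairs between the two block structures is correct. The case $(n,r)=(1,3)$ is also disposed of for the right reason.

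The gap is the final step: you do not actually produce a contradiction, and the one you ``expect'' (a single rigid configuration whose image under a block-rotation would force an edge onto an independent pair) does not materialise. The difficulty you yourself flag --- that the maximal common substructure of $K_n[D_r]$ and its complement is very small --- is real, and the paper does not overcome it by your route. Instead it splits into four genuinely different arguments:
\begin{itemize}
\item[$r=3$, $n>1$:] First show (via $4$-set-homogeneity on configurations of the form $\{\alpha\}\cup\{\mbox{one point of one }D_3\}\cup\{\mbox{an arc of another }D_3\}$) that $G_\alpha$ is $2$-transitive on the set of $D_3$-blocks, and that $\Delta,\Lambda$ are self-paired. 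Then observe that each $D_3$-block of $\Gamma(\alpha)$ dominates exactly one vertex; these form a $G_\alpha$-orbit $\Sigma$ of size $n$, which must equal $\Lambda(\alpha)$ or $\Delta(\alpha)$. But both $\Lambda(\alpha)$ and $\Delta(\alpha)$ contain arcs (seen inside $\Gamma(\alpha)$ and $\Gamma^*(\alpha)$), so the induced action on $\Sigma$, and hence on the blocks, is not $2$-transitive --- contradiction.
\item[$r\in\{4,5\}$, $n>1$:] Fix $\epsilon\in\Gamma^*(\alpha)$ with $\epsilon\to\delta\in\Gamma(\alpha)$; the $D_r$-block $T\ni\delta$ is pointwise fixed by $G_{\epsilon\alpha}$, giving at least $r\geq4$ singleton orbits. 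A $4$-set-homogeneity argument then shows $|\Gamma^*(\epsilon)\cap T|\leq1$, so each of $\Gamma(\epsilon),\Gamma^*(\epsilon),\Lambda(\epsilon),\Delta(\epsilon)$ meets $T$ in at most one point, forcing $r=4$ and $\Gamma(\alpha)\setminus T\subseteq\Gamma^*(\epsilon)$. But then a second $D_4$-block embeds in $\Gamma^*(\epsilon)\cong\overline{K_n[D_4]}$, which is impossible.
\item[$n=1$, $r=5$:] Here $G_{\epsilon\alpha}=1$, and one locates exactly two vertices $\beta,\beta'\in\Gamma(\alpha)$ with $\beta,\beta'\to\epsilon$; Lemma~\ref{lem_basic2} then forces $|\Gamma^*(\epsilon)\cap\Gamma(\alpha)|\in\{1,3\}$, a contradiction.
\item[$n=1$, $r=4$:] A direct structural count shows $|\Lambda(\alpha)|=|\Delta(\alpha)|=4$, and then $\Lambda(\epsilon)$ must contain two points of the same $G_\alpha$-orbit $\Lambda(\alpha)$ or $\Delta(\alpha)$; rigidity of the resulting triple contradicts $G_{\epsilon\alpha}=1$.
\end{itemize}
None of these is an ``edge/independence flip'' argument; each exploits a feature specific to its case (block $2$-transitivity, orbit-counting under $G_{\epsilon\alpha}$, Lemma~\ref{lem_basic2}, or the size of $\Lambda(\alpha)$). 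Your plan stops exactly where the work begins.
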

\begin{proof}

Suppose that $\Gamma(\alpha)$ is isomorphic to ${K}_n[D_r]$, where  $r \in \{3,4,5\}$. Then by Lemma~\ref{GammaPlusOne}, $\Gamma^*(\alpha)$ is isomorphic 
to $\overline{\Gamma(\alpha)}$. Fix some $\epsilon \in \Gamma^*(\alpha)$. Then by Lemma \ref{GammaPlusOne}(a), there is a unique
 $\delta\in \Gamma(\alpha)$ with $\epsilon \to \delta$. Also,  all elements of the copy $T$ of $D_r$ in $\Gamma(\alpha)$ 
containing $\delta$ lie in singleton orbits of $G_{\epsilon\alpha}$. When $n=1$, as $\Gamma$ is connected, it follows that $G_{\epsilon\alpha}=1$. 

Suppose first that $r=3$. Since $\Gamma(\alpha) \not\cong \Gamma^*(\alpha)$, we have $n>1$ in this case. By applying 4-set-homogeneity to  configurations consisting of $\alpha$, one point from one copy 
of $D_3$ in $\Gamma(\alpha)$, and two $\Gamma$-connected points in another copy, we see that 
$G_\alpha$ induces a 2-transitive group on the set of copies of $D_3$ in $\Gamma(\alpha)$, and that $\Delta$ is self-paired. Likewise, considering $\alpha$ and similar configurations in $\Gamma^*(\alpha)$, $\Lambda$ is self-paired.

Now, since $n>1$, considering the structure of $\Gamma(\alpha)$ we observe that $\Delta(\delta)$ contains an arc, and hence so does $\Delta(\alpha)$. A similar argument given by considering $\Lambda(\epsilon)$ in $\Gamma^*(\alpha)$ shows that $\Lambda(\alpha)$ contains an arc. 
We claim that each copy of $D_3$ in $\Gamma(\alpha)$ dominates exactly one vertex of $M$: it dominates at most one vertex, since otherwise, 
if $\beta$ lies in the copy, then the subdigraph 
$\Gamma(\beta)$ has $\Gamma$-outvalency at least 2, a contradiction; and it dominates at least one vertex, since copies of $D_3$ in $\Gamma^*(\alpha)$ 
dominate $\alpha$. In view of the structure of $\Gamma^*(\alpha)$ we see that distinct copies of $D_3$ in $\Gamma(\alpha)$ dominate distinct vertices of $M$.
Thus, there is a set $\Sigma$ of size $n$ consisting of the vertices which are dominated by copies of $D_3$ in $\Gamma(\alpha)$. 
Clearly $\Sigma$ is a $G_\alpha$-orbit, and equals $\Lambda(\alpha)$ or $\Delta(\alpha)$ since
 $\Gamma(\alpha)$ and $\Gamma^*(\alpha)$ have size $3n$. It follows, as $\Lambda(\alpha)$ and $\Delta(\alpha)$ contain arcs, that 
the group induced by
 $G_\alpha$ on the copies of $D_3$ in $\Gamma(\alpha)$ is not 2-transitive, contradicting the previous paragraph. 
This deals with the case $r=3$. 

Next, suppose that $n>1$ and $r\in \{4,5\}$.    Then $G_{\epsilon\alpha}$ has at 
least 5 orbits on $\Gamma(\alpha)$, as the copy $T$ of $D_r$ containing
$\delta$ is fixed pointwise by $G_{\epsilon\alpha}$. 
If $\beta,\beta'\in \Gamma(\alpha)$ and $\epsilon \sim \beta$ and 
$\epsilon \sim \beta'$, then by 3-set-homogeneity and rigidity of $\{\epsilon,\alpha,\beta\}$,
the elements  $\beta,\beta'$ are in the same $G_{\epsilon\alpha}$-orbit; similarly if $\beta,\beta'$ are 
unrelated to $\epsilon$. 
Consider $\Gamma^*(\epsilon) \cap \Gamma(\alpha)$. We claim that $|\Gamma^*(\epsilon) \cap T| \leq 1$. Indeed, suppose not. Then there exist $\beta, \beta' \in T$ with $\beta, \beta' \rightarrow \epsilon$ and, fixing an arbitrary $\gamma \in \Gamma(\alpha) \setminus T$, regardless of the relationship between $\gamma$ and $\epsilon$, by set-homogeneity there is an automorphism $g \in G$ with $\{ \epsilon, \alpha, \beta, \gamma\}^{g} = \{\epsilon, \alpha, \beta', \gamma \}$, and by considering the substructure induced by $\{ \epsilon, \alpha, \beta, \gamma \}$ we conclude that $\epsilon^g = \epsilon$ and $\alpha^g = \alpha$. But $G_{\epsilon \alpha}$ fixes $\delta$ and $\beta$ and thus $\beta = \beta'$ or $\beta = \gamma$, in either case a contradiction. We conclude as claimed that $|\Gamma^*(\epsilon) \cap T| \leq 1$.
Thus, we have that each of $\Gamma(\epsilon) \cap T $, $\Gamma^*(\epsilon) \cap T$,
$\Lambda(\epsilon) \cap T$, $\Delta(\epsilon) \cap T$ have size at most one, which in particular forces $r=4$. Thus, 
 $(\epsilon,\mu) \in \Lambda$ and $(\epsilon,\mu') \in \Delta$ for some $\mu, \mu' \in T$, and by the observations above about the $G_{\epsilon \alpha}$-orbits of $\Gamma(\alpha)$, $\beta \rightarrow \epsilon$ for every  $\beta \in \Gamma(\alpha)\setminus T$. In particular, since $n>1$, there is a copy $T'$ of $D_4$ in $\Gamma(\alpha)$ disjoint from $T$ with $T' \subseteq \Gamma^*(\epsilon)$. But this is a contradiction since $\Gamma^*(\epsilon) \cong \Gamma^*(\alpha)$ does not embed $D_4$.

Thus, we have $n=1$, and $r\in \{4,5\}$. Recall that since $n=1$ we have $G_{\epsilon \alpha} = 1$. 

Suppose first $r=5$, so $\Gamma(\alpha) \cong D_5$ and $\Gamma^*(\alpha) \cong \overline{D_5}$.  The triple $(u, v, w)$ where $u \to v \to w$ and $u\sim w$ embeds in $\Gamma^*(\alpha)$, so there is
$\gamma \in \Gamma(\alpha)$ with $\epsilon \sim \gamma$. Similarly (considering a $2$-arc in $\Gamma^*(\alpha)$), there is $\gamma'\in \Gamma(\alpha)$ with $\epsilon$ independent from $\gamma'$. Since $G_{\epsilon\alpha}$ fixes $\Gamma(\alpha)$ pointwise, by 3-set-homogeneity $\gamma$ and $\gamma'$
 are unique.
Now let $\Gamma(\alpha)\setminus\{\delta,\gamma,\gamma'\}=\{\beta,\beta'\}$. Then we must have 
$\beta \to \epsilon$ and $\beta' \to \epsilon$.

Since $G_{\epsilon\alpha}$ fixes $\beta$ and $(\epsilon, \alpha, \beta) \cong (\epsilon, \alpha, \beta')$, it follows that the isomorphism $(\epsilon,\alpha,\beta \mapsto \epsilon,\alpha,\beta')$ between copies of $D_3$ does not extend to an automorphism, and so by Lemma~\ref{lem_basic2}, the group 
induced by $G$ on copies of $D_3$ is trivial. 
Thus, again by Lemma~\ref{lem_basic2}, it follows that $|\Gamma^*(\epsilon) \cap \Gamma(\alpha)| = 3$, a 
contradiction as $\Gamma^*(\epsilon) \cap \Gamma(\alpha) = \{ \beta, \beta' \} $. 

It remains only to consider the case $\Gamma(\alpha)\cong D_4$, and $\Gamma^*(\alpha)\cong \bar{D}_4$. The argument is in part similar to \cite[6.2]{lachlan}.
There is $\epsilon' \in \Gamma^*(\alpha)$  with $\epsilon \to \epsilon'$. Now $\Gamma(\epsilon) \cong \Gamma(\alpha) \cong D_4$, so as
$\epsilon $ dominates $\epsilon',\alpha,\delta$ and
$\epsilon' \to \alpha \to \delta$, there is $\mu \in \Lambda(\alpha)$ with $\epsilon \to \mu$ and  $\delta \to \mu \to \epsilon'$. Since $G_{\epsilon \alpha}=1$ we have $|G_{\alpha}|=4$ and hence 
$|\Lambda(\alpha)|$ divides 4. 

If $|\Lambda(\alpha)| = 1$ then $G_\alpha$ acts transitively on $\Gamma^*(\alpha)$ and fixes $\Lambda(\alpha) = \{ \mu \}$, and this is a contradiction since $\epsilon \rightarrow \mu$ while $\epsilon' \leftarrow \mu$.

Next suppose $|\Lambda(\alpha)| = 2$. Now $G_{\alpha}$ acts transitively on $\Gamma(\alpha)$ and fixes $\Lambda(\alpha) = \{ \mu, \mu' \}$ setwise, and it easily follows that there exist $\delta, \delta' \in \Gamma(\alpha)$ with $(\delta,\delta') \in \Lambda$ and $\{ \delta, \delta' \} \subseteq \Gamma^*(\mu)$. But this contradicts $\Gamma^*(\mu) \cong \Gamma^*(\alpha) \cong \overline{D_4}$.

Thus, $|\Lambda(\alpha)|=4$, and similarly $|\Delta(\alpha)|=4$. Now as $\Lambda(\epsilon)$ has size four and  contains just one point from
$\{\alpha\} \cup \Gamma(\alpha) \cup \Gamma^*(\alpha)$, it contains three points from $\Lambda(\alpha) \cup \Delta(\alpha)$, so contains two points
$\beta_1,\beta_2$ in $\Lambda(\alpha)$, or two points $\beta_1,\beta_2$ in $\Delta(\alpha)$. Either way, 3-set-homogeneity yields an
 automorphism inducing
$(\epsilon, \alpha, \beta_1) \mapsto (\epsilon, \alpha, \beta_2)$, contradicting the fact that $G_{\epsilon \alpha}=1$.

The corresponding cases when $\Gamma^*(\alpha)$ is isomorphic to ${K}_n[D_r]$ are also handled by the above (they arise by
 reversing all arcs). 
\end{proof}

\begin{lemma}\label{SubcaseB}
Case (c)(ii) of Lemma~\ref{GammaPlusOne} cannot occur.

\end{lemma}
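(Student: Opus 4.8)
The plan is to assume we are in case (c)(ii) and derive a contradiction. After possibly replacing $M$ by its complement, and reversing all arcs of $M$ if necessary (an operation that preserves set-homogeneity and fixes $K_n[\bar K_m[D_3]]$ up to isomorphism, since reversing the arcs of $D_3$ yields $D_3$ again), I may assume outright that $\Gamma(\alpha) \cong K_n[\bar K_m[D_3]]$ with $n,m>1$. By Lemma~\ref{DHom}(i) we then have $|\Gamma^*(\alpha)| = |\Gamma(\alpha)| = 3mn$, and by hypothesis $\Gamma^*(\alpha)$ embeds $D_3$.

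The first real step is to pin down $\Gamma^*(\alpha)$. Every vertex of $\Gamma(\alpha)\cong K_n[\bar K_m[D_3]]$ has exactly one out-neighbour and one in-neighbour inside $\Gamma(\alpha)$, so $\Gamma(\alpha)^+$ and $\Gamma(\alpha)^-$ are each a single vertex. By Lemma~\ref{DHom}(ii), $\Gamma^*(\alpha)^+ \cong \Gamma(\alpha)^-$ is a single vertex, and the paired version of that lemma (obtained by applying Lemma~\ref{2verts} to the reversed configuration, or by passing to $\bar M$) gives $\Gamma^*(\alpha)^- \cong \Gamma(\alpha)^+$, also a single vertex. Hence every vertex of $\Gamma^*(\alpha)$ has in- and out-degree $1$ in its own digraph structure, so the arcs of $\Gamma^*(\alpha)$ form a disjoint union of directed cycles; as $\Gamma^*(\alpha)$ embeds $D_3$ and lies in the list of Theorem~\ref{maintheorem}(iii), these cycles must be $mn$ directed triangles. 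I then transport the imprimitivity system of $\Gamma(\alpha)$ — the $n$ blocks of size $3m$ refined into $mn$ triangles — across the $G_\alpha$-equivariant bijection of Lemma~\ref{GammaPlusOne}(b), and use set-homogeneity of $\Gamma^*(\alpha)$ (Lemma~\ref{nhood}) to rule out the disjoint-triangle members of the list that would force a non-canonical invariant partition (for instance $K_{mn}[D_3]$, whose set-homogeneous automorphism group admits no invariant grouping of its $mn$ triangles into $n$ groups of $m$). This leaves $\Gamma^*(\alpha) \cong \bar K_n[K_m[D_3]] = \overline{\Gamma(\alpha)}$, with the single sporadic coincidence $(K_3\times K_3)[D_3]$ (when $m=n=3$) dispatched by hand.

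With both neighbourhoods identified, I exploit the fact that they are complementary transposes: $\Gamma(\alpha)$ has maximum clique $n$ and maximum independent set $m$, whereas $\Gamma^*(\alpha)$ has maximum clique $m$ and maximum independent set $n$. Writing $p=\min\{m,n\}$, the subdigraph of $\Gamma(\alpha)$ that is maximal subject to embedding in $\Gamma^*(\alpha)$ is $K_p[D_3]$ (one full triangle taken from each of $p$ distinct blocks, with all cross-pairs $\sim$-related). The counting template of this section then applies: fixing $\delta\in\Gamma(\alpha)$, each copy $X$ of this maximal substructure through $\delta$ embeds in $\Gamma^*(\alpha)$, hence lies in $\Gamma^*(\mu)$ for some $\mu$ by set-homogeneity; since $\delta\in X$ forces $\delta\to\mu$ we have $\mu\in\Gamma(\delta)$, and maximality of $X$ together with assumption (III) shows that distinct copies yield distinct $\mu$. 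Comparing the number $\binom{n-1}{p-1}m^{\,p-1}$ of such copies with $|\Gamma(\delta)|=3mn$ produces a numerical contradiction once $\max\{m,n\}$ is large enough.

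The main obstacle is the combination of the second and third steps. Identifying $\Gamma^*(\alpha)$ precisely requires a careful analysis of which disjoint-triangle digraphs in the list carry a $G_\alpha$-invariant imprimitivity system matching that of $K_n[\bar K_m[D_3]]$ while remaining set-homogeneous; and the counting inequality is not uniformly strict, so the finitely many residual small configurations (those with $p=2$, or with $\max\{m,n\}$ small) must be eliminated separately. For these I expect to argue as in the neighbouring lemmas, either by a valency and vertex-transitivity argument applied to the $G_\alpha$-orbit digraphs $\Delta(\alpha)$ and $\Lambda(\alpha)$ (a digraph that must be vertex-transitive yet is forced to contain vertices of unequal valency), or by invoking Lemma~\ref{lem_basic2} to count the completions of an arc to a copy of $D_3$ and contradicting the triangle structure of $\Gamma(\alpha)$ and $\Gamma^*(\alpha)$.
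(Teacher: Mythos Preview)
Your plan takes a substantially longer route than the paper and contains a real gap.

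\textbf{The paper's argument.} The paper never identifies $\Gamma^*(\alpha)$ beyond the single fact that it embeds $D_3$. With $\Gamma(\alpha)\cong K_n[\bar K_m[D_3]]$, each of the $mn$ copies of $D_3$ in $\Gamma(\alpha)$ dominates exactly one vertex (at least one because $D_3$ embeds in $\Gamma^*(\alpha)$; at most one because $|\Gamma(\alpha)^+|=1$). The set $\Sigma$ of these dominated vertices is a $G_\alpha$-orbit of size at most $mn<3mn$, hence $\Sigma$ is $\Lambda(\alpha)$ or $\Delta(\alpha)$, not $\Gamma(\alpha)$ or $\Gamma^*(\alpha)$. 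A four-point rigidity argument shows that for any two triangles $S,T$ in $\Gamma(\alpha)$ there is $g\in G_\alpha$ swapping them, so every $G_\alpha$-orbital on $\Sigma$ is self-paired and $\Sigma$ contains no arc. But for $\gamma\in\Gamma(\alpha)$, the triangles in other blocks lie in $\Delta(\gamma)$ and those in other triangles of the same block lie in $\Lambda(\gamma)$, so both $\Delta(\alpha)$ and $\Lambda(\alpha)$ contain arcs. Contradiction. This works uniformly for all $m,n>1$.

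\textbf{The gap in your plan.} Your counting step does not give a contradiction when $p=\min(m,n)=2$: if $m=2$ the inequality becomes $2(n-1)\le 6n$, and if $n=2$ it becomes $m\le 6m$, both of which hold for every value of the other parameter. So the residual family is not ``finitely many small configurations'' but the infinite family $\{(2,n):n\ge 2\}\cup\{(m,2):m\ge 2\}$. Your proposed fallbacks (``valency and vertex-transitivity on $\Delta(\alpha),\Lambda(\alpha)$'' or ``count $D_3$-completions via Lemma~\ref{lem_basic2}'') are not fleshed out, and the first of them is essentially the paper's entire argument --- which, once you have it, renders the whole detour through identifying $\Gamma^*(\alpha)$ and counting maximal substructures unnecessary.

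Your identification $\Gamma^*(\alpha)\cong\bar K_n[K_m[D_3]]$ via the transported imprimitivity system is plausible and interesting, but it is extra work that the problem does not require.
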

\begin{proof}
We shall suppose for a contradiction that $\Gamma(\alpha) \cong K_n[\bar{K}_m[D_3]]$ (with $m,n > 1$), and $\Gamma^*(\alpha)$ embeds $D_3$. This suffices, since the other case arise through a combination of  complements and weak complements.

Let $\gamma \in \Gamma(\alpha)$. By considering the digraph $\Gamma(\alpha)$ 
we see that both $\Lambda(\gamma)$ and $\Delta(\gamma)$ contain arcs. Therefore by vertex transitivity both
 $\Lambda(\alpha)$ and $\Delta(\alpha)$ contains arcs. Also, each copy of $D_3$ in $\Gamma(\alpha)$ 
dominates exactly one vertex: it dominates at most one since otherwise, with $\beta$ in a copy of $D_3$ in $\Gamma(\alpha)$, we would have a vertex in $\Gamma(\beta)$ with 
out-degree at least $2$, contradicting the fact that $\Gamma(\beta) \cong K_n[\bar{K_m}[D_3]]$; and  each copy of $D_3$ in $\Gamma(\alpha)$ dominates at least one vertex since $D_3$ embeds into $\Gamma^*(\alpha)$. 

Thus there is a set $\Sigma$ of size $mn$ consisting of vertices dominated by copies of $D_3$
 in $\Gamma(\alpha)$. Clearly $\Sigma$ is a $G_{\alpha}$-orbit and equals $\Lambda(\alpha)$ or 
$\Delta(\alpha)$ since $|\Gamma(\alpha)| = |\Gamma^*(\alpha)| = 3mn > mn$. We claim that the group
 induced by $G_{\alpha}$ on the copies of $D_3$ in $\Gamma(\alpha)$ has all orbitals self-paired. 
Once established, this will be a contradiction since we have already seen that both $\Lambda(\alpha)$ 
and $\Delta(\alpha)$ contain arcs, and the group induced by $G_{\alpha}$ on the copies of $D_3$ in 
$\Gamma(\alpha)$ is the same as the group induced on either $\Lambda(\alpha)$ or $\Delta(\alpha)$. 
To prove the claim, let $T$ and $S$ be two copies of $D_3$ in $\Gamma(\alpha)$ with $t \in T$ and $s \in S$. First suppose that $s$ and $t$ are $\Delta$-related. Let $t' \in T$ with $t' \rightarrow t$, and $s' \in S$ with $s' \rightarrow s$. Then by set-homogeneity, and since the configuration is rigid, the isomorphism $(\alpha,t',t,s) \mapsto (\alpha,s',s,t)$ extends to an automorphism which interchanges $S$ and $T$. The case that $s$ and $t$ are $\Lambda$-related is dealt with similarly.  
 \end{proof}

\begin{lemma}
If $M \in \mathcal{S}$ and $\Gamma(\alpha) \not\cong \Gamma^*(\alpha)$ then we cannot have
$\Gamma(\alpha)$ isomorphic to $E_6,E_7, F_6$, or their complements.

\end{lemma}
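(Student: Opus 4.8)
The plan is to show that, under the standing hypotheses (I)--(III) of this section, none of these neighbourhood types can occur, by placing them inside Lemma~\ref{GammaPlusOne}. Every vertex of $E_6,E_7,F_6$ and of each complement has out-degree $1$ in the arc relation, so $|\Gamma(\alpha)^+|=1$ and Lemma~\ref{GammaPlusOne} applies; the possibilities $\Gamma(\alpha)\in\{E_6,E_7,F_6\}$ (up to complement) therefore fall under case (c)(i) or (c)(iii), so $\Gamma^*(\alpha)$ is either $\overline{\Gamma(\alpha)}$, or (when $E_6,F_6$ or a complement occurs) one of $E_6,F_6$ and their complements. First I would establish the engine $G_{\alpha\beta}=1$ for every arc $\alpha\to\beta$. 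Since $\Gamma(\alpha)$ is set-homogeneous (Lemma~\ref{nhood}) and $\Aut(E_6)\cong\Aut(F_6)\cong\mathbb{Z}_6$, $\Aut(E_7)\cong\mathbb{Z}_7$ act regularly, $G_\alpha^{\Gamma(\alpha)}$ is the full regular automorphism group. Fixing $\epsilon\in\Gamma^*(\alpha)$ and letting $\delta\in\Gamma(\alpha)$ be the unique out-neighbour of $\epsilon$ in $\Gamma(\alpha)$ (Lemma~\ref{GammaPlusOne}(a)), the stabiliser $G_{\epsilon\alpha}$ fixes $\delta$, hence fixes $\Gamma(\alpha)$ pointwise by regularity; propagating through successive neighbourhoods (all with regular automorphism group) and using $\Gamma$-connectedness of $M$ shows $G_{\epsilon\alpha}$ fixes $M$ pointwise, so $G_{\alpha\beta}=1$ and $G_\alpha\cong\mathbb{Z}_6$ or $\mathbb{Z}_7$.

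Next I would run a rigidity count. For $\epsilon\in\Gamma^*(\alpha)$, exactly one vertex of $\Gamma(\alpha)$ is an out-neighbour of $\epsilon$ (Lemma~\ref{GammaPlusOne}(a)); the configurations $(\epsilon,\alpha,\beta)$ with $\epsilon\sim\beta$, and with $\epsilon\parallel\beta$, are rigid, so since $G_{\epsilon\alpha}=1$ at most one $\beta\in\Gamma(\alpha)$ is $\sim$-related to $\epsilon$ and at most one is unrelated. Hence the number $\tau$ of $\beta\in\Gamma(\alpha)$ with $\beta\to\epsilon$, which equals $|\Gamma(\alpha)\cap\Gamma^*(\epsilon)|$ and is the number of copies of $D_3$ on the arc $\epsilon\to\alpha$, satisfies $\tau\geq|\Gamma(\alpha)|-3$. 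In particular $M$ embeds $D_3$, so Lemma~\ref{lem_basic2} forces $\tau\in\{1,3\}$. When $|\Gamma(\alpha)|=7$ (the cases $E_7,\overline{E_7}$) this already gives $\tau\geq4$, a contradiction.

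It remains to treat $|\Gamma(\alpha)|=6$, where $\tau=3$. If one of $\Gamma(\alpha),\Gamma^*(\alpha)$ is isomorphic to $F_6$ or $\overline{F_6}$ (this disposes of the mixed case (c)(i) and of $\Gamma(\alpha)\cong F_6,\overline{F_6}$), then that neighbourhood contains a directed triangle $T$, and the subgroup $\mathbb{Z}_3\leq G_\alpha$ acts on it as the rotations of $F_6$ fixing $T$ setwise and cycling it; thus $G$ induces $\mathbb{Z}_3$ on $T$, so by Lemma~\ref{lem_basic2} $\tau=1$, contradicting $\tau=3$. The only remaining possibility is $\Gamma(\alpha)\cong E_6$ or $\overline{E_6}$ with $\Gamma^*(\alpha)\cong\overline{\Gamma(\alpha)}$, where neither neighbourhood embeds $D_3$. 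Here I would exploit the elementary fact that a directed $2$-path $x\to y\to z$ has endpoints $x,z$ unrelated in $E_6$ but $\sim$-related in $\overline{E_6}$. Applying this in $\Gamma(\epsilon)$ to the path $\epsilon^+\to\alpha\to\delta$ (where $\epsilon^+$ is the out-neighbour of $\epsilon$ in $\Gamma^*(\alpha)$ and $\delta=\phi(\epsilon)$) shows $\epsilon^+$ and $\delta$ are unrelated, while applying it in $\Gamma^*(\delta)$ to $\epsilon\to\alpha\to\delta^-$ (where $\delta^-$ is the in-neighbour of $\delta$ in $\Gamma(\alpha)$) shows $\epsilon$ and $\delta^-$ are $\sim$-related; comparing these relations via the regular $\mathbb{Z}_6$-action of $G_\alpha$ and the enforced isomorphism type of the three in-neighbours then yields the contradiction.

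The hard part will be this last case: unlike the others it is not closed by a single divisibility or counting step, and the contradiction must be read off from the precise local relations between $\Gamma(\alpha)$ and $\Gamma^*(\alpha)\cong\overline{\Gamma(\alpha)}$, while tracking the two possible orientations of the arc-cycle of $\Gamma^*(\alpha)$ relative to the $G_\alpha$-equivariant matching $\phi$. In one orientation the two $2$-path computations refer to a single $G_\alpha$-orbit of pairs and clash directly; in the other, the set of three in-neighbours is forced to induce a directed $2$-path with unrelated endpoints, a configuration which (by the same $E_6$-versus-$\overline{E_6}$ dichotomy) cannot embed in $\Gamma^*(\alpha)\cong\overline{E_6}$. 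Verifying that these two sub-cases exhaust the orientations, and that $\tau=3$ together with $G_{\alpha\beta}=1$ indeed pins the in-neighbour set as claimed, is the delicate bookkeeping that completes the argument.
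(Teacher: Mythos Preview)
Your argument is correct, and through the elimination of $E_7$, $\overline{E_7}$, $F_6$, $\overline{F_6}$ it matches the paper's proof essentially line for line: the regularity of the automorphism groups, the deduction $G_{\epsilon\alpha}=1$, the rigidity count forcing $\tau\ge|\Gamma(\alpha)|-3$, the application of Lemma~\ref{lem_basic2} to get $\tau\in\{1,3\}$, and the observation that a set-homogeneous action on $F_6$ (or $\overline{F_6}$) must rotate each $D_3$ so that $\tau=1$, are all exactly what the paper does.

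For the residual case $\{\Gamma(\alpha),\Gamma^*(\alpha)\}=\{E_6,\overline{E_6}\}$ your route genuinely differs. The paper introduces two three-vertex configurations $U_1,U_2$ (an arc together with a $\sim$-edge and an unrelated pair), counts the six copies of each in $\Gamma(\alpha)$, and tracks the vertices they dominate; this forces one of $\Delta(\alpha),\Lambda(\alpha)$ to be a $G_\alpha$-orbit of size~$6$, whence $|G_\alpha|=6$, and a final pigeonhole in that orbit against $G_{\alpha\beta}=1$ gives the contradiction. You instead exploit the clean dichotomy that in $E_6$ the ends of a $2$-arc are unrelated while in $\overline{E_6}$ they are $\sim$-related, reading this off inside $\Gamma(\epsilon)$ and $\Gamma^*(\delta)$ to pin down two specific cross-relations between $\Gamma(\alpha)$ and $\Gamma^*(\alpha)$; in one orientation of the equivariant matching these two relations land on the same pair and clash, and in the other they force the three vertices of $\Gamma(\alpha)\cap\Gamma^*(\epsilon)$ to be consecutive on the $E_6$-cycle, hence to induce a $2$-arc with unrelated ends, which cannot sit inside $\Gamma^*(\epsilon)\cong\overline{E_6}$. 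This is a perfectly good alternative: it avoids the auxiliary $U_i$-counting and stays entirely inside $\Gamma(\alpha)\cup\Gamma^*(\alpha)$, at the cost of an explicit two-case orientation split. When you write it up, make the labelling of $\Gamma(\alpha)=\{a_0,\dots,a_5\}$ and $\Gamma^*(\alpha)=\{b_0,\dots,b_5\}$ and the two actions $b_i\mapsto b_{i\pm1}$ of the generator explicit; the ``delicate bookkeeping'' you flag then becomes a short and transparent calculation.
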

\begin{proof}
Suppose that each of $\Gamma(\alpha)$, $\Gamma^*(\alpha)$ is isomorphic to one of $E_6$, $E_7$, $F_6$, or their complements.
Observe first that $\Aut(E_6)$, $\Aut(E_7)$,  and $\Aut(F_6)$ are each regular on vertices, and  
$\Aut(E_6)$ and $\Aut(E_7)$ are both cyclic. 
We have a bijection between 
$\Gamma^*(\alpha)$ and $\Gamma(\alpha)$ given by $\rightarrow$, by Lemma \ref{GammaPlusOne}. In particular, by connectedness of $\Gamma$, if
$\beta\in \Gamma^*(\alpha)$ then $G_{\beta\alpha}=1$.

Pick $\beta\in\Gamma^*(\alpha)$ and $\beta'\in \Gamma(\alpha)$ with $\beta \rightarrow \beta'$. By Lemma~\ref{GammaPlusOne}(a), for the 5 (or 6) elements
 $\gamma\in \Gamma(\alpha)\setminus \{\beta'\}$, 
there are three possible relations to $\beta$: $\gamma\rightarrow \beta$, $\gamma\sim \beta$, or $\gamma$, $\beta$ independent.
Since $G_{\beta\alpha}=1$, by 3-set-homogeneity only one element $\gamma$ of $\Gamma(\alpha)$ can be $\sim$-related to 
$\beta$, and only one can be independent from $\beta$, for the corresponding structures on $\{\alpha,\beta,\gamma\}$ are rigid. The group induced by $G$ on each copy of $D_3$ in $M$ either has size $1$ or $3$, and hence by Lemma~\ref{lem_basic2}, $\Gamma^*(\beta) \cap \Gamma(\alpha)$
has size 1 or 3, so by counting, has size 3. This  ensures $|\Gamma(\alpha)|=6$, so eliminates $E_7$ and its complement. It also eliminates $F_6$ and its complement, since 
a set-homogeneity argument shows that
any group acting set-homogeneously on $F_6$ (or on its complement) induces a transitive group on each $D_3$, contradicting Lemma~\ref{lem_basic2}. 
 
Thus, we may assume that one
 of $\Gamma(\alpha),\Gamma^*(\alpha)$ is $E_6$, and the other is its complement. One case is obtained from the other by
 reversing all arrows, so we may suppose that $\Gamma(\alpha)\cong E_6$. Note that, by the previous paragraph, each vertex of $\Gamma^*(\alpha)$ is dominated by
 exactly three vertices of $\Gamma(\alpha)$.
 
 Consider configurations $\beta_1,\beta_2,\beta_3\in \Gamma(\alpha)$ with 
$\beta_1\rightarrow \beta_2$, $\beta_2\sim \beta_3$, and $\beta_1||\beta_3$. Call $U_1$ the isomorphism type of 
such structures, and $U_2$
the corresponding isomorphism type when instead $\beta_2\rightarrow \beta_1$. Both $U_1$ and $U_2$ embed in both 
$\Gamma(\alpha)$ and 
$\Gamma^*(\alpha)$, with six copies of each, permuted transitively by $G_\alpha$. Thus, each copy of $U_1$ in $\Gamma(\alpha)$
 dominates
a vertex $u_1$, and no two copies of $U_1,U_1'$ in $\Gamma(\alpha)$ dominates the same vertex, as
 $\Gamma(\alpha),\Gamma^*(\alpha)$ 
do not share a 4-vertex isomorphism type. Thus, the vertex $u_1$ lies in a $G_\alpha$-orbit of size 6; likewise for any vertex
 $u_2$ dominated by a copy of $U_2$ in $\Gamma(\alpha)$.  
 
 Clearly we cannot have $u_1,u_2\in \Gamma(\alpha)$. If both $u_1,u_2\in \Gamma^*(\alpha)$ then a vertex in $\Gamma^*(\alpha)$ 
is dominated by at least four vertices of $\Gamma(\alpha)$, a contradiction. Thus  $u_i$, say, is not in $\Gamma(\alpha)\cup \Gamma^*(\alpha)$, and
hence lies in either $\Lambda(\alpha)$ or $\Delta(\alpha)$, say the former, so $|\Lambda(\alpha)|=6$ (the argument in the other case is similar). By $\Gamma$-connectedness $|G_\alpha|=6$. Pick 
$\beta\in\Gamma(\alpha)$ and recall that $G_{\alpha\beta}=1$. If there are $\gamma_1,\gamma_2\in \Lambda(\alpha)$ both 
dominated by $\beta$, or both independent from $\beta$, or both $\sim$-related to $\beta$, then by 3-set-homogeneity and rigidity of $\{\alpha,\beta,\gamma_i\}$ there is 
$g\in G_{\alpha\beta}$ with $\gamma_1^g=\gamma_2$, a contradiction. Thus, there are at least three vertices $\gamma_1,\gamma_2,\gamma_3 \in \Lambda(\alpha)$ all 
dominating $\beta$.
As the structure on $\alpha,\beta,\gamma_1$ admits just two automorphisms, there are distinct $\gamma_i,\gamma_j$
in the same $G_{\alpha\beta}$-orbit. This contradicts again that $G_{\alpha\beta}=1$.
\end{proof}

By Lemma~\ref{GammaPlusOne}, the last three lemmas have treated all cases in which 
$|\Gamma(\alpha)^+|=1$.
The remaining lemma eliminates the case $|\Gamma(\alpha)^+|>1$, which includes the case that $\Gamma(\alpha) \cong H_3$. 

\begin{lemma}
If $M \in \mathcal{S}$ and $\Gamma(\alpha) \not\cong \Gamma^*(\alpha)$ then we must have $|\Gamma(\alpha)^+| \leq 1$.
\end{lemma}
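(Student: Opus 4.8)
The plan is to assume for contradiction that $d:=|\Gamma(\alpha)^+|\ge 2$ and to derive $\Gamma(\alpha)\cong\Gamma^*(\alpha)$, contradicting (II). The essential input is the local data of Lemma~\ref{DHom}: part~(i) gives $|\Gamma(\alpha)|=|\Gamma^*(\alpha)|=:N$, and since out-degree equals in-degree in a vertex-transitive digraph, part~(ii) together with the same statement read off the complement $\overline{M}$ (which is again set-homogeneous and interchanges $\Gamma$ with the reversed orientation) yields $|\Gamma^*(\alpha)^+|=d$, together with $\Gamma^*(\alpha)^+\cong\Gamma(\alpha)^-$ and $\Gamma^*(\alpha)^-\cong\Gamma(\alpha)^+$. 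Since $d\ge 1$, the digraph $M$ embeds $P_3$, so neither neighbourhood is a graph; and by Lemma~\ref{nhood} the induced structures $\Gamma(\alpha)^+$ and $\Gamma(\alpha)^-$ are themselves set-homogeneous. These invariants pin the pair $(\Gamma(\alpha),\Gamma^*(\alpha))$ down very tightly.

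First I would run through the list of Theorem~\ref{maintheorem}(iii), using the inductive hypothesis that $\Gamma(\alpha)$ and $\Gamma^*(\alpha)$ lie in it, and tabulate, for each admissible $\Sigma$ possessing an arc and with $|\Sigma^+|\ge 2$, its order, its out-degree, and the two isomorphism types $\Sigma^+,\Sigma^-$. The crucial point is that $\overline{\Gamma(\alpha)}^{\,+}\cong\overline{\Gamma(\alpha)^-}$, so a \emph{non-isomorphic} partner of the shape $\Gamma^*(\alpha)\cong\overline{\Gamma(\alpha)}$ can meet the constraint $\Gamma^*(\alpha)^+\cong\Gamma(\alpha)^-$ only when $\Gamma(\alpha)^-$ is isomorphic to its own $s$-digraph complement. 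In every case where this fails, the local data leaves $\Gamma^*(\alpha)\cong\Gamma(\alpha)$ as the only option, contradicting (II). This disposes uniformly of all the ``generic'' possibilities, among them $J_n$ ($n\ge3$) with $\Gamma(\alpha)^\pm\cong\overline{K}_{n-1}$, the digraphs $H_1$, $H_2$, $H_3$ (where $\Gamma(\alpha)^-\cong\overline{K}_2,\,D_4,\,H_0$ respectively, none self-complementary), and the composition products whose neighbourhoods carry a $\sim$-edge or an independent pair.

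It then remains to treat the cases in which $\Gamma(\alpha)^-$ is self-complementary. As $\Gamma(\alpha)^-$ is set-homogeneous of order $d\ge2$, inspection of Theorem~\ref{maintheorem} shows that the surviving configurations with $\overline{\Gamma(\alpha)}\not\cong\Gamma(\alpha)$ are exactly those whose neighbourhoods are the directed triangle: $\Gamma(\alpha)^+\cong\Gamma(\alpha)^-\cong D_3$, this being the unique set-homogeneous tournament of order $>1$ (Lemma~\ref{sethomtourn}) and satisfying $\overline{D_3}\cong D_3$. (Self-complementary neighbourhoods such as $C_5$ do occur, e.g.\ inside $D_3[C_5]$, but there $\overline{\Gamma(\alpha)}\cong\Gamma(\alpha)$ and the case is already closed above.) Tracing which $\Sigma$ on the list have $\Sigma^+\cong\Sigma^-\cong D_3$ with out-degree $>1$, one finds precisely $H_0$ and the composition products $K_n[H_0]$, together with their complements; indeed one computes directly that in $H_0$ the three out-neighbours of a vertex induce a $D_3$. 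Thus the proof reduces to excluding $\Gamma(\alpha)\cong H_0$ (the configuration that would build the $27$-vertex digraph $H_3$) and $\Gamma(\alpha)\cong K_n[H_0]$.

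I expect essentially all the difficulty to be concentrated in this last, intrinsically global, step, where no local invariant separates $\Gamma^*(\alpha)$ from $\overline{\Gamma(\alpha)}$. Here I would use the counting method of this section: since $\Gamma(\alpha)\cong H_0$ embeds $D_3$, so does $M$, making Lemma~\ref{lem_basic2} available; and I would pick $X\subseteq\Gamma(\alpha)$ maximal subject to embedding in $\Gamma^*(\alpha)$, so that $|X|<N$ and distinct copies of $X$ through a fixed $\delta\in X$ dominate distinct vertices, all lying in $\Gamma(\delta)$, whence comparing their number with $|\Gamma(\delta)|=N$ gives leverage. The decisive asymmetry to exploit is that $\Gamma(\alpha)\cong H_0$ carries no $\sim$-edges while $\Gamma^*(\alpha)\cong\overline{H_0}$ carries a perfect matching of them: double-counting the pairs $(\{\beta,\beta'\},z)$ over $\sim$-edges $\{\beta,\beta'\}$ and vertices $z$ shows that no $\sim$-edge of $M$ has a common in-neighbour yet every $\sim$-edge has common out-neighbours, and reconciling this with the rigid $D_3$ arc-pattern (through Lemma~\ref{lem_basic2} and set-homogeneity) forces a contradiction. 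The composition case $K_n[H_0]$ should then reduce to the base case $H_0$ once the $K_n$-blocks are factored out. This completes the elimination and, with the generic cases, establishes $|\Gamma(\alpha)^+|\le 1$.
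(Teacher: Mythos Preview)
Your overall plan matches the paper's: assume $|\Gamma(\alpha)^+|>1$, use Lemma~\ref{DHom}(ii) (i.e.\ $\Sigma_1^-\cong\Sigma_2^+$) together with $|\Sigma_1|=|\Sigma_2|$ to sieve the list of Theorem~\ref{maintheorem}(iii), and isolate the few pairs that survive. Your observation that for the candidate $\Sigma_2=\overline{\Sigma_1}$ the constraint becomes ``$\Sigma_1^-$ self-complementary'' is correct, and indeed it disposes of $H_3$ more directly than the paper does (since $H_3^-\cong H_0$ and $\overline{H_0}\not\cong H_0$, the pair $(H_3,\overline{H_3})$ already fails $\Sigma_1^-\cong\Sigma_2^+$; the paper's separate counting argument there is not strictly needed). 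Two gaps, however, prevent your proposal from being a proof.

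First, you tacitly assume that $\overline{\Sigma_1}$ is the \emph{only} candidate for $\Sigma_2$. That requires a genuine check: for each admissible $\Sigma_1$ you must scan all $s$-digraphs of the same order on the list (not only the complement) and verify that none has $\Sigma_2^+\cong\Sigma_1^-$. The paper does precisely this in its Case~2, and it is not automatic. Your claim that ``$\Gamma^*(\alpha)^-\cong\Gamma(\alpha)^+$'' would help here, but it is not what Lemma~\ref{DHom}(ii) says; only the single relation $\Gamma(\alpha)^-\cong\Gamma^*(\alpha)^+$ is available (the other direction does not follow from taking complements in the way you suggest, since complementation also changes the induced structure on the neighbourhood).

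Second, and more seriously, your elimination of the surviving case $\Sigma_1\cong K_n[H_0]$, $\Sigma_2\cong\overline{K_n[H_0]}$ is only a sketch and does not go through as stated. The asymmetry you point to (``$\sim$-edges have common out-neighbours but no common in-neighbours'') is real for $n=1$, but it is not by itself a contradiction, and you do not explain how Lemma~\ref{lem_basic2} converts it into one; moreover for $n>1$ the premise fails, since $K_n[H_0]$ does contain $\sim$-edges, so the reduction ``factor out the $K_n$-blocks'' has no content. The paper's argument here is different and specific: it takes the $4$-vertex tournament $T$ in which one vertex dominates a $D_3$, observes that $T$ and $\overline{T}$ are maximal among configurations embedding in both $\Sigma_1$ and $\Sigma_2$, and then, for a copy of $T=\{\beta,\gamma_0,\gamma_1,\gamma_2\}\subset\Gamma(\alpha)$ and the vertex $\delta$ with $T\subset\Gamma^*(\delta)$, uses $3$-set-homogeneity on the rigid triples $\{\alpha,\beta,\delta\}$ and $\{\alpha,\gamma_1,\delta\}$ to force an automorphism of $T$ moving $\beta$ to $\gamma_1$, which is impossible. (When $\delta\in\Gamma^*(\alpha)$ one first shows, via the rigid triples $\{\alpha,\beta,\gamma_i\}$, that $G$ induces ${\mathbb Z}_3$ on copies of $D_3$, which makes the same rigidity argument go through.) This is the missing idea in your endgame.
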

\begin{proof}
Suppose for a contradiction that $|\Gamma(\alpha)^+|>1$.
We consider the remaining cases of pairs $\Sigma_1$, $\Sigma_2$ of graphs from the list in 
Theorem~\ref{maintheorem}(iii)
which are possibilities for $\Gamma(\alpha)$, $\Gamma^*(\alpha)$. We require the following (see Lemma~\ref{DHom}):
$\Sigma_1, \Sigma_2$ are  non-isomorphic, have the same number of vertices, $\Sigma_1^- \cong\Sigma_2^+$ 
and has size greater than one.
In each case, there is no harm in supposing $\Gamma(\alpha)=\Sigma_1$ and $\Gamma^*(\alpha)=\Sigma_2$, and we do this; for we may replace $M$ by its weak complement if necessary, since this also will be set-homogeneous and satisfy the same assumptions.  Also,
 we may assume that $\Sigma_1$, rather than its complement,
comes from the list. 

\

\noindent \textbf{Case 1:} \textit{$\Sigma_1=K_n[H_0]$ with $n\geq1$.} 

\

\noindent Then $\Sigma_2= \overline{K_n[H_0]}$. Now  
 the tournaments $T=\{\beta,\gamma_0,\gamma_1,\gamma_2\}$ (where $\beta\rightarrow \gamma_i$ and 
$\gamma_i\rightarrow \gamma_{i+1 ({\rm mod 3})}$
for $i=0,1,2$), and $\bar{T}$, are both  configurations which are maximal subject to embedding
in both $\Gamma(\alpha)$ and $\Gamma^*(\alpha)$. Thus, given a copy of $T$ in $\Gamma(\alpha)$ labelled as above, there 
is $\delta$
with $T\subset \Gamma^*(\delta)$. Clearly $\delta\not\in \Gamma(\alpha)$. If $\delta\not\in \Gamma^*(\alpha)$, then with $\beta \in T$ the
 digraph induced on $\{\alpha,\beta,\delta\}$
is rigid, so by 3-set-homogeneity there is $g\in G$ with $(\alpha,\beta,\delta)^g=(\alpha,\gamma_1,\delta)$. However, because 
$g$ fixes
$\alpha$ and $\delta$ it induces an automorphism of  $T$, and no automorphism of $T$ moves $\beta$ to $\gamma_1$.

If $\delta\in \Gamma^*(\alpha)$, then the above argument applies provided we know that $G$ induces $\mathbb{Z}_3$ on copies of $D_3$. 
To see that the latter holds, note that $\{\alpha,\beta,\gamma_i\}$ is rigid, so there is
$g\in G_{\alpha\beta}$ with $\gamma_0^g=\gamma_1$. Such an element $g$ 
fixes $\Gamma(\alpha)\cap \Gamma(\beta)=\{\gamma_0,\gamma_1,\gamma_2\}$ and generates a copy of $\mathbb{Z}_3$
on this set.

\

\noindent \textbf{Case 2:} \textit{$\Sigma_1$ is a member of
\[
\begin{array}{c}
\{D_3[K_n],\; D_4[K_n],\; D_5[K_n],\;  H_0[K_n],\;  D_3[C_5],\;  D_3[K_3\times K_3], \\
 D_3[K_m[\bar{K}_n]], \; H_1, \; H_2, \; H_3, \; J_n, \; K_n[D_3[\bar{K}_m]] \}
\end{array}
\]
for $n,m>1$. }

\

\noindent In these cases there are no possibilities for $\Sigma_2$ with the required properties. 

All cases are easily eliminated except for the case $\Sigma_1\cong H_3$, so we consider this case. 
When $\Sigma_1 \cong H_3$, by Lemma~\ref{DHom}(ii) we know that $\Sigma_1^- \cong \Sigma_2^+$, 
and since $\Sigma_1 \cong H_3$ we have $\Sigma_1^- \cong H_0$. Hence $\Sigma_2^+ \cong H_0$ and this means 
that $\Sigma_2$ cannot be isomorphic to any of : $D_3[K_3 \times K_3]$, $D_3[K_9]$, $D_3[K_3[\bar{K}_3]]$,
 $J_9$, or $K_3[D_3[\bar{K}_3]]$. But then the only remaining possibility is that $\Sigma_2 \cong \bar{H_3}$.
 
In this case choose $\delta \in \Gamma(\alpha)$. Observe that (by inspection of $H_3$) the    pairwise unrelated sets in $H_3$ have size at most nine, and pairwise unrelated sets in $\Gamma^*(\alpha)$ have size at most 3. Consider a pairwise unrelated set $X$ in $\Gamma(\alpha)$ of size 9. For every 3-subset $A$ of $X$ containing $\delta$, there is $\beta$ with $A\subset \Gamma^*(\beta)$, and distinct 3-sets give distinct $\beta$. Since there are 
${8 \choose 2}=28$ possibilities for $A$, we find $|\Gamma(\delta)|\geq 28$, a contradiction as $|\Gamma(\alpha)|=|H_3|=27$.

\end{proof}

Combining the results of the last two sections, we now obtain a proof of our main result. 

\subsection*{Proof of Theorem~\ref{maintheorem}(iii)} Let  $M$ be a finite symmetric set-homogeneous digraph. By Lemma~\ref{disconnect}(ii) we may suppose that $M$ is $\Gamma$-connected. We now argue by induction on $|M|$. By Lemma~\ref{nhood} and the induction hypothesis, $\Gamma(\alpha)$ and $\Gamma^*(\alpha)$ belong to the list in (iii), and as $\Gamma$ and $\Gamma^*$ are paired orbitals $|\Gamma(\alpha)| = |\Gamma^*(\alpha)|$. Also, by Lemma~\ref{samenbours}(ii) we may suppose that for vertices $\alpha$, $\beta$, if $\alpha \neq \beta$ then $\Gamma(\alpha) \neq \Gamma(\beta)$ and $\Gamma^*(\alpha) \neq \Gamma^*(\beta)$. 

It follows from the results in Section~\ref{sec_5} that $\Gamma(\alpha) \cong \Gamma^*(\alpha)$. Then applying the results of Section~\ref{sec_CaseInNEqualsOutN} we deduce that $M$ is isomorphic to one of the digraphs in (iii), completing the proof of the theorem. \qed

\section{Infinite set-homogeneous digraphs}

In this section we begin a study of countably infinite set-homogeneous a-digraphs.
As before, there is an orbital $\Gamma$ such that $\alpha \to \beta$ if and only if 
$(\alpha,\beta)\in \Gamma$,
and there is another orbital $\Lambda$ such that if $(\alpha,\beta)\in \Lambda$ then 
$\alpha,\beta$ are distinct and unrelated by $\to$. We shall write $\alpha||\beta$ if $(\alpha,\beta)\in \Lambda \cup \Lambda^*$.
The main result is Theorem~\ref{infnot2hom},
a classification 
under 
the additional assumption that the a-digraph is not 2-homogeneous, or equivalently, that the orbital 
$\Lambda$ consisting of an orbit on unrelated pairs
is not self-paired.
As found also in \cite{dgms}
 for graphs, the problem of classifying
{\em all} set-homogeneous countable a-digraphs appears to be hard.

Let $T(4)$ be the a-digraph obtained by distributing countably many points densely around the unit circle, no two making an 
angle of $\pi/2$ or $\pi$ at the centre, such that $y\rightarrow x$ if and only if
$\pi/2<\arg(y/x)<\pi$. 

Also, let $2\leq n \leq \aleph_0$, and let $\{Q_i: 0\leq i<n\}$ be a partition of the rationals ${\mathbb Q}$ into $n$ dense codense sets. Define an 
a-digraph $R_n$ with domain ${\mathbb Q}$,
putting $a\rightarrow b$ if and only if $a<b$ and there is no $i<n$ such that $a,b\in Q_i$.

By standard back-and-forth arguments (see \cite[Section 2.5]{cameron2} or \cite[Ch. 9]{bmmn} for background on arguments of this kind), these constructions (for $T(4)$ and the $R_n$) determine unique a-digraphs up to isomorphism.

\begin{lemma}\label{not2hom}

\

\begin{enumerate}[(i)]
\item 
The a-digraphs $R_n$ (for $n\geq 2$) are set-homogeneous but not $2$-homogeneous, and have imprimitive automorphism groups.
\item 
The a-digraph $T(4)$  is set-homogeneous but not $2$-homogeneous, and has primitive automorphism group.
\item Let $T$ be a set-homogeneous tournament. Then the disjoint union $\bar{K}_n[T]$ of $n$ copies of $T$ (where $n\in \{\aleph_0\}\cup ({\mathbb N}\setminus\{0\})$) is set-homogeneous and $2$-homogeneous, and is homogeneous if and only if $T$ is homogeneous. Also, $T[\bar{K}_n]$ is a set-homogeneous a-digraph, and is $2$-homogeneous.
\end{enumerate}
\end{lemma}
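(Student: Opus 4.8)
The plan is to treat the three parts separately: parts (i) and (ii) share a common ``recover the enriched structure, then run back-and-forth'' strategy, while part (iii) is essentially bookkeeping with a wreath product.

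For part (i), my first step is to recover the enriched structure from the bare a-digraph. In $R_n$ the relation $\alpha\parallel\beta$ (non-adjacency) is exactly ``same part'', so the parts are the $(\parallel\cup=)$-classes and form an equivalence relation $E$ visible in every induced substructure; moreover, for $a,b$ in one part with $a<b$, the order is witnessed by any $c$ in another part with $a<c<b$ (giving a directed $2$-arc $a\to c\to b$), which exists by density, so $<$ is definable from $\to$. Hence $\Aut(R_n)=\Aut(\mathbb Q,<,E)$, where $E$ is the partition into $n$ dense codense classes. I would then show $(\mathbb Q,<,E)$ is homogeneous by a routine back-and-forth, extending a finite order-and-$E$-isomorphism one point at a time using density and codensity of each class. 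Next I would check that any isomorphism of finite induced sub-a-digraphs is realised by an isomorphism of the corresponding enriched substructures (match each part-class by its unique order-isomorphism; consistency with the cross-class arcs follows from the witness argument above), so homogeneity of $(\mathbb Q,<,E)$ yields set-homogeneity of $R_n$. Finally, $E$ is a non-trivial $G$-congruence, giving imprimitivity, and since every automorphism preserves $<$ no automorphism reverses a same-part pair; thus the orbital $\Lambda$ on unrelated pairs is not self-paired and $R_n$ is not $2$-homogeneous.

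Part (ii) is parallel but circular. Fixing a base point $\alpha$, the stabiliser $G_\alpha$ has exactly four suborbits, according to which of the four open sectors $(0,\tfrac\pi2),(\tfrac\pi2,\pi),(\pi,\tfrac{3\pi}2),(\tfrac{3\pi}2,2\pi)$ contains $\arg(x/\alpha)$ (whence the name $T(4)$), with the two middle sectors giving $\Gamma,\Gamma^*$ (arcs) and the two outer ones giving $\Lambda,\Lambda^*$ (unrelated pairs); note that reflections reverse arcs, so $\Aut(T(4))$ consists only of circular-order-preserving maps. I would recover the dense circular betweenness relation from $\to$, verify its homogeneity by back-and-forth exactly as for $R_n$, and conclude set-homogeneity. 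For primitivity I would invoke the standard criterion that a transitive group is primitive precisely when every non-trivial orbital graph is connected: each of the four orbital graphs is (strongly) connected, because repeated steps within a single sector already reach a dense set of angles wrapping around the whole circle. Since orientation is preserved we have $\Lambda\ne\Lambda^*$, so $T(4)$ is not $2$-homogeneous.

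For part (iii) the ambient group $\Aut(T)\Wr\Sym(n)$ suffices. In $\bar K_n[T]$, non-adjacency is ``different copy'', so a finite induced substructure is a disjoint union of subtournaments of $T$, one per copy it meets, and its a-digraph type is the multiset of these $T$-types; given isomorphic $U,V$ I would pick a type-respecting bijection of the used copies (realised in $\Sym(n)$, using homogeneity of the pure set of copies, including $n=\aleph_0$) and, in each copy independently, an element of $\Aut(T)$ taking the piece of $U$ to that of $V$ by set-homogeneity of $T$. For $2$-homogeneity, a $2$-subset is either an arc inside a copy (the unique direction-preserving isomorphism extends by arc-transitivity of $T$) or an unrelated pair in two copies (either bijection extends, the swap via a copy-transposition). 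The identical analysis, with fibres in place of copies and $\Aut(\bar K_n)=\Sym(n)$ acting within each fibre, handles $T[\bar K_n]$. Lastly, if $T$ is homogeneous then every finite partial isomorphism of $\bar K_n[T]$ respects the copy-partition (preserved since it equals ``adjacent or equal'') and extends copy-by-copy, so $\bar K_n[T]$ is homogeneous; conversely a non-extending partial isomorphism of $T$ is also non-extending in $\bar K_n[T]$, so homogeneity transfers exactly. The genuine obstacle throughout is the set-homogeneity verification in (i) and (ii): the bare arc relation forgets the within-part order (in $R_n$) and the outer sector (in $T(4)$) of a non-adjacent pair, so the crux is proving these data are reconstructible up to the symmetry actually present and hence that isomorphic finite sub-a-digraphs are enriched-isomorphic; everything else, including part (iii), is short once the enriched structures are identified and shown homogeneous.
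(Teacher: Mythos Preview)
Your strategy matches the paper's: enrich to a homogeneous structure with the same automorphism group, deform a given finite a-digraph isomorphism into an enriched isomorphism, and extend; part (iii) is handled identically (the paper omits it as easy). For (i) your sketch is correct once one unpacks ``consistency follows from the witness argument'' inside a \emph{finite} $U$: for a same-part pair $a,b$ in $U$, the absence of any $2$-arc witness $a\to c\to b$ or $b\to c\to a$ with $c\in U$ is exactly the statement that $a,b$ have identical $U$-neighbourhoods, so your order-isomorphism deformation differs from $\theta$ only by permutations of such pairs, which are automatically automorphisms of $U$. This is precisely the paper's argument, phrased there via an explicit equivalence relation $E$ on $U$.

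The one genuine gap is in (ii). Enriching only to the dense circular order (or circular betweenness) does not suffice, because its automorphism group is strictly larger than $\Aut(T(4))$: an orientation-preserving bijection of the countable dense circle need not preserve the four-sector partition, hence need not preserve $\to$, so extending such an enriched isomorphism yields the wrong kind of automorphism. The paper instead enriches to Cameron's homogeneous structure $S(4)$, carrying four binary relations $\sigma_0,\sigma_1,\sigma_2,\sigma_3$ recording which open quarter-sector contains $\arg(a/b)$; this has $\Aut(S(4))=\Aut(T(4))$, and the analogous $\equiv$-class deformation works. The extra content, beyond what you wrote, is a reconstruction step: for independent $\alpha,\beta$ lying in distinct $\equiv$-classes of $U$ (where $\equiv$ means ``same $U$-neighbourhoods''), which of $\sigma_0(\alpha,\beta)$ or $\sigma_3(\alpha,\beta)$ holds is determined by the arcs within $U$ via an explicit case analysis. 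With $S(4)$ in place of the bare circular order your argument becomes the paper's.
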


\begin{proof}
(i)  First, observe, by a back-and-forth argument, that there is a homogeneous expansion $R_n'$ of $R_n$ to a language with an 
equivalence relation whose classes are the $Q_i$ (which are the maximal independent sets in $R_n$), 
and with a binary relation symbol for the natural ordering on ${\mathbb Q}$, such 
that $\Aut(R_n)=\Aut(R_n')$. Note here that the induced ordering on each $Q_i$ is invariant under $\Aut(R_n)$, since if $x,y\in Q_i$ are distinct we have $x<y\Leftrightarrow \Gamma(x)\supset \Gamma(y)$.

Let $\theta:U\rightarrow V$ be an isomorphism between finite subsets of $R_n$. The relation $||$ (for unrelated pairs)
is an equivalence relation on $U$ and $V$, preserved by $\theta$. We may deform $\theta$ to a map 
$\theta':U\rightarrow V$ which induces the same map as $\theta$ on $U/||$ but
respects the ordering on each $||$-class induced from ${\mathbb Q}$. Then $\theta':U\to V$ is an isomorphism; for if $E$ is the equivalence relation on $U={\rm dom}(\theta)$ given by
$$Exy\Leftrightarrow x||y \wedge (\Gamma(x)\cap U=\Gamma(y)\cap U) \wedge (\Gamma^*(x)\cap U=\Gamma^*(y)\cap U),$$
then there is an automorphism $\phi$ of ${\rm dom}(\theta)$ fixing each $E$-class setwise such that $\theta'=\phi\circ \theta$. Now $\theta'$ is an isomorphism of finite substructures 
of the expansion $R_n'$, so by homogeneity, $\theta'$ extends to an automorphism $\tilde{\theta}$ of $R_n'$,
 and this is 
also an automorphism of $R_n$,
with $\tilde{\theta}(U)=V$. The a-digraph $R_n$  is not 2-homogeneous, for if $x,y\in Q_2$ with $x<y$ then 
$\{x,y\} \cong\{y,x\}$, but
there is $z$ with
$x\rightarrow z \rightarrow y$ but no $z$ with $y\rightarrow z \rightarrow x$. 

(ii) Consider the binary structure $S(4)$ introduced by Cameron \cite{cameron1} and mentioned in \cite{dgms}.
 The domain is, as  for $T(4)$, a countable set $M$ of points on the unit circle, distributed densely,
 no two making an angle $2k\pi/4$ at the centre ($k\in {\mathbb Z}$). 
There are
 binary relations $\sigma_0,\sigma_1,\sigma_2,\sigma_3$ with $\sigma_i(a,b)$ if and only if $2\pi i/4<\arg(a/b)
<2\pi(i+1)/4$.
The structure $S(4)=(M,\sigma_0,\sigma_1,\sigma_2,\sigma_3)$ is homogeneous, as noted in \cite[Section 8]{cameron1}. (Formally, Cameron also has the circular ordering in the language, but since it is definable without quantifiers from the $\sigma_i$, it is not needed.) Now $T(4)$ is a `reduct' of $S(4)$;
 that is
$\alpha\rightarrow \beta$ if and only if $\sigma_1(\alpha,\beta)$ holds. In particular, $\Aut( T(4))\geq \Aut(S(4))$ (in fact, we have equality here).

To show $T(4)$ is set-homogeneous, suppose that $\theta:U\rightarrow V$ is an isomorphism between finite subdigraphs of $T(4)$.
Define an equivalence relation $\equiv$ on $U$, putting $x\equiv y$ if and only if 
$\Gamma(x) \cap U=\Gamma(y)\cap U$ and $\Gamma^*(x) \cap U=\Gamma^*(y) \cap U$, and define $\equiv$ similarly on $V$. Then 
$\theta$ respects $\equiv$, so induces a bijection
$U/\equiv ~\longrightarrow V/\equiv$. Each $\equiv$-class is an independent set, so any pair of distinct $\equiv$-related elements from $U$ or 
$V$ satisfies $\sigma_0$ or $\sigma_3$ (in $S(4))$). Suppose $\theta$ maps the $\equiv$-class
$\{\alpha_1,\ldots,\alpha_r\}$ of $U$ to $\{\beta_1,\ldots,\beta_r\}$ of $V$.  We may suppose that the enumerations are such that
$\sigma_0(\alpha_i,\alpha_{i+1})$ and $\sigma_0(\beta_i,\beta_{i+1})$ for each $i=1,\ldots,r-1$. We may deform $\theta$ 
to $\theta'$ 
such that for all such $\equiv$-classes,
$\theta'(\alpha_i)=\beta_i$, that is, $\theta'$ respects $\sigma_0$ on each $\equiv$-class. 
The map $\theta'$ is also an isomorphism between substructures of $T(4)$, since any permutation of $U$ which fixes each  $\equiv$-class setwise is an automorphism of $U$.
We claim that $\theta'$ 
extends to an automorphism of $T(4)$. To see this,
by the homogeneity of $S(4)$, it suffices to show that, on $U$, we can reconstruct the $\sigma_i$ from knowledge of 
$\rightarrow$ and of $\sigma_0$ on each $\equiv$-class. So suppose that $\alpha,\beta\in U$ are independent and in
 different $\equiv$-classes. 
If there is $x\in U$ such that $\alpha\rightarrow x$ and $\beta||x$, or $\beta\rightarrow x$ and 
$x\rightarrow \alpha$, or $x||\alpha$ and $x\rightarrow \beta$, then $\sigma_0(\alpha,\beta)$. Otherwise, 
$\sigma_0(\beta,\alpha)$ holds. Similarly we may recover $\sigma_3$, and $\sigma_1$ and $\sigma_2$ are given (as $\Gamma$ and $\Gamma^*$).  

Clearly $T(4)$ is not $2$-homogeneous because there exist independent pairs that 
cannot be swapped by any automorphism. Indeed, if $x,y\in T(4)$ with $0<\arg(y/x)<\pi/2$, then
$\exists z(z\rightarrow y\wedge x\rightarrow z)$, but $\neg \exists z(z\rightarrow x \wedge y \rightarrow z)$.

For the primitivity assertion, it suffices to show that $\Aut(S(4))$ is primitive. For this, it is enough to verify that each $\sigma_i$ generates the universal equivalence relation on $M$. This is straightforward.

(iii) This is easy, and is omitted.
\end{proof}

The proof of Theorem~\ref{infnot2hom} breaks into two parts, depending on whether or not $M$ has primitive automorphism group. The theorem will follow immediately from Propositions~\ref{recover1} (the primitive case) and \ref{recover2} (the imprimitive case), the subject of the rest of the section.

\subsection*{The primitive case}

\begin{lemma}\label{(1)and(2)}
Let $M$ be a set-homogeneous countably infinite a-digraph, and let $G := \Aut(M)$ be its automorphism group.   
\begin{enumerate}[(i)]
\item \label{(1)} If $G$ is primitive then each of the suborbits $\Gamma(\alpha)$, $\Gamma^*(\alpha)$, $\Lambda(\alpha)$, and $\Lambda^*(\alpha)$ is infinite.
\item \label{(2)} If there exist vertices $\alpha$, $\beta$, $\alpha'$ and $\beta'$ such that $\alpha||\beta$, $\alpha'||\beta$, $\beta'||\alpha$, and either
$\alpha\rightarrow \alpha'$ and $\beta\rightarrow \beta'$, or $\alpha'\rightarrow \alpha$ and $\beta'\rightarrow \beta$, then $M$ is $2$-homogeneous. 
\end{enumerate}
\end{lemma}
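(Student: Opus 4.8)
The plan is to treat the two parts separately. Throughout I would use the observation that, since $M$ is set-homogeneous, $G$ is already transitive on arcs and on unordered unrelated pairs; hence $M$ is $2$-homogeneous if and only if $\Lambda$ is self-paired, i.e.\ if and only if some (equivalently, every) unordered unrelated pair is reversed by an element of $G$. So part~(ii) reduces to producing a single $g\in G$ that interchanges the two vertices of an unrelated pair. The key mechanism I would rely on is that set-homogeneity realises a \emph{specified} isomorphism whenever the source substructure is rigid: if $U$ is rigid and $U\cong V$ there is a unique isomorphism $U\to V$, and any $g$ with $U^{g}=V$ must induce it.

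For part~(ii) I would apply this to the two $3$-element sets $P=\{\alpha,\alpha',\beta\}$ and $Q=\{\beta,\beta',\alpha\}$, using only the hypotheses $\alpha\parallel\beta$, $\alpha'\parallel\beta$, $\beta'\parallel\alpha$ and the arcs. In the first case ($\alpha\to\alpha'$, $\beta\to\beta'$) the set $P$ induces a single arc $\alpha\to\alpha'$ with $\beta$ unrelated to both endpoints, while $Q$ induces the arc $\beta\to\beta'$ with $\alpha$ unrelated to both endpoints; the second case ($\alpha'\to\alpha$, $\beta'\to\beta$) is identical with the arcs reversed. In either case $P$ and $Q$ induce the same isomorphism type, namely a directed edge together with a vertex unrelated to both endpoints, and this type is rigid, since the three vertices occupy the pairwise-distinct roles of tail, head, and isolated vertex. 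Hence $\alpha\mapsto\beta$, $\alpha'\mapsto\beta'$, $\beta\mapsto\alpha$ is the unique isomorphism $P\to Q$, and by $3$-set-homogeneity there is $g\in G$ inducing it. Then $\alpha^{g}=\beta$ and $\beta^{g}=\alpha$, so $g$ reverses the unrelated pair $\{\alpha,\beta\}$, whence $\Lambda$ is self-paired and $M$ is $2$-homogeneous. I note that the relationship between $\alpha'$ and $\beta'$ never enters, so no case division on that pair is needed.

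For part~(i) I would appeal to the standard fact that an infinite primitive permutation group has no non-trivial finite suborbit. In an a-digraph every pair of distinct vertices is either an arc or an unrelated pair, so the only non-diagonal orbitals are $\Gamma,\Gamma^{*},\Lambda,\Lambda^{*}$; assuming (as holds whenever this lemma is applied) that $M$ possesses both an arc and an unrelated pair, the four suborbits $\Gamma(\alpha),\Gamma^{*}(\alpha),\Lambda(\alpha),\Lambda^{*}(\alpha)$ are all non-empty, and primitivity then forces each to be infinite. The intuition is via orbital graphs: a finite suborbit would make either the underlying graph of $M$ or its complement an infinite, connected (by Higman's criterion), locally finite, vertex-transitive graph, and such a graph is never vertex-primitive. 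I would cite this result (see, e.g.\ \cite{cameron3}) rather than reprove it.

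The main obstacle is part~(i). Unlike the finite case, for infinite vertex-transitive digraphs the out- and in-valencies of a suborbit need not coincide, and Higman's connectivity criterion by itself does not yield a contradiction; one genuinely has to invoke the non-elementary theorem that an infinite, connected, locally finite, vertex-transitive graph is imprimitive (equivalently, that infinite primitive groups have no finite suborbits). Part~(ii), by contrast, is completely self-contained once the rigidity principle above is in hand, and is where the specific combinatorial hypothesis does its work.
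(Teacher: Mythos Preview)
Your proposal is correct and follows essentially the same approach as the paper. For (ii), the paper likewise observes that $(\alpha,\alpha',\beta)\mapsto(\beta,\beta',\alpha)$ is an isomorphism of rigid $3$-sets, so by $3$-set-homogeneity some $g\in G$ realises it and swaps $\alpha,\beta$; for (i), the paper simply cites the standard fact (Proposition~2.3 of \cite{mp}) that otherwise the union of the finite $G_\alpha$-orbits forms a non-trivial block, which is the same content you invoke, though your detour through locally finite orbital graphs is unnecessary once that statement is cited.
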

\begin{proof}
(i) This is standard -- otherwise the union of the finite $G_\alpha$-orbits would be a 
non-trivial block of imprimitivity (see e.g. Proposition 2.3 of \cite{mp}).

(ii) If such $\alpha',\beta'$ exist, then by 3-set-homogeneity the isomorphism 
$(\alpha,\alpha',\beta)\mapsto (\beta, \beta',\alpha)$
must extend to an automorphism of $M$ interchanging $\alpha$ and $\beta$, and hence $M$ is $2$-homogeneous.
\end{proof}

For what remains of this subsection $M$ will denote a set-homogeneous but not 2-homogeneous countably 
infinite a-digraph, with a primitive automorphism group $G:=\Aut(M)$. We shall prove several lemmas about 
$M$ which will then be used in the proof of Proposition~\ref{recover1} to show that $M \cong T(4)$. 

Since $M$ is not 2-homogeneous, there are two distinct paired orbitals $\Lambda$ and $\Lambda^*$ on unrelated pairs. 
We shall write $\alpha\Rightarrow \beta$
to mean that $\beta\in \Lambda(\alpha)$. Also, we write $\alpha||\beta$ to mean that $\alpha,\beta$ are independent, and
$\alpha||B$
 to mean that  there is no arc between $\alpha$ and any member of the set $B$ of vertices. We use the following notation to 
denote certain configurations on three vertices (see Table~\ref{table_2}).

\begin{table}\label{configurations2}
\begin{center}
\begin{tabular}{|c|c||c|c||c|c|}\hline
\raisebox{-3ex}[0pt]{$L_1$}  &  \xymatrix{
&  					     & \node \rulab{\gamma} &     \\
& \node \darcc{ur} \ldlab{\alpha}     &   \node  \darcc{l} \arcc{u} \rdlab{\beta} & \\
} &
\raisebox{-3ex}[0pt]{$L_6$}  &  \xymatrix{
&  					     & \node \rulab{\gamma} &     \\
& \node  \darcc{ur}  \ldlab{\alpha}     &   \node  \bdarcc{l} \darcc{u} \rdlab{\beta} & \\
} &
\raisebox{-3ex}[0pt]{$L_{11}$}  & \xymatrix{
&  					     & \node \rulab{\gamma} &     \\
& \node \arcc{ur} \ldlab{\alpha}     &   \node  \barcc{l} \arcc{u} \rdlab{\beta} & \\
}  \\ \hline
\raisebox{-3ex}[0pt]{$L_2$}  &  \xymatrix{
&  					     & \node \rulab{\gamma} &     \\
& \node \arcc{ur} \ldlab{\alpha}     &   \node  \darcc{l} \arcc{u} \rdlab{\beta} & \\
} &
\raisebox{-3ex}[0pt]{$L_7$}  &  \xymatrix{
&  					     & \node \barcc{d} \arcc{dl} \rulab{\gamma} &     \\
& \node   \ldlab{\alpha}     &   \node  \darcc{l}  \rdlab{\beta} & \\
} &
\raisebox{-3ex}[0pt]{$L_{12}$}  &  \xymatrix{
&  					     & \node \rulab{\gamma} &     \\
& \node  \ldlab{\alpha}  \bdarcc{ur}   &   \node  \darcc{l} \arcc{u} \rdlab{\beta} & \\
}  \\ \hline
\raisebox{-3ex}[0pt]{$L_3$}  &  \xymatrix{
&  					     & \node \rulab{\gamma} &     \\
& \node \barcc{ur} \ldlab{\alpha}     &   \node  \darcc{l} \barcc{u} \rdlab{\beta} & \\
} &
\raisebox{-3ex}[0pt]{$L_8$}  &  \xymatrix{
&  					     & \node \arcc{d} \rulab{\gamma} &     \\
& \node  \ldlab{\alpha}     &   \node  \darcc{l}  \rdlab{\beta} & \\
} &
\raisebox{-3ex}[0pt]{$L_{13}$}  &  \xymatrix{
&  					     & \node \bdarcc{d} \arcc{dl} \rulab{\gamma} &     \\
& \node  \ldlab{\alpha}     &   \node  \darcc{l}  \rdlab{\beta} & \\
} \\ \hline
\raisebox{-3ex}[0pt]{$L_4$}  &  \xymatrix{
&  					     & \node \rulab{\gamma} &     \\
& \node \arcc{ur} \ldlab{\alpha}     &   \node  \darcc{l} \barcc{u} \rdlab{\beta} & \\
} &
\raisebox{-3ex}[0pt]{$L_9$}  &  \xymatrix{
&  					     & \node \rulab{\gamma} &     \\
& \node  \arcc{ur} \ldlab{\alpha}     &   \node  \darcc{l}  \rdlab{\beta} & \\
} &
\raisebox{-3ex}[0pt]{$L_{14}$}  &  \xymatrix{
&  					     & \node \bdarcc{dl} \arcc{d} \rulab{\gamma} &     \\
& \node  \ldlab{\alpha}     &   \node  \darcc{l} \rdlab{\beta} & \\
} \\ \hline
\raisebox{-3ex}[0pt]{$L_5$}  &  \xymatrix{
&  					     & \node \rulab{\gamma} &     \\
& \node \barcc{ur} \ldlab{\alpha}     &   \node  \barcc{l} \arcc{u} \rdlab{\beta} & \\
} &
\raisebox{-3ex}[0pt]{$L_{10}$}  &  \xymatrix{
&  					     & \node \rulab{\gamma} &     \\
& \node  \darcc{ur}  \ldlab{\alpha}     &   \node  \darcc{l} \bdarcc{u} \rdlab{\beta} & \\
} &
&  \\ \hline
\end{tabular}
\end{center}
\caption{Configurations on $3$ vertices.}
 \label{table_2}
\end{table}

\

\begin{tabular}{ll}
  $L_1$: $\beta\Rightarrow \alpha$, $\beta\rightarrow \gamma$, $\alpha\Rightarrow \gamma$. & $L_8$: $\beta\Rightarrow \alpha$, $\gamma\rightarrow \beta$, $\gamma ||\alpha$ (isomorphic to $L_{12}$ or $L_{14}$).   \\
$L_2$: $\beta\Rightarrow \alpha$, $\alpha,\beta \rightarrow \gamma$. &
$L_{9}$: $\beta\Rightarrow \alpha$, $\alpha\rightarrow \gamma$, $\gamma||\beta$ (isomorphic to $L_{13}$ or $L_{14}$). \\
$L_3$: $\beta\Rightarrow \alpha$, $\gamma \rightarrow \alpha,\beta$. &  $L_{10}$: $\beta\Rightarrow \alpha$, $\alpha \Rightarrow \gamma$, $\gamma \Rightarrow \beta$.  \\
$L_4$: $\beta\Rightarrow \alpha$, $\alpha\rightarrow\gamma\rightarrow \beta$.  & $L_{11}$: $\alpha\rightarrow \beta\rightarrow\gamma$, $\alpha\rightarrow \gamma$.
   \\
 $L_5$: $\alpha\rightarrow\beta\rightarrow\gamma\rightarrow\alpha$. & $L_{12}$: $\beta\Rightarrow \alpha$, $\beta\rightarrow \gamma$, $\gamma\Rightarrow\alpha$.
   \\
 $L_6$:  $\alpha\Rightarrow\beta\Rightarrow\gamma$, $\alpha\Rightarrow\gamma$.  & $L_{13}$: $\beta\Rightarrow \alpha$, $\gamma \rightarrow \alpha$, $\beta\Rightarrow \gamma$.
  \\
$L_7$: $\beta\Rightarrow \alpha$, $\beta\rightarrow \gamma\rightarrow\alpha$. & $L_{14}$: $\beta\Rightarrow\alpha\Rightarrow \gamma\rightarrow \beta$. 
\end{tabular}

\

\begin{lemma}\label{(3)and(4)}
If $\alpha \Rightarrow \beta$ then:
\begin{enumerate}[(i)]
\item \label{(3)} $\Gamma(\alpha)\neq \Gamma(\beta)$ and $\Gamma^*(\alpha)\neq \Gamma^*(\beta)$; and
\item \label{(4)} each of $\Gamma(\alpha)\setminus \Gamma(\beta)$,
$\Gamma(\beta)\setminus\Gamma(\alpha)$, $\Gamma^*(\alpha)\setminus \Gamma^*(\beta)$ and $\Gamma^*(\beta)\setminus \Gamma^*(\alpha)$ is
 non-empty.
\end{enumerate}
\end{lemma}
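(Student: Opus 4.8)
The plan is to prove (i) directly from primitivity and to reduce (ii), after two symmetry reductions, to ruling out a single ``nested'' configuration for the out-neighbourhoods.

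For (i), I would observe that the relation $\rho$ defined by $x\,\rho\,y\iff\Gamma(x)=\Gamma(y)$ is a $G$-congruence: it is plainly an equivalence relation, and it is $G$-invariant because $\Gamma(x^{g})=\Gamma(x)^{g}$. Since $M$ is not $2$-homogeneous it cannot be a pure set (otherwise $\Aut(M)=\Sym(M)$ would be highly transitive), so $\Gamma(\alpha)\neq\varnothing$; choosing $z\in\Gamma(\alpha)$, if $\rho$ were the universal relation then $\Gamma(z)=\Gamma(\alpha)\ni z$, forcing the loop $z\to z$, which is impossible. By primitivity $\rho$ is therefore equality, so distinct vertices have distinct out-neighbourhoods; in particular $\Gamma(\alpha)\neq\Gamma(\beta)$, and running the same argument with $\Gamma^{*}$ (equivalently, in the converse digraph) gives $\Gamma^{*}(\alpha)\neq\Gamma^{*}(\beta)$.

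For (ii), I would first make two reductions. The converse a-digraph $M^{*}$ (reverse every arc) is again countably infinite, set-homogeneous, not $2$-homogeneous and primitive, has the same automorphism group and the same orbital $\Lambda$ on unrelated ordered pairs, and it interchanges the two $\Gamma$-statements with the two $\Gamma^{*}$-statements; hence it suffices to prove $\Gamma(\alpha)\setminus\Gamma(\beta)\neq\varnothing$ and $\Gamma(\beta)\setminus\Gamma(\alpha)\neq\varnothing$ for every such $M$ and every $(\alpha,\beta)\in\Lambda$. As $G$ is transitive on $\Lambda$, emptiness of either of these sets is a property of the orbital, not of the chosen pair, so together with (i) there are exactly three uniform possibilities: $\Gamma(\alpha)\subsetneq\Gamma(\beta)$ always, $\Gamma(\beta)\subsetneq\Gamma(\alpha)$ always, or both differences always nonempty (the desired conclusion). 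Since the names $\Lambda,\Lambda^{*}$ may be interchanged and the conclusion is symmetric in $\alpha,\beta$, I may assume we are in the first nested case, $\Gamma(\alpha)\subsetneq\Gamma(\beta)$ for all $\alpha\Rightarrow\beta$, and seek a contradiction.

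In that case consider the $G$-invariant relation $x\preceq y\iff\Gamma(x)\subseteq\Gamma(y)$, which is a partial order (antisymmetric by (i)); the hypothesis says $\alpha\Rightarrow\beta$ forces $\alpha\prec\beta$, so every $\Lambda$-pair is $\preceq$-comparable. A clean first step is that no arc can be increasing: if $x\to y$ then $y\in\Gamma(x)$, so $\Gamma(x)\subseteq\Gamma(y)$ would give the loop $y\in\Gamma(y)$; thus along arcs $\Gamma$ is decreasing or incomparable. If arcs are also $\preceq$-comparable, then all distinct vertices are comparable, so $\preceq$ is a $G$-invariant total order, which primitivity forces to be dense without endpoints (a successor relation or an endpoint would yield a proper block system); one then derives a contradiction from the $G$-invariant two-colouring of ordered pairs by ``arc'' versus ``$\Lambda$'', using the infiniteness of all four suborbits (Lemma~\ref{(1)and(2)}(i)) and the admissible $3$-vertex configurations $L_{1},\dots,L_{14}$ of Table~\ref{table_2}. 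The remaining subcase, arcs $\preceq$-incomparable while all unrelated pairs are comparable, is treated the same way by determining which $L_{i}$ can be realised. I expect this last elimination — using set-homogeneity and the configuration list to show that nested out-neighbourhoods of unrelated vertices are genuinely impossible — to be the main obstacle; the reductions and the loop argument above are routine, but closing off the surviving nested configuration is where the real work lies.
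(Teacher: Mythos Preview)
Your argument for (i) is correct and matches the paper's exactly: both use the congruence $\Gamma(x)=\Gamma(y)$ and primitivity. Your loop argument is a clean variant of the paper's ``no arcs'' observation.

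Your reductions for (ii) are also correct and agree with the paper: reduce to the $\Gamma$-statements via the converse digraph, and then (after swapping $\Lambda$ with $\Lambda^{*}$ if needed) assume $\alpha\Rightarrow\beta$ forces $\Gamma(\alpha)\subsetneq\Gamma(\beta)$, hence a $G$-invariant partial order. The loop observation that arcs cannot be $\preceq$-increasing is also used implicitly in the paper.

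The genuine gap is exactly where you locate it, but your proposed attack does not reach the target. You split on ``arcs $\preceq$-comparable or not'' and hope to finish by listing which of $L_{1},\dots,L_{14}$ are realizable. The paper's split is different --- whether $\Rightarrow$ is \emph{transitive} or not --- and neither branch is handled by a bare configuration count. In the transitive branch the key structural observation is that $(M,\Rightarrow)$ is a \emph{semilinear order} (a downwards-linear tree), and the contradiction is obtained by invoking Droste's classification of countable $2$-transitive semilinear orders together with Ramsey's theorem to produce two $4$-element $\to$-chains whose images in the Dedekind--MacNeille completion have non-isomorphic closures, violating $4$-set-homogeneity. In the non-transitive branch the paper first shows $L_{1}$ fails and $L_{14}$ occurs, deduces that arcs are $\preceq$-decreasing (so $\preceq$ is total, as in your ``comparable'' case), but then needs a delicate argument about the convex sets $C_{\alpha}=\Lambda(\alpha)$ and $D_{\alpha}=C_{\alpha}\cup\bigcup_{\beta\in C_{\alpha}}C_{\beta}$, showing $D_{\alpha}$ is the entire upper cone of $\alpha$ and deriving a contradiction from the ordering of the suprema $\Sup C_{\beta}$. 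Neither the semilinear/Ramsey step nor this initial-segment analysis is recoverable from the configuration list alone; in particular, in the total-order case the admissible $L_i$'s are mutually consistent at the $3$-point level and you need a global argument to rule the picture out.
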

\begin{proof}
(i) Define the $G$-congruence $\equiv$ on $M$, putting $u\equiv v$ if and only if $\Gamma(u)=\Gamma(v)$. 
If $\alpha\equiv \beta$,
then, by primitivity, $u\equiv v$ for any $u,v\in M$, and it follows that $M$ has no arcs, so is 2-homogeneous, contrary to assumption. 
The same argument applies for $\Gamma^*$.

(ii) Suppose that $\alpha\Rightarrow \beta$ implies that $\Gamma(\beta)\supset \Gamma(\alpha)$ (the other cases are similar). There is 
a partial order
$<$ on $M$ given by
$\alpha<\beta$ if and only if $\Gamma(\beta)\supset \Gamma(\alpha)$, and we have that $\alpha\Rightarrow \beta$ implies 
$\alpha<\beta$. 
In particular, $L_{10}$ does not embed. Likewise, $L_{12}$ does not embed in $M$; for if $\beta\Rightarrow \alpha$ then 
$\Gamma(\alpha)\supset \Gamma(\beta)$ so 
there is no vertex $\gamma$ in $\Gamma(\beta)\setminus \Gamma(\alpha)$.

\

\noindent  \textbf{Case 1:} \emph{$\Rightarrow$ is transitive.}

\

\noindent In this case,
 $\Rightarrow$ is a 
partial order $<$ on $M$ such that the `principal ideal' $\{y: y \Rightarrow \alpha\}$  generated by 
every element $\alpha$ is linearly ordered; for if $y_1\Rightarrow \alpha$ and $y_2\Rightarrow \alpha$ with $y_1\neq y_2$, we cannot have $y_1 \to y_2$ for then $y_2\in \Gamma(y_1)\setminus \Gamma(\alpha)$, and similarly $y_2\to y_1$ is impossible, so $y_1\Rightarrow y_2$ or $y_2 \Rightarrow y_1$. Furthermore, we claim that any two elements of $M$ have a common lower bound. For let  $\sim$ be the adjacency relation for the comparability graph of $(P,<)$ (so $x\sim y \Leftrightarrow x<y \mbox{~or~} y<x$). Since $\Aut(M)$ is primitive and there are comparable pairs, this graph is connected.
Suppose for a contradiction that $x,y\in M$ have no common lower bound, and let $x=x_0\sim x_1\sim... \sim x_n=y$ be a path of minimal length between them. If $x_1>x_0$ then by minimality of $n$, $x_2<x_1$, a contradiction as then  the principal ideal below $x_1$ is not linearly ordered. So $x_1<x_0$, and then by minimality of $n$, $x_2>x_1$. We cannot have $x_2=y$, and by minimality of $n$ must have $x_3<x_2$, in which case the principal ideal below $x_2$ is not linearly ordered, again a contradiction.

Thus, 
the structure $(M,\Rightarrow)$ (so just with the relation $\Rightarrow$)
is a {\em semilinear order} (a partial order in which each principal ideal is totally ordered  and any two elements have a common lower bound). Also, $(M,\Rightarrow)$ is 2-set-homogeneous. 
Countable 2-set-homogeneous semilinear orders are classified in
 in Droste \cite{droste}, under slightly different terminology (they are called countable 2-transitive
trees). 
 We recall that a poset is called Dedekind--MacNeille complete if each non-empty upper bounded subset has a supremum, or equivalently, each non-empty lower bounded subset has an infimum. For any poset $M$ there is a unique (up to isomorphism fixing $M$ pointwise) extension $\overline{M}$ of $M$ which is Dedekind--MacNeille complete, called the \emph{Dedekind--MacNeille} completion of $M$; see \cite{droste} for more details.
According to Droste's classification, the isomorphism type of $M = (M,\Rightarrow)$ is determined by whether or not the 
`ramification points'
(greatest lower bounds of incomparable pairs) are in $M$ or $\bar{M}\setminus M$, and the `ramification order', or number
 of `cones' at each ramification point.
 
Choose
$\{\alpha_i:i\in {\mathbb N}\}$ with $\alpha_i\Rightarrow \alpha_j$ whenever $i<j$, and $\beta_i$ such that
$\alpha_i\Rightarrow \beta_{i}$ and $\beta_i$ is incomparable to $\alpha_{i+1}$ in the semilinear order. Then $(\{\beta_i:i\in {\mathbb N}\},\to)$ is a tournament. 
 By Ramsey's Theorem, there are $i_1<i_2<i_3<i_4 \in {\mathbb N}$ such that
$B:=\{\beta_{i_1},\beta_{i_2},\beta_{i_3},\beta_{i_4}\}$ is linearly ordered by $\rightarrow$  (we colour a 2-subset $\{i,j\}$ of ${\mathbb N}$ with $i<j$ red if $\beta_i \to \beta_j$, and green otherwise, and pick a 4-element monochromatic subset of ${\mathbb N}$). By structural properties of the semilinear order under consideration, we may choose 
$C:=\{\gamma_i:i\in {\mathbb N}\}$
and
$D:=\{\delta_i:i\in {\mathbb N}\}$ such that $C\cup D$ is an  antichain of $(M,\Rightarrow)$, and for some ramification point $\alpha$ in the 
 completion $\bar{M}$ of $M$, any pair
 $\gamma_i,\delta_j$
has greatest lower bound $\alpha$, but any two of the $\gamma_i$, or of the $\delta_i$, have greatest lower bound greater than 
$\alpha$. Since any distinct 
$\gamma,\delta \in C \cup D$
are related by $\rightarrow$, it is now possible to find a 4-set $E$, containing 2 elements from $C$
 and 2 elements from $D$, 
which is linearly ordered by $\rightarrow$. To see this, observe
first that, after  replacing D by an infinite subset if necessary, without loss there is $c_1\in C$ such that $c_1 \rightarrow y$ for all $y\in D$. If now there is $c_2 \neq c_1$ dominating two elements $d_1,d_2$ of $D$, then $\{c_1,c_2,d_1,d_2\}$ is totally ordered by $\rightarrow$. If  there is no such $c_2$, then there are distinct $d_1,d_2\in D$ dominating two distinct elements $c_1,c_2$ of $C$, and again $\{c_1,c_2,d_1,d_2\}$ is totally ordered by $\rightarrow$. 

Finally,  by 4-set-homogeneity, there is $g\in G$ with $B^g=E$. This is  impossible, for $g$ induces an automorphism of $\bar{M}$, and the closures in $\bar{M}$ of $B$ and $E$ (where we add infima of $\Rightarrow$-incomparable pairs) are non-isomorphic. 

\

\noindent  \textbf{Case 2:} \emph{$\Rightarrow$ is not transitive.} 

\

\noindent Now $L_1$ does not embed in $M$, for in the notation of  $L_1$, 
$\gamma\in \Gamma(\beta)\setminus \Gamma(\alpha)$, contradicting $\beta\Rightarrow \alpha$.
 Hence, since $L_{10}$ does not embed, if
 $\beta\Rightarrow \alpha\Rightarrow\gamma$ then
$\beta\Rightarrow \gamma$ or $\gamma\rightarrow\beta$, and since $\Rightarrow$ is not transitive the latter case ($L_{14}$) does occur. It follows that 
$\beta\rightarrow\alpha$ implies $\alpha<\beta$, for by 2-set-homogeneity there is $\gamma$ so that $\alpha\Rightarrow \gamma\Rightarrow \beta$, so $\alpha<\gamma<\beta$, so $\alpha<\beta$. 
Thus, $\alpha\Rightarrow \beta$ and $\beta \to \alpha$ each imply $\alpha<\beta$. Since any two distinct elements of $M$ are related by $\Rightarrow$ or by $\to$, it follows that the partial order
$<$ on $M$ is
  a total order, with
$\alpha<\beta$ if and only if $\alpha\Rightarrow \beta$ or $\beta\rightarrow \alpha$.

Given $\alpha$, let $C_\alpha:=\{\beta:\alpha\Rightarrow \beta\}$, and
$D_\alpha:=C_\alpha \cup \bigcup_{\beta\in C_\alpha} C_\beta$. 
First observe that $C_\alpha$ is an initial segment of $\{x\in M: \alpha<x\}$.
 Indeed, 
suppose $\beta\in C_\alpha$
and $\alpha<\gamma<\beta$. If $\gamma\not\in C_\alpha$ then $\gamma\rightarrow \alpha$, so $\alpha\in \Gamma(\gamma)$; 
thus, by the definition of $<$, $\alpha\in \Gamma(\beta)$, a contradiction. 
It follows that if $\beta\in C_\alpha$ then $C_\beta$ is an initial segment of $\{x\in M:\beta<x\}$, so $C_\alpha \cup C_\beta$ is an initial segment of $\{x\in M: \alpha<x\}$, and in fact $D_\alpha$ is an initial segment of $\{x\in M:\alpha<x\}$. Also, $C_\alpha$ is a {\em proper} initial segment of $\{x\in M:\alpha<x\}$, for there is $\gamma \in M$ with $\gamma\rightarrow \alpha$, and then $\alpha<\gamma$ with $\gamma\not\in C_\alpha$.

Next, $D_\alpha=\{x\in M: \alpha<x\}$. For if $\gamma\in M$ with $\gamma>C_\alpha$ then $\gamma\rightarrow \alpha$, so 
as $L_{14}$ embeds in $M$,
by 2-set-homogeneity there is $\beta\in C_\alpha$ with $\beta\Rightarrow \gamma$, so $\gamma \in C_\beta\subset D_\alpha$.
Observe also that for such $\alpha,\beta,\gamma$ we have $\Sup C_\alpha<\Sup C_\beta$, so whenever we have 
$\beta_1\Rightarrow \beta_2$ we have $\Sup C_{\beta_1} <\Sup C_{\beta_2}$.
It follows that if $\beta_1,\beta_2\in C_\alpha$ with $\beta_1<\beta_2$ then $\Sup C_\alpha<\Sup C_{\beta_1}$ so
 $\beta_1\Rightarrow \beta_2$ so
$\Sup C_{\beta_1}<\Sup C_{\beta_2}$. 

Fix $\beta,\gamma$ as in the last paragraph, so $\alpha\Rightarrow \beta$, $\beta\Rightarrow \gamma$,  and $\gamma>C_\alpha$.
If $\alpha<\beta'<\beta$ then $\Sup C_{\beta'}<\Sup C_\beta<\Sup C_\gamma$, and if $\beta<\beta'\in C_\alpha$ then 
$\Sup C_\beta<\Sup C_{\beta'}$, so $\beta'\Rightarrow \gamma$, so $\Sup C_{\beta'}<\Sup C_\gamma$.
Together, using the fact that $C_\alpha$ is an initial segment of $\{x\in M:\alpha<x\}$, these give that $x< \Sup C_{\gamma}$ for all $x\in  C_\alpha$. Hence, if $\delta\in M$ with $\delta\rightarrow \gamma$, then 
$\delta>\gamma>\alpha$ and $\delta\not\in C_\gamma$, so $\delta\not\in D_\alpha$, contradicting the first assertion of
 the last paragraph.
\end{proof}

\begin{lemma}\label{FromNowOn}
If $\beta \Rightarrow \alpha$ then either there exists $\gamma$ with $\beta\rightarrow \gamma$
 and $\alpha||\gamma$, or there exists $\delta$ with $\alpha \rightarrow \delta$ and $\beta \parallel \delta$. 
\end{lemma}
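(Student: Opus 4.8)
The plan is to argue by contradiction. Suppose that neither a $\gamma$ with $\beta\to\gamma,\ \gamma\parallel\alpha$ nor a $\delta$ with $\alpha\to\delta,\ \delta\parallel\beta$ exists. Writing $N_x:=\Gamma(x)\cup\Gamma^*(x)$ for the set of vertices related to $x$, this failure says exactly that (a) $\Gamma(\beta)\subseteq N_\alpha$ and (b) $\Gamma(\alpha)\subseteq N_\beta$. The observation I would exploit is that the conclusion is a property of the ordered pair $(\beta,\alpha)\in\Lambda$ that is invariant under $\Aut(M)$; since $M$ is set-homogeneous, $\Lambda$ is a single $G$-orbit, so the negated conclusion, namely the conjunction of (a) and (b), holds for \emph{every} pair $(u,v)\in\Lambda$. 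In other words, $\Gamma(u)\subseteq N_v$ and $\Gamma(v)\subseteq N_u$ whenever $u\parallel v$. This orbit-transfer is the heart of the argument.

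Next I would produce a vertex adjacent to one of $\alpha,\beta$ but unrelated to it. To this end consider the relation $x\sim_N y\iff N_x=N_y$. This is an $\Aut(M)$-congruence, and it is not universal because $M$ contains arcs (by Lemma~\ref{(1)and(2)}(i) the suborbit $\Gamma(\alpha)$ is infinite, in particular non-empty). Since $G$ is primitive, $\sim_N$ must be trivial, so $N_\alpha\neq N_\beta$. Hence at least one of $N_\beta\setminus N_\alpha$ and $N_\alpha\setminus N_\beta$ is non-empty.

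Suppose $z\in N_\beta\setminus N_\alpha$ (the other case is symmetric). Then $z$ is related to $\beta$, and $z\parallel\alpha$: indeed $z\neq\alpha$ because $\alpha\parallel\beta$ forces $\alpha\notin N_\beta$, and $z\notin N_\alpha$ means $z$ is unrelated to $\alpha$. Moreover $\beta\to z$ is impossible, since then $z\in\Gamma(\beta)\subseteq N_\alpha$ by (a), contradicting $z\notin N_\alpha$; therefore $z\to\beta$, i.e.\ $\beta\in\Gamma(z)$. Now $\{z,\alpha\}$ is an unrelated pair, so $(z,\alpha)$ or $(\alpha,z)$ lies in $\Lambda$, and applying the orbit-invariant inclusion of the first paragraph to this pair gives $\Gamma(z)\subseteq N_\alpha$. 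But $\beta\in\Gamma(z)$ while $\beta\notin N_\alpha$ (as $\alpha\parallel\beta$), a contradiction. The case $w\in N_\alpha\setminus N_\beta$ runs identically with $\alpha,\beta$ interchanged: one gets $w\to\alpha$, $w\parallel\beta$ and $\Gamma(w)\subseteq N_\beta$, yet $\alpha\in\Gamma(w)\setminus N_\beta$, again absurd.

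The step I expect to be the crux is the orbit-invariance transfer: recognising that, because the negated conclusion is $\Lambda$-invariant, it re-applies to the \emph{freshly found} independent pair $\{z,\alpha\}$ (or $\{w,\beta\}$), which is precisely what turns a single adjacency witness into a contradiction. Everything else is routine: the primitivity argument yielding $N_\alpha\neq N_\beta$ is standard, and it is worth noting that this route needs neither Lemma~\ref{(3)and(4)} nor the $2$-homogeneity criterion Lemma~\ref{(1)and(2)}(ii).
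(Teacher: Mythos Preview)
Your proof is correct and takes a genuinely different route from the paper's. The paper argues as follows: assuming the negation, Lemma~\ref{(3)and(4)}(ii) supplies $\gamma\in\Gamma(\beta)\setminus\Gamma(\alpha)$ and $\delta\in\Gamma(\alpha)\setminus\Gamma(\beta)$; the negated hypothesis then forces $\gamma\to\alpha$ and $\delta\to\beta$, so $(\beta,\gamma,\alpha)$ and $(\alpha,\delta,\beta)$ are isomorphic rigid $2$-arcs, and $3$-set-homogeneity yields an automorphism swapping $\alpha$ and $\beta$, contradicting $\Lambda\neq\Lambda^*$. Your argument instead propagates the negated hypothesis to \emph{all} unrelated pairs via transitivity on $\Lambda$, uses primitivity directly (through the congruence $N_x=N_y$) to manufacture a witness $z$ with $z\to\beta$ and $z\parallel\alpha$, and then re-applies the propagated inclusion to the fresh pair $\{z,\alpha\}$; the key observation that the pair of inclusions $\Gamma(u)\subseteq N_v$, $\Gamma(v)\subseteq N_u$ is symmetric in $u,v$ is what lets you avoid caring which of $(z,\alpha)$, $(\alpha,z)$ lies in $\Lambda$. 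Your approach is more self-contained---it bypasses the substantial Lemma~\ref{(3)and(4)} entirely and never explicitly invokes $3$-set-homogeneity or the non-self-pairedness of $\Lambda$---whereas the paper's argument is shorter once Lemma~\ref{(3)and(4)} is already in hand and rehearses the ``isomorphic rigid configurations force a swap'' technique that drives most of the subsequent lemmas in the section.
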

\begin{proof}
Suppose not. Then by Lemma~\ref{(3)and(4)}
 there are $\gamma,\delta$ with
$\beta \rightarrow \gamma\rightarrow \alpha$ and $\alpha\rightarrow\delta\rightarrow\beta$,
and there is an automorphism taking $(\beta,\gamma,\alpha)$ to $(\alpha,\delta,\beta)$, swapping $\alpha$ and $\beta$, which is impossible.
\end{proof}

So from now on, we may suppose without loss of generality that if $\beta\Rightarrow \alpha$ then 
there is $\gamma$ with $\beta\rightarrow \gamma$ and $\alpha||\gamma$. For,
 if the other possibility arising from Lemma~\ref{FromNowOn} held, then we would continue with the argument below, but with the orientation of every $\Rightarrow$-arc reversed. 

\begin{lemma}\label{(5)}
Suppose $\beta\Rightarrow \alpha$. Then 
\begin{enumerate}[(i)]
\item there is $\delta$
such that
$\alpha\rightarrow \delta\rightarrow \beta$;
\item there is no directed $2$-path $\beta\rightarrow \eta\rightarrow \alpha$, that is, $L_7$ does not embed;
\item there is $\epsilon||\beta$ with $\epsilon \rightarrow \alpha$;
\item there is no $\eta$ with $\eta\rightarrow \beta$ and $\eta||\alpha$; that is, $L_{12}$ and $L_{14}$ do not embed.
 \end{enumerate}
\end{lemma}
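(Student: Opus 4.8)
The plan is to treat the four assertions as statements about the three‑vertex configurations of Table~\ref{table_2}: part (i) says that $L_4$ (namely $\alpha\to\delta\to\beta$ with $\beta\Rightarrow\alpha$) is always realised, part (ii) that $L_7$ never embeds, part (iii) that some $\epsilon$ with $\epsilon\to\alpha$ and $\epsilon\parallel\beta$ exists, and part (iv) that neither $L_{12}$ nor $L_{14}$ embeds (these being exactly the two orientations of a vertex $\eta$ with $\eta\to\beta$ and $\eta\parallel\alpha$). The only ``engine'' I would use is the pair of rigidity facts that no automorphism of $M$ reverses an arc (since $\Gamma\neq\Gamma^*$) or reverses an oriented unrelated pair (since $\Lambda\neq\Lambda^*$, as $M$ is not $2$-homogeneous), together with the standing convention adopted after Lemma~\ref{FromNowOn} — that $\beta\Rightarrow\alpha$ yields a witness $\gamma$ with $\beta\to\gamma$ and $\alpha\parallel\gamma$ — and the non‑vanishing of the neighbourhood differences from Lemma~\ref{(3)and(4)}.

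Two of the parts reduce cleanly to the others. For (iii): if there were no $\epsilon$ with $\epsilon\to\alpha$ and $\epsilon\parallel\beta$, then each $\epsilon$ in the non‑empty set $\Gamma^*(\alpha)\setminus\Gamma^*(\beta)$ would have $\beta\to\epsilon$ (because $\epsilon\not\to\beta$ forces an arc, not independence), producing $\beta\to\epsilon\to\alpha$, an embedded $L_7$ forbidden by (ii); so I would deduce (iii) at once from (ii). For (i): if no $\delta$ satisfied $\alpha\to\delta\to\beta$, that is $\Gamma(\alpha)\cap\Gamma^*(\beta)=\varnothing$, then any $\delta\in\Gamma(\alpha)\setminus\Gamma(\beta)$ (non‑empty by Lemma~\ref{(3)and(4)}) would give $\alpha\to\delta$ with $\delta\parallel\beta$, an embedded $L_9$; since $L_9$ is isomorphic to $L_{13}$ or to $L_{14}$ according as $\beta\Rightarrow\delta$ or $\delta\Rightarrow\beta$, part (i) would follow once both $L_{13}$ and $L_{14}$ are excluded. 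Part (iv) supplies the exclusion of $L_{14}$ (and $L_{12}$), leaving $L_{13}$ to be ruled out by a separate swap argument applied to the arc $\gamma\to\alpha$ inside $L_{13}$, or by folding it back into (ii).

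The heart of the lemma, and the step I expect to be the main obstacle, is therefore the non‑existence assertions (ii) and (iv). The naive attack — assume $L_7$ embeds and use $3$‑set‑homogeneity to reverse the pair $\beta\Rightarrow\alpha$ — fails, for two reasons that I would have to circumvent: the three‑vertex structures carrying a $2$‑path $\alpha\to\delta\to\beta$ and a $2$‑path $\beta\to\eta\to\alpha$ are \emph{not} isomorphic (the orientation of the unrelated pair $\{\alpha,\beta\}$ tells them apart) and each is rigid, while mapping one arc of $L_7$ onto another merely reproduces $L_7$ shifted one step, so no single application of set‑homogeneity closes up. I therefore expect the genuine argument to require a four‑vertex configuration — obtained by adjoining the convention's witness $\gamma$ (with $\beta\to\gamma$, $\alpha\parallel\gamma$) or a vertex furnished by Lemma~\ref{(3)and(4)} — chosen so that two isomorphic copies of it are forced to overlap along $\alpha,\beta$ and thereby compel an illegal reversal of $\beta\Rightarrow\alpha$ or of an arc; alternatively, one extracts from the self‑similar tower generated by $L_7$ a finite sub‑configuration and derives a contradiction by a Ramsey/ordering argument of the type already used in the transitivity analysis in the proof of Lemma~\ref{(3)and(4)}. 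Identifying the correct auxiliary vertices so that every isomorphism of the resulting four‑point type is \emph{forced} to flip $\beta\Rightarrow\alpha$ is the delicate point on which the whole lemma turns.
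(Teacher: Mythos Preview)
Your central claim --- that the naive $3$-set-homogeneity attack on (ii) fails because ``the orientation of the unrelated pair $\{\alpha,\beta\}$ tells them apart'' --- is precisely backwards, and this is the gap. The relation $\Rightarrow$ is \emph{not} part of the language of $M$: $M$ is an a-digraph, carrying only $\to$, and set-homogeneity is stated for $\to$-isomorphisms of finite induced subdigraphs. The triples $\{\alpha,\delta,\beta\}$ (with $\alpha\to\delta\to\beta$, $\alpha\parallel\beta$) and $\{\alpha,\eta,\beta\}$ (with $\beta\to\eta\to\alpha$, $\alpha\parallel\beta$) are therefore isomorphic rigid a-digraphs, and set-homogeneity produces $g\in G$ inducing $(\alpha,\delta,\beta)\mapsto(\beta,\eta,\alpha)$; this $g$ interchanges $\alpha$ and $\beta$, contradicting $\Lambda\neq\Lambda^*$. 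That \emph{is} the paper's proof of (ii), in one line. The identical device, applied to $(\epsilon,\alpha,\beta)\mapsto(\eta,\beta,\alpha)$ with $\epsilon$ from (iii), gives (iv). No four-vertex configurations, Ramsey towers, or auxiliary orderings are needed; the mechanism you dismissed as ``naive'' is the whole argument.

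Your approach to (i) is also unnecessarily indirect. The paper proves (i) \emph{first}, using Lemma~\ref{(1)and(2)}(ii) rather than the exclusion of $L_{13}$ and $L_{14}$: take $\delta\in\Gamma(\alpha)\setminus\Gamma(\beta)$ (non-empty by Lemma~\ref{(3)and(4)}(ii)); if $\delta\parallel\beta$ then together with the convention's witness $\gamma$ (where $\beta\to\gamma$, $\alpha\parallel\gamma$) we have exactly the configuration of Lemma~\ref{(1)and(2)}(ii) with $\alpha'=\delta$, $\beta'=\gamma$, forcing $2$-homogeneity. Hence $\delta\to\beta$ and (i) holds. The logical flow is then the straight chain (i)$\Rightarrow$(ii)$\Rightarrow$(iii)$\Rightarrow$(iv), each step a single three-vertex rigidity swap. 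Your proposed route for (i), via the non-embedding of $L_{13}$, would drag in material that the paper only establishes in the \emph{following} lemma (Lemma~\ref{(6)}(ii)), and is avoidable.
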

\begin{proof}
By our assumption there is $\gamma$ with $\beta \to \gamma$ and $\alpha||\gamma$.

(i) By Lemma~\ref{(3)and(4)}(ii) there is $\delta\in \Gamma(\alpha)\setminus \Gamma(\beta)$, and by Lemma~\ref{(1)and(2)}(ii) (applied with $\alpha'=\delta$ and $\beta'=\gamma$) 
 we cannot have $\delta||\beta$. Hence $\delta \to \beta$. 

(ii) If such $\eta$ exists, then by 3-set-homogeneity there is $g\in G$ mapping $(\alpha,\delta,\beta)$ to $(\beta,\eta,\alpha)$ (where $\delta$ is as in (i)), and such 
$g$ swaps
$\alpha$ and $\beta$, a contradiction.

(iii) By Lemma~\ref{(3)and(4)}(ii) 
there is $\epsilon\in \Gamma^*(\alpha)\setminus \Gamma^*(\beta)$, and by (ii) we must have $\epsilon||\beta$. 

(iv) Suppose such $\eta$ exists. Then there is $g\in G$ inducing $(\epsilon,\alpha,\beta)\mapsto (\eta,\beta,\alpha)$ (with $\epsilon$ as in (iii))
and so swapping $\alpha$ and $\beta$, a contradiction.
\end{proof}

\begin{lemma}\label{(6)}
Suppose $\beta\Rightarrow \alpha$. Then 
\begin{enumerate}[(i)]
\item for any $\gamma\in \Gamma(\beta)\setminus \Gamma(\alpha)$, $\alpha\Rightarrow \gamma$;
\item for any $\gamma\in \Gamma^*(\alpha)\setminus\Gamma^*(\beta)$, $\gamma\Rightarrow \beta$ (so $L_{13}$ does not embed);
\item $\Gamma(\beta)\setminus \Gamma(\alpha)$ is an independent set.
\end{enumerate}
\end{lemma}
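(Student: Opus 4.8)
The plan is to prove the three parts of Lemma~\ref{(6)} using the structural information accumulated in Lemmas~\ref{(3)and(4)}--\ref{(5)}, always exploiting the fact that $\beta\Rightarrow\alpha$ cannot be ``reversed'' by any automorphism. Throughout I fix the standing assumption made just before Lemma~\ref{(5)}: whenever $\beta\Rightarrow\alpha$ there is some $\gamma$ with $\beta\to\gamma$ and $\alpha\parallel\gamma$. The recurring mechanism is that if two three-vertex configurations are isomorphic via a map that swaps $\alpha$ and $\beta$, then by $3$-set-homogeneity it extends to an automorphism interchanging $\alpha$ and $\beta$; but no automorphism can carry the ordered pair $(\beta,\alpha)\in\Lambda$ to $(\alpha,\beta)\in\Lambda^*$ since $\Lambda\neq\Lambda^*$ ($M$ is not $2$-homogeneous). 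This contradiction is the engine for ruling out unwanted adjacencies.

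For part (i), let $\gamma\in\Gamma(\beta)\setminus\Gamma(\alpha)$, which is non-empty by Lemma~\ref{(3)and(4)}(ii). The three possible relations between $\alpha$ and $\gamma$ are $\alpha\to\gamma$, $\gamma\to\alpha$, or $\alpha\parallel\gamma$; and in the independent case, the orientation must be decided between $\alpha\Rightarrow\gamma$ and $\gamma\Rightarrow\alpha$. First I would eliminate $\gamma\to\alpha$: this would give the directed $2$-path $\beta\to\gamma\to\alpha$, that is a copy of $L_7$, which is forbidden by Lemma~\ref{(5)}(ii). Next I would eliminate $\alpha\to\gamma$ by comparing the configuration $(\beta,\gamma,\alpha)$ (with $\beta\to\gamma$, $\alpha\to\gamma$, $\beta\Rightarrow\alpha$) against a configuration built from $\delta$ as in Lemma~\ref{(5)}(i), where $\alpha\to\delta\to\beta$, producing an $\alpha$--$\beta$ swap and a contradiction. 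Finally, among the two independence orientations I would use Lemma~\ref{(5)}(iii) and~(iv): the vertex $\gamma\in\Gamma(\beta)$ cannot satisfy $\gamma\Rightarrow\alpha$ together with $\gamma\parallel\alpha$ (this is exactly the forbidden $L_{12}$/$L_{14}$ pattern of Lemma~\ref{(5)}(iv), since $\gamma\to\beta$ fails but $\gamma\parallel\beta$-type comparisons close off the cases), leaving only $\alpha\Rightarrow\gamma$, as required.

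Part (ii) is dual to part (i): taking $\gamma\in\Gamma^*(\alpha)\setminus\Gamma^*(\beta)$, so $\gamma\to\alpha$ and $\gamma\not\to\beta$, I would rule out $\gamma\to\beta$ (already excluded by the choice of $\gamma$) and $\beta\to\gamma$ (this would be the $2$-path $\beta\to\gamma\to\alpha$, i.e.\ $L_7$ again, contradicting Lemma~\ref{(5)}(ii)), reducing to $\beta\parallel\gamma$; then the orientation $\beta\Rightarrow\gamma$ would yield a copy of $L_{13}$ ($\beta\Rightarrow\alpha$, $\gamma\to\alpha$, $\beta\Rightarrow\gamma$), which I would forbid by constructing the appropriate $\alpha$--$\beta$-swapping automorphism exactly as in Lemma~\ref{(5)}(iv), forcing $\gamma\Rightarrow\beta$ instead. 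This simultaneously establishes that $L_{13}$ does not embed. For part (iii), to show $\Gamma(\beta)\setminus\Gamma(\alpha)$ is independent I take distinct $\gamma_1,\gamma_2$ there; by part (i) we have $\alpha\Rightarrow\gamma_1$ and $\alpha\Rightarrow\gamma_2$, so both lie in $\Lambda(\alpha)$, and an arc between them (say $\gamma_1\to\gamma_2$) together with $\beta\to\gamma_1,\beta\to\gamma_2$ and $\alpha\Rightarrow\gamma_i$ gives a configuration I can compare with its $\alpha\leftrightarrow\beta$ transpose to derive a swap of the non-self-paired pair, a contradiction; hence no such arc exists.

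The main obstacle I anticipate is the careful bookkeeping in part (i): several of the small configurations $L_7,L_{12},L_{13},L_{14}$ look superficially similar, and I must be scrupulous about which ordered independent pairs lie in $\Lambda$ versus $\Lambda^*$, since the whole argument turns on the asymmetry $\Lambda\neq\Lambda^*$. In particular, each ``swap'' contradiction requires verifying that the two three-vertex structures really are isomorphic as $\{\to,\Rightarrow\}$-structures \emph{and} that the unique such isomorphism is the transposition of $\alpha$ and $\beta$ (using rigidity, i.e.\ that the configurations admit no non-trivial automorphism fixing the relevant vertices); getting the direction of each $\Rightarrow$-arc right in these comparisons is where an error would most easily creep in. Once the forbidden-configuration lemmas of Lemma~\ref{(5)} are invoked correctly, each case closes quickly, so the difficulty is organisational rather than conceptual.
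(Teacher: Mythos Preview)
Your outline for part~(i) contains an unnecessary detour: the case $\alpha\to\gamma$ is already excluded by the hypothesis $\gamma\notin\Gamma(\alpha)$, so there is nothing to rule out there. (Your proposed elimination of it, comparing $(\beta,\gamma,\alpha)$ with $\beta\to\gamma$, $\alpha\to\gamma$ against $(\alpha,\delta,\beta)$ with $\alpha\to\delta\to\beta$, would in any case compare an $L_2$-shape to an $L_4$-shape, which are not isomorphic digraphs.) After dropping that case, your plan for~(i) is essentially the paper's: $\gamma\to\alpha$ gives $L_7$ (Lemma~\ref{(5)}(ii)), and $\gamma\Rightarrow\alpha$ together with $\beta\to\gamma$ and $\beta\Rightarrow\alpha$ is precisely $L_{12}$, forbidden by Lemma~\ref{(5)}(iv); only $\alpha\Rightarrow\gamma$ remains.

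The genuine gap is in part~(ii). To rule out $\beta\Rightarrow\gamma$ by an ``$\alpha\leftrightarrow\beta$ swap exactly as in Lemma~\ref{(5)}(iv)'' you would need a vertex $\gamma'$ with $\gamma'\to\beta$ and $\gamma'\parallel\alpha$, so that the rigid digraph $\{\gamma,\alpha,\beta\}$ (one arc $\gamma\to\alpha$, two independent pairs) can be matched to $\{\gamma',\beta,\alpha\}$. But the existence of such a $\gamma'$ is exactly what Lemma~\ref{(5)}(iv) forbids, so the swap argument cannot get started. The paper uses a different idea here: since $(\beta,\gamma)$ and $(\beta,\alpha)$ both lie in the orbital $\Lambda$, there is $g\in G$ with $\beta^g=\beta$ and $\gamma^g=\alpha$; applying $g$ to $\gamma\to\alpha$ yields $\alpha\to\alpha^g=:\delta$, so $\delta\in\Gamma(\alpha)$, while applying $g$ to $\beta\Rightarrow\alpha$ yields $\beta\Rightarrow\delta$, so $\delta\parallel\beta$. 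Now the standing assumption supplies $\beta'\in\Gamma(\beta)$ with $\beta'\parallel\alpha$, and Lemma~\ref{(1)and(2)}(ii) (with $\alpha'=\delta$) forces $2$-homogeneity, a contradiction.

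Once~(ii) is established (so $L_{13}$ does not embed), part~(iii) is immediate without any further swap machinery: if $\gamma,\gamma'\in\Gamma(\beta)\setminus\Gamma(\alpha)$ with $\gamma\to\gamma'$, then by~(i) $\alpha\Rightarrow\gamma$ and $\alpha\Rightarrow\gamma'$, and $\{\alpha,\gamma,\gamma'\}$ is a copy of $L_{13}$.
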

\begin{proof}
(i) We cannot have $\gamma\rightarrow \alpha$ by Lemma~\ref{(5)}(ii). By Lemma~\ref{(5)}(iv), $\gamma\Rightarrow \alpha$ is impossible.
 The only remaining possibility is
$\alpha\Rightarrow \gamma$.

(ii) By Lemma~\ref{(5)}(ii), we cannot have $\beta \to \gamma$. If $\beta \Rightarrow \gamma$, then, by
 considering a map $(\beta, \gamma) \mapsto (\beta, \alpha)$,
there is $\delta \in \Gamma(\alpha)$ with $\delta ||\beta$, contrary to Lemma~\ref{(1)and(2)}(ii) (applied with ($\alpha'=\delta$). Thus,
 $\gamma \Rightarrow \beta$.

(iii) Suppose $\gamma,\gamma'\in \Gamma(\beta)\setminus\Gamma(\alpha)$, and $\gamma \rightarrow \gamma'$. By (i), $\alpha\Rightarrow\gamma$ and
$\alpha\Rightarrow \gamma'$. But now the configuration $\{\alpha,\gamma,\gamma'\}$ is a copy of $L_{13}$, contrary to (ii). \end{proof}

\begin{lemma}\label{(7)}
\
\begin{enumerate}[(i)]
\item There is no $\Gamma$-$3$-chain $\alpha\rightarrow\beta\rightarrow \gamma$ with $\alpha\rightarrow \gamma$.
\item  $\Gamma(\alpha)$ and $\Gamma^*(\alpha)$ are each independent sets.
\item  $\Lambda(\alpha)$ and $\Lambda^*(\alpha)$ are independent sets.
\item $L_6$ embeds in $M$, but $L_{10}$ does not embed.
\item  Each of $\Gamma(\alpha)$, $\Gamma^*(\alpha)$, $\Lambda(\alpha)$ and $\Lambda^*(\alpha)$ is linearly ordered 
by $\Rightarrow$, densely and without endpoints.
\end{enumerate}
\end{lemma}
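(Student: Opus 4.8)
The plan is to prove the five assertions in the order (ii), (i), (iii), (iv), (v), since (i) is a formal consequence of (ii), and the orderings in (v) rest on the independence statements (ii) and (iii) together with the acyclicity information carried by (i) and (iv). The two genuinely new facts to establish are the two \emph{acyclicity} properties: that $\Gamma(\alpha)$ carries no arc (equivalently, that no transitive arc-triangle $L_{11}$ embeds), and that no $\Rightarrow$-triangle $L_{10}$ embeds. Everything else is bookkeeping on top of Lemmas~\ref{(1)and(2)}--\ref{(6)}.

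First I would show that $\Gamma(\alpha)$ is an independent set for every vertex $\alpha$; this gives (ii) for out-neighbourhoods, and the in-neighbourhood case then follows at once, since an arc $b\to c$ inside $\Gamma^*(\alpha)$ would place the arc $c\to\alpha$ inside $\Gamma(b)$. Assume for contradiction that $\Gamma(\alpha)$ contains an arc $b\to c$. For any $\beta'\in\Lambda(\alpha)$ (these are infinite, by Lemma~\ref{(1)and(2)}(i)), Lemma~\ref{(6)}(iii) applied to $\alpha\Rightarrow\beta'$ tells us $\Gamma(\alpha)\setminus\Gamma(\beta')$ is independent, so $b$ or $c$ lies in $\Gamma(\beta')$; and if $c\notin\Gamma(\beta')$ then Lemma~\ref{(6)}(i) gives $\beta'\Rightarrow c$ while $\beta'\to b\to c$, contradicting Lemma~\ref{(5)}(ii). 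Hence $\beta'\to c$, i.e. $\Lambda(\alpha)\subseteq\Gamma^*(c)$. Since $G_\alpha$ is transitive on $\Gamma(\alpha)$ (by set-homogeneity applied to the arcs out of $\alpha$), every vertex of $\Gamma(\alpha)$ is the head of an arc inside $\Gamma(\alpha)$, so the same argument yields $\Lambda(\alpha)\subseteq\Gamma^*(d)$ for every $d\in\Gamma(\alpha)$; that is, $\Gamma(\alpha)\subseteq\Gamma(\beta')$ for each $\beta'\in\Lambda(\alpha)$. This forces $\Gamma(\alpha)\setminus\Gamma(\beta')=\varnothing$, contradicting Lemma~\ref{(3)and(4)}(ii). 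With (ii) in hand, (i) is immediate, since an $L_{11}$ would put an arc into $\Gamma(\alpha)$.

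For (iii), suppose $b,c\in\Lambda(\alpha)$ with $b\to c$; then $b\in\Gamma^*(c)\setminus\Gamma^*(\alpha)$, so Lemma~\ref{(6)}(ii) (for $\alpha\Rightarrow c$) gives $b\Rightarrow\alpha$, contradicting $\alpha\Rightarrow b$, and the case of $\Lambda^*(\alpha)$ is symmetric, using Lemma~\ref{(6)}(i). For (iv), $L_6$ embeds trivially: any two elements of the infinite independent set $\Lambda(\alpha)$ are $\Rightarrow$-comparable and, together with $\alpha$, form an $L_6$. To exclude $L_{10}$, suppose $a\Rightarrow c$, $c\Rightarrow b$, $b\Rightarrow a$. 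Lemma~\ref{(5)}(i) (for $c\Rightarrow b$) gives $q$ with $b\to q\to c$; we cannot have $q\to a$, else $b\to q\to a$ contradicts Lemma~\ref{(5)}(ii) (for $b\Rightarrow a$); thus $q\in\Gamma^*(c)\setminus\Gamma^*(a)$, so Lemma~\ref{(6)}(ii) (for $a\Rightarrow c$) gives $q\Rightarrow a$. But now $b\to q$ with $b\parallel a$ is exactly the configuration forbidden by Lemma~\ref{(5)}(iv) for the pair $q\Rightarrow a$ (an embedded $L_{12}$), a contradiction.

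Finally (v). By (ii) and (iii) each of the four suborbits is independent, so within it any two vertices are $\Rightarrow$-comparable; since (iv) forbids $\Rightarrow$-$3$-cycles, the resulting tournament is transitive, i.e. a strict linear order. For density and the absence of endpoints I would check that $G_\alpha$ induces a $2$-homogeneous group on each of these orders: given two $\Rightarrow$-comparable pairs inside, say, $\Lambda(\alpha)$, the two triples they form with $\alpha$ are both copies of the rigid configuration $L_6$, so set-homogeneity supplies $g\in G_\alpha$ carrying one ordered pair to the other; the analogous rigid triples handle $\Gamma(\alpha)$, $\Gamma^*(\alpha)$ and $\Lambda^*(\alpha)$. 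A countable linear order admitting a $2$-homogeneous automorphism group is dense and without endpoints, giving (v). The main obstacle is the pair of acyclicity steps -- excluding $L_{11}$ and $L_{10}$ -- because each requires manufacturing, from an arc in a neighbourhood (respectively a $\Rightarrow$-cycle), the precise three-vertex configuration already forbidden by Lemmas~\ref{(5)} and~\ref{(6)}; locating the right auxiliary vertex $q$ and invoking $G_\alpha$-transitivity together with the non-containment of Lemma~\ref{(3)and(4)} is the delicate part.
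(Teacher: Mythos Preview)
Your proof is correct, but two of your arguments diverge from the paper's in instructive ways.

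For (i)/(ii), you prove (ii) first by showing that an arc $b\to c$ in $\Gamma(\alpha)$ forces $\Gamma(\alpha)\subseteq\Gamma(\beta')$ for every $\beta'\in\Lambda(\alpha)$, contradicting Lemma~\ref{(3)and(4)}(ii). The paper instead attacks (i) directly: given $\alpha\to\beta\to\gamma$ with $\alpha\to\gamma$, it uses arc-transitivity (and the fact that $L_1$ embeds, from Lemma~\ref{(6)}(i)) to find $\delta$ with $\alpha\Rightarrow\delta\Rightarrow\gamma$, then rules out each possible relation between $\delta$ and $\beta$ via $L_7$, $L_{12}$, $L_{13}$. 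Both work; yours leans on $G_\alpha$-transitivity and the containment statement, the paper's on a single auxiliary vertex and a case split.

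The real difference is (iv). You exclude $L_{10}$ by manufacturing an auxiliary vertex $q$ and chasing through Lemmas~\ref{(5)} and~\ref{(6)} until you hit an $L_{12}$. The paper's argument is a one-liner: $L_6$ and $L_{10}$ are isomorphic \emph{as a-digraphs} (each is just an independent triple $\bar K_3$), so by $3$-set-homogeneity any embedded $L_{10}$ can be mapped by some $g\in G$ onto an embedded $L_6$; but $g$ preserves the orbital $\Lambda$ and hence $\Rightarrow$, so cannot send a cyclic $\Rightarrow$-triple to a linear one. This exploits directly the fact that $\Rightarrow$ is not part of the digraph language but is $G$-invariant, and it is worth internalising --- the same trick is implicit in the later analysis of the circular order.

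Parts (iii) and (v) are essentially the same in both proofs.
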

\begin{proof}
(i) Suppose such $\alpha,\beta,\gamma$ exist. Since $\alpha\rightarrow\gamma$, and in 
view of the triple $(\alpha,\beta,\gamma)$ of Lemma~\ref{(6)}(i),
there is $\delta$ with $\alpha\Rightarrow\delta\Rightarrow \gamma$. We cannot have 
$\delta||\beta$, since 
$\delta\Rightarrow \beta$ implies $(\delta,\beta,\gamma)$ carries $L_{13}$, and $\beta\Rightarrow \delta$ implies 
$(\alpha,\beta,\delta)$ carries $L_{12}$.
  Now $\beta\rightarrow\delta$ is impossible 
since  $\{\alpha,\beta,\delta\}$ would be isomorphic to $L_7$, contrary to Lemma~\ref{(5)}(ii), and likewise $\delta\rightarrow \beta$ 
is impossible because otherwise 
$\{\gamma,\delta,\beta\}$ carries $L_7$.

(ii) This is immediate from (i).

(iii) It suffices to observe that the configurations $L_{12}$ and $L_{13}$ do not embed. For these, see Lemmas~\ref{(5)}(iv) and \ref{(6)}(ii).

(iv) By Lemma~\ref{(1)and(2)}(i), each of $\Gamma(\alpha)$, $\Gamma^*(\alpha)$, $\Lambda(\alpha)$ and $\Lambda^*(\alpha)$ is infinite. Thus, by (iii) above, $L_6$ embeds in $M$, so by $3$-set-homogeneity, as $L_6$ and $L_{10}$ are digraph-isomorphic, $L_{10}$ does not embed.

(v)  It follows from (iv) that each of
$\Gamma(\alpha)$, $\Gamma^*(\alpha)$, $\Lambda(\alpha)$ and $\Lambda^*(\alpha)$ is linearly ordered by $\Rightarrow$. By $3$-set-homogeneity, it follows that $G_\alpha$ acts 2-homogeneously
on each of the four sets, so in each case,  the linear order is dense and without endpoints.
\end{proof}

Now fix $\alpha \in M$. It follows from Lemma~\ref{(7)}(v) that there is a unique linear ordering $<_\alpha$ on $M$, with $\alpha$ as the first point, with convex subsets
$\Lambda(\alpha)$, $\Lambda^*(\alpha)$, $\Gamma(\alpha)$, $\Gamma^*(\alpha)$ on which $<_\alpha$ extends $\Rightarrow$, and with
$\Lambda(\alpha)<\Gamma(\alpha)<\Gamma^*(\alpha)<\Lambda^*(\alpha)$. There is a corresponding 
\emph{circular ordering} (in the sense, say,  of \cite[11.3.3]{bmmn})
$K_\alpha$ on $M$ defined from $<_\alpha$, with $K_\alpha(x,y,z)$ holding if and only if $x<_\alpha y<_\alpha z$
 or $y<_\alpha z<_\alpha x$ or $z<_\alpha x<_\alpha y$. We shall show that $K_\alpha$ is $G$-invariant (not just 
$G_\alpha$-invariant). 

\begin{lemma}\label{(8)}
The structures $L_1-L_4$ and $L_6$ embed in $M$, but $L_7$--$L_{14}$ do not embed in $M$.
\end{lemma}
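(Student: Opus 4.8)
The plan is to verify the fourteen assertions one at a time, reusing the structural facts already established, and recalling that $M$ is set-homogeneous, not $2$-homogeneous, with primitive $G=\Aut(M)$, and that we have adopted the standing convention following Lemma~\ref{FromNowOn}: if $\beta\Rightarrow\alpha$ then there is $\gamma$ with $\beta\to\gamma$ and $\alpha\parallel\gamma$. Most of the non-embedding claims are direct citations: $L_7$ is Lemma~\ref{(5)}(ii); $L_{10}$ and $L_{11}$ are Lemma~\ref{(7)}(iv) and Lemma~\ref{(7)}(i); $L_{12}$ and $L_{14}$ are Lemma~\ref{(5)}(iv); and $L_{13}$ is Lemma~\ref{(6)}(ii). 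For $L_8$ and $L_9$ I would appeal to the isomorphisms of underlying configurations recorded in the definitions: a copy of $L_8$ has $\beta\Rightarrow\alpha$ together with the single arc $\gamma\to\beta$, so depending on the orbital of the remaining unrelated pair $\{\alpha,\gamma\}$ it is a copy of $L_{12}$ (if $\gamma\Rightarrow\alpha$) or of $L_{14}$ (if $\alpha\Rightarrow\gamma$); since neither embeds, $L_8$ does not. The identical reasoning reduces $L_9$ to $L_{13}$ or $L_{14}$, both excluded.

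For the embeddings, $L_6$ is Lemma~\ref{(7)}(iv), and $L_4$ is precisely the output of Lemma~\ref{(5)}(i): taking $\gamma=\delta$ we get $\beta\Rightarrow\alpha$ with $\alpha\to\gamma\to\beta$. For $L_1$ I would start from any unrelated pair with $\beta\Rightarrow\alpha$ (such pairs exist since $\Lambda\ne\Lambda^*$); the standing convention supplies $\gamma$ with $\beta\to\gamma$ and $\alpha\parallel\gamma$, so $\gamma\in\Gamma(\beta)\setminus\Gamma(\alpha)$, and Lemma~\ref{(6)}(i) then gives $\alpha\Rightarrow\gamma$, producing exactly a copy of $L_1$.

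The only constructions requiring a small idea are $L_2$ and $L_3$, and here I would exploit Lemma~\ref{(7)}(v). For $L_2$, fix any arc $\beta\to\gamma$ (arcs exist, as $M$ is not edgeless); then $\beta\in\Gamma^*(\gamma)$, which by Lemma~\ref{(1)and(2)}(i) is infinite and by Lemma~\ref{(7)}(v) is densely linearly ordered by $\Rightarrow$ without endpoints, so it contains some $\alpha$ with $\beta\Rightarrow\alpha$. Since $\Gamma^*(\gamma)$ is independent (Lemma~\ref{(7)}(ii)) we have $\alpha\parallel\beta$, and from $\alpha,\beta\in\Gamma^*(\gamma)$ we get $\alpha\to\gamma$ and $\beta\to\gamma$; thus $\{\alpha,\beta,\gamma\}$ realises $L_2$ with no extra relations. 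The configuration $L_3$ is obtained dually: fix an arc $\gamma\to\beta$ and choose, inside the (infinite, independent, densely $\Rightarrow$-ordered, endpoint-free) set $\Gamma(\gamma)$, a vertex $\alpha$ with $\beta\Rightarrow\alpha$, giving $\gamma\to\alpha$, $\gamma\to\beta$ and $\beta\Rightarrow\alpha$.

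I do not anticipate a serious obstacle: the lemma is essentially a catalogue that collects the earlier work into a single statement of which $3$-vertex types occur (note that the directed triangle $L_5$ is deliberately not among the claims). The one point to handle with care is the $L_8$/$L_9$ reduction, where I must check that the orbital ambiguity left open in their definitions is matched exactly against the already-excluded $L_{12}$, $L_{13}$, $L_{14}$, so that \emph{every} possible realisation is ruled out; once that is in place, the embeddings of $L_2$ and $L_3$ rest only on the density-without-endpoints clause of Lemma~\ref{(7)}(v).
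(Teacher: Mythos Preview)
Your proposal is correct and follows essentially the same approach as the paper: the non-embeddings of $L_7$--$L_{14}$ are cited exactly as you list them, and the embeddings of $L_1$, $L_4$, $L_6$ come from the same lemmas. The only minor difference is that for $L_2$ and $L_3$ the paper appeals just to $\Gamma(\alpha)$ and $\Gamma^*(\alpha)$ being infinite (Lemma~\ref{(1)and(2)}(i)) and independent (Lemma~\ref{(7)}(ii))---so any two elements in $\Gamma^*(\gamma)$ or $\Gamma(\gamma)$ are $\Rightarrow$-comparable---rather than invoking the full density-without-endpoints clause of Lemma~\ref{(7)}(v); but your argument is of course also valid.
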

We shall show in the next lemma that $L_5$ also embeds in $M$. 

\begin{proof}
The structure $L_1$ embeds by Lemma~\ref{(6)}(i); $L_2$ and $L_3$ embed since $\Gamma(\alpha)$ and $\Gamma^*(\alpha)$ are
 infinite (by Lemma~\ref{(1)and(2)}(i))
and are independent (by Lemma~\ref{(7)}(ii));
$L_4$ embeds by Lemma~\ref{(5)}(i), and $L_6$ by Lemma~\ref{(7)}(iv). 

The structure $L_7$, $L_{10}$, and $L_{11}$  do not embed, by Lemmas~\ref{(5)}(ii), \ref{(7)}(iv), and \ref{(7)}(i), respectively. Also,
 $L_{12}$, $L_{13}$ 
and $L_{14}$
do not embed, by 
 Lemma~\ref{(5)}(iv), Lemma~\ref{(6)}(ii),  and  Lemma~\ref{(5)}(iv),  respectively. Since $L_8$ is `$L_{12}$ or $L_{14}$', and $L_9$ is `$L_{13}$ or $L_{14}$', it follows that these do not embed too.
\end{proof}

 Suppose now that $\Sigma_1,\Sigma_2$ are distinct members of 
$\{\Lambda(\alpha),\Gamma(\alpha),\Gamma^*(\alpha),\Lambda^*(\alpha)\}$, and are viewed as sets totally ordered 
by $<_\alpha$.
 We say that \emph{$(\Sigma_1,\Sigma_2)$ is of $(\Phi,\Psi)$-type} if $\Phi,\Psi$ are distinct elements 
of $\Gamma,\Gamma^*,\Lambda,\Lambda^*$ and

\begin{enumerate}[(A)] 
\item for any $\beta\in \Sigma_1$, the set $\Phi(\beta)\cap\Sigma_2(\alpha)$ is a proper non-empty final segment of
 $\Sigma_2$ with no least element, and if $\beta_1,\beta_2\in \Sigma_1$ are distinct, then
 $\Phi(\beta_1)\cap \Sigma_2\neq \Phi(\beta_2)\cap \Sigma_2$;
\item  for any $\beta\in \Sigma_1$, we have $\Sigma_2\setminus \Phi(\beta)\subseteq \Psi(\beta)$;
\item  for any $\gamma\in \Sigma_2$, the set $\Phi^*(\gamma)\cap\Sigma_1$ is a proper non-empty initial segment of $\Sigma_1$ with no
 greatest element, 
and if $\gamma_1,\gamma_2\in \Sigma_2$ are distinct then $\Phi^*(\gamma_1)\cap \Sigma_1\neq \Phi^*(\gamma_2) \cap \Sigma_1$;
\item for any $\gamma\in \Sigma_2$, we have $\Sigma_1\setminus\Phi^*(\gamma)\subseteq \Psi^*(\gamma)$.
\end{enumerate}

We now determine the types for all possible pairs $\Sigma_1,\Sigma_2$. 

\begin{lemma}\label{(9)}
\
\begin{enumerate}[(i)]
\item $(\Lambda(\alpha),\Gamma(\alpha))$ is of type $(\Gamma,\Lambda)$.
\item $(\Lambda(\alpha),\Gamma^*(\alpha))$ has type $(\Gamma^*,\Gamma)$.
\item $(\Lambda(\alpha),\Lambda^*(\alpha))$ has type $(\Lambda^*,\Gamma^*)$.
\item $(\Gamma(\alpha),\Lambda^*(\alpha))$ has type  $(\Gamma^*,\Gamma)$.
\item $(\Gamma^*(\alpha),\Lambda^*(\alpha))$ has type $(\Gamma,\Lambda)$.
\item $(\Gamma(\alpha),\Gamma^*(\alpha))$ has type $(\Gamma,\Lambda)$.
\item $L_5$ embeds in $M$.
\end{enumerate}
\end{lemma}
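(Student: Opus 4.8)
The plan is to prove (i)--(vi) by a single uniform method and to read off (vii) from (vi). I work throughout with the linear order $<_\alpha$ and its four convex, densely $\Rightarrow$-ordered blocks $\Lambda(\alpha)<_\alpha\Gamma(\alpha)<_\alpha\Gamma^*(\alpha)<_\alpha\Lambda^*(\alpha)$ (Lemma~\ref{(7)}(v)), and with the list of three-vertex configurations that do and do not embed in $M$ (Lemma~\ref{(8)}). The clauses (A)--(D) split into two halves: (A),(B) describe how a fixed $\beta$ in the lower block $\Sigma_1$ meets the upper block $\Sigma_2$, while (C),(D) are the same statements viewed from a fixed $\gamma\in\Sigma_2$. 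Since the converse a-digraph $M^*$ is again set-homogeneous, not $2$-homogeneous and primitive, and since passing to $M^*$ interchanges $\Gamma\leftrightarrow\Gamma^*$ and $\Lambda\leftrightarrow\Lambda^*$ and reverses $<_\alpha$, clauses (C),(D) for any one case are precisely clauses (A),(B) for the matching case in $M^*$; under this symmetry (i) corresponds to (v), (ii) to (iv), and (iii) and (vi) are self-dual. Hence it suffices to establish (A) and (B) in each of the six cases.

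For a fixed case I would first determine, for $\beta\in\Sigma_1$ and $\gamma\in\Sigma_2$, exactly which orbital can relate $\beta$ and $\gamma$. Since the relation of each of $\beta,\gamma$ to $\alpha$ is fixed, each candidate orbital for the pair $\{\beta,\gamma\}$ completes $\{\alpha,\beta,\gamma\}$ to one of $L_1$--$L_{14}$; the non-admissible orbitals are exactly those giving a configuration from the non-embedding list $L_7$--$L_{14}$, so only the two named orbitals $\Phi,\Psi$ survive, which is clause (B). The monotonicity statements then follow from the same triple analysis: if $\gamma<_\alpha\gamma'$ in $\Sigma_2$ then $\gamma\Rightarrow\gamma'$, and examining $\{\beta,\gamma,\gamma'\}$ shows that $\gamma\in\Phi(\beta)$ forces $\gamma'\in\Phi(\beta)$, so $\Phi(\beta)\cap\Sigma_2$ is a final segment; examining instead $\{\beta_1,\beta_2,\gamma\}$ for $\beta_1\Rightarrow\beta_2$ in $\Sigma_1$ shows that the cuts are nested, which yields the injectivity required in (A).

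It remains to see that each cut $\Phi(\beta)\cap\Sigma_2$ is proper, non-empty, and has no least element. The absence of a least element is immediate from density of $<_\alpha$ on $\Sigma_2$ together with $2$-homogeneity of $G_\alpha$ there (Lemma~\ref{(7)}(v)). For properness and non-emptiness I use that $G_\alpha$ is transitive on $\Sigma_1$ (same lemma): the properties ``the cut is empty'' and ``the cut is all of $\Sigma_2$'' are $G_\alpha$-invariant, so if either held for one $\beta$ it would hold for all, making the map $\beta\mapsto(\Phi(\beta)\cap\Sigma_2)$ constant and contradicting the injectivity just established (as $|\Sigma_1|>1$ by Lemma~\ref{(1)and(2)}(i)); hence every cut is a proper non-empty final segment. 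The nesting is moreover strict by density together with Lemma~\ref{(3)and(4)}(i) and primitivity, which forbid two distinct vertices from having identical neighbourhoods. Finally, (vii) drops out of (vi)(A): choosing $\beta\in\Gamma(\alpha)$ and any $\gamma$ in the non-empty set $\Gamma(\beta)\cap\Gamma^*(\alpha)$ yields $\alpha\to\beta\to\gamma\to\alpha$, a copy of $L_5$.

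I expect the main obstacle to be the bookkeeping in clauses (A) and (B): for each case one must match every relevant three-vertex pattern against $L_1$--$L_{14}$ and quote the correct embedding or non-embedding fact, keeping the orientation of $\Rightarrow$ straight throughout (the convention fixed after Lemma~\ref{FromNowOn} breaks the $\Lambda\leftrightarrow\Lambda^*$ symmetry, so the orientations genuinely matter). The single most delicate point is the non-emptiness of the $\Phi$-part of each cut, that is, ruling out the degenerate possibility that $\beta$ meets all of $\Sigma_2$ in the orbital $\Psi$ alone; this is exactly what the transitivity-plus-injectivity argument above is designed to handle, and it is also precisely the input that produces the directed triangle $L_5$.
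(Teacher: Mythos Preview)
Your overall architecture is close to the paper's, but the heart of your argument --- the ``transitivity plus injectivity'' step for non-emptiness and properness of the cuts --- is circular. You deduce injectivity from nesting, but nesting of final segments only gives a weak containment; to get strict nesting you invoke Lemma~\ref{(3)and(4)}(i) and primitivity, which say that two distinct vertices have different \emph{global} $\Gamma$-neighbourhoods, not that their intersections with a particular $\Sigma_2$ differ. So injectivity is not independently established, and hence your non-emptiness argument (``if all cuts were empty the map would be constant, contradicting injectivity'') has no traction. The paper avoids this circle in (i)--(v) by appealing directly to Lemma~\ref{(8)}: for instance in (i), the cut $\Gamma(\beta)\cap\Gamma(\alpha)$ is non-empty because $\{\alpha,\beta\}$ lies in a copy of $L_2$, and proper because it lies in a copy of $L_1$; injectivity is then obtained from 2-homogeneity of $G_\alpha$ on $\Lambda(\alpha)$ together with the existence of such configurations.

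The place where your gap really bites is case (vi). There the non-emptiness of $\Gamma(\beta)\cap\Gamma^*(\alpha)$ for $\beta\in\Gamma(\alpha)$ is \emph{exactly} the assertion that $L_5$ embeds --- this is (vii), and Lemma~\ref{(8)} conspicuously omits $L_5$ from the list of configurations already known to embed. So you cannot quote an embedded $L_i$ here, and your uniform method has nothing to fall back on. The paper treats this as the crux: assuming for contradiction that $\beta\Rightarrow\gamma$ for all $\beta\in\Gamma(\alpha)$ and $\gamma\in\Gamma^*(\alpha)$, it uses the already-proved parts (iv) and (v) to locate $\delta_1,\delta_2\in\Lambda^*(\alpha)$ and $\gamma_1,\gamma_2\in\Gamma^*(\alpha)$ whose relations, combined with a $3$-set-homogeneity move in $G_{\alpha\beta}$, force an impossible arc reversal. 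You need an argument of this kind; the injectivity shortcut does not supply it.
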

\begin{proof}
We just prove (i), 
as parts (ii)-(v) are dealt with using similar arguments to (i). We also comment 
on the proof of (vi) and (vii). We make repeated use of Lemma~\ref{(8)} throughout. 

(A) Let $\beta\in \Lambda(\alpha)$. To see that $I:=\Gamma(\beta)\cap \Gamma(\alpha)$ is a final segment of $\Gamma(\alpha)$, suppose that
  $\gamma,\gamma'\in \Gamma(\alpha)$ with
$\gamma\Rightarrow \gamma'$, and $\beta\rightarrow \gamma$. As $\{\beta, \gamma, \gamma'\}$ cannot be $L_8$, $\beta||\gamma'$ is 
impossible, 
and as $\{\alpha, \beta,\gamma'\}\not\cong L_7$ we cannot have
$\gamma'\rightarrow \beta$, so $\beta\rightarrow \gamma'$. To see that $I\neq \varnothing$, observe that by 2-set-homogeneity,
$\{\alpha,\beta\}$ lies in a copy of $L_2$. Also, $I\neq \Gamma(\alpha)$ as $\{\alpha,\beta\}$ lies in a copy of $L_1$. If $I$ has a least 
element, $\gamma$ say, and $\gamma'\in I\setminus\{\gamma\}$,
then by 3-set-homogeneity there is $g\in G$ with $(\alpha,\beta,\gamma)^g=(\alpha,\beta,\gamma')$, and this is clearly impossible.

If $\beta_1,\beta_2\in \Lambda(\alpha)$ are distinct and $\Gamma(\beta_1) \cap \Gamma(\alpha)=\Gamma(\beta_2)\cap \Gamma(\alpha)$,
 there is a $G_\alpha$-congruence on $\Lambda(\alpha)$ given by
$u\equiv v$ if and only if
$\Gamma(u)\cap \Gamma(\alpha)=\Gamma(v) \cap \Gamma(\alpha)$. By 2-homogeneity of 
$G_\alpha$ on $\Lambda(\alpha)$, there is a single $\equiv$-class on $\Lambda(\alpha)$. In this case, if
 $\beta\in\Lambda(\alpha)$ and $\gamma\in \Gamma(\beta)\cap\Gamma(\alpha)$, then for all 
$\beta'\in \Lambda(\alpha)$ we have $\beta'\rightarrow\gamma$. This contradicts that $\{\alpha,\gamma\}$ lies in a copy of $L_1$.

(B) To see that if $\beta\in \Lambda(\alpha)$ then $\Gamma(\alpha)\setminus\Gamma(\beta)\subseteq \Lambda(\beta)$, observe that
$L_7$ and $L_{12}$ do not embed in $M$.

(C) Let $\gamma\in \Gamma(\alpha)$. To see that $J:= \Gamma^*(\gamma)\cap\Lambda(\alpha)$ is an initial segment of
 $\Lambda(\alpha)$, observe that 
if $\beta,\beta'\in \Lambda(\alpha)$ with $\beta\Rightarrow\beta'$ and $\beta'\rightarrow \gamma$, then $\beta,\gamma$ are not
 independent as otherwise $\{\beta,\beta',\gamma\}\cong L_{9}$, and 
 $\gamma\not\rightarrow\beta$ as otherwise $\{\alpha,\beta,\gamma\} \cong L_7$. Now $J\neq \varnothing$ as $\{\alpha,\gamma\}$ lies in some
 $L_2$, and $J\neq \Lambda(\alpha)$ as $\{\alpha,\gamma\}$ lies in some $L_1$. The facts that $J$ has no greatest element,
 and that distinct elements $\gamma$ determine distinct sets $J$, are much as in (A).
 
(D) Suppose $\beta\in \Lambda(\alpha)\setminus \Gamma^*(\gamma)$. Then $\gamma\not\rightarrow \beta$ as 
$\{\alpha,\beta,\gamma\}\not\cong
L_7$, and $\gamma\not\Rightarrow \beta$ as $\{\alpha,\gamma,\beta\}\not\cong L_{12}$. Thus, $\beta\Rightarrow\gamma$.

This proves (i). We now make some comments on (vi) and (vii). 

Suppose $\beta\in\Gamma(\alpha)$ and $\gamma\in \Gamma^*(\alpha)$. Then $\gamma\not\Rightarrow \beta$ as
$\{\alpha,\beta,\gamma\}\not\cong L_7$, and $\gamma\not\rightarrow\beta$ as $\{\alpha,\beta,\gamma\}\not\cong L_{11}$. Thus,
 the only possibilities are that
$\beta\Rightarrow\gamma$ and $\beta\rightarrow\gamma$. Clearly, for any $\beta\in \Gamma(\alpha)$ there is some 
$\gamma\in \Gamma^*(\alpha)$ with 
$\beta\Rightarrow\gamma$, as $\{\alpha,\beta\}$ lies in some
$L_4$. The key point is to check that $\{\alpha,\beta\}$ also lies in some $L_5$, i.e., that $L_5$ embeds in $M$ (so (vii) holds); for then there is $\gamma\in \Gamma^*(\alpha)$ with $\beta \to \gamma$, and the remaining details of (vi) follow easily as in (i).

So suppose for a contradiction that $\beta\Rightarrow\gamma$ for all $\beta\in \Gamma(\alpha)$ and $\gamma\in \Gamma^*(\alpha)$.
By (iv), there are $\delta_1,\delta_2\in \Lambda^*(\alpha)$ with $\beta\rightarrow\delta_1$ and $\delta_2\rightarrow\beta$. 
Then $\delta_1\Rightarrow\delta_2$, as $\{\beta,\delta_1,\delta_2\} \not\cong L_7$. Using (v), pick $\gamma_1,\gamma_2\in \Gamma^*(\alpha)$, with $\gamma_1\rightarrow\delta_1$ 
and $\gamma_2\Rightarrow\delta_2$. 
By 3-set-homogeneity, there is $g\in G_{\alpha\beta}$ with $\gamma_1^g=\gamma_2$. Then 
$\gamma_2 \rightarrow \delta_1^g$, so as $\gamma_2 \Rightarrow \delta_2$ and $\delta_1^g,\delta_2\in \Lambda^*(\alpha)$, by (v) we have 
$\delta_2\Rightarrow \delta_1^g$. Thus, as  $\delta_2\to \beta$, by (iv) we also have
$\delta_1^g\rightarrow \beta$, a contradiction as $\beta\rightarrow\delta_1$ and $\beta^g=\beta$. This yields (vii), and hence (vi).
\end{proof}

There is a natural notion of {\em convex subset} of a circular ordering (in the sense of \cite[11.3.3]{bmmn}) $K$ on $M$: here, $I\subset M$ is convex if, given
 distinct
$\beta,\gamma\in I$, either $K(\beta,\gamma,\alpha)$ for all $\alpha\in M\setminus I$, or $K(\beta,\alpha,\gamma)$ for all 
$\alpha \in M\setminus I$. With $<_\alpha$ defined as in the paragraph before Lemma~\ref{(8)}, a subset of $M$ will be convex in the sense of the circular ordering on $M$, if and only if it is convex in the sense of the linear order, or is the union of an initial and a final segment of $(M,<_\alpha)$.

\begin{lemma}\label{(10)}
\
\begin{enumerate}[(i)]
\item The circular ordering $K_\alpha$ is independent of the choice of $\alpha$, so can be denoted by $K$.
\item For any $\beta$, each $G_\beta$-orbit is a convex subset of $M$ with respect to $K$.
\item Suppose that $A=\{\alpha_1,\ldots,\alpha_n\}\subset M$ is finite, and $A_i$ is an infinite $G_{\alpha_i}$-orbit for each $i=1,\ldots,n$, and that $I:=A_1\cap \ldots\cap A_n \neq \emptyset$. Then $I$ is convex. 
\end{enumerate}
\end{lemma}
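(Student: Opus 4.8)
\textbf{Proof plan for Lemma~\ref{(10)}.}
The plan is to prove the three parts in order, with (i) and (ii) feeding into (iii). For part (i), I would fix distinct vertices $\alpha,\beta$ and show $K_\alpha = K_\beta$. The linear order $<_\alpha$ was constructed so that $\Lambda(\alpha) <_\alpha \Gamma(\alpha) <_\alpha \Gamma^*(\alpha) <_\alpha \Lambda^*(\alpha)$, with $<_\alpha$ extending $\Rightarrow$ on each of the four convex pieces; the induced circular order $K_\alpha$ is therefore determined by the relations $\Gamma,\Gamma^*,\Lambda,\Lambda^*$ together with the $<_\alpha$-orderings inside the pieces. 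The key observation is that Lemma~\ref{(9)} tells us exactly how each piece of the partition around $\alpha$ sits relative to each piece of the partition around $\beta$: each of the type statements (A)--(D) there says precisely which of $\Gamma,\Gamma^*,\Lambda,\Lambda^*$ relates a given vertex of one piece to the vertices of another, and that the relevant $\Phi(\beta)\cap\Sigma_2$ is a proper non-empty final (or initial) segment with respect to $<_\alpha$ with no endpoint, distinct vertices giving distinct segments. From this data one checks directly that for any three vertices $x,y,z$, the truth value of $K_\alpha(x,y,z)$ can be recomputed purely from $\Gamma,\Gamma^*,\Lambda,\Lambda^*$ — i.e.\ $K_\alpha$ is a relation definable without parameters from the digraph structure — hence $G$-invariant, hence equal to $K_\beta$ for every $\beta$ (since $g\in G$ with $\alpha^g=\beta$ sends $K_\alpha$ to $K_\beta$ but also fixes $K_\alpha$ as it is $G$-invariant). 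I would carry this out by a short case analysis on where $x,y,z$ lie in the partition around a fixed basepoint, using Lemma~\ref{(9)}(i)--(vi) to read off the order.

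For part (ii), let $\beta\in M$ and let $O$ be a $G_\beta$-orbit. If $O$ is one of the four suborbits $\Lambda(\beta),\Gamma(\beta),\Gamma^*(\beta),\Lambda^*(\beta)$ (and by Lemma~\ref{(7)}(ii),(iii) together with set-homogeneity these are the $G_\beta$-orbits on $M\setminus\{\beta\}$, while $\{\beta\}$ itself is trivially convex), then $O$ is a convex subset of $(M,<_\beta)$ by the very construction of $<_\beta$ preceding Lemma~\ref{(8)}, so $O$ is convex with respect to $K = K_\beta$. Thus part (ii) is essentially a restatement of the construction, once part (i) identifies $K_\beta$ with the global $K$.

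For part (iii), I would argue that an intersection of convex subsets of a circular order need not be convex in general, so the content is that when the convex sets are whole $G_{\alpha_i}$-orbits through various points, the intersection stays convex. The strategy: by part (ii), each $A_i$ is a convex subset of $(M,K)$ determined by $\alpha_i$ — concretely, $A_i$ is one of $\Lambda(\alpha_i),\Gamma(\alpha_i),\Gamma^*(\alpha_i),\Lambda^*(\alpha_i)$, which (reading the construction of $<_{\alpha_i}$) is a ``circular interval'' of the form $(\alpha_i, \text{something})$ or $(\text{something},\alpha_i)$ with $\alpha_i$ itself as one endpoint sitting on the boundary. The crucial structural fact I expect to need is that each such $A_i$ is convex \emph{and} has $\alpha_i$ lying just outside it on a definite side (e.g.\ $\Gamma(\alpha_i)$ is bounded below, within $<_{\alpha_i}$, by $\alpha_i$ placed immediately before $\Lambda(\alpha_i)$ — wait, more carefully: $\alpha_i$ is the first point of $<_{\alpha_i}$ and $\Gamma(\alpha_i)$ is an interior convex block, so in the circular picture $\alpha_i$ abuts $A_i$ on one side). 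Then I would show that a finite intersection $I = \bigcap A_i$ of convex circular intervals, none of which is all of $M$ and each carrying a marked boundary point $\alpha_i$ outside it, is again convex: if $I$ were not convex, being an intersection of convex sets it would have to be the whole circle minus at most finitely many gaps, but the marked points $\alpha_i\notin I$ force enough structure to pin $I$ down as a genuine interval. The cleanest way is probably: order the $2n$ ``endpoints'' of the $A_i$ around the circle $K$ and observe that since $I\neq\emptyset$, there is an arc containing no endpoint in its interior that meets all $A_i$; one then shows every point of $I$ lies in the unique maximal such arc.

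\textbf{Main obstacle.} The genuinely delicate point is part (iii): proving that the finite intersection stays convex rather than breaking into several arcs. For two convex subsets of a circular order the intersection can already have two components, so one must exploit that these are not arbitrary convex sets but the specific suborbits $\Lambda(\alpha_i)$ etc., each ``anchored'' at its defining vertex $\alpha_i$ which lies on the boundary and outside the set. I anticipate the argument will need the type information of Lemma~\ref{(9)} once more — specifically, that when $\alpha_i\in A_j$ (for $i\neq j$) the boundary behaviour of $A_i$ relative to $A_j$ is controlled (no endpoints, segments are proper and nested compatibly), so that the boundaries interleave around the circle in only finitely many patterns, all of which yield a connected intersection. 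Handling this interleaving cleanly, rather than by an unwieldy case explosion in $n$, is where I would spend most of the effort; a reduction to $n=2$ by induction (intersecting $A_1\cap\cdots\cap A_{n-1}$, known convex, with $A_n$) plus a careful two-set analysis using that both sets are anchored suborbits is the route I would try first.
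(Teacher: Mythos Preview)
Your approach to parts (i) and (ii) is essentially the paper's: for (i), the paper also reduces to a case analysis over the four $G_\alpha$-orbits containing the second basepoint and checks, using Lemma~\ref{(9)}, that the two circular orderings agree; your reformulation via definability and $G$-invariance is a clean way to package that same check. Part (ii) is, as you say, immediate once (i) is known.

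For part (iii) your plan has a genuine gap. The inductive reduction you propose --- assume $A_1\cap\cdots\cap A_{n-1}$ is convex, then intersect with $A_n$ --- loses exactly the structure you say you need: the set $A_1\cap\cdots\cap A_{n-1}$ is no longer a suborbit anchored at any single vertex, so your ``two-set analysis using that both sets are anchored suborbits'' does not apply to it. The endpoint-interleaving idea is also left too vague to carry the argument.

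The paper's route is shorter and avoids induction. The key observation you are missing is extracted directly from the type data in Lemma~\ref{(9)}: for any $\beta\neq\alpha$, each $G_\beta$-orbit $J$ is contained in the union of $\{\alpha\}$ and \emph{two} of the four $G_\alpha$-orbits. Consequently $J$ cannot meet a single $G_\alpha$-orbit $I$ in a proper union of an initial and a final segment of $I$; for if it did, then (both being convex arcs) $I\cup J=M$, so $M$ would be $\{\alpha\}$ together with at most three $G_\alpha$-orbits, contradicting the existence of four. Hence $A_j\cap A_1$ is an interval of the linearly ordered set $A_1$ for each $j\geq2$, and an intersection of intervals in a linear order is an interval --- giving convexity of $I=A_1\cap\cdots\cap A_n$ immediately, with no induction and no endpoint bookkeeping.
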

\begin{proof}
This follows easily from Lemma~\ref{(9)}.

(i) Consider any $\alpha'\neq \alpha$; there are four cases to consider -- the four $G_\alpha$
 orbits on $M\setminus\{\alpha\}$. In each case, patch together the circular ordering $K_{\alpha'}$ from
$\Lambda(\alpha')$, $\Gamma(\alpha')$, $\Gamma^*(\alpha')$, $\Lambda^*(\alpha')$, and check that 
$K_{\alpha'}$ agrees with $K_\alpha$. 

(ii) This follows immediately from (i), since clearly, by construction of $K_\alpha$,
 each $G_\alpha$-orbit is a convex subset of $K_\alpha$.
 
 (iii) It suffices to observe that if $I$ is one of $\Lambda(\alpha)$, $\Gamma(\alpha)$, $\Gamma^*(\alpha)$, $\Lambda^*(\alpha)$, and $\beta\in M$ with $\beta\neq \alpha$, then there is no $G_\beta$-orbit $J$ which intersects $I$ properly in the union of an initial segment and a final segment of $I$. This follows by a case analysis from Lemma~\ref{(9)}. The main point is that  each $G_\beta$-orbit $J$ is convex and, by Lemma~\ref{(9)}, lies 
in the union of $\{\alpha\}$ and {\em two} $G_\alpha$-orbits. This suffices, for if $J$ intersected $I$ properly in the union of an initial and final segment of $I$, then $M$ would equal
$I\cup J$ and would be the union of $\{\alpha\}$ and three $G_\alpha$-orbits, which is impossible.
\end{proof}

\begin{proposition} \label{recover1} Let $M$ be a set-homogeneous but not $2$-homogeneous countably infinite a-digraph, and suppose that
$G:=\Aut(M)$ is primitive. Then $M\cong T(4)$.
\end{proposition}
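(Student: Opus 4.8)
The plan is to show that the circular ordering $K$ constructed in Lemma~\ref{(10)} exhibits $M$ as the unique countable structure $T(4)$, by proving that $M$ (together with the four $G$-orbitals $\Gamma,\Gamma^*,\Lambda,\Lambda^*$ laid out convexly around $K$) satisfies exactly the same back-and-forth extension axioms as $T(4)$. Recall from the discussion preceding Lemma~\ref{not2hom} that $T(4)$ is characterised up to isomorphism by a standard back-and-forth argument, so it suffices to verify that $M$ and $T(4)$ realise the same finite configurations and that any finite partial isomorphism of $M$ extends by one point. First I would record what $K$ tells us: by Lemma~\ref{(10)}(i) there is a single $G$-invariant circular order, and by Lemma~\ref{(9)} the four suborbits $\Lambda(\alpha),\Gamma(\alpha),\Gamma^*(\alpha),\Lambda^*(\alpha)$ occur as four consecutive convex arcs around $K$ in that cyclic order, each densely linearly ordered without endpoints (Lemma~\ref{(7)}(v)). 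This precisely matches the picture of $T(4)$: points on the circle with $y\to x$ iff the argument of $y/x$ lies in $(\pi/2,\pi)$, so that the four quadrant-arcs seen from any fixed point are exactly the out-neighbours, in-neighbours, and the two classes of unrelated points.

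Next I would set up the isomorphism via a back-and-forth. Fix a reference point in $T(4)$ and use its induced circular order to identify $T(4)$ with $(\mathbb{Q}\text{-like circle}, K)$ carrying the same orbital data. The key reduction is that, by $G$-invariance of $K$ and by Lemma~\ref{(9)}, the relation $x\to y$ is completely determined by the cyclic position of $y$ relative to $x$: $y\in\Gamma(x)$ iff $y$ lies in the second of the four arcs read off from $x$. Thus both $M$ and $T(4)$ are \emph{reducts of a dense circular order together with a partition of each ``quadrant''}, and the a-digraph relation is quantifier-free definable from the circular order in the same way in each. It then remains to check the extension step: given a finite partial isomorphism $\theta\colon U\to V$ between finite subsets of $M$ and a new point to be matched in $T(4)$ (or vice versa), one must find an image realising the correct circular-order type over $U$. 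Here the density and endpoint-freeness of the order on each of the four arcs (Lemma~\ref{(7)}(v)), together with the convexity and ``no-third-orbit'' facts of Lemma~\ref{(10)}(ii),(iii), guarantee that every consistent one-point extension type is realised, so the extension succeeds on both sides.

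The main obstacle, and the step I would spend the most care on, is showing that the orbital structure is genuinely \emph{rigidly} encoded by the circular order, i.e.\ that there are no ``hidden'' refinements of the four arcs that $T(4)$ lacks (or that $M$ lacks). Concretely, one must argue that for a finite tuple the isomorphism type in $M$ is determined entirely by the cyclic order type of the tuple, with no further invariant; the type-conditions (A)--(D) of Lemma~\ref{(9)} are exactly what rules out such refinements, since they pin down $\Gamma(\beta)\cap\Sigma_2$ as a final segment determined by $\beta$ and injective in $\beta$, leaving no room for an automorphism-invariant sub-partition. Once this is established, set-homogeneity of $M$ plus the matching set-homogeneity of $T(4)$ (Lemma~\ref{not2hom}(ii)) let the back-and-forth run symmetrically. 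I would therefore structure the final argument as: (1) read off the cyclic arc decomposition from $K$ and Lemma~\ref{(9)}; (2) observe that $\to$ is the same quantifier-free function of $K$ in $M$ and in $T(4)$; (3) run the one-point extension using density and Lemma~\ref{(10)}, concluding $M\cong T(4)$.

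\begin{proof}
By Lemma~\ref{(10)} there is a $G$-invariant circular order $K$ on $M$, and by Lemma~\ref{(7)}(v) together with Lemma~\ref{(9)} the four sets $\Lambda(\alpha),\Gamma(\alpha),\Gamma^*(\alpha),\Lambda^*(\alpha)$ appear as four consecutive convex arcs of $K$, each densely ordered without endpoints. Conditions (A)--(D) of Lemma~\ref{(9)} show that, for any $x$, the relation $x\to y$ holds precisely when $y$ lies in the arc $\Gamma(x)$, so $\to$ is quantifier-free definable from $K$; the same definition holds in $T(4)$ by construction. Both $M$ and $T(4)$ are thus reducts of a countable dense circular order with the induced four-arc partition, and their a-digraph relations coincide as functions of $K$. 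Since each arc is dense without endpoints and, by Lemma~\ref{(10)}(ii),(iii), no $G_\beta$-orbit refines this arc structure, every consistent one-point extension type over a finite set is realised in $M$, and likewise in $T(4)$. A standard back-and-forth argument (as for the uniqueness of $T(4)$ noted before Lemma~\ref{not2hom}), using set-homogeneity of $M$ and of $T(4)$, now builds an isomorphism $M\cong T(4)$.
\end{proof}
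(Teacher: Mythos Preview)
Your overall strategy---run a back-and-forth between $M$ and $T(4)$ using the circular order $K$ and the orbital decomposition---is exactly the paper's, but the execution has a genuine gap at the extension step, and one claim is simply false. The false claim is that ``$\to$ is quantifier-free definable from $K$'': the countable dense circular order has far more automorphisms than $T(4)$ (any cyclic-order-preserving permutation preserves $K$ but need not respect quadrants), so $\to$ cannot be recovered from $K$ alone. What is correct, and what the paper does, is to run the back-and-forth in the expanded language $\{\to,\Rightarrow\}$ (equivalently, with all four orbitals named); your phrase ``circular order with the induced four-arc partition'' is really just this expanded structure, and then saying $\to$ is definable from it is a tautology, not a reduction.

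The substantive gap is the one-point extension. You assert that density of the arcs together with Lemma~\ref{(10)}(ii),(iii) guarantees that every consistent extension type is realised, but Lemma~\ref{(10)}(iii) only tells you the intersection $I=\bigcap_i A_i$ is convex \emph{when nonempty}; it gives no control over nonemptiness on the target side. Concretely, given $f\colon A\to A'$ and $\alpha\in M\setminus A$, you must show $J:=\bigcap_i B_i$ is nonempty in $M'$, and density of individual arcs does not do this (convex arcs in a circular order can pairwise meet yet have empty total intersection). The paper's argument is precisely here: convexity is used to reduce $I$ to a \emph{two}-fold intersection $A_r\cap A_s$; the nonemptiness of the corresponding $B_r\cap B_s$ is then decided by which of the three-vertex configurations $L_1$--$L_{14}$ embed, and Lemmas~\ref{(8)} and~\ref{(9)} show the same list $L_1$--$L_6$ embeds in both structures; finally one checks via Lemma~\ref{(9)} that $B_r\cap B_s\subseteq B_i$ for every remaining $i$. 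Your sketch skips this reduction entirely. Also note that set-homogeneity of $M$ and $T(4)$ plays no role in the back-and-forth: set-homogeneity moves finite \emph{sets} to sets, whereas the forth step demands realisation of a specified type over a tuple, which is a strictly stronger requirement; the paper's proof of Proposition~\ref{recover1} does not invoke set-homogeneity at all.
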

\begin{proof}
Since $T(4)$ has the required properties (by Lemma~\ref{not2hom}), it suffices to show that if $M,M'$ are 
countably infinite set-homogeneous 
but not 2-homogeneous a-digraphs with
primitive automorphism group, then $M\cong M'$.  By the above analysis, $M'$ carries the same relation $\Rightarrow$, so 
that Lemmas~\ref{(1)and(2)}--\ref{(10)} hold for $M'$ as well as for $M$. 
We shall show by a back and forth argument that $M\cong M'$, where $M$, $M'$ are viewed as structures in the expanded 
language with relation symbols
 $\rightarrow$, $\Rightarrow$. So suppose that $A\subset M$, $A'\subset M'$ are finite, $f:A\rightarrow A'$ is an 
isomorphism,
 and $\alpha\in M\setminus A$. We must find $\beta\in M'$ so that $f$ extends to a partial isomorphism 
(of $\{\rightarrow,\Rightarrow\}$-structures)
 with $f(\alpha)=\beta$.

Let $I:=\{y\in M: (x,\alpha)\cong (x,y)\mbox{~for all~} x\in A\}$.  Put
$A:=\{\alpha_1,\ldots,\alpha_m\}$, and $\beta_i:=f(\alpha_i)$ for each $i=1,\ldots,m$. For each $i=1,\ldots,m$,
let $A_i:=\{y\in M: (\alpha_i,y)\cong (\alpha_i,\alpha)\}$. Thus, $I:= A_1\cap \ldots \cap A_m$. 
By Lemma~\ref{(10)}(iii), $I$ is a convex subset of $M$
under the circular 
ordering $K$.
Also put 
$B_i:= \{y\in M': (\beta_i,y)\cong (\alpha_i,\alpha)\}$. It suffices to show that $J:=B_1 \cap \ldots \cap B_m\neq \varnothing$,
 for then, if
$\beta\in J$, then $f$ extends to an isomorphism $f':A\cup\{\alpha\}\to A' \cup\{\beta\}$ with $f'(\alpha)=\beta$.

By considering how convex sets in circular orderings intersect (bearing in mind Lemma~\ref{(10)}(iii)), we see that there are $r,s\in \{1,\ldots,m\}$ such that $I=A_r \cap A_s$. We suppose $r\neq s$, this being the harder case.
 Let $I':= B_r \cap B_s$. Then 
$B_r\cap B_s \neq \varnothing$. For whether 
or not an intersection $B_i\cap B_j$ is empty
 is determined by which configurations $L_1-L_{14}$ embed in $M'$, and the same members, namely $L_1$--$L_6$, embed in both 
$M$ and $M'$. 

We claim that $I'=J$. To see this, we must show that for each
 $i\in \{1,\ldots,m\}\setminus \{r,s\}$, $B_r \cap B_s \subseteq B_i$. For this we view $\alpha_i,\beta_i$ 
(in the structures $M,M'$ respectively) as playing the role of $\alpha$ in Lemma~\ref{(10)} above. Thus, $M\setminus\{\alpha_i\}$ is 
partitioned into convex subsets
$\Lambda(\alpha_i),\Gamma(\alpha_i),\Gamma^*(\alpha_i),\Lambda^*(\alpha_i)$, as is $M'$ over $\beta_i$. There are orderings
$<_{\alpha_i}$, 
$<_{\beta_i}$ on $M$ and $M'$ respectively such that $\alpha_r$ and $\beta_r$ lie in corresponding
 convex subsets of $M$ and $M'$ (i.e. corresponding orbits over $\alpha_i$ and $\beta_i$ respectively), and
$\alpha_r<_{\alpha_i}\alpha_s$ if and only if $\beta_r<_{\beta_i} \beta_s$. 
We may suppose that $I$ is an initial segment of $A_r$ and a final segment of $A_s$ (with respect to $<_{\alpha_i}$). 
Now the isomorphism type 
of $\{\alpha_r,\alpha_i\}$ 
ensures that
an initial segment of $A_r$ lies in $A_i$, so  an initial segment of $B_r$ lies in $B_i$ (with respect to $<_{\beta_i}$). Likewise, the isomorphism type of 
$\{\alpha_s,\alpha_i\}$ ensures that
$A_s$ has a final segment in $A_i$, so $B_s$ has a final segment in $B_i$ (essentially, these assertions are because 
$M$ and $M'$ both satisfy Lemma~\ref{(10)}). Now it is an easy consequence of Lemmas ~\ref{(9)} and ~\ref{(10)} that $M$ cannot be the union of  three convex sets of the form 
$B_r$, $B_s$, $B_i$. As $B_r \cap B_s \neq \varnothing$, 
 it 
follows that 
$B_r \cap B_s \subseteq B_i$, as required.
\end{proof}

\subsection*{The imprimitive case}

We begin with a lemma used in the next proposition, and relevant to the discussion in  Section
7.

\begin{lemma} \label{extra}
Let $M$ be a countably infinite connected set-homogeneous a-digraph and suppose that $G=\Aut(M)$ preserves a nontrivial block system $\{B_n : n\in I\}$. Then one of the following holds, for some set-homogeneous tournament $T$ and $n\leq \aleph_0$.
\begin{enumerate}[(i)]
\item $M\cong \bar K_n[T]$, $M$ is $2$-homogeneous, and each $B_n$ induces $T$;
\item $M\cong T[\bar K_n]$, $M$ is $2$-homogeneous, and each $B_n \cong\bar K_n$;
\item Each $B_n$ is an independent set, and for distinct $x,y\in B_n$, $\Gamma(x)\ne \Gamma(y)$; also, for distinct $i,j\in I$, there are arcs in both directions between $B_i$ and $B_j$.
\end{enumerate}
\end{lemma}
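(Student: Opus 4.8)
The plan is to analyze the block system $\{B_n : n \in I\}$ using 2-set-homogeneity. First I would examine what relations can occur \emph{within} a block. Since the blocks form a $G$-congruence, by 2-set-homogeneity exactly one of the binary relations $\Gamma$, $\Gamma^*$, $\Lambda$, $\Lambda^*$ (up to pairing) can hold between two elements of the same block, unless a block is a singleton or carries no relations at all. If some block contains an arc $x\to y$, then since the block system is a congruence, the induced structure on each block is a vertex-transitive a-digraph; being an induced substructure of a set-homogeneous a-digraph it is itself set-homogeneous (this is not quite Lemma~\ref{nhood}, but follows directly: an isomorphism between subsets of a block extends by set-homogeneity of $M$ to an automorphism of $M$, which must preserve the block setwise if the block is recognizable — here we use that blocks are the classes of a definable congruence, or more carefully that an isomorphism of two subsets of one block, viewed as subsets of $M$, extends to $g\in\Aut(M)$, and composing with a block permutation we may take $g$ to fix the block). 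If instead every block is an independent set, we are heading toward case (iii).

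Next I would treat the case where blocks carry arcs. If a block induces a tournament $T$, then $T$ is set-homogeneous, and I claim that between any two distinct blocks $B_i, B_j$ either all pairs are $\sim$-unrelated or there is some arc. If there is an arc between $B_i$ and $B_j$, then by 2-set-homogeneity (applied to arcs) together with the congruence property, every pair from $B_i \times B_j$ is related by an arc; pushing this through the standard argument (as in the proof of Lemma~\ref{samenbours}, using that $\Gamma(x)=\Gamma(y)$ is forced for $x,y$ in the same block once one cross-arc exists) one gets that the blocks form a tournament quotient, so $M \cong T'[T]$ for tournaments $T', T$, but then $M$ is itself essentially a tournament (composition of tournaments) — in particular $\Lambda=\emptyset$ — and $M$ being connected and an a-digraph with a nontrivial block system whose quotient and blocks are tournaments; this case must be shown not to arise as stated, or rather it is subsumed: actually a composition $T'[T]$ of infinite set-homogeneous tournaments is still a tournament, hence has no independent pairs, and the block system $\{B_n\}$ described in the hypothesis need not be this one. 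The cleaner route: if blocks carry arcs, conclude the blocks are independent-set-free, and then the only way to get genuine incomparable pairs forcing the block structure is $M \cong T[\bar K_n]$ reversed — I would instead argue that if a block induces a nontrivial tournament then $M$ is $2$-homogeneous and equals $\bar K_n[T]$ (blocks mutually unrelated) by the argument of Lemma~\ref{disconnect}(i)/Lemma~\ref{samenbours}(i), giving case (i); the sub-case ``all cross pairs are arcs'' collapses because then the whole digraph is a tournament and the only vertex-transitive block systems on set-homogeneous tournaments $D_1, D_3$ (and their infinite analogues from Lachlan's list) are trivial or give $\bar K_n[T]$ after reversing viewpoint.

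Then I would handle the case where every block $B_n$ is an independent set. If additionally $\Gamma(x)=\Gamma(y)$ for some distinct $x,y$ in a block, then the relation $\Gamma(x)=\Gamma(y)$ is a $G$-congruence refining or equal to the block system; running the argument of Lemma~\ref{samenbours} in the countable setting, the classes are independent sets $\bar K_n$ and the quotient is a set-homogeneous a-digraph $A$ with $M \cong A[\bar K_n]$; if moreover the quotient $A$ is a tournament (e.g.\ the argument forces this when $M$ is connected and we want the congruence to match the given block system, or we appeal to Lemma~\ref{not2hom}(iii) characterizing $T[\bar K_n]$), we land in case (ii) with $M$ being $2$-homogeneous. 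Finally, if blocks are independent sets and $\Gamma(x)\ne\Gamma(y)$ for all distinct $x,y$ in a block, we are in case (iii): here I must verify the last clause, that between distinct blocks $B_i, B_j$ there are arcs in \emph{both} directions. Since $M$ is connected, there is an arc between some pair of blocks, hence (by 2-set-homogeneity on the congruence, blocks being independent) at least one arc between any two blocks $B_i,B_j$; if all arcs went one way, the blocks would carry a vertex-transitive tournament structure via ``$B_i \to B_j$ iff some arc goes from $B_i$ to $B_j$'', and since $\Gamma(x)\ne\Gamma(y)$ within blocks one shows this forces a partial matching-type structure that is incompatible with set-homogeneity unless arcs go both ways — essentially the argument at the end of the proof of Lemma~\ref{samenbours}(ii), or a direct rigidity argument: pick $x_1\to x_2$ in $B_i\times B_j$ and, using $\Gamma(x_1)\ne\Gamma(x_1')$ for $x_1'\in B_i$, find $x_1'\in B_i$ with $x_1'\not\to x_2$; then either $x_2\to x_1'$ (giving arcs both ways) or $x_1'\parallel x_2$, and the latter, combined with a suitable copy of $x_2\to$ something in $B_i$ and 2-set-homogeneity, produces an automorphism reversing the $B_i$–$B_j$ relation. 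The main obstacle I expect is this last step — carefully ruling out the ``unidirectional arcs between independent blocks with distinct neighbourhoods'' configuration — since it requires a delicate rigidity/counting argument of the flavour of the finite case but without the luxury of finiteness; I would model it closely on the proof of Lemma~\ref{samenbours}(ii) and the relevant parts of \cite{dgms}.
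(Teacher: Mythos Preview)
Your overall case split is right, but you are making the argument much harder than it needs to be, and the ``main obstacle'' you flag is an artifact of approaching the dichotomy from the wrong end.

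For case (i) you get tangled up unnecessarily. The point is simply this: if some block contains an arc, then by $2$-set-homogeneity \emph{every} arc of $M$ lies within a block (an arc within a block and an arc between blocks would be isomorphic $2$-sets in different $G$-orbits on unordered pairs, since $G$ preserves the block relation). Hence all cross-block pairs are independent. Dually, since cross-block independent pairs now exist, there can be no independent pair \emph{within} a block. So each block is a tournament and $M\cong \bar K_n[T]$. Your digression about compositions $T'[T]$ of tournaments is a dead end: arcs between blocks simply do not occur here.

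For the split between (ii) and (iii) you should first observe the fact you never state cleanly: once every block is an independent set, then by the same $2$-set-homogeneity argument as above, \emph{every} cross-block pair carries an arc (independent pairs are all within blocks). This is the crucial step you are missing; it immediately gives a tournament structure on the quotient in case (ii) without any appeal to Lemma~\ref{samenbours}. Now split on whether, for some pair $B_i,B_j$, all arcs go from $B_i$ to $B_j$. If yes, then by arc-transitivity this holds for every pair of blocks, the quotient is a set-homogeneous tournament $T$, and $M\cong T[\bar K_n]$. If no, then for every pair $B_i,B_j$ there are arcs in both directions (since every cross-block pair carries an arc), and from a pair of opposite arcs one reads off directly two vertices in the same block with different out-neighbourhoods; $2$-set-homogeneity then propagates $\Gamma(x)\ne\Gamma(y)$ to all within-block pairs. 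In other words, the paper deduces ``$\Gamma(x)\ne\Gamma(y)$'' \emph{from} ``arcs both ways'', which is immediate, whereas you are trying to run the implication in the reverse direction and finding it hard. Your route is salvageable (it is the contrapositive of ``$\Gamma$ constant on blocks $\Rightarrow$ arcs one way'', which is easy once you know cross-block pairs always carry arcs), but it is the long way round.
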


\begin{proof}
First note that in cases (i) and (ii), $2$-homogeneity of $M$ follows from Lemma~\ref{not2hom}(iii).

Suppose first that some $B=B_n$ contains an arc. Then by $2$-set-homogeneity, two elements in distinct blocks must be independent, and two elements within any block must have an arc between them. Thus, $B$ induces a tournament $T$, and $T$ must be set-homogeneous, and (i) above holds.

Thus, we may assume that there are no arcs within any block. It follows by $2$-set-homogeneity that any two elements in distinct blocks are related by an arc, and hence that $G$ acts $2$-homogeneously on $\{B_n:n\in I\}$. If there are distinct blocks $B_i, B_j$ such that there is an arc from every vertex in $B_i$ to every vertex in $B_j$, then for any two distinct blocks, all arcs go from one to the other. There is then an induced tournament structure on the block system, and  (ii) holds. 

In the remaining case, for any two distinct blocks there are arcs in both directions between them. It follows that there are two vertices $x,y$ in the same block such that $\Gamma(x)\neq \Gamma(y)$. Hence, by $2$-set-homogeneity, this holds for any two vertices in the same block, and (iii) holds.
\end{proof}

\begin{proposition}\label{recover2} Let $M$ be a countably infinite connected set-homogeneous but not $2$-homogeneous a-digraph, and suppose that 
$G:=\Aut(M)$ is imprimitive. Then $M$ is isomorphic to $R_n$ for some $n\geq 2$.
\end{proposition}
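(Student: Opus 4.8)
The plan is to apply Lemma~\ref{extra} to reduce the imprimitive case to a single shape of block system, and then to recover $R_n$ by a back-and-forth argument in an expanded language, exactly parallel to the treatment of $T(4)$ in Proposition~\ref{recover1}. First I would note that since $M$ is not $2$-homogeneous, the orbital $\Lambda$ is not self-paired, so cases (i) and (ii) of Lemma~\ref{extra} are excluded (in those cases $M$ is $2$-homogeneous). Hence we are in case (iii): the blocks $B_i$ are independent sets, distinct vertices of a block have distinct out-neighbourhoods, and between any two distinct blocks there are arcs in both directions. Let $n$ denote the (common) size of the blocks, with $2\le n\le\aleph_0$; I would first justify $n\ge 2$, since if $n=1$ the block system is trivial, contradicting imprimitivity.

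Next I would set up the ordered expansion. As in the proof of Lemma~\ref{not2hom}(i), the key observation is that the independence relation $\parallel$ (``unrelated or equal'') is a $G$-congruence whose classes are exactly the maximal independent sets, namely the blocks $B_i$; this follows from case (iii) together with $2$-set-homogeneity. On each block $B_i$ the out-neighbourhood gives a $G$-invariant linear order, because for distinct $x,y\in B_i$ we have $\Gamma(x)\ne\Gamma(y)$ and set-homogeneity forces a comparison $\Gamma(x)\subset\Gamma(y)$ or $\Gamma(y)\subset\Gamma(x)$ --- I would verify this containment is linear and $G$-invariant, writing $x<y \Leftrightarrow \Gamma(x)\supset\Gamma(y)$ as in the $R_n$ verification. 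I would then show, by a back-and-forth argument on finite configurations, that the order on each block is dense without endpoints, that the quotient digraph on $\{B_i\}$ is the complete ``symmetric'' digraph (every pair joined both ways), and that the cross-block arc pattern is governed entirely by the orders: that is, for $a\in B_i$ and $b\in B_j$ with $i\ne j$, the relation $a\to b$ is determined by the induced orderings, matching the defining recipe $a\to b \Leftrightarrow a<b$ and no block contains both.

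The main technical content is to produce a homogeneous expansion $M'$ of $M$ in the language with a binary equivalence relation $E$ (for $\parallel$, with classes the blocks) and a total order $<$ refining all the block orders, such that $\Aut(M')=\Aut(M)$; this is the analogue of the expansion $R_n'$ in Lemma~\ref{not2hom}(i). Concretely I would define a global order $\mathbb{Q}$-like structure by amalgamating the block orders with the cross-block comparisons, check it is dense without endpoints and that $E$ has $n$ dense codense classes, and then invoke the uniqueness (up to isomorphism) of such an ordered structure by the standard Fra\"iss\'e/back-and-forth argument cited before Lemma~\ref{not2hom}. The hard part will be showing that every isomorphism between finite substructures of $M$ can be ``deformed'' to one respecting the order and $E$ --- precisely the move in Lemma~\ref{not2hom}(i), where one replaces $\theta$ by $\theta'=\phi\circ\theta$ using an order-preserving automorphism $\phi$ of $\mathrm{dom}(\theta)$ that fixes each $E$-class setwise --- and then that such an order-respecting map extends via homogeneity of $M'$ to an automorphism of $M'$, hence of $M$. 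Once this is established, $M$ and $R_n$ have isomorphic homogeneous expansions with the same defining data ($n$ dense codense classes, dense order, arcs read off from the order), so $M\cong R_n$, completing the proof and, with Proposition~\ref{recover1}, the proof of Theorem~\ref{infnot2hom}.
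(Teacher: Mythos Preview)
Your outline has the right overall shape—reduce to case (iii) of Lemma~\ref{extra}, put a linear order on each block via out-neighbourhood containment, assemble a global order, and identify the result with $R_n$—but it skips the one genuinely hard step in the paper's proof and imports an irrelevant argument in its place.

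The central gap is your assertion that ``the cross-block arc pattern is governed entirely by the orders: \dots\ $a\to b \Leftrightarrow a<b$''. This is precisely what must be proved, and it is equivalent to showing that $M$ embeds no directed triangle $D_3$. The paper devotes most of its proof to this: it first shows (via a short claim) that for any three blocks and any vertex $a$ in the middle one, there exist $u,v$ with $u\to a\to v$ and $u\to v$; it then uses this, together with the segment structure $\Gamma(x)\cap B_j$ and $\Gamma^*(x)\cap B_j$ being final/initial segments, to derive a contradiction from a hypothetical $D_3$. Without this, your ``global order'' need not be transitive, and the amalgamation you propose has no content. Your back-and-forth plan cannot substitute here: back-and-forth establishes isomorphism between two structures already known to satisfy the same one-point extension properties, it does not by itself prove that $M$ has the $R_n$ arc pattern.

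Two smaller points. First, you conflate the parameter $n$: in $R_n$ it is the \emph{number} of blocks, not their size; the blocks are always countably infinite, and you must rule out the finite case (the paper eliminates $|B_i|=3$ by a parity count on the nine arcs between two blocks, using that the blocks can be swapped). Second, the deformation $\theta\mapsto\theta'$ you invoke from Lemma~\ref{not2hom}(i) is the proof that $R_n$ \emph{is} set-homogeneous; it plays no role in showing that an arbitrary $M$ satisfying the hypotheses is isomorphic to $R_n$. Once the order and the absence of $D_3$ are established, $\Aut(M)$ already preserves both $E$ and $<$ by construction, so no deformation is needed—what remains is to verify the density/codensity properties (the paper's (i)--(iii)) and then cite uniqueness of the $(\mathbb{Q},<)$-with-$n$-dense-codense-classes structure.
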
 

\begin{proof} Let $\{B_n: n \in I\}$ be a non-trivial block system  for $G$. Then since $M$ is not $2$-homogeneous, case (iii) of Lemma~\ref{extra} holds.
It follows that the $G$-action induced on $B_i$ is highly homogeneous (since $M$ is set-homogeneous) but not 2-transitive (since $M$ is not 2-homogeneous).

We claim that the $B_i$ are infinite. Indeed, otherwise, by Fact~\ref{livwag}, $|B_i|=3$ for each $i$. 
Then by 2-set-homogeneity if $i\neq j$ there is $g\in G$ interchanging $B_i$ and $B_j$, but the number of arcs between $B_i$ and $B_j$ is 9, so the number of arcs from $B_1$ to $B_2$ is not equal to the number for $B_2$ to $B_1$, a contradiction.

By the last paragraph and 
an 
unpublished result of
J. P. J. MacDermott (see p.63 of Cameron \cite{cameron2}),  $G$ preserves 
a dense linear order without endpoints $<_i$ on $B_i$. Let  $x,y\in B_i$ 
with $x<_i y$. If there are $z,w\not\in B_i$ such that $x\rightarrow z\rightarrow y$ and $y\rightarrow w \rightarrow x$, then (by 3-set-homogeneity and rigidity of $\{x,y,z\}$)
there is $g\in G$ with $(x,y,z)^g=(y,x,w)$, so $M$ is 2-homogeneous, a contradiction.  Also, 
given such $x,y$, as $\Gamma(x)\neq \Gamma(y)$, either there is $z$ with $x\to z\to y$ or there is $w$ with $y\to w\to x$. 
Hence, reversing some of the $<_i$ if necessary,  we may suppose that
$x<_i y$ if and only if $\Gamma(x) \supset \Gamma(y)$.

It follows that if $B_i$, $B_j$ are distinct then every element $x$ of $B_i$ dominates
 a final segment of $B_j$ and is dominated by an initial segment of $B_j$. For the first statement note that if $x\in B_i$ and $u,v\in B_j$ with $u<_j v$, and $x \to u$, then $x\not\in \Gamma(v)$ as $x\not\in \Gamma(u)$ and $\Gamma(u)\supset \Gamma(v)$, so $x\to v$; the argument is similar for the second statement.  These segments are both non-empty. 
For example, if $x$ is not dominated by any element of $B_j$, then there is a proper non-empty initial segment $I$ of $B_i$ which dominates
 all of $B_j$, and $B_i\setminus I$ is dominated by all $B_j$. Then there is no automorphism 
swapping $B_i$ and $B_j$, which is impossible as there are arcs in both directions.

We will show that $<$ is a total order on $M$,
where
$x<y$ if and only if 
$x\rightarrow y$ or $x,y$ are unrelated and
$\Gamma(x) \supset \Gamma(y)$. 
All points follow from the definition (and the last paragraph) except the case where $x \to y$
and $y \to z$ with $x,z$ in distinct blocks, in which case we need to prove that $x \to z$.  Thus it is sufficient to prove 
that $M$ does not embed a  directed triangle $x,y,z$
with $x \rightarrow y \rightarrow z \rightarrow x$.

\

\noindent {\em Claim.}
For any $3$ blocks $B_1, B_2, B_3$ and for any $a \in
B_2$ there exist $u \in B_1$ and $v \in B_3$ such that
$u \rightarrow a \rightarrow v$ and $u \rightarrow v$.

\

\begin{proof}[Proof of Claim.]

Let $x \in \Gamma^*(a) \cap B_1$ and $y \in \Gamma(a)
\cap B_3$ be arbitrary. Such vertices exist by the fact that the initial and final segments mentioned above are non-empty.
Now if $x \rightarrow y$ then we are done by setting
$u=x$ and $v=y$. 

The other possibility is that $y \rightarrow x$, so suppose this holds.
Recall that
$\Gamma(y) \cap B_1$ is a final segment of $B_1$. Let
$x' \in B_1 \setminus \Gamma(y)$ so that $x' \in
\Gamma^*(y) \cap B_1$ and $x'<x$. By definition of
$<_i$ we have $x' \rightarrow a$ since $x' <_1 x$. But
now if we set $u=x'$ and $v=y$ then $u \rightarrow a
\rightarrow v$ and $u \rightarrow v$, proving the
claim.    
\end{proof}

Now, seeking a contradiction, suppose that $M$ embeds
a directed triangle $b \rightarrow a \rightarrow c
\rightarrow b$ with $b \in B_1$, $a \in B_2$, and $c
\in B_3$. By applying the claim twice we see
that there exist $u, v' \in B_1$ and $v, u' \in B_3$
such that $u \rightarrow a \rightarrow v$, $u
\rightarrow v$, $u' \rightarrow a \rightarrow v'$, and
$u' \rightarrow v'$. By 3-set-homogeneity there is an
automorphism $g \in G$ extending the isomorphism
$(u,a,v) \mapsto (u',a,v')$. Now $a^g=a$ and
$(B_1,B_3)^g = (B_3,B_1)$. Thus $(\Gamma(a) \cap
B_1)^g = \Gamma(a) \cap B_3$, $(\Gamma(a) \cap
B_3)^g = \Gamma(a) \cap B_1$ and the corresponding
statements for $\Gamma^*(a)$ also hold. It follows
that $b,c^g \in B_1$ with $b<c^g$, and $b^g, c \in
B_3$ with $b^g < c$, while $c^g \rightarrow b^g$ and
$c \rightarrow b$. Now from the definition of $<_i$,
$c^g \rightarrow b^g$ and $b^g < c$ implies $c^g
\rightarrow c$, while $c \rightarrow b$ and $b<c^g$
implies $c \rightarrow c^g$. This is a contradiction.
We conclude that $M$ does not embed a directed
triangle and so $<$ is an ordering on $M$.   

\

In order to recover $R_n$ completely, we now have to show the following, for any distinct $i,j\in I$: 

\begin{enumerate}[(i)]
\item 
if  $x,y\in B_i$ with $x<_i y$, then 
there is $z\in B_j$ with $x\rightarrow z \rightarrow y$;
\item 
if $x\in B_i$, $y\in B_j$ with $x\rightarrow y$, then 
there is $x'\in B_i$ with $x<_ix'\rightarrow y$ and there is $y'\in B_j$ with $x\rightarrow y'<_j y$;
\item 
if $x\in B_i$, $y\in B_j$ with $x \to y$, and $k\in I\setminus \{i,j\}$, there is $z\in B_k$ with $x\rightarrow z\rightarrow y$.
\end{enumerate}
From these properties it follows that $(M,<)$ is dense 
without endpoints,  the $B_i$ are dense codense, and hence  $M \cong R_n$ for some $n$
(see the comment above Lemma~\ref{not2hom}).

{\em Proofs of (i)--(iii).}
Given distinct $x,y \in B_i$ there is $z\in B_j$ with $x,y\rightarrow z$.  Hence, 
using 3-set-homogeneity,
$G_{\{B_i\},\{B_j\}}$ acts 2-homogeneously and hence primitively on $B_i$. 

Suppose that (i) does not hold. Then $x,y$ dominate the same elements of $B_j$, so are equivalent under the $G^{B_i}_{\{B_i\},\{B_j\}}$-invariant equivalence relation `dominate the same vertices of $B_j$'. Thus, 
 by primitivity of $G^{B_i}_{\{B_i\},\{B_j\}}$, any two elements of $B_i$ dominate the same elements of $B_j$.
 This is impossible, since case (iii) of Lemma~\ref{extra} holds.
 
 Part (ii) is an easy consequence of transitivity on arcs. Likewise, part (iii) follows from the claim and arc-transitivity. 
 This completes the proof of the proposition. \end{proof}

{\bf Proof of Theorem~\ref{infnot2hom}.} This follows immediately from Propositions~\ref{recover1}  and \ref{recover2}. \qed

\section{Concluding remarks}

The problem of classifying all countably infinite set-homogeneous a-digraphs appears to be hard. 
As a starting point for the investigation of this problem, suppose $M$ is a set-homogeneous a-digraph. By Theorem~\ref{infnot2hom}, we may suppose that $M$ is 2-homogeneous, so the orbital $\Lambda$ is self-paired. 

In the particular case when $M$ is a countably infinite set-homogeneous a-digraph with {\em imprimitive} automorphism group, by Lemma~\ref{extra}
 there is a block system $\{B_i:i\in I\}$ such that one of the following holds.
 
\begin{enumerate}[(i)] 
\item 
The blocks are isomorphic set-homogeneous tournaments, and there are no arcs between the blocks.
\item
The blocks are independent sets, and on each block the group induced by $\Aut(M)$ is highly homogeneous, so (assuming the blocks are infinite)
 is either highly transitive,
or, by a theorem of Cameron \cite{cameron}, preserves or reverses  a linear or circular order.
\end{enumerate}
We shall say that $M$ is {\em connected} if the graph obtained by forgetting the arc orientation is connected. By the last remark, if $M$ is disconnected, then  it is a disjoint union of set-homogeneous tournaments, in which case Proposition~\ref{tourninf} below provides some information. Thus, we assume $M$ is connected.

We shall suppose first that $M$ is not a tournament.
If 
$\alpha \in M$ then $G_\alpha$ has three orbits on $M\setminus\{\alpha\}$, namely $\Gamma(\alpha)$, $\Gamma^*(\alpha)$, and $\Lambda(\alpha)$ (which equals $\Lambda^*(\alpha)$, by assumption).

Suppose that
$(\Gamma\circ \Gamma) \subseteq \Gamma$. 
Then $\to$ is transitive, so $(M, \to)$ is a set-homogeneous countably infinite partial order, so is homogeneous, by \cite[Theorem 8.13]{droste}. These are classified in \cite{Schmerl1}.

Thus, we may suppose $(\Gamma\circ \Gamma)\not\subseteq \Gamma$.
 In this case $M$ embeds either $D_3$ or $D_4$.
 For if there is an arc from $\Gamma(\alpha)$ to
 $\Gamma^*(\alpha)$ then $M$ embeds $D_3$.
 And if not, then there are $\alpha\rightarrow \beta\rightarrow \gamma$ with $(\alpha,\gamma)\in \Lambda$, and as $\Lambda$ 
is self-paired, there is $\delta$ with $\gamma\rightarrow\delta\rightarrow \alpha$, yielding a $D_4$. 

Let $\diam(M)$ be the smallest $d$ such that for any two distinct vertices $\alpha,\beta$
there is a directed path from $\alpha$ to $\beta$ of length no greater than $d$.
An easy analysis now
yields the following
 three possible cases, under the given assumptions (that $M$ is a countably infinite connected 2-homogeneous and set-homogeneous a-digraph which is not a tournament).
\begin{enumerate}[(i)]
\item 
$M$ embeds $D_3$, $\diam(M)=3$, in which case $(\Gamma\circ \Gamma) \cap \Lambda =\emptyset$ and $(\Gamma \circ \Gamma)\supseteq \Gamma^*$.
\item
$M$ embeds $D_3$, $\diam(M)=2$, in which case $(\Gamma\circ \Gamma)\supseteq \Lambda\cup\Gamma^*$.
\item
$M$ embeds $D_4$ but not $D_3$, $\diam(M)=3$, in which case $(\Gamma\circ \Gamma) \supseteq \Lambda$ and $(\Gamma\circ \Gamma)\cap \Gamma^*=\emptyset$.
\end{enumerate}

For tournaments,  we have the following result. 

\begin{proposition}\label{tourninf}
Let $T$ be a  set-homogeneous tournament. Then $T$ is $k$-homogeneous for all $k\leq 4$.
\end{proposition}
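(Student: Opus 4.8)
The plan is to show that a set-homogeneous tournament $T$ is automatically $k$-homogeneous for $k=1,2,3,4$, using an Enomoto-style one-point extension argument like the one already carried out in Lemma~\ref{sethomtourn}, but now keeping careful track of the size of the structures involved. Recall that for a finite set-homogeneous tournament this was done in full: given an isomorphism $\phi: A \to B$ between induced subtournaments, one chooses $a \in T \setminus A$ maximising $|\Gamma(a) \cap A|$ and extends $\phi$ by a vertex $b$ dominating $B' := (A')^\phi$, where $A' := \Gamma(a) \cap A$. In the infinite setting the same construction works provided we can guarantee the existence of the vertex $b$; the only place finiteness was used was in counting ``the number of vertices of $M$ dominating $A'$ equals the number dominating $B'$''. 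For our purposes we do not need a counting identity: if $A' \cong B'$ and $A' \subseteq A$ has size at most $3$, then by set-homogeneity there is $g \in \Aut(T)$ with $(A')^g = B'$, and one transfers a suitable dominating vertex via $g$.

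First I would fix the convention that a tournament, having a single antisymmetric total relation, has $\Lambda = \emptyset$, so set-homogeneity of $T$ is exactly the statement that $\Aut(T)$ is transitive on each isomorphism type of finite induced subtournament. For $k=1$ this is just vertex-transitivity, which holds by set-homogeneity (transitivity on singletons). For $k=2$: any two-vertex induced subtournament is a single arc, so $2$-homogeneity is transitivity on arcs, which is set-homogeneity applied to the (unique) isomorphism type of two-element tournament. Now for $k=3$ and $k=4$ I would run the extension argument: let $\phi: A \to B$ be an isomorphism of induced subtournaments with $|A| = |B| = k-1 \in \{2,3\}$, and show it extends to an isomorphism $A \cup \{a\} \to B \cup \{b\}$ of $k$-element subtournaments. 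Pick $a \in T \setminus A$; if $\Gamma(a) \cap A = \emptyset$ for all choices of $a$ then every vertex outside $A$ is dominated by all of $A$, and by set-homogeneity the same holds for $B$, so any one-point extensions $\hat\phi$ with $\hat\phi(a) = b$ (for arbitrary $a \notin A$, $b \notin B$) work; dually if $a \to x$ for all $x \in A$. Otherwise choose $a$ with $A' := \Gamma(a) \cap A$ nonempty and as large as possible, set $B' := (A')^\phi \subseteq B$ and note $A' \cong B'$ with $|A'| \le k-1 \le 3$. By set-homogeneity there is $h \in \Aut(T)$ with $(A')^h = B'$; then $a^h$ dominates $B'$, and since $|B'| \ge 1$, either $a^h \notin B$ and we are done provided $a^h$ dominates no vertex of $B \setminus B'$, or $a^h \in B$ and we replace it. The maximality of $|A'|$ ensures (exactly as in Lemma~\ref{sethomtourn}) that no vertex dominating $B'$ can dominate any vertex of $B \setminus B'$: if some $b$ dominated $B' \cup \{b_0\}$ with $b_0 \in B\setminus B'$, transferring back via an automorphism sending $B' \cup \{b_0\}$ to an isomorphic subset of $A$ would produce a vertex dominating more than $|A'|$ vertices of $A$. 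Since $\Gamma(a^h)$ meets $B \setminus B'$ only in vertices $a^h$ fails to dominate, and $a^h \notin B$ after this adjustment, setting $a^{\hat\phi} = a^h$ (or rather choosing $b := a^h$, or some translate lying outside $B$, by vertex-transitivity of the stabiliser of $B'$) gives the required extension.

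Having extended any isomorphism between $(k-1)$-element subtournaments by one point, I would then argue that an isomorphism between $k$-element subtournaments extends to an automorphism, hence $T$ is $k$-homogeneous for $k \le 4$: given $\psi: U \to V$ with $|U| = |V| = k \le 4$, restrict to a $(k-1)$-subset, extend repeatedly by one point (each extension of an isomorphism of subtournaments of size $\le 3$ is legitimate by the previous paragraph, so we may keep going), building an increasing chain of partial isomorphisms whose union, after enough steps, can be shown to be onto—or, more carefully, one uses a back-and-forth between $T$ and itself, alternately extending the domain and the range, which terminates in the finite case and gives an automorphism in the countable case. The key point that makes $k \le 4$ special is that the ``transfer'' step only ever needs set-homogeneity on subtournaments of size $|A'| \le k - 1 \le 3$, together with the existence of the extending vertex; these are exactly the hypotheses available. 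The main obstacle, and the place where the argument genuinely requires care rather than being routine, is justifying the transfer of the dominating vertex in the infinite setting without the finite counting identity: one must verify that the set of vertices dominating $B'$ is a union of $\Aut(T)_{B'}$-orbits of the right kind and is nonempty outside $B$, and that the maximality hypothesis on $|A'|$ survives the transfer. Once that is pinned down, the extension-by-one-point lemma and the back-and-forth are standard.
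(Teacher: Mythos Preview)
Your proposal has a genuine structural gap: the Enomoto one-point extension argument does not yield $k$-homogeneity in the infinite setting, even if the extension step can be carried out for partial maps of size $\le 3$.

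Recall what the extension lemma actually gives you: from an isomorphism $\phi:A\to B$ with $|A|=k-1$, it produces \emph{some} extension $\hat\phi:A\cup\{a\}\to B\cup\{b\}$. In the finite case this suffices, because iterating the extension exhausts $T$ and yields an automorphism. In the infinite case, however, reaching an automorphism requires back-and-forth, and back-and-forth demands the extension step at \emph{every} stage, i.e.\ for partial maps of arbitrarily large finite size---precisely what your argument does not supply once $|A|\ge4$. Your final paragraph (``restrict to a $(k-1)$-subset, extend repeatedly by one point \ldots\ one uses a back-and-forth'') conflates two different things: (a) extending a size-$(k-1)$ map by one point, which does not recover the given size-$k$ map $\psi$; and (b) extending $\psi$ itself to an automorphism, which would need extension from size $k$ to $k+1$, then $k+1$ to $k+2$, etc., none of which your lemma provides. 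There are also loose ends in the extension step itself (e.g.\ $a^h$ may land in $B$, and the ``reverse transfer'' for the maximality contradiction has the same issue), but these are secondary to the main gap.

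The paper's argument is entirely different and much more direct. It first observes that, by set-homogeneity, $k$-homogeneity for $k\le4$ reduces to the single statement that $\Aut(T)$ induces $\mathbb{Z}_3$ on each copy of $D_3$ (since any non-rigid tournament on at most four vertices has automorphism group $\mathbb{Z}_3$, generated by a rotation of an embedded $D_3$). To obtain this rotation, one embeds the given $D_3=\{x_1,x_2,x_3\}$ in a \emph{rigid} $4$-vertex subtournament $\{x_1,x_2,x_3,y_1\}$, finds a second rigid copy $\{x_2,x_3,x_1,y_2\}$ of the same isomorphism type, and invokes $4$-set-homogeneity: rigidity forces the resulting automorphism to induce the $3$-cycle $(x_1,x_2,x_3)$. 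The existence of suitable $y_1,y_2$ comes from a short pigeonhole argument using vertices on the arcs of the $D_3$. For $4$-homogeneity one then applies the same argument to the set-homogeneous action of $G_\alpha$ on $\Gamma(\alpha)$. The key idea you are missing is this ``rigid envelope'' trick: pass to a larger rigid configuration where set-homogeneity pins down the map uniquely.
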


\begin{proof} 
We first show that $T$ is $3$-homogeneous.

From Lemma~\ref{sethomtourn}, we may suppose that $T$ is infinite. By Ramsey's Theorem, $T$ embeds a chain of 
length 3. Thus,
by 2-set-homogeneity, for each arc $x\rightarrow y$, there is $z$ such that $x\rightarrow z$, and $z\rightarrow y$. 

We may suppose that $T$ embeds a 3-cycle, since otherwise it is a dense linear order, so is 3-homogeneous.
 Thus, pick a 3-cycle
$x_1\rightarrow x_2\rightarrow x_3\rightarrow x_1$. By the previous paragraph, there are $y_1,y_2,y_3$ such that 
$x_i\rightarrow y_i\rightarrow x_{i+1~({\rm mod}~ 3)}$ for each $i=1,2,3$. Observe that $y_1,y_2,y_3$ are distinct. Either at least two of the $y_i$ each dominate at least
 two of the $x_i$, or at least two of the $y_i$ are each dominated by at least two of the $x_i$. Without loss we suppose 
 the former, with say $y_1\rightarrow x_3$, $y_2\rightarrow x_1$. Now there is $g\in G$ with
 $\{x_1,x_2,x_3,y_1\}^g=\{x_2,x_3,x_1,y_2\}$. Furthermore, the tournament induced on $\{x_1,x_2,x_3,y_1\}$ admits
 no automorphisms, since it contains just two copies of $D_3$ 
 ($\{x_1,x_2,x_3\}$ and $\{x_1,y_1,x_3\}$)  so any automorphism must fix their intersection $\{x_1,x_3\}$ and hence fixes the tournament pointwise.
 It follows that $(x_1,x_2,x_3)^g=(x_2,x_3,x_1)$, so the cyclic group $Z_3$ is induced on each triangle. 
This ensures 3-homogeneity. Note that this argument
 applies to any $G\leq \Aut(T)$ which acts set-homogeneously.
 
Recall that the automorphism group of any finite tournament has odd order, since any involution would have to reverse some pair of distinct vertices.
To see that $T$ is 4-homogeneous, note that any non-trivial odd order subgroup of $S_4$ is cyclic of order 3, so any non-rigid 4-vertex
 tournament 
has a vertex dominating the other three, or dominated by the other three. We must show that $\Aut(T)$ induces $\mathbb{Z}_3$ on such a 
tournament.
So consider $\alpha,\beta_1,\beta_2,\beta_3$ where $\{\beta_1,\beta_2,\beta_3\} \subset \Gamma(\alpha)$ carries a copy of $D_3$.
 Clearly $\Aut(T)_\alpha$ acts set-homogeneously on the tournament $\Gamma(\alpha)$, so, by the last paragraph, induces ${\mathbb Z}_3$ on copies of
$D_3$ in $\Gamma(\alpha)$. Thus, $\Aut(T)$ induces the full group of automorphisms on
$\{\alpha,\beta_1,\beta_2,\beta_3\}$. \end{proof}

\begin{problem}
Does there exist a countably infinite tournament that is set-homogeneous but not homogeneous?
\end{problem}
 
 \medskip

We conclude with some remarks about set-homogeneous  subgroups of full automorphism groups.  It is convenient to use the language of topological groups. Recall (see \cite[Section 2.4]{cameron2}) that if $M$ is a countably infinite set, then there is a topology on $\Sym(M)$, for which the basic open sets are the cosets of pointwise stabilisers of finite sets. A subgroup $G$ of $\Sym(M)$ is then closed in $\Sym(M)$ if and only if $G$ is the automorphism group of some relational first order structure  with domain $M$. If $G\leq \Sym(M)$ is closed, then $H\leq G$ is dense in $G$ if and only if $H$ has the same orbits as $G$ on $M^n$ for all $n\geq 1$. 

If $M$ is a countably infinite homogeneous structure, we say that $G\leq \Aut(M)$ {\em acts set-homogeneously} on $M$, or is a {\em set-homogeneous subgroup} ~ of $\Aut(M)$, if, whenever $U,V$ are isomorphic finite substructures of $M$, there is $g\in G$ with $U^g=V$.

\begin{problem}
Which countably infinite homogeneous structures $M$ have the property that every set-homogeneous subgroup of $\Aut(M)$ is dense in $\Aut(M)$?
\end{problem}

We remark that if $M$ has no structure (that is, $\Aut(M)=\Sym(M)$) then $G\leq \Aut(M)$ acts set-homogeneously if and only if $G$ acts highly homogeneously on $M$, that is, $G$ is $k$-homogeneous on $M$ for all $k\geq 1$. By the theorem of Cameron \cite{cameron} mentioned above, it follows
 that $\Aut(M)$ has (up to conjugacy in $\Sym(M)$) four proper non-trivial set-homogeneous closed subgroups, namely the automorphism groups of a linear order, a circular order, a linear betweenness relation, or an
(arity four) separation relation.

Following \cite[Section 6]{dgms}, we shall say that a structure $M$ is {\em locally rigid} if 
for every finite substructure $U$ of $M$, there is a finite substructure $V$ of $M$ containing $U$ such that every automorphism of $V$ fixes $U$ pointwise.

\begin{proposition} \label{rigid} Let $M$ be a locally rigid countably infinite homogeneous relational structure. Then any set-homogeneous subgroup of $\Aut(M)$ is dense in $\Aut(M)$.
\end{proposition}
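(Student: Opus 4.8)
The plan is to show that a set-homogeneous subgroup $G\leq\Aut(M)$ has the same orbits as $\Aut(M)$ on $M^n$ for every $n\geq 1$, which by the remarks on topological groups just above is equivalent to $G$ being dense in $\Aut(M)$. Fix $n$ and fix an $n$-tuple $\bar a=(a_1,\dots,a_n)$ of distinct elements of $M$ (the general case, allowing repetitions, follows immediately by deleting repeated coordinates). Let $U$ be the substructure of $M$ with underlying set $\{a_1,\dots,a_n\}$. By local rigidity there is a finite substructure $V\supseteq U$ of $M$ such that every automorphism of $V$ fixes $U$ pointwise.

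Now suppose $\bar b=(b_1,\dots,b_n)$ lies in the same $\Aut(M)$-orbit as $\bar a$; say $h\in\Aut(M)$ with $\bar a^h=\bar b$. The idea is to transfer the rigid "witness" $V$ along $h$ and then use set-homogeneity of $G$ on the resulting finite substructure. Concretely, put $W:=V^h$, a finite substructure of $M$ isomorphic to $V$, with $U^h$ as its distinguished rigid core, and $U^h$ has underlying set $\{b_1,\dots,b_n\}$. Since $V$ and $W$ are isomorphic finite substructures of $M$, set-homogeneity of $G$ provides $g\in G$ with $V^g=W$ (as sets, hence as induced substructures). The composite map $h^{-1}g$, restricted to $V$, is then an isomorphism from $V$ onto $V$, i.e. an automorphism of $V$; by the choice of $V$ it fixes $U$ pointwise, that is, $a_i^{\,h^{-1}g}=a_i$ for each $i$. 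Applying $h$, this gives $a_i^{\,g}=a_i^{\,h}=b_i$ for each $i$, so $\bar a^{\,g}=\bar b$ with $g\in G$. Hence the $G$-orbit of $\bar a$ coincides with its $\Aut(M)$-orbit, for every $\bar a$ and every $n$, which is precisely density of $G$ in $\Aut(M)$.

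The one point needing a little care — and the main (very mild) obstacle — is the bookkeeping around set containment and the meaning of "$V^g=W$". Set-homogeneity yields $g$ with $V^g=W$ as \emph{sets}; because $g$ is a global automorphism of $M$, it automatically carries the induced structure on $V$ to the induced structure on $W$, so the restriction $g|_V\colon V\to W$ is a structure isomorphism, and likewise $h|_V\colon V\to W$. Thus $h^{-1}g|_V=(h|_V)^{-1}\circ(g|_V)$ is genuinely an automorphism of the finite structure $V$, which is all that local rigidity needs. No use of the classification theorems or of any deeper structure theory is required; the argument is a direct combination of local rigidity with the definition of set-homogeneity for the subgroup $G$, exactly paralleling the treatment in \cite[Section 6]{dgms}.
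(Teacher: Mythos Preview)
Your argument is correct and essentially identical to the paper's: pick a rigid extension $V\supseteq U$, transport it by some $h\in\Aut(M)$, use set-homogeneity of $G$ to get $g\in G$ with $V^g=V^h$, and conclude from local rigidity that $g$ and $h$ agree on $U$. The only wrinkle is notational: in the paper's right-action convention the automorphism of $V$ is $gh^{-1}$ (since $V^{gh^{-1}}=W^{h^{-1}}=V$), not $h^{-1}g$, but your explicit clarification $(h|_V)^{-1}\circ(g|_V)$ makes the intended map clear and the logic sound.
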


\begin{proof} Let $G\leq \Aut(M)$ be set-homogeneous, and let $f:U_1\to U_2$ be an isomorphism between finite substructures of $M$. By local rigidity, there is a substructure $V_1$ of $M$ containing $U_1$ so that any automorphism of $V_1$ fixes $U_1$ pointwise. As $M$ is homogeneous, there is $h\in \Aut(M)$ extending $f$. Put $V_2:=V_1^h$. Then $V_1 \cong V_2$, so as $G$ acts set-homogeneously, there is $g\in G$ with $V_1^g=V_2$. Then by choice of the $V_i$, $g$ extends $f$, as required.
\end{proof}

\begin{remark} \rm By \cite[Proposition 6.1]{dgms}, if $\Gamma$ is an infinite graph, and $\Gamma$ has the property that for any two distinct vertices $x$ and $y$, there are infinitely many vertices joined to $x$ and not $y$, and infinitely many joined to $y$ but not $x$, then $\Gamma$ is locally rigid. It follows from this and Proposition~\ref{rigid} that if $\Gamma$ is the random graph, or the universal homogeneous $K_n$-free graph (see \cite[4.10]{cameron2}), then any set-homogeneous subgroup of $\Aut(\Gamma)$ is dense in $\Aut(\Gamma)$.
\end{remark}

\end{document}